\newcommand{\la}{\langle}
\newcommand{\ra}{\rangle}
\newcommand{\N}{\mathbb{N}}
\newcommand{\R}{\mathbb{R}}
\newcommand{\mQ}{\mathcal{Q}}
\newcommand{\norm}[1]{\left\Vert#1\right\Vert}
\newcommand{\abs}[1]{\left\vert#1\right\vert}
\newcommand{\w}{\omega}
\def\({\left(}
\def\){\right)}
\newcommand{\loc}{\operatorname{loc}}
\newcommand{\trace}{\operatorname{tr}}
\newcommand{\Dom}{\operatorname{Dom}}
\newtheorem{thm}{Theorem}[section]
\newtheorem{prop}[thm]{Proposition}
\newtheorem{cor}[thm]{Corollary}
\newtheorem{lem}[thm]{Lemma}
\theoremstyle{definition}
\newtheorem{defn}[thm]{Definition}
\newtheorem{rem}[thm]{Remark}
\newtheorem{notation}[thm]{Notation}
\numberwithin{equation}{section}
\author[P. R. Stinga]{\href{http://pabloraulstinga.github.io/}{Pablo Ra\'ul Stinga} }
\address{Department of Mathematics\\
Iowa State University\\
396 Carver Hall, Ames\\
IA 50011, United States of America}
\email{stinga@iastate.edu}
\author[M. Vaughan]{\href{https://maryvaughan.github.io/}{Mary Vaughan}}
\address{Department of Mathematics\\
The University of Texas at Austin\\
2515 Speedway, Austin\\
TX 78712, United States of America}
\email{maryv@iastate.edu}
\thanks{Research supported by Simons Foundation grant 580911}
\keywords{Fractional nondivergence form elliptic equations, Monge--Amp\`ere equations, regularity estimates}
\subjclass[2010]{Primary: 35R11, 35B65, 35J96. Secondary: 35B45, 35J70, 47D06}
\begin{document}

\title[Fractional elliptic equations]{Fractional elliptic equations in nondivergence form: definition,
applications and Harnack inequality}

\begin{abstract}
We define the fractional powers $L^s=(-a^{ij}(x)\partial_{ij})^s$, $0 < s < 1$, of nondivergence form elliptic operators
$L=-a^{ij}(x)\partial_{ij}$  in bounded domains $\Omega\subset\R^n$, under minimal regularity assumptions on the coefficients
$a^{ij}(x)$ and on the boundary $\partial\Omega$. We show
that these fractional operators appear in several applications such as
fractional Monge--Amp\`ere equations, elasticity, and finance. The solution $u$ to the nonlocal Poisson problem
$$\begin{cases}
(-a^{ij}(x) \partial_{ij})^su = f&\hbox{in}~\Omega\\
u=0&\hbox{on}~\partial\Omega
\end{cases}$$
is characterized by a local degenerate/singular extension problem.
We develop the method of sliding paraboloids in the Monge--Amp\`ere geometry and
prove the interior Harnack inequality and H\"older estimates for solutions to 
the extension problem when the coefficients $a^{ij}(x)$ are bounded, measurable functions.
This in turn implies the interior Harnack inequality and H\"older estimates for solutions $u$ to the fractional problem. 
\end{abstract}

\maketitle

\section{Introduction}

In this paper, we inaugurate the analysis of equations driven by fractional powers of nondivergence
form uniformly elliptic operators
\begin{equation}\label{eq:Ls}
L^s=(-a^{ij}(x)\partial_{ij})^s\quad\hbox{in}~\Omega\qquad\hbox{for}~0<s<1,
\end{equation}
under minimal regularity assumptions on the coefficients $a^{ij}(x)$ and the boundary of the domain $\Omega\subset\R^n$, $n\geq1$.
We show in Section \ref{sec:applications} that
fractional power operators as in \eqref{eq:Ls} in such minimal regularity regime arise in applications to 
fractional Monge--Amp\`ere equations, elasticity, and finance, among others.

The very first difficulty we need to overcome when considering \eqref{eq:Ls}
is that of giving a meaningful definition of the fractional power operator $L^s$ when
\begin{equation}\label{eq:L}
L=-a^{ij}(x)\partial_{ij}\equiv-\sum_{i,j=1}^na^{ij}(x)\partial_{ij}\qquad x\in\Omega
\end{equation}
is an elliptic operator in \emph{nondivergence} form with \emph{nonsmooth} coefficients.
As in other well-known cases, this is not immediately obvious. 
For example, the fractional Laplacian $(-\Delta)^s$ in $\R^n$ can be defined using the Fourier transform as
$\widehat{(-\Delta)^su}=|\xi|^{2s}\widehat{u}$. However, the nondivergence form operator \eqref{eq:L}
has nonsmooth coefficients in a bounded domain $\Omega$, so the Fourier transform is not 
the most convenient tool.  Indeed, \eqref{eq:L} is not translation invariant and not defined in the whole space. 
If $-\Delta_D$ denotes the Laplacian in a bounded domain $\Omega$ subject to
homogeneous Dirichlet boundary conditions on $\partial\Omega$,
then $(-\Delta_D)^s$ is naturally defined in a spectral way using the basis of eigenfunctions
and the corresponding eigenvalues of $-\Delta_D$ in the Sobolev space $H^1_0(\Omega)$.
In contrast, there is no natural Hilbert space structure for nondivergence form operators as in \eqref{eq:L}.
The spectral method is also used to 
define fractional powers of \emph{divergence form} elliptic operators $(-\partial_i(a^{ij}(x)\partial_j))^s$, see \cite{Caffarelli-Stinga}.
Nevertheless, our operator \eqref{eq:L} has nonsmooth coefficients so
it cannot be written in divergence form. We further remark that these definitions, though adequate from 
the operator theory point of view, do not immediately give explicit pointwise, nonlocal formulas.

Our idea to define \eqref{eq:Ls} is to apply the \emph{method of semigroups}.
The main ingredient in this approach is the semigroup $\{e^{-tL}\}_{t\geq0}$ generated by $L$. With this, we define
\begin{equation}\label{eq:semigroupformula}
L^su(x)=\frac{1}{\Gamma(-s)}\int_0^\infty\big(e^{-tL}u(x)-u(x)\big)\,\frac{dt}{t^{1+s}}
\end{equation}
for $0<s<1$, $x\in\Omega$, where $\Gamma$ is the Gamma function.
Using the semigroup, we can also give formulas for the solution $u$ to the Poisson problem $L^su=f$ as $u=L^{-s}f$ and
for the solution $U$ to local extension problems.
Moreover, if $L$ has a heat kernel, then one can derive explicit pointwise expressions for $L^su(x)$, $L^{-s}f(x)$ and $U(x,z)$.
These results are presented in Section \ref{sec:fractionalpowers}.
For details about the semigroup method applied to the fractional Laplacian in the whole space
and to other different contexts, see \cite{Stinga} and the references therein.

We then consider the following fractional elliptic equation in nondivergence form:
\begin{equation}\label{eq:Poisson}
 \begin{cases}
(-a^{ij}(x) \partial_{ij})^s u = f & \hbox{in}~\Omega\\
u = 0 & \hbox{on}~\partial \Omega.
\end{cases}
\end{equation}
Here $\Omega \subset \R^n$, $n \geq 1$, is a bounded domain satisfying
the uniform exterior cone condition. The coefficients $a^{ij}(x):\Omega \to \R$ are symmetric $a^{ij}(x)=a^{ji}(x)$, $i,j=1,\ldots,n$,
$a^{ij}(x) \in C(\Omega)\cap L^{\infty}(\Omega)$, and uniformly elliptic, meaning that there exist constants $0 < \lambda \leq \Lambda $ such that
\begin{equation}\label{eq:ellipticity}
\lambda \abs{\xi}^2 \leq a^{ij}(x)\xi_i \xi_j \leq \Lambda \abs{\xi}^2 \quad \hbox{for all}~\xi \in \R^n~\hbox{and a.e.}~x\in \Omega.
\end{equation}
Under these conditions, the operator $L = -a^{ij}(x) \partial_{ij}$ generates a uniformly bounded $C_0$-semigroup
with exponential decay. Therefore, $L^su$ is well-defined by means of \eqref{eq:semigroupformula}.
See Section \ref{sec:fractionalpowers} for these notions and the necessary notation.

The main regularity result for \eqref{eq:Poisson} in this paper is the following interior Harnack inequality and H\"older regularity estimate.

\begin{thm}\label{thm:harnack Ls}
Assume that $\Omega\subset \R^n$ is a bounded domain that satisfies the uniform exterior cone condition,
$a^{ij}(x) \in C^{\alpha}(\Omega) \cap L^{\infty}(\Omega)$, for some $0 < \alpha <1$, are symmetric, satisfy \eqref{eq:ellipticity},
and $f \in C_0(\Omega)$.
There exist positive constants $C_H = C_H(n,\lambda,\Lambda, s)>1$, $\kappa = \kappa(n,s)<1$, and $\hat{K} = \hat{K}(n,s)>1$ such
that for every ball $B_{\hat{K}R} = B_{\hat{K}R}(x_0)$ satisfying 
$B_{\hat{K}R} \subset\subset \Omega$ 
and every $u \in \Dom(L)$, $u \geq 0$ in $\Omega$, solution to
\begin{equation}\label{eq:eqLsinB}
(-a^{ij}(x) \partial_{ij})^su =f  \quad \hbox{in}~B_{\hat{K}R},
\end{equation}
we have that
\begin{equation}\label{eq:Ls Harnack3}
\sup_{B_{\kappa R}} u \leq C_H \( \inf_{B_{\kappa R}} u + \norm{f}_{L^{\infty}(B_{\hat{K}R})}R^{2s}\).
\end{equation}

Furthermore,
there exist positive constants $\alpha_0 = \alpha_0(n,\lambda,\Lambda,s)<1$ and $\hat{C}= \hat{C}(n,\lambda,\Lambda,s)>0$ such that for any $u \in \Dom(L)$ satisfying \eqref{eq:eqLsinB}, we have that, for every $x \in B_{\hat{K}R}(x_0)$,
\begin{equation}\label{eq:Ls Holder}
\abs{u(x_0) - u(x)} \leq \frac{\hat{C}}{(\hat{K}R)^{\alpha_0}}\abs{x_0-x}^{\alpha_0} \(\sup_{\Omega} \abs{u}+\norm{f}_{L^{\infty}(B_{\hat{K}R})}R^{2s}\).
\end{equation}
\end{thm}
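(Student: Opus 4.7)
My strategy is to transfer the problem to the local degenerate/singular extension in one extra variable, apply the interior Harnack inequality for the extension on a cylindrical neighborhood of $(x_0,0)$, and trace the result back to $\{z=0\}$. Given a solution $u\in\Dom(L)$ of \eqref{eq:eqLsinB}, the extension theory of Section \ref{sec:fractionalpowers} produces $U(x,z)$ on $\Omega\times[0,\infty)$ solving
\[
-a^{ij}(x)\partial_{ij}U + \partial_{zz}U + \frac{1-2s}{z}\partial_z U = 0 \quad \text{in } \Omega\times(0,\infty),
\]
with trace $U(\cdot,0)=u$ and conormal data $-\lim_{z\to 0^+}z^{1-2s}\partial_z U(\cdot,z) = c_s f$. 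The $C^\alpha$ hypothesis is used here to make sense of $u\in\Dom(L)$ and to guarantee that the extension is well defined; the regularity theory for $U$ itself requires only boundedness and ellipticity of $a^{ij}$. Because the extension is produced by a positivity preserving kernel, $U\ge 0$ whenever $u\ge 0$, and the maximum principle yields the a priori bound $\sup_{\Omega\times(0,\infty)}|U|\le\sup_\Omega|u|$.

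Granting the interior Harnack inequality for nonnegative solutions of the extension equation, I would apply it on a cylindrical neighborhood of $(x_0,0)$ of size comparable to $\hat K R$ (obtained, for example, by extending $U$ evenly in $z$ so that the equation becomes a degenerate nondivergence equation on an open subset of $\R^{n+1}$) to get
\[
\sup U \le C_H\bigl(\inf U + R^{2s}\norm{f}_{L^\infty(B_{\hat K R})}\bigr)
\]
on a comparable smaller cylinder, and restricting to $\{z=0\}$ produces \eqref{eq:Ls Harnack3}. For the Hölder bound \eqref{eq:Ls Holder}, the standard oscillation-decay iteration applies: Harnack on a geometric sequence of cylinders shrinking to $(x_0,0)$, applied to $U-\inf U$ and $\sup U-U$, gives $\operatorname{osc} U \le \theta^k \cdot \operatorname{osc}_{\text{initial}} U + C R^{2s}\norm{f}_{L^\infty}$; tracing to $z=0$ and using $|U|\le\sup_\Omega|u|$ delivers \eqref{eq:Ls Holder} with $\alpha_0 = \log_{1/2}\theta$.

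The hard part is establishing the Harnack inequality for $U$ in the first place, under only bounded measurable $a^{ij}$. The classical Krylov--Safonov scheme, based on ABP on Euclidean cubes, is unavailable for two reasons: the natural geometry attached to $-a^{ij}(x)\partial_{ij}$ is that of the Monge--Amp\`ere sections of the coefficients rather than Euclidean balls, and the extension variable carries the $A_2$-type weight $z^{1-2s}$. I would therefore adapt the sliding paraboloids method to this mixed setting, which requires (i) the correct family of paraboloids touching $U$ from below within the Monge--Amp\`ere sections, (ii) a weighted ABP/$L^\varepsilon$-type estimate on the contact set that accounts for the $z^{1-2s}$ weight, and (iii) a Calder\'on--Zygmund covering compatible with both the weighted measure and the Monge--Amp\`ere geometry. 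This combined machinery, rather than the reduction to and from the extension, is where the bulk of the work takes place.
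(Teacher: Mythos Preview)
Your reduction strategy matches the paper's: pass to the extension $U$ via Theorem~\ref{thm:Calpha extension}, reflect evenly in $z$, apply the interior Harnack/H\"older estimate of Theorem~\ref{thm:harnack extension} on a Monge--Amp\`ere section centered at $(x_0,0)$, and trace back to $\{z=0\}$; the $R^{2s}$ scaling arises because Euclidean balls of radius $R$ correspond to sections of parameter $R^2/2$, so the $R^s$ in \eqref{eq:extension Harnack3} becomes $R^{2s}$. One correction to your diagnosis of the hard part: the Monge--Amp\`ere geometry here is \emph{not} induced by the coefficients $a^{ij}(x)$ (those are uniformly elliptic, so Euclidean balls suffice in $x$) but by the convex function $\Phi(x,z)=\tfrac12|x|^2+\tfrac{s^2}{1-s}|z|^{1/s}$, whose sections encode the degeneracy/singularity $|z|^{2-1/s}$ of the extension operator; the paraboloids, the weighted ABP estimate, and the covering lemma are all built from $\delta_\Phi$ and $\mu_\Phi$, not from any structure of $a^{ij}$.
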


We mention that Grubb \cite{Grubb, Grubb2} and Seeley \cite{Seeley} studied properties of fractional powers of
nondivergence form elliptic operators with smooth coefficients and smooth domains from the operator theory and
pseudo-differential operators points of view. 
Gradient estimates for fractional powers of constant coefficients, nondivergence form operators in $\R^n$ were studied in \cite[Remark 1.10]{Valdinoci}. 
In particular, none of their results 
include the Harnack inequality
and H\"older estimate in Theorem \ref{thm:harnack Ls}.

Our proof of Theorem \ref{thm:harnack Ls} is based on the extension problem characterization of fractional power operators 
in Banach spaces given by the method of semigroups in \cite{Gale} (see \cite{Stinga-Torrea} for the case of Hilbert spaces).
In our particular case, the extension result of \cite{Gale} allows us to rewrite the nonlocal equation \eqref{eq:Poisson}
in an equivalent way as a local PDE problem.

\begin{thm}[{Particular case of  \cite{Gale}}]\label{thm:Calpha extension}
Assume that the bounded domain $\Omega \subset \R^n$ satisfies the uniform exterior cone condition and that 
$a^{ij}(x) \in C({\Omega}) \cap L^{\infty}(\Omega)$ are symmetric and satisfy \eqref{eq:ellipticity}. 
If $u \in \Dom(L)$, then a solution $U\in C^{\infty}((0,\infty);\Dom(L)) \cap C([0,\infty);C_0(\Omega))$
to the extension problem
\begin{equation} \label{eq:extension intro}
\begin{cases}
a^{ij}(x) \partial_{ij}U + z^{2-\frac{1}{s}}\partial_{zz}U= 0 & \hbox{in}~\Omega \times \{z>0\}\\
U(x,0) = u(x) & \hbox{on}~\Omega \times \{z=0\}\\
U= 0 & \hbox{on}~\partial\Omega \times \{z\geq0\}
\end{cases}
\end{equation}
is given by
\begin{equation}\label{eq:U-defn}
U(x,z) = \frac{s^{2s}z}{\Gamma(s)}\int_0^{\infty} e^{-s^2z^{1/s}/t} e^{-tL}u(x) \,\frac{dt}{t^{1+s}}
\end{equation}
and satisfies 
\[
\norm{U(\cdot,z)}_{L^{\infty}(\Omega)} \leq M \norm{u}_{L^{\infty}(\Omega)} \quad \hbox{for some}~M>0.
\]
Furthermore, $U_{z+} \in C([0,\infty);C_0(\Omega))$ and
\[
-\partial_{z+} U(x,0) = d_sL^su(x) \in C_0(\Omega)
\]
where $d_s=\frac{s^{2s}\Gamma(1-s)}{\Gamma(1+s)}>0$ and
\[
\partial_{z+}U(x,0) = \lim_{z \to 0^+} \frac{U(x,z) - U(x,0)}{z}\quad\hbox{for all}~x\in\Omega.
\]
If, in addition, $a^{ij}(x)\in C^{\alpha}(\Omega)$ for some $0 < \alpha<1$, then the solution $U$ in \eqref{eq:U-defn} is the unique classical solution
$U \in C^2(\Omega \times (0,\infty))\cap C(\overline{\Omega} \times [0,\infty))$ such that $\lim_{z\to\infty}\|U(\cdot,z)\|_{L^\infty(\Omega)}=0$.
\end{thm}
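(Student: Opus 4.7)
The plan is to specialize the abstract extension theorem of \cite{Gale} for fractional powers of generators of uniformly bounded $C_0$-semigroups on Banach spaces to the concrete operator $L=-a^{ij}(x)\partial_{ij}$ acting on $X=C_0(\Omega)$. The only abstract hypothesis to verify is that $-L$ generates a uniformly bounded $C_0$-semigroup on $X$ with exponential decay, which was already recorded in the paper as a consequence of ellipticity \eqref{eq:ellipticity}, continuity of the coefficients, and the uniform exterior cone condition on $\partial\Omega$ via standard elliptic/parabolic theory. Once this is in place, the abstract theorem directly produces the $X$-valued function $U$ given by \eqref{eq:U-defn}, together with the regularity $U\in C^\infty((0,\infty);\Dom(L))\cap C([0,\infty);C_0(\Omega))$ and the $L^\infty$ bound $\|U(\cdot,z)\|_{L^\infty(\Omega)}\le M\|u\|_{L^\infty(\Omega)}$. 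The remaining tasks are to verify the PDE pointwise in $x$, to identify the Dirichlet and conormal traces, and to address classical solvability and uniqueness in the H\"older case.

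The extension equation is checked by differentiating \eqref{eq:U-defn} under the integral sign, which is justified by pairing the semigroup estimates $\|e^{-tL}u\|_\infty\le C\|u\|_\infty$ and $\|Le^{-tL}u\|_\infty\le C t^{-1}\|u\|_\infty$ with the rapid decay of the factor $e^{-s^2 z^{1/s}/t}$. The key identity $a^{ij}\partial_{ij}e^{-tL}u=-Le^{-tL}u=\partial_t e^{-tL}u$ permits an integration by parts in $t$ (the boundary terms vanish as $t\to 0^+$ thanks to the Gaussian-type weight and as $t\to\infty$ thanks to the semigroup decay), recasting $a^{ij}\partial_{ij}U$ as an integral against $e^{-tL}u$ weighted by $\partial_t[t^{-1-s}e^{-s^2 z^{1/s}/t}]$. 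A direct computation of $z^{2-1/s}\partial_{zz}U$ produces the same kernel with opposite sign, yielding the PDE. The change of variables $r=s^2 z^{1/s}/t$ rewrites \eqref{eq:U-defn} as $\tfrac{1}{\Gamma(s)}\int_0^\infty r^{s-1}e^{-r}e^{-s^2 z^{1/s}L/r}u(x)\,dr$; letting $z\to 0^+$ and invoking strong continuity of the semigroup at $0$ gives $U(x,0)=u(x)$, while $U=0$ on $\partial\Omega\times\{z\ge 0\}$ is immediate from $e^{-tL}u\in C_0(\Omega)$.

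For the conormal identity I would form the difference quotient $z^{-1}(U(x,z)-u(x))$, insert the identity $u(x)=\tfrac{1}{\Gamma(s)}\int_0^\infty r^{s-1}e^{-r}u(x)\,dr$ to symmetrize, and pass to the limit $z\to 0^+$ after a further change of variables; the limit reproduces the semigroup formula \eqref{eq:semigroupformula} for $L^s u$ up to the explicit factor $d_s=s^{2s}\Gamma(1-s)/\Gamma(1+s)$, extracted by a Gamma function manipulation from $\Gamma(-s)^{-1}=-s/\Gamma(1-s)$. Continuity of $-\partial_{z+}U(\cdot,0)$ in $C_0(\Omega)$ is inherited from $L^s u\in C_0(\Omega)$, itself a consequence of dominated convergence in \eqref{eq:semigroupformula}. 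When $a^{ij}\in C^\alpha(\Omega)$, the substitution $y=z^{1/s}/s$ absorbs the singular weight and turns the extension equation into a uniformly elliptic linear PDE with H\"older continuous coefficients in $\Omega\times(0,\infty)$, so interior Schauder estimates upgrade $U$ to a classical solution in $C^2(\Omega\times(0,\infty))\cap C(\overline{\Omega}\times[0,\infty))$, and the maximum principle applied on truncated cylinders $\Omega\times(0,M)$, together with the decay $\|U(\cdot,z)\|_\infty\to 0$ as $z\to\infty$, yields uniqueness. The main obstacle I expect is the rigorous justification of interchanging derivatives with the $t$-integral in the presence of both the singular factor $t^{-1-s}$ near $0$ and the operator-valued integrand $Le^{-tL}u$, which must be handled carefully using the sharp semigroup bounds recorded earlier in the paper.
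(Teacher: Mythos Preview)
Your proposal is correct and follows the same essential strategy as the paper: specialize the abstract extension theorem of \cite{Gale} to $L$ on $C_0(\Omega)$, using Proposition~\ref{thm:arendt} to supply the semigroup hypothesis, then invoke Schauder estimates and the maximum principle in the H\"older case.

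The only noteworthy difference is one of economy. The paper does not re-verify the PDE, the Dirichlet trace, or the conormal limit by direct computation on the formula \eqref{eq:U-defn}; instead it simply applies Theorem~\ref{thm:Gale} to obtain the solution $U(y)$ of the $y$-variable extension equation $-LU+\tfrac{1-2s}{y}U_y+U_{yy}=0$ together with all of its stated properties, and then performs the change of variables $z=(y/(2s))^{2s}$ to land on \eqref{eq:extension intro} and \eqref{eq:U-defn}. Your plan to differentiate under the integral, integrate by parts in $t$, and pass to the limit in the difference quotient is a valid alternative that reproduces the content of \cite{Gale} in this concrete setting, but it duplicates work already encapsulated in Theorem~\ref{thm:Gale}. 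The paper's route is shorter precisely because it leans on that abstract result rather than reopening it. Your treatment of the H\"older case (interior Schauder plus maximum principle with decay at infinity) matches the paper's argument.
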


Hence, to prove Theorem \ref{thm:harnack Ls},
we will show interior Harnack inequalities and H\"older estimates for solutions $U$ to the local,
degenerate/singular elliptic equation in \eqref{eq:extension intro}
subject to the Neumann boundary condition $-\partial_{z+} U(x,0) =f(x)$
up to $\{z=0\}$, and then take the trace at $\{z=0\}$. 
Towards this end, we define the even reflection of $U$ in the variable $z$ by $\tilde{U}(x,z) = U(x,|z|)$,
for $x\in\Omega$, $z\in\R$. For convenience, we continue to use the notation $U$ instead of $\tilde{U}$ and notice that $U$,
being symmetric across $\{z=0\}$, satisfies the equation
\begin{equation}\label{eq:Utilde}
a^{ij}(x) \partial_{ij}U + |z|^{2-\frac{1}{s}}\partial_{zz}U = 0\qquad\hbox{in}~\Omega \times\{z\neq0\}.
\end{equation}
Furthermore, if $f(x)\neq0$ then $\partial_{z}U$ is discontinuous across $(x,0)$.
Since $0<s<1$, the equation \eqref{eq:Utilde} either degenerates or blows up at $z=0$, unless $s=1/2$.

It turns out that we can recast \eqref{eq:Utilde} as an equation comparable to a linearized Monge--Amp\`ere equation.
Recall that the Monge--Amp\`ere equation for a convex function $\psi$ is given by $\det D^2\psi = G$.
By taking the directional derivative $\partial_e$ in a unit direction $e$ to the equation
and defining $v = \partial_e\psi$ and $g = \partial_eG$, we find that $v$
satisfies the linearized Monge--Amp\`ere equation
\begin{equation}\label{eq:linMA}
\trace(A_{\psi}(x) D^2 v) = g.
\end{equation}
Here, $A_\psi(x) = \det(D^2\psi(x)) (D^2\psi(x))^{-1}$ is the matrix of cofactors of $D^2\psi(x)$. 
Notice that \eqref{eq:linMA} is a linear equation in nondivergence form that is elliptic as soon as $D^2\psi >0$ and $G>0$. 
However, it is not uniformly elliptic in general since the eigenvalues of $A_{\psi}(x)$ are not \textit{a priori} controlled. 

For our degenerate equation \eqref{eq:Utilde}, we consider the strictly convex function $\Phi = \Phi(x,z): \R^{n+1} \to \R$ given by
\[
\Phi(x,z) = \frac{1}{2} \abs{x}^2 + \frac{s^2}{1-s}\abs{z}^{\frac{1}{s}},\qquad 0<s<1.
\]
Then $\Phi$ is in $C^1(\R^{n+1})$ but, when $s>1/2$, is not in $C^2(\R^{n+1})$.
Since the Hessian of $\Phi$ is 
\[
D^2\Phi(x,z) = \begin{pmatrix} I & 0 \\ 0 &  \abs{z}^{\frac{1}{s}-2} \end{pmatrix},
\]
where $I$ denotes the identity matrix of size $n\times n$,
the linearized Monge--Amp\`ere equation associated with $\Phi$ is 
\begin{equation}\label{eq:recast}
\trace((D^2\Phi)^{-1} D^2 U) =  \Delta_xU + \abs{z}^{2-\frac{1}{s}}\partial_{zz} U=0\qquad\hbox{for}~z\neq0.
\end{equation}
As the coefficients $a^{ij}(x)$ are uniformly elliptic, see \eqref{eq:ellipticity}, we see that the coefficients in
\eqref{eq:Utilde} are comparable to the coefficients in \eqref{eq:recast}.

An important feature of the linearized Monge--Amp\`ere equation is its
intrinsic geometry that was first discovered by Caffarelli--Guit\'errez \cite{Caffarelli-Guti}.
They proved Harnack inequality for classical nonnegative solutions $v$ to \eqref{eq:linMA} when $\psi\in C^2$
and $g\equiv 0$, where the Euclidean balls and distance are replaced by Monge--Amp\`ere sections
and the Monge--Amp\`ere quasi-distance, respectively.
The Monge--Amp\`ere sections associated to a convex, $C^1$ function $\psi$ are the sublevel sets of
$\psi - \ell$ where $\ell$ is any linear function, while the corresponding quasi-distance is given by
$\delta_\psi(x_0,x)=\psi(x)-\psi(x_0)-\langle\nabla\psi(x_0),x-x_0\rangle$.

We show that the geometry for our degenerate/singular equation \eqref{eq:Utilde} with Neumann boundary condition at $\{z=0\}$
is given by the Monge--Amp\`ere sections $S_R$ associated to the strictly convex function $\Phi$,
that is, the sublevel sets of $\Phi - \ell$, and the Monge--Amp\`ere quasi-distance $\delta_\Phi$ in $\R^{n+1}$.
See Section \ref{sec:MA} for more details.
We prove the following Harnack inequality and H\"older regularity estimate for the extension equation in such sections.

\begin{thm}\label{thm:harnack extension}
Let $\Omega$ be a bounded domain,
$a^{ij}(x):\Omega \to \R$ be bounded, measurable functions that satisfy \eqref{eq:ellipticity} and let $f \in L^{\infty}(\Omega)$.
There exist positive constants 
$C_H = C_H(n,\lambda,\Lambda, s)>1$, 
$\kappa_0 = \kappa_0(n,s)<1$, 
and $\hat{K}_0= \hat{K}_0(n,s)>1$
such that for every section $S_{\hat{K}_0R} = S_{\hat{K}_0R}(x_0,z_0) \subset \subset \Omega \times \R$ 
and every nonnegative solution
$U \in C^2(S_{\hat{K}_0 R} \setminus \{z=0\}) \cap C({S}_{\hat{K}_0 R})$ 
such that $U$ is symmetric across $\{z=0\}$  and  $U_{z+} \in C(S_{\hat{K}_0 R} \cap \{z\geq0\})$ to
\begin{equation}\label{eq:extensionS}
\begin{cases}
a^{ij}(x) \partial_{ij}U + \abs{z}^{2-\frac{1}{s}}\partial_{zz}U = 0 & \hbox{in}~S_{\hat{K}_0 R} \cap  \{z\not=0\} \\
-\partial_{z+}U(x,0) = f & \hbox{on}~S_{\hat{K}_0 R} \cap \{z=0\},
\end{cases}
\end{equation}
we have that
\begin{equation}\label{eq:extension Harnack3}
\sup_{S_{\kappa_0 R}} U 
\leq C_H \( \inf_{S_{\kappa_0 R}} U + \norm{f}_{L^{\infty}(S_{\hat{K}_0 R}\cap \{z=0\})}R^s \).
\end{equation}
Consequently, there exist constants $0<\alpha_1 = \alpha_1(n,\lambda,\Lambda,s)<1/2$ and $\hat{C}_1= \hat{C}_1(n,\lambda,\Lambda,s)>1$ 
such that, for every solution
$U \in C^2(S_{\hat{K}_0 R} \setminus \{z=0\}) \cap C({S}_{\hat{K}_0 R})$ such that $U$ is symmetric across $\{z=0\}$ and  $U_{z+} \in C(S_{\hat{K}_0 R} \cap \{z\geq0\})$ 
to \eqref{eq:extensionS},
\begin{equation}\label{eq:extension Holder}
\begin{aligned}
|U(x_0,z_0)& - U(x,z)|\\
& \leq\frac{\hat{C}_1}{(\hat{K}_0R)^{\alpha_1}}[\delta_{\Phi}((x_0,z_0),(x,z))]^{\alpha_1}\(\sup_{S_{\hat{K}_0R}(x_0,z_0)} \abs{U}+  \norm{f}_{L^{\infty}(S_{\hat{K}_0 R}\cap \{z=0\})}R^s \)
\end{aligned}
\end{equation}
for every $(x,z) \in S_{\hat{K}_0R}(x_0,z_0)$.
\end{thm}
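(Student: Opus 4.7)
The plan is to adapt Savin's sliding paraboloids method to the Monge--Amp\`ere geometry determined by $\Phi$, exploiting the comparability noted between the coefficients in \eqref{eq:Utilde} and those in \eqref{eq:recast}. Since the sections $S_R$ and the quasi-distance $\delta_\Phi$ replace Euclidean balls and distance in Caffarelli--Guti\'errez theory, I will develop an intrinsic Krylov--Safonov argument in these coordinates. The even symmetry of $U$ together with the Neumann condition $-\partial_{z+}U(x,0)=f$ allows me to treat $U$ as a single function on $S_{\hat{K}_0R}$ that solves the PDE classically off $\{z=0\}$ and whose normal derivative jumps by $-2f$ across that hyperplane.

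First, I would prove an ABP-type estimate for $\mcl U := a^{ij}(x)\partial_{ij}U + |z|^{2-1/s}\partial_{zz}U$ in the geometry of $\Phi$. The natural comparison functions are ``$\delta_\Phi$-paraboloids'', namely translated/shifted copies of $-\Phi$ plus affine terms. At a contact point where such a paraboloid touches $U$ from above, the bound $D^2U \le c\,D^2\Phi$ combined with $\trace((D^2\Phi)^{-1}D^2U) \ge -C\|f\|_{L^\infty}R^s$ off $\{z=0\}$ gives pointwise control of $\det D^2U$ in terms of $\det D^2\Phi = |z|^{1/s-2}$. Integrating over the contact set via the normal map of $U + c\Phi$ then bounds the Monge--Amp\`ere measure of that set by $(\inf_{S_{\kappa_0 R}} U + \|f\|_{L^\infty}R^s)^{n+1}$. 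The factor $R^s$ appearing in \eqref{eq:extension Harnack3} emerges naturally from the anisotropic scaling $(x,z) \mapsto (rx, r^s z)$ that preserves the structure of $\Phi$ and sends $S_R$ to a normalized section.

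Next, I would establish the key density/decay estimate: if $U \ge 0$ in $S_{\hat{K}_0R}$, $\inf_{S_{\kappa_0R}}U \le 1$ and $\|f\|_{L^\infty}R^s$ is small enough, then for universal constants $\mu \in (0,1)$ and $M>1$,
\[
\bigl|\{U > M^k\} \cap S_{\kappa_0R}\bigr|_\Phi \le (1-\mu)^k \bigl|S_{\kappa_0R}\bigr|_\Phi,
\]
where $|\cdot|_\Phi$ denotes integration against $\det D^2\Phi$. This is obtained by sliding $\delta_\Phi$-paraboloids from below with vertices ranging over a suitably chosen larger section, applying the ABP estimate to bound the contact set from below in $|\cdot|_\Phi$-measure, and iterating via a Vitali covering in the quasi-metric space $(\R^{n+1},\delta_\Phi,|\cdot|_\Phi)$, which is of homogeneous type since $|z|^{1/s-2}$ is doubling on sections and the sections satisfy the engulfing property. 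The resulting weak Harnack inequality, combined with the symmetric local maximum principle obtained by sliding paraboloids from above, yields \eqref{eq:extension Harnack3}. The H\"older estimate \eqref{eq:extension Holder} then follows by the standard oscillation-decay argument on nested sections $S_{\theta^k R}$, with engulfing converting the geometric decay into a H\"older bound in $\delta_\Phi$.

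The principal obstacle will be handling the hyperplane $\{z=0\}$, where the equation degenerates or blows up and where the reflected $U$ is not $C^2$. The ABP step requires justifying the pointwise Hessian inequality at contact points on or near $\{z=0\}$; the comparison functions built from $\Phi$ are themselves only $C^1$ across $\{z=0\}$ when $s > 1/2$, so I will need either to regularize $\Phi$ by replacing $|z|^{1/s}$ with a smooth approximant and pass to the limit, or to work with a viscosity formulation that treats $-\partial_{z+}U(x,0)=f$ as a prescribed jump of $\partial_z U$ at the touching points. A secondary issue is that the constants in the density estimate must depend only on $n,\lambda,\Lambda,s$, uniformly in the position of the section relative to $\{z=0\}$; this will follow from the invariance of the ellipticity constants, the exponent $2-1/s$, and the function $\Phi$ itself under the anisotropic rescaling $(x,z)\mapsto(rx,r^sz)$.
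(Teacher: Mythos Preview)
Your overall strategy---sliding $\delta_\Phi$-paraboloids in Savin's fashion, an ABP-type measure estimate for the contact set, a density/localization step, a Vitali-type covering in the homogeneous space $(\R^{n+1},\delta_\Phi,\mu_\Phi)$, and then the standard oscillation-decay for \eqref{eq:extension Holder}---is exactly the architecture the paper follows. The paper likewise reduces first to Monge--Amp\`ere cubes and to $f\ge 0$, and then proves the Harnack inequality via a chain of reductions culminating in a normalized statement whose proof iterates the localization lemma against the covering lemma.

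There are, however, two concrete gaps in your plan.

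\textbf{The rescaling you invoke does not exist.} Under $(x,z)\mapsto (rx,r^sz)$ one has $\tfrac12|x|^2\mapsto r^2\cdot\tfrac12|x|^2$ but $|z|^{1/s}\mapsto r\,|z|^{1/s}$, so $\Phi$ is not homogeneous under this map; the correct dilation that fixes $\Phi$ up to a factor is $(x,z)\mapsto(R^{1/2}x,R^sz)$, and even that only normalizes sections centered on $\{z=0\}$. For a general center $(x_0,z_0)$ with $z_0\neq 0$, there is no dilation sending $S_R(x_0,z_0)$ to a fixed model section while preserving the equation. The paper never rescales: every estimate (ABP, barrier, localization) is proved directly at scale $R$ and arbitrary center, with constants tracked through the doubling and engulfing properties of $\mu_\Phi$. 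You will need to do the same; the uniformity in the position of the section comes not from scaling invariance but from Corollary~\ref{lem:doubling} and Lemma~\ref{lem:Guti}.

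\textbf{The treatment of $\{z=0\}$ is the crux, and neither regularization nor a viscosity formulation will close it.} Smoothing $|z|^{1/s}$ destroys the exact identity $h''(z)=|z|^{1/s-2}$ on which the whole Monge--Amp\`ere structure rests, and a viscosity interpretation of the Neumann jump does not by itself produce the strict subsolution needed in the localization step. What the paper actually does is build, by hand, an explicit barrier $\phi$ in the half-ring $[S_{2r}(x_0,z_0)\setminus S_{\gamma r}(x_0,z_0)]^\pm$ satisfying $\mathcal L\phi>a(n\Lambda+1)$ off $\{z=0\}$ and $-\partial_{z+}\phi(x,0)<0$ on $\{z=0\}$ (Lemma~\ref{lem:subsoln}). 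The construction splits into four cases: for $0<s\le 1/2$ one can use $\delta_\Phi^{-\alpha}$ directly after a pointwise lower bound on the quotient $(h'(z)-h'(z_0))^2/(\delta_h(z_0,z)h''(z))$; for $1/2<s<1$ the coefficient $|z|^{2-1/s}$ degenerates and one must first excise a thin set $H_\varepsilon$ where it is small (using that $h''\in A_\infty$) and add to $\delta_\Phi$ an auxiliary convex function $h_\varepsilon$ solving $h_\varepsilon''=2(n\Lambda+1)\psi_\varepsilon h''$ with carefully chosen boundary data; the case $z_0=0$ requires a further linear correction $g_\varepsilon$ to force the Neumann condition to be strictly positive. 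This barrier is then used (Lemma~\ref{claim:Car}) to show that if a paraboloid of opening $a$ touches $U$ from below near $(x_0,z_0)$, then $U$ detaches from it by at most $Car$ somewhere in $\overline S_{\gamma r}(x_0,z_0)$, which is the input to the localization Lemma~\ref{Lem:Cc}. Without this barrier, your density/decay step has no mechanism to propagate contact from one scale to the next across $\{z=0\}$.
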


Note that if $S_{\hat{K}_0R}\cap \{z=0\} = \varnothing$, then 
$\norm{f}_{L^{\infty}(S_{\hat{K}_0 R}\cap \{z=0\})}R^{s}$ does not appear in the right hand side of \eqref{eq:extension Harnack3} and \eqref{eq:extension Holder}.

Regularity estimates, such as Harnack inequalities, for the linearized Monge--Amp\`ere equation \eqref{eq:linMA} 
have been studied by Caffarelli--Guti\'errez \cite{Caffarelli-Guti}, Forzani--Maldonado \cite{Forzani},
Le \cite{Le}, Maldonado \cite{Maldonado2,Maldonado-CVPDE}, Savin \cite{Savin-Liouville}, among others.
In each case, they either assume that $\det(D^2\psi)$ is bounded
away from zero and infinity (that is, the Monge--Amp\`ere measure $\mu_\psi(E)=|\nabla\psi(E)|$, $E\subset\R^n$, is
comparable to the Lebesgue measure), or that $\psi$ is sufficiently regular, e.g.~$\psi \in C^2$. 
For our function $\Phi$, we have that $D^2\Phi$ either degenerates or blows up near $\{z=0\}$ when $s\neq1/2$,
and, moreover, $\Phi\notin C^2$ when $s>1/2$. Therefore, \eqref{eq:Utilde}
is not covered by such previous results.
On the other hand, Maldonado proved Harnack inequality for degenerate 
elliptic equations associated with convex functions of the form $\psi(x) = \abs{x}^{p}$, $p\geq 2$, see \cite{Maldonado3} and  also \cite{MaldonadoPDEs3}.
However, not only are his techniques different than the ones presented here but also
his work does not include the singular case in which $s>1/2$.
Moreover, when we write \eqref{eq:extensionS} in $\Omega\times\R$ as a single equation,
we see that $U$ satisfies \eqref{eq:Utilde} in $\Omega\times\R$ with a right hand side
that is a singular measure with density $f(x)$ supported on $\{z=0\}$.

We develop a method of sliding paraboloids inspired by the work of Savin for
fully nonlinear uniformly elliptic equations \cite{Savin}.
For our setting, we work with a Neumann problem in a Monge--Amp\`ere geometry that brings additional challenges
because $\Phi$ is only $C^1$ and $D^2\Phi$ is degenerate/singular. For this,
we define \emph{paraboloids} $P:\R^{n+1} \to \R$ of opening $a>0$ with vertex $(x_v,z_v)$ by
\[
P(x,z) = -a \delta_{\Phi}((x_v,z_v),(x,z)) + c
\]
where $c$ is a constant.
We lift these paraboloids from below until they touch the graph of $U$ in a section $S_R$ for the first time. 
We estimate the Monge--Amp\`ere measure of the resulting set of contact points by the Monge--Amp\`ere measure of the set of vertices.
Observe that, since our equation \eqref{eq:Utilde} is degenerate/singular and $-\partial_{z+}U(x,0)=f(x)$, we need to
be able to control the contact
points $(x,z)$ for which $z=0$ in terms of the size of $f$.
Next, we show that, by increasing the opening of these paraboloids, they almost cover the section $S_R$ in measure.
This relies on explicit barriers whose construction is very delicate because of Neumann boundary condition
and the degeneracy/singularity of \eqref{eq:Utilde}.
Then, we build a refined geometric argument to obtain a localization estimate.
Thus, using a covering argument, we can conclude the proof of Theorem \ref{thm:harnack extension}
and deduce Theorem \ref{thm:harnack Ls}.

Our function $\Phi$ was also considered in \cite{Maldonado} to study the
fractional nonlocal linearized Monge--Amp\`ere equation. 
They established Harnack inequality and H\"older estimates for solutions to \eqref{eq:Poisson} 
when the coefficients $a^{ij}(x)$ are given by the matrix of cofactors of $D^2\psi$, where $\psi$ is a $C^3$ strictly convex function
and $\Omega$ is a section of $\psi$. Observe that in \cite{Maldonado} the weak Harnack inequality
is proved using the divergence form structure of the equation.
Whereas, in \eqref{eq:Poisson}, we not only consider general elliptic coefficients $a^{ij}(x)$, but also
the equation cannot be written in divergence form. Nevertheless, since 
the proof of the local boundedness of the solution to the extension problem in \cite{Maldonado} 
uses purely nondivergence form techniques, one can easily check that solutions to our extension problem
\eqref{eq:extension intro} satisfy the same local boundedness estimate as that of \cite[Theorem~11.3]{Maldonado}.

We additionally mention that Le in \cite{Le} proved Harnack inequality for the linearized Monge--Amp\`ere equation \eqref{eq:linMA}
when $\psi \in C^2$ and $0<\lambda\leq \det(D^2\psi(x)) \leq \Lambda$, by using sliding paraboloids
within the roadmap of the proof of Caffarelli--Guti\'errez \cite{Caffarelli-Guti}. Again, our methods (inspired by Savin \cite{Savin})
and results are different and independent of \cite{Le} (in particular, $\Phi$ is not smooth when $s>1/2$, $D^2\Phi$ is degenerate/singular,
and we have the Neumann boundary condition $-\partial_{z^+}U(x,0)=f(x)$).

Theorem \ref{thm:harnack extension} holds for bounded domains $\Omega$ and bounded, measurable
coefficients $a^{ij}(x)$. In Theorem \ref{thm:harnack Ls} we additionally require
that $\Omega$ satisfies the uniform exterior cone condition and that $a^{ij}(x)$ are H\"older continuous.
There are several reasons for these technical assumptions. First, the uniform exterior cone condition and the hypothesis
$a^{ij}(x)\in C(\Omega)\cap L^\infty(\Omega)$ give us the existence of an appropriate $C_0$-semigroup generated by $L$, so 
the fractional power operator $L^s$ can be defined using \eqref{eq:semigroupformula}.
Furthermore, under these conditions, the extension problem characterization in Theorem \ref{thm:Calpha extension} holds.
Second, our proof of Theorem \ref{thm:harnack extension} is for \emph{classical} solutions $U$ to the extension problem
and does not require any continuity assumptions on $a^{ij}(x)$ nor geometric conditions on $\Omega$.
Third, to apply Theorem \ref{thm:harnack extension}, we need to ensure that the solution $U$ given in Theorem \ref{thm:Calpha extension}
is classical, and for this we must require $a^{ij}(x)\in C^\alpha(\Omega)$.
It is an open problem and will be the object of future work to define $(-a^{ij}(x) \partial_{ij})^s$ in bounded domains 
when the coefficients are only bounded, measurable and 
to establish a corresponding extension equation and Harnack inequality for viscosity solutions to \eqref{eq:Poisson}.

The paper is organized as follows.
First, in Section \ref{sec:applications}, we show several applications of fractional powers of nondivergence form operators \eqref{eq:Ls}. Then, 
in Section \ref{sec:fractionalpowers}, we precisely define the fractional operator $(-a^{ij}(x) \partial_{ij})^s$ and prove the extension characterization.
In Section \ref{sec:MA}, we provide the necessary Monge--Amp\`ere background associated to our function $\Phi$. 
We prove a sequence of reductions of Theorem \ref{thm:harnack extension} in Section \ref{sec:reductions}.
Section \ref{sec:paraboloids} contains preliminary results on the Monge--Amp\`ere paraboloids $P$ associated to $\Phi$. 
Next, we establish several key results that will be used to prove the final reduction of Theorem  \ref{thm:harnack extension}.
In Section \ref{sec:lem1}, we estimate the Monge--Amp\`ere measure of the set of contact points for sliding paraboloids of fixed opening by the measure of the set of vertices.
The delicate construction of the barriers is done in Section \ref{sec:barrier}.
These are used in Section \ref{sec:lem2} to prove a localization estimate by means of a refined geometric argument. 
A Calder\'on--Zygmund-type covering lemma is proved in Section \ref{sec:lem3}.
Finally, in Section \ref{sec:main proofs}, we present the proof of the final reduction of Theorem \ref{thm:harnack extension} and the proof of Theorem \ref{thm:harnack Ls}.

\section{Applications}\label{sec:applications}

In this section we present some applications where fractional powers of nondivergence form elliptic operators naturally arise. 

\subsection{Fractional Monge--Amp\`ere equations}
 
If $u=u(x)$ is a convex, $C^2$ function, then one can check that the Monge--Amp\`ere operator acting on $u$ at a point $x$ can be written as
\begin{align*}
n \det (D^2u(x))^{1/n} 
	&= \inf\big\{ \Delta (u \circ B) (B^{-1}x) : B \in \mathcal{M}\big\}\\
	&= \inf\big\{a^{ij}\partial_{ij}u(x) : (a^{ij})= B^2,~B \in \mathcal{M}\big\},
\end{align*}
where the infimum is taken over the class $\mathcal{M}$ of all positive definite, symmetric
matrices $B$ of size $n\times n$ such that $\det(B) =1$.  
Motivated by these identities, Caffarelli--Charro defined in \cite{Caffarelli-Charro} the fractional Monge--Amp\`ere operator by 
\begin{equation}\label{eq:fracMA}
\mathcal{D}_su(x) =  \inf\big\{ -(-\Delta)^s (u \circ B) (B^{-1}x) : B \in \mathcal{M}\big\}, \quad 1/2 < s < 1.
\end{equation}
On the other hand, it was shown in \cite{Stinga-Jhaveri} that the operator in \eqref{eq:fracMA} can also be written as
\begin{equation}\label{eq:uniformlyDs}
\mathcal{D}_su(x) = \inf\big\{ -(-a^{ij}\partial_{ij})^su(x) : (a^{ij})= B^2,~B \in \mathcal{M}\big\},
\end{equation}
where $(-a^{ij}\partial_{ij})^s$ is the fractional power of the constant coefficients operator $-a^{ij}\partial_{ij}$.

The fractional Monge--Amp\`ere operator \eqref{eq:fracMA} is degenerate elliptic because the eigenvalues of the matrices $B \in \mathcal{M}$ are not
\emph{a priori} controlled from below or above. 
Nevertheless, it is proved in \cite{Caffarelli-Charro} that if $u$ is Lipschitz, semiconcave, 
and $\mathcal{D}_s u \geq \eta_0 >0$ in a bounded domain $\Omega$,
then $\mathcal{D}_s$ becomes uniformly elliptic in $u$, that is, there is a constant $\lambda>0$ such that
$$\mathcal{D}_su(x) = \mathcal{D}_s^\lambda u(x):= \inf\big\{ -(-\Delta)^s (u \circ B) (B^{-1}x) : B \in \mathcal{M},~B\geq\lambda I\big\}$$
for all $x\in\Omega$. Equivalently, in the description of \eqref{eq:uniformlyDs},
\[
\mathcal{D}_s^\lambda u(x) = \inf\big\{ -(-a^{ij}\partial_{ij})^su(x) : (a^{ij})= B^2,~B \in \mathcal{M},~B\geq\lambda I\big\}.
\]
It was observed in \cite{Vaughan} that, for each $x\in\Omega$, the infimum above is attained at some matrix $a^{ij}=a^{ij}(x)$. Therefore,
the fractional Monge--Amp\`ere operator in the uniformly elliptic regime is in fact given by
\[
\mathcal{D}_s^\lambda u(x)=-(-a^{ij}(x) \partial_{ij})^s u(x)\qquad\hbox{for every}~x\in\Omega.
\]
In other words, $\mathcal{D}_s^\lambda u(x)$ is the fractional power of the nondivergence form uniformly elliptic 
operator $L=-a^{ij}(x)\partial_{ij}$, where $a^{ij}(x)$ are bounded, measurable coefficients.

\subsection{Elasticity}

Consider an anisotropic elastic membrane represented by the graph of a function $U(x,z)$, for $(x,z)\in\Omega\times[0,\infty)$.
Suppose that we place a thin obstacle $\phi:\Omega \to \R$ on the hyperplane $\{z=0\}$, such that $\phi\leq0$ on $\partial\Omega$,
which pushes $U$ from below at $\{z=0\}$. By fixing $U=0$ on $\partial\Omega\times[0,\infty)$,
this problem is modeled by the following thin obstacle problem:
\begin{equation}\label{eq:thin}
\begin{cases}
a^{ij}(x) \partial_{ij}U + \partial_{zz}U = 0 & \hbox{in}~\Omega \times \{z>0\}\\
U(x,z) = 0 & \hbox{on}~ \partial \Omega \times \{z\geq0\}\\
U(x,0) \geq \phi(x) & \hbox{on}~\Omega \\
-\partial_{z+} U(x,0) \geq 0 & \hbox{on}~\Omega \\
-\partial_{z+} U(x,0) = 0 & \hbox{on}~\{ U(x,0) > \phi(x)\}.
\end{cases}
\end{equation}
The last two conditions are called the \emph{Signorini complementary conditions}. They follow from the fact that $\phi$
is pushing $U$ upwards, while $U$ is actually free in the noncoincidence set $\{U(x,0)>\phi(x)\}$.
The coefficients $a^{ij}(x)$ encode the heterogeneity of the membrane. The thin obstacle problem \eqref{eq:thin}
is equivalent to the problem of semipermeable cell membranes in biology
(see \cite{Duvaut}), where $a^{ij}(x)$ are a model for the cytoplasm inside the cell.

It follows from the extension problem characterization (see Theorem \ref{thm:Calpha extension}) with $s = 1/2$,
that the trace $u(x):= U(x,0)$ satisfies
$$-\partial_{z+} U(x,0) = (-a^{ij}(x) \partial_{ij})^{1/2}u(x)\quad\hbox{in}~\Omega.$$ 
Therefore, $U$ solves the thin obstacle problem \eqref{eq:thin}
if and only if its trace $u$ is the solution to the following fractional obstacle problem
$$\begin{cases}
(-a^{ij}(x) \partial_{ij})^{1/2}u \geq 0 & \hbox{in}~\Omega\\
(-a^{ij}(x) \partial_{ij})^{1/2}u = 0 &  \hbox{in}~\Omega \cap\{u > \phi\}\\
u \geq \phi & \hbox{in}~\Omega \\
u = 0 & \hbox{on}~\partial \Omega.
\end{cases}$$

\subsection{Finance}

Consider a particle moving randomly in a heterogeneous domain $\Omega$ that is killed when it reaches the boundary $\partial\Omega$. 
This random behavior can be modeled by a diffusion process $X_t$ whose infinitesimal generator
is a nondivergence form elliptic operator $L = -a^{ij}(x) \partial_{ij}$ in $\Omega$,
subject to homogeneous Dirichlet boundary conditions on $\partial\Omega$.
In this situation, the coefficients $a^{ij}(x)$ serve as a measure of the anisotropy of the medium,
or the preferred directions the particle chooses at every point $x$.
A model for particles randomly jumping inside a heterogeneous medium that are killed as soon as they reach  or
try to cross the boundary can be given by subordinating the process $X_t$ with a
$2s$-stable L\`evy subordinator $T_t$, for $0<s<1$. The resulting subordinated process
$Y_t = X_{T_t}$ is then generated by the fractional power operator $L^s = (-a^{ij}(x)\partial_{ij})^s$, $0 < s < 1$. 
See \cites{Jacob} for the case of smooth coefficients and domains, and
\cite{Song} for the case when $X_t$ is a Wiener process.

Next, let $\tau$ be the optimal stopping time that maximizes
the function 
\[
u(x) = \sup_{\tau} \mathbb{E}[\phi(Y_\tau) ; \tau <+\infty],
\]
where $\phi \in C_0(\Omega)$ (see \eqref{eq:definitionofC0}), $\mathbb{E}$ denotes the expected value, and the process
$Y$ is set to start at $x\in\Omega$. It turns out that $u$ is the solution to the following obstacle problem:
\begin{equation}\label{eq:frac-obstacle}
\begin{cases}
(-a^{ij}(x) \partial_{ij})^su \geq 0 & \hbox{in}~\Omega\\
(-a^{ij}(x) \partial_{ij})^su = 0 &  \hbox{in}~\Omega \cap\{u > \phi\}\\
u \geq \phi & \hbox{in}~\Omega \\
u = 0 & \hbox{on}~\partial \Omega.
\end{cases}
\end{equation}
These free boundary problems appear in financial models (see \cite{Cont}) where $u$ is
the value of a perpetual American option in which the asset prices are modeled by $Y_t$ and $\phi$ is the payoff function.

\section{Fractional powers of elliptic operators and extension problem}\label{sec:fractionalpowers}

Here, we give the precise definition of the fractional power operator $L^s=(-a^{ij}(x)\partial_{ij})^s$ in \eqref{eq:Poisson}
and present the extension problem characterization, i.e.~Theorem \ref{thm:Calpha extension}. For this, we apply the method of semigroups
of \cite{Gale,Stinga-Torrea} (see also \cite{Stinga}) which we
describe next.

\subsection{Method of semigroups for fractional power operators}

A family $\{T_t\}_{t \geq 0}$ of bounded, linear operators on a Banach space $X$ is a \emph{semigroup} on $X$ if 
\[
T_0 = Id \quad \hbox{and} \quad T_{t_1}\circ T_{t_2} = T_{t_1+t_2} \quad \hbox{for every}~t_1,t_2\geq0,
\]
where $Id$ denotes the identity operator. We say that a semigroup
$\{T_t\}_{t \geq 0}$ is a \emph{$C_0$-semigroup} if $T_tu \to u$ as $t\to 0^+$ for all $u \in X$. 
A semigroup $\{T_t\}_{t \geq 0}$ is \emph{uniformly bounded} if its operator norm is uniformly bounded in $t$,
that is, there is a constant $M \geq 1$ such that $\|T_t\|\leq M$ for all $t \geq 0$.
The \emph{infinitesimal generator} $A$ of a semigroup $\{T_t\}_{t \geq 0}$ is the closed linear operator defined as
\begin{equation}\label{eq:generator}
-Au=\lim_{t\to0^+}\frac{T_tu-u}{t}
\end{equation}
in the domain $\Dom(A)=\{u\in X:\hbox{the limit in }\eqref{eq:generator}\hbox{ exists}\}\subset X$. In this case,
we write $T_t=e^{-tA}$. Hence, if $A$ is the infinitesimal generator of a $C_0$-semigroup
$\{e^{-tA}\}_{t\geq0}$ on $X$, then the function $v=e^{-tA}u$, for $u\in X$, satisfies the heat equation for $A$:
\[
\begin{cases}
\partial_tv = -Av & \hbox{for}~t>0\\
v = u & \hbox{for}~t=0.
\end{cases}
\]
Conversely, a linear operator $(A,\Dom(A))$ on $X$ is said to \emph{generate} a semigroup if there is 
a semigroup $\{T_t\}_{t \geq 0}$ for which $A$ is its infinitesimal generator, that is, $T_t=e^{-tA}$.
Given a uniformly bounded $C_0$-semigroup $\{T_t=e^{-tA}\}_{t\geq0}$ on $X$, the fractional power $A^s$ of its
infinitesimal generator is defined as
$$A^su=\frac{1}{\Gamma(-s)}\int_0^\infty\big(e^{-tA}u-u\big)\,\frac{dt}{t^{1+s}},\quad\hbox{for all}~u\in\Dom(A)\subset X,$$
where $0<s<1$. If the semigroup $\{e^{-tA}\}_{t\geq0}$
has \emph{exponential decay}, that is, $\|e^{-tA}\|\leq Me^{-\varepsilon t}$, for some $\varepsilon>0$, for all $t\geq0$,
then the negative power $A^{-s}$, $s>0$, is given by
$$A^{-s}f=\frac{1}{\Gamma(s)}\int_0^\infty e^{-tA}f\,\frac{dt}{t^{1-s}},\quad\hbox{for all}~f\in X.$$
Thus, under the exponential decay assumption on $\{e^{-tA}\}_{t\geq0}$,
given $f\in X$, the solution $u\in\Dom(A^s)$ to the fractional problem $A^su=f$ is $u=A^{-s}f$. Here $\Dom(A^s)$ is defined
as the range of $A^{-s}$. For all these details, see \cite{Pazy,Yosida}.

Fractional powers $A^{\pm s}$ of infinitesimal generators $A$ of uniformly bounded $C_0$-semigroups
can be characterized by extension problems. For the case when $X$ is a Hilbert space see \cite{Stinga-Torrea},
while for the case when $X$ is a general Banach space see \cite{Gale}.

\begin{thm}[{See \cite[Theorems~1.1~and~2.1,~Remark~2.2]{Gale}}]\label{thm:Gale}
Let $(A,\Dom(A))$ be the infinitesimal generator of a uniformly bounded $C_0$-semigroup $\{e^{-tA}\}_{t\geq0}$ on a Banach space $X$.
Let $0<s<1$. Define, for $y>0$ and any $u\in X$,
\begin{equation}\label{eq:Uy}
U(y)=\frac{y^{2s}}{4^s\Gamma(s)}\int_0^\infty e^{-y^2/(4t)}e^{-tA}u\,\frac{dt}{t^{1+s}}.
\end{equation}
Then 
$U\in C^\infty((0,\infty),\Dom(A))\cap C([0,\infty),X)$ is a solution to the extension problem
$$\begin{cases}
-AU+\frac{1-2s}{y}\partial_yU+\partial_{yy}U=0&\hbox{for}~y>0\\
\lim_{y\to0^+}U(y)=u&\hbox{in}~X.
\end{cases}$$
Moreover, $\|U(y)\|_X\leq M\|u\|_X$, for all $y\geq 0$.
Furthermore, if $u\in\Dom(A)$ then
$$-\lim_{y\to0^+}y^{1-2s}\partial_yU(y)=c_sA^su=-2s\lim_{y\to0^+}\frac{U(y)-u}{y^{2s}}\quad\hbox{in}~X$$
where $c_s=\frac{\Gamma(1-s)}{4^{s-1/2}\Gamma(s)}>0$. If, in addition, $\{e^{-tA}\}_{t\geq0}$ has exponential decay and
$u\in\Dom(A)$ satisfies $A^su=f$, for some $f\in X$,
then the solution $U$ in \eqref{eq:Uy} can also be written as
$$U(y)=\frac{1}{\Gamma(s)}\int_0^\infty e^{-y^2/(4t)}e^{-tA}f\,\frac{dt}{t^{1-s}}$$
and, in particular, $-\lim_{y\to0^+}y^{1-2s}\partial_yU(y)=c_sf$ and $U(0)=u$.
\end{thm}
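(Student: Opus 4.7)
The plan is to prove everything directly from the subordination formula. Set $K(y,t) = \frac{y^{2s}}{4^s\Gamma(s)}e^{-y^2/(4t)}t^{-1-s}$ so that $U(y) = \int_0^\infty K(y,t)\,e^{-tA}u\,dt$. The substitution $\sigma = y^2/(4t)$ rewrites this as
\[
U(y) = \frac{1}{\Gamma(s)}\int_0^\infty \sigma^{s-1} e^{-\sigma}\, e^{-y^2 A/(4\sigma)}u\,d\sigma,
\]
which, using $\int_0^\infty \sigma^{s-1}e^{-\sigma}d\sigma = \Gamma(s)$, immediately yields $\|U(y)\|_X \le M\|u\|_X$. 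The $C_0$-property of the semigroup applied pointwise in $\sigma$ together with dominated convergence then gives $U(y)\to u$ as $y\to 0^+$.

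Next I would verify the extension PDE by a direct kernel computation. One checks
\[
\partial_{yy}K + \frac{1-2s}{y}\partial_y K = \partial_t K \qquad \text{for } y,t > 0.
\]
Since $K(y,0^+) = K(y,\infty) = 0$ for $y > 0$, integration by parts in $t$, together with $Ae^{-tA}u = -\partial_t e^{-tA}u$ and the closedness of $A$, shows
\[
AU(y) = \int_0^\infty \partial_t K\, e^{-tA}u\,dt = \partial_{yy}U(y) + \frac{1-2s}{y}\partial_y U(y).
\]
Differentiation under the integral is legitimate for $y > 0$ because every $y$-derivative of $K$ inherits exponential decay at $t = 0$ and an integrable $t^{-1-s}$-type tail at infinity; combined with the previous identity this also gives $U \in C^\infty((0,\infty);\Dom(A))$.

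For the Neumann trace, the identity $\partial_t(e^{-y^2/(4t)} t^{-s}) = -\tfrac{1}{2}e^{-y^2/(4t)} t^{-1-s}[2s - y^2/(2t)]$ integrates to zero on $(0,\infty)$, which lets me rewrite
\[
y^{1-2s}\partial_y U(y) = \frac{1}{4^s\Gamma(s)}\int_0^\infty e^{-y^2/(4t)}\left[2s - \tfrac{y^2}{2t}\right](e^{-tA}u - u)\frac{dt}{t^{1+s}}.
\]
Assuming $u \in \Dom(A)$, the standard estimate $\|e^{-tA}u - u\|_X \le Mt\|Au\|_X$ furnishes an integrable dominating function, so sending $y\to 0^+$ and using $-s\Gamma(-s) = \Gamma(1-s)$ produces $-\lim y^{1-2s}\partial_y U = c_sA^s u$. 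The parallel expression $-2s\lim(U(y)-u)/y^{2s}$ follows by the same argument after first normalizing $u$ inside the integral via $\frac{y^{2s}}{4^s\Gamma(s)}\int_0^\infty e^{-y^2/(4t)}t^{-1-s}dt = 1$.

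For the final claim, under exponential decay substitute $u = A^{-s}f = \frac{1}{\Gamma(s)}\int_0^\infty\tau^{s-1}e^{-\tau A}f\,d\tau$ into $U(y)$, apply Fubini, and reduce to the scalar identity
\[
\int_0^\rho e^{-y^2/(4\theta)}\theta^{-1-s}(\rho-\theta)^{s-1}d\theta = \frac{4^s\Gamma(s)}{y^{2s}}e^{-y^2/(4\rho)}\rho^{s-1},
\]
verified by $\theta = y^2\rho/(4\rho\xi + y^2)$, which converts the left side to a Gamma integral. The principal difficulty is not any single step but the bookkeeping: justifying the interchanges of the closed operator $A$, derivatives in $y$, and Bochner integrals against $t\mapsto e^{-tA}u$, all of which hinge on the kernel's exponential decay at $t=0$ and polynomial decay at infinity.
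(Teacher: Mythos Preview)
The paper does not prove this statement: Theorem~\ref{thm:Gale} is quoted from the external reference \cite{Gale} and used as a black box in the proof of Theorem~\ref{thm:Calpha extension}. There is therefore no in-paper proof to compare against; your sketch is essentially the standard direct computation one finds in \cite{Gale,Stinga-Torrea}.

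One point needs more care. In your verification of the extension PDE you invoke $Ae^{-tA}u=-\partial_t e^{-tA}u$ and then integrate by parts. For a general (non-analytic) $C_0$-semigroup this identity holds only when $u\in\Dom(A)$; for arbitrary $u\in X$ the orbit $t\mapsto e^{-tA}u$ need not be strongly differentiable, so your argument as written does not establish $U(y)\in\Dom(A)$ in full generality. The fix is to avoid differentiating the semigroup altogether: compute the generator difference quotient
\[
\frac{e^{-hA}U(y)-U(y)}{-h}=\int_0^\infty \frac{K(y,t)-K(y,t-h)\chi_{\{t>h\}}}{h}\,e^{-tA}u\,dt
\]
via the change of variables $t\mapsto t-h$, and pass to the limit $h\to 0^+$ using your kernel bounds and $K(y,0^+)=0$. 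This yields $AU(y)=\int_0^\infty \partial_tK\,e^{-tA}u\,dt$ directly from the definition of the generator, for every $u\in X$. Alternatively, prove the PDE first for $u\in\Dom(A)$ as you do, then pass to general $u$ by density plus the closedness of $A$ and the uniform bound $\|U(y)\|_X\le M\|u\|_X$. The remaining parts of your outline---the bound and continuity at $y=0$, the Neumann-trace renormalization, and the Laplace-type convolution identity for the alternative representation---are correct.
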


\subsection{Fractional powers of nondivergence form elliptic operators}

To give the definition of \eqref{eq:Ls}, we need conditions on $a^{ij}(x)$ and $\Omega$ so that
$L$ as in \eqref{eq:L} generates a uniformly bounded $C_0$-semigroup with exponential decay in an appropriate Banach space $X$.
For this, we assume that the bounded domain $\Omega\subset\R^n$ satisfies the \emph{uniform exterior cone condition}, namely,
that there is a right circular cone $C$ such that for all $x\in\partial\Omega$ there is a cone $C_x$ with vertex $x$
that is congruent to $C$ such that $\overline{\Omega}\cap C_x=\{x\}$.
We define the Banach space $X = C_0(\Omega)$ by
\begin{equation}\label{eq:definitionofC0}
C_0({\Omega}) = \{ u \in C(\overline{\Omega}) : u \equiv 0~\hbox{on}~\partial\Omega\},
\end{equation}
endowed with the $L^\infty$-norm. Let $L$ be the linear operator on $C_0(\Omega)$ given by
\begin{equation}\label{eq:Ldomain}
L = -a^{ij}(x) \partial_{ij}, \quad \Dom(L) = \{ u \in C_0(\Omega) \cap W^{2,n}_{\loc}(\Omega) : Lu \in C_0(\Omega)\},
\end{equation}
where the coefficients $a^{ij}(x) \in C(\Omega)\cap L^{\infty}(\Omega)$ are symmetric and satisfy \eqref{eq:ellipticity}.
Under these hypotheses, $L$ generates a uniformly bounded $C_0$-semigroup on $C_0(\Omega)$ with exponential decay. 

\begin{prop}[{See \cite[Proposition~4.7]{Arendt}}]\label{thm:arendt}
Assume that $\Omega \subset \R^n$ is a bounded domain that satisfies the uniform exterior cone condition and that 
$a^{ij}(x) \in C({\Omega}) \cap L^{\infty}(\Omega)$ are symmetric and satisfy \eqref{eq:ellipticity}.
The operator $L$ defined by \eqref{eq:Ldomain}
generates a uniformly bounded $C_0$-semigroup, denoted by $\{e^{-tL}\}_{t\geq0}$, on $C_0(\Omega)$,
such that if $u\in C_0(\Omega)$ satisfies $u\geq 0$, then $e^{-tL}u\geq 0$, for all $t\geq0$. Moreover,
there are constants $M\geq1$ and $\varepsilon>0$ such that
\begin{equation}\label{eq:semigroup bound}
\|e^{-tL}u\|_{C_0(\Omega)}\leq Me^{-\varepsilon t}\norm{u}_{C_0(\Omega)}, \quad \hbox{for all}~t\geq 0.
\end{equation}
\end{prop}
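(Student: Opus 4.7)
The plan is to verify the Hille-Yosida hypotheses for $-L$ on the Banach space $C_0(\Omega)$: once, for every $\mu>0$, a positivity preserving resolvent $R_\mu=(\mu+L)^{-1}:C_0(\Omega)\to C_0(\Omega)$ with the contraction-type bound $\|R_\mu\|\leq 1/\mu$ is constructed, the standard Hille-Yosida theorem, together with the fact that positivity passes from the resolvent to the semigroup via the Yosida approximation $e^{-tL}=\lim_{n\to\infty}(n/t)^n R_{n/t}^n$, yields a positive uniformly bounded $C_0$-semigroup $\{e^{-tL}\}_{t\geq 0}$. The exponential decay in \eqref{eq:semigroup bound} is then obtained separately from a direct parabolic supersolution argument.

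For the construction of $R_\mu$, fix $\mu>0$ and $f\in C_0(\Omega)$, and solve the Dirichlet problem
\[
(\mu-a^{ij}(x)\partial_{ij})u=f\quad\hbox{in}~\Omega,\qquad u=0\quad\hbox{on}~\partial\Omega,
\]
via the classical existence theory for strong solutions of nondivergence form elliptic equations with continuous, uniformly elliptic coefficients (Gilbarg-Trudinger, Chapter 9), producing a unique $u\in W^{2,p}_{\loc}(\Omega)$ for every $p<\infty$. The Alexandrov-Bakelman-Pucci maximum principle applied to the operator with its positive zero-order term $\mu$ yields $\|u\|_{L^\infty(\Omega)}\leq\|f\|_{L^\infty(\Omega)}/\mu$, and the weak maximum principle gives $u\geq 0$ whenever $f\geq 0$. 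Continuity of $u$ up to $\partial\Omega$ is where the uniform exterior cone condition enters crucially: at every $x_0\in\partial\Omega$ one constructs a Miller-type exterior barrier $w(x)=|x-x_0|^\beta\Psi(\theta)$, with $\beta>0$ small and $\Psi$ a positive eigenfunction of the spherical Laplace-Beltrami operator on the complement of the exterior cone, chosen so that $-a^{ij}(x)\partial_{ij}w\geq c_0>0$ on a fixed neighborhood of $x_0$. Comparing $u$ with $\pm Cw$ forces $u(x)\to 0$ as $x\to x_0$, so $u\in C_0(\Omega)\cap\Dom(L)$ and $R_\mu f:=u$ is well defined and positivity preserving, closing the Hille-Yosida input.

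For the exponential decay, I would construct a global positive supersolution $\phi\in C^\infty(\overline\Omega)$. Enclosing $\Omega$ inside the strip $\{x_1<R\}$ with $R>\sup_\Omega x_1$ and setting $\phi(x)=e^{\alpha R}-e^{\alpha x_1}$, a direct computation gives $L\phi=a^{11}(x)\alpha^2 e^{\alpha x_1}\geq\varepsilon_0\phi$ in $\Omega$ for some $\varepsilon_0=\varepsilon_0(n,\lambda,\Lambda,\diam(\Omega))>0$, once $\alpha$ is fixed appropriately. Given $u_0\in\Dom(L)$, the function $v(t,x)=A\phi(x)e^{-\varepsilon_0 t}$ with $A=\|u_0\|_\infty/\min_{\overline\Omega}\phi$ satisfies $\partial_t v+Lv\geq 0$ in $\Omega\times(0,\infty)$, $\pm u_0(x)\leq v(0,x)$, and $v(t,x)\geq 0$ on $\partial\Omega$, so the classical parabolic maximum principle for nondivergence form equations (valid since $e^{-tL}u_0$ is a strong solution when $u_0\in\Dom(L)$) yields $|e^{-tL}u_0|\leq A\phi e^{-\varepsilon_0 t}\leq M\|u_0\|_\infty e^{-\varepsilon_0 t}$ with $M=\max_{\overline\Omega}\phi/\min_{\overline\Omega}\phi$. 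Extending from $\Dom(L)$ to all of $C_0(\Omega)$ by density then gives \eqref{eq:semigroup bound} with $\varepsilon=\varepsilon_0$.

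The main obstacle is the tension between the nondivergence structure with only continuous coefficients and the low regularity of $\partial\Omega$: neither Schauder theory nor energy methods are available up to the boundary, so the entire boundary attainment in the resolvent construction (and thus the passage from $W^{2,p}_{\loc}(\Omega)$ to $C_0(\Omega)\cap\Dom(L)$) rests on the careful choice of Miller-type exterior cone barriers. Without this step, $L$ would fail to act on $C_0(\Omega)$, and the entire semigroup framework underpinning the definition \eqref{eq:semigroupformula} of $L^s$ would collapse.
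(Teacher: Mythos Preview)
The paper does not supply its own proof of this proposition; it is stated as a citation of Arendt--Sch\"atzle \cite[Proposition~4.7]{Arendt} and used as a black box throughout. There is therefore nothing in the paper to compare your argument against directly.

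That said, your outline is a correct and standard route to this kind of result, and it is close in spirit to the argument in the cited reference: solve the resolvent equation $(\mu+L)u=f$ in $W^{2,p}_{\loc}$ via the nondivergence theory in Gilbarg--Trudinger, use the ABP maximum principle for the bound $\|R_\mu\|\leq 1/\mu$ and for positivity, and invoke Miller-type exterior cone barriers to force $u\in C_0(\Omega)$; then apply Hille--Yosida and pass positivity from the resolvent to the semigroup through the Yosida approximation. Your exponential-decay step via the explicit supersolution $\phi(x)=e^{\alpha R}-e^{\alpha x_1}$ is also correct. The one Hille--Yosida hypothesis you do not mention explicitly is the density of $\Dom(L)$ in $C_0(\Omega)$, but this follows once the resolvent is constructed, since $\mu R_\mu f\to f$ in $C_0(\Omega)$ as $\mu\to\infty$.
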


In other words, by Proposition \ref{thm:arendt} and the maximum principle for parabolic equations (see \cite{Friedman}), 
for any $u\in C_0(\Omega)$, the function 
 $v(x,t) = e^{-tL}u(x)\in C^1((0,\infty),\Dom(L))\cap C([0,\infty),C_0(\Omega))$ is the unique solution to the heat equation driven by $L$ with initial data $u$:
\begin{equation}\label{eq:heat}
\begin{cases}
\partial_tv(x,t) = a^{ij}(x) \partial_{ij}v(x,t) & \hbox{in}~\Omega \times\{t >0\}\\
v(x,t) = 0 & \hbox{on}~\partial \Omega \times\{t \geq 0\}\\
v(x,0) = u(x) & \hbox{on}~\Omega \times\{t =0\}.
\end{cases}
\end{equation}
Now we can formalize the definition of the fractional power operator \eqref{eq:Ls}.

\begin{defn}
Assume that the bounded domain $\Omega \subset \R^n$ satisfies the uniform exterior cone condition and that 
$a^{ij}(x) \in C({\Omega}) \cap L^{\infty}(\Omega)$ are symmetric and satisfy \eqref{eq:ellipticity}. 
Consider the Banach space $C_0(\Omega)$ and the operator $L=-a^{ij}(x)\partial_{ij}$ given by \eqref{eq:Ldomain}. 
We define the fractional power operator $L^s=(-a^{ij}(x)\partial_{ij})^s: \Dom(L) \to C_0(\Omega)$ by 
\begin{equation}\label{eq:semigroup}
(-a^{ij}(x)\partial_{ij})^s u(x) = \frac{1}{\Gamma(-s)} \int_0^{\infty}\big(e^{-tL}u(x) - u(x)\big) \, \frac{dt}{t^{1+s}}, \quad 0 < s < 1,~x\in\Omega.
\end{equation}
\end{defn}

\begin{rem}[Pointwise formula]\label{rem:pointwise}
It is known that, under certain conditions on $L$, the semigroup $\{e^{-tL}\}_{t\geq0}$ has a heat kernel, namely, there is a function $H_t(x,y)$ such that
$$e^{-tL}u(x)=\int_\Omega H_t(x,y)u(y)\,dy\qquad\hbox{for all}~t>0,~x\in\Omega.$$
For example, the heat kernel exists and satisfies the Gaussian estimate
\begin{equation}\label{eq:Gaussian}
0\leq H_t(x,y)\leq C\frac{e^{-c|x-y|^2/t}}{t^{n/2}}\qquad\hbox{for all}~t>0,~x,y\in\Omega,
\end{equation}
for some constants $C,c>0$, whenever the coefficients $a^{ij}(x)$ are H\"older continuous, see \cite{Friedman}.
In this situation, it follows from \eqref{eq:semigroup} that for any smooth function $u\in\Dom(L)$,
$$(-a^{ij}(x)\partial_{ij})^su(x)=\int_\Omega\big(u(x)-u(y)\big)K_s(x,y)\,dy+B_s(x)u(x)\qquad\hbox{for all}~x\in\Omega$$
where $0\leq K_s(x,y)\leq C_{n,s}|x-y|^{-(n+2s)}$, for $x,y\in\Omega$, $x\neq y$, and $B_s(x)\in L^\infty(\Omega)$.
Therefore, the fractional operator $L^s$ is a nonlocal, integro-differential operator in $\Omega$.
\end{rem}

\begin{rem}[Negative fractional powers]
Let $f \in C_0(\Omega)$ and assume that $u\in\Dom(L^s)$ is a solution to \eqref{eq:Poisson}, that is, $(-a^{ij}(x)\partial_{ij})^su=f$ in $\Omega$.
By Proposition \ref{thm:arendt}, the semigroup $\{e^{-tL}\}_{t\geq0}$ has exponential decay. Then $u$ can be written as
\begin{equation}\label{eq:inverse}
u(x) = (-a^{ij}(x)\partial_{ij})^{-s} f(x) = \frac{1}{\Gamma(s)} \int_0^{\infty} e^{-tL}f(x) \, \frac{dt}{t^{1-s}}\qquad\hbox{for all}~x\in\Omega.
\end{equation}
If the coefficients $a^{ij}(x)$ are H\"older continuous then we can use the heat kernel from Remark \ref{rem:pointwise}
into \eqref{eq:inverse} to write
$$u(x)= (-a^{ij}(x)\partial_{ij})^{-s} f(x)=\int_\Omega K_{-s}(x,y)f(y)\,dy\qquad\hbox{for all}~x\in\Omega$$
where, by the estimate \eqref{eq:Gaussian}, $0\leq K_{-s}(x,y)\leq C_{n,-s}|x-y|^{-(n-2s)}$, for all $x,y\in\Omega$, $x\neq y$. 
\end{rem}

\begin{proof}[Proof of Theorem \ref{thm:Calpha extension}]
We choose $X=C_0(\Omega)$ and $A=L$ as in \eqref{eq:Ldomain} so that, by Proposition \ref{thm:arendt},
$L$ generates a uniformly bounded $C_0$-semigroup on $C_0(\Omega)$ with exponential decay.
Then, the solution $U(y)$ in \eqref{eq:Uy} satisfies the properties stated in Theorem \ref{thm:Gale}.
With the change of variables $z = (y/(2s))^{2s}$, we obtain that $U(z)\equiv U(x,z)$ verifies
the formulas and properties of Theorem \ref{thm:Calpha extension}.
If the coefficients $a^{ij}(x)$ are also H\"older continuous then, by interior Schauder estimates (see \cite[Theorem 9.19]{GilbargTrudinger}),
the solution $U$ is classical. Moreover, by the weak maximum principle (see \cite[Theorem~3.1]{GilbargTrudinger}),
there is at most one classical solution to \eqref{eq:extension intro}
such that $\lim_{z \to \infty} \norm{U(\cdot,z)}_{L^{\infty}(\Omega)}= 0.$
Using \eqref{eq:semigroup bound} it is easy to check that the solution $U$ given by \eqref{eq:U-defn} has
such decay at infinity and hence is the unique solution. 
\end{proof}

\section{Monge--Amp\`ere setting}\label{sec:MA}

We present the necessary background for the Monge--Amp\`ere geometry associated to equation \eqref{eq:Utilde} as well as the notation that will be used throughout the rest of this work. We reference the reader to  \cites{Forzani,Gutierrez} for details
about the Monge--Amp\`ere geometry associated to general convex functions. 

Given $0 < s < 1$, we define the functions $\varphi : \R^n \to \R$ and $h: \R \to \R$ by
\begin{equation}\label{eq:phiandh}
\varphi(x) = \frac{1}{2} \abs{x}^2 \quad \hbox{and} \quad h(z) = \frac{s^2}{1-s}\abs{z}^{\frac{1}{s}}.
\end{equation}
Notice that $\varphi \in C^{\infty}(\R)$ and $h \in C^1(\R) \cap C^2(\R \setminus \{0\})$ are strictly convex.
Set
\begin{equation}\label{eq:capitalphi}
\Phi(x,z) = \varphi(x) + h(z) \quad \hbox{for all}~(x,z) \in \R^{n+1}.
\end{equation}
We note that
\[
h'(z) = \frac{s}{1-s} \abs{z}^{\frac{1}{s}-2}z, 
\quad h''(z) = \abs{z}^{\frac{1}{s}-2},
\quad D^2\Phi(x,z) = \begin{pmatrix} I & 0 \\ 0 &  \abs{z}^{\frac{1}{s}-2} \end{pmatrix}.
\]
It is clear that $h'(-z) = -h'(z)$ and $h'(0) = 0$.

The \emph{Monge-Amp\`ere measure} associated to a strictly convex function $\psi \in C^1(\R^n)$ is the Borel measure given by
\[
\mu_{\psi}(E) = \abs{\nabla\psi(E)}\quad \hbox{for every Borel set}~E \subset \R^n,
\]
where $|A|$ denotes the Lebesgue measure of a measurable set $A\subset\R^n$.
Since $\nabla\varphi(x) = x$, it clear that $\mu_{\varphi}(E) = \abs{E}$. 

\begin{lem}\label{lem:h-integral}
For a Borel set $I \subset \R$,
\[
\mu_h(I) = \int_{I} h''(z) \, dz.
\]
Consequently, for a Borel set $E \subset \R^{n+1}$,
\[
\mu_\Phi(E) = \int_{E} h''(z) \, dz \, dx.
\]
\end{lem}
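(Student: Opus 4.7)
The plan is to reduce the $(n+1)$-dimensional claim to the one-dimensional one via Fubini, and to prove the one-dimensional claim by a direct change of variables for the strictly monotone, absolutely continuous map $h'$.

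First I would verify that $h'$ is a homeomorphism of $\R$ onto itself. Since $h''(z)=|z|^{1/s-2}>0$ for $z\neq 0$, the function $h'$ is strictly increasing on each of $(-\infty,0)$ and $(0,\infty)$. Combined with $h'(0)=0$, continuity of $h'$, and $h'(z)\to\pm\infty$ as $z\to\pm\infty$, this gives that $h'$ is a continuous, strictly increasing bijection of $\R$ onto $\R$. In particular, $h'(I)$ is Borel whenever $I$ is Borel, so $\mu_h(I)=|h'(I)|$ is a well-defined Borel measure.

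For the one-dimensional identity I would decompose $I=I^+\sqcup I^-\sqcup I^0$ where $I^\pm=I\cap(\pm(0,\infty))$ and $I^0=I\cap\{0\}$. The piece $I^0$ contributes zero on both sides. On each half-line, $h'$ is $C^1$ with positive derivative away from $0$; and although $h''$ blows up at $z=0$ in the singular regime $s>1/2$, it is locally integrable since $1/s-2>-1$ is equivalent to $s<1$. Thus $h'$ is absolutely continuous and strictly monotone on every compact interval, so the classical change of variables formula for absolutely continuous monotone maps yields
\[
|h'(I^\pm)|=\int_{I^\pm}h''(z)\,dz.
\]
Summing gives $\mu_h(I)=\int_I h''(z)\,dz$.

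For the $(n+1)$-dimensional statement I would use the block structure $\nabla\Phi(x,z)=(\nabla\varphi(x),h'(z))=(x,h'(z))$. Given a Borel set $E\subset\R^{n+1}$, let $E_x=\{z\in\R:(x,z)\in E\}$ denote its vertical section. Then $\nabla\Phi(E)=\{(x,h'(z)):(x,z)\in E\}$, and Fubini's theorem together with the one-dimensional step give
\[
\mu_\Phi(E)=|\nabla\Phi(E)|=\int_{\R^n}|h'(E_x)|\,dx=\int_{\R^n}\!\int_{E_x}h''(z)\,dz\,dx=\int_E h''(z)\,dz\,dx.
\]
The only delicate point is the integrable singularity of $h''$ at $z=0$ when $s>1/2$, which is handled by the observation that $h'$ remains absolutely continuous on all of $\R$ (being the indefinite integral of $h''$ from $0$) and by isolating the measure-zero hyperplane $\{z=0\}$ before applying Fubini.
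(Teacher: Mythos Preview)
Your proof is correct and follows essentially the same approach as the paper: both arguments split across $\{z=0\}$ and use that $h'$ is strictly monotone and absolutely continuous (with locally integrable derivative $h''$) to obtain $|h'(A)|=\int_A h''(z)\,dz$ via a change of variables. The only differences are cosmetic: the paper first checks the identity on intervals containing the origin and then extends to all Borel sets by the standard uniqueness-of-measures argument, whereas you apply the change-of-variables formula for absolutely continuous monotone maps directly on arbitrary Borel sets; and you spell out the Fubini step for the $(n+1)$-dimensional statement, which the paper leaves implicit.
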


\begin{proof}
Consider an interval $(a,b) \subset \R$ such that $0 \in (a,b)$. Note that $h'$ 
is monotone increasing, injective, and $h'(z) =0$ if and only if $z=0$. 
Since $h$ is $C^2$ and strictly convex in $\R\setminus \{z=0\}$, we have that
\begin{align*}
\mu_h((a,b))
	= \abs{h'((a,b))}
	&= \abs{h'((a,0))} + \abs{h'(0)} + \abs{h'((0,b))}\\
	&= \int_{a}^0 h''(z)\, dz + 0 + \int_0^b h''(z) \, dz= \int_{a}^b h''(z) \, dz.
\end{align*}
The result follows for any interval and hence for any Borel set $I \subset \R$.
\end{proof}

The \emph{Monge-Amp\`ere (quasi)-distance} associated to a strictly convex function $\psi \in C^1(\R^n)$ is given by
\[
\delta_{\psi}(x_0,x) = \psi(x) - \psi(x_0) - \la \nabla\psi(x_0), x-x_0 \ra.
\]
By convexity, $\delta_\psi\geq0$, and $\delta_\psi(x_0,x)=0$ if and only if $x_0=x$.
We use the terminology quasi-distance when there exists a constant $K \geq 1$ such that
\[
\delta_{\psi}(x_1,x_2) 
	\leq K \(\min\{\delta_{\psi}(x_1,x_3),\delta_{\psi}(x_3,x_1)\} 
	+ \min\{\delta_{\psi}(x_2,x_3), \delta_{\psi}(x_3,x_2)\} \) 
\]
for all $x_1,x_2,x_3 \in \R^n$.
For our functions $\varphi$, $h$, and $\Phi$ in \eqref{eq:phiandh} and \eqref{eq:capitalphi}, we have
\begin{align*}
\delta_{\varphi}(x_0,x)
	&= \frac{1}{2} \abs{x}^2 - \frac{1}{2} \abs{x_0}^2 - \la x_0, x-x_0 \ra 
	= \frac{1}{2} \abs{x-x_0}^2\\
\delta_{h}(z_0,z)
	&= h(z) - h(z_0) - h'(z_0)(z-z_0)\\
\delta_{\Phi}((x_0,z_0),(x,z)) 
	&= \delta_{\varphi}(x_0,x) + \delta_h(z_0,z).
\end{align*}
We will later show that $\delta_h$, $\delta_{\varphi}$, and $\delta_{\Phi}$ are indeed quasi-distances (see Corollary \ref{lem:doubling}).
 
A \emph{Monge--Amp\`ere section} of radius $R>0$, centered at $x_0\in\R^n$ associated to a strictly convex function $\psi \in C^1(\R^n)$
is defined as
\[
S_\psi(x_0,R) = \{ x \in\R^n: \delta_\psi(x_0,x)  < R\}.
\]
The supporting hyperplane to $\psi$ at $x_0$ is given by
$\ell(x) = \psi(x_0) + \la \nabla\psi(x_0),x-x_0 \ra$.
Then, $S_{\psi}(x_0,R) = \{x : \psi(x)-\ell(x) < R\}$, and
we see that the Monge--Amp\`ere sections for $\psi$ are the sublevel sets of $\psi-\ell$.
In the case of $\varphi$ in \eqref{eq:phiandh}, the sections correspond to Euclidean balls with the same center
\begin{equation}\label{eq:sections-balls}
S_\varphi(x_0,R)
	= \Big\{x : \frac{1}{2} \abs{x-x_0}^2 < R \Big\}
	= B_{\sqrt{2R}}(x_0).
\end{equation}
The sections for $h$ in \eqref{eq:phiandh} with radius $R>0$ correspond to intervals in $\R$.

We say that the Monge--Amp\`ere measure $\mu_{\psi}$
is \emph{doubling with respect to the center of mass}
on the sections of $\psi$, written $\mu_{\psi} \in (DC)_{\psi}$, if there is a constant $C_d>0$ such that
\begin{equation}\label{eq:doubling-COM}
\mu_\psi(S_\psi(x,R)) \leq C_d \mu_{\psi}\bigg(\frac{1}{2} S_\psi(x,R)\bigg) \quad \hbox{for all sections}~S_\psi(x,R).
\end{equation}
Here, we use the notation $\alpha S_\psi(x,R) = \{ \alpha(y-x^*) + x^* : y \in S_\psi(x,R)\}$, for $\alpha>0$,
where $x^*$ is the center of mass of $S_\psi(x,R)$. 
On the other hand, we say that $\mu_{\psi}$ is \emph{doubling with respect to the parameter}
on the sections of $\psi$ if there is a constant $C_d'>0$ such that
\begin{equation}\label{eq:doubling-radius}
\mu_\psi(S_\psi(x,R)) \leq C_d' \mu_{\psi}\bigg( S_\psi\bigg(x,\frac{R}{2}\bigg)\bigg) \quad \hbox{for all sections}~S_\psi(x,R).
\end{equation}
It can be seen that \eqref{eq:doubling-COM} implies \eqref{eq:doubling-radius}, but the converse is not true in general, see \cite{Gutierrez}.

Finally, we say that $\psi$ satisfies the \emph{engulfing property} if there is a constant $\theta \geq 1$ such that, for every section 
$S_\psi(x,R)$, if $x_1 \in S_\psi(x,R)$, then $S_\psi(x,R) \subset S_\psi(x_1,\theta R)$.

For the next result, see Theorem 5 in \cite{Forzani} and the comments following it and also Lemma 2.1 in \cite{Maldonado2017}.

\begin{thm}\label{thm:DC-engulf}
Let $\psi\in C^1(\R^n)$ be a strictly convex function. The following are equivalent.
\begin{enumerate}
	\item $\mu_{\psi} \in (DC)_\psi$;
	\item $\psi$ satisfies the engulfing property;
	\item there are constants $c,C>0$ such that
	\[
	cR^n \leq \abs{S_\psi(x,R)} \mu_{\psi}(S_\psi(x,R)) \leq C R^n
	\]
	for all sections $S_\psi(x,R)$;
	\item $\delta_\psi$ is a quasi-distance. 
\end{enumerate}
All the statements are equivalent in the sense that the constants in each property only depend on each other. 
If $\mu_{\psi} \in (DC)_{\psi}$, then there exists a constant $K_d >1$, depending only on the doubling constant $C_d$ in \eqref{eq:doubling-COM} and on dimension, such that
\begin{equation}\label{eq:reversedoubling}
\mu_{\psi}(S_{\psi}(x,r_2)) \leq K_d \(\frac{r_2}{r_1}\)^n  \mu_{\psi}(S_{\psi}(x,r_1)) \quad \hbox{for all}~x \in \R^n,~0 < r_1<r_2.
\end{equation}
\end{thm}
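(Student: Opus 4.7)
The plan is to establish a cyclic chain of implications $(1) \Rightarrow (2) \Rightarrow (4) \Rightarrow (3) \Rightarrow (1)$, with the affine invariance of the Monge--Amp\`ere measure and John's lemma as the main technical tool. John's lemma lets me attach to every section $S=S_\psi(x_0,R)$ an affine map $T$ with $B_1\subset T(S)\subset B_n$; setting $\tilde\psi(y)=\psi(T^{-1}y)$, the sections of $\tilde\psi$ are images of sections of $\psi$ under $T$, and $\mu_{\tilde\psi}(T(E))=|\det T|^{-1}\mu_\psi(E)$. Both the doubling property \eqref{eq:doubling-COM} and the quasi-distance relation are preserved under such normalizations.

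For $(1)\Rightarrow(2)$, I would argue as follows. Given $x_1\in S_\psi(x_0,R)$, I pass to the normalized picture and build a finite chain of concentric dilates $\tfrac{1}{2^k}S_\psi(x_0,R)$ connecting $x_1$ to $x_0$; at each step, the doubling bound \eqref{eq:doubling-COM} controls how fast the Monge--Amp\`ere measure shrinks, and this iteration converts into a uniform estimate $S_\psi(x_0,R)\subset S_\psi(x_1,\theta R)$ with $\theta=\theta(C_d,n)$. The implication $(2)\Rightarrow(4)$ is more elementary: engulfing first implies that $\delta_\psi$ is symmetric up to the constant $\theta$ (apply $S_\psi(x,R)\subset S_\psi(x_1,\theta R)$ at a point $x_1$ just inside $S_\psi(x,R)$ and let $R\to\delta_\psi(x,x_1)$), and then a direct computation using the definition of $\delta_\psi$ and this symmetry gives the quasi-triangle inequality with constant $K=2\theta$. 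For $(4)\Rightarrow(3)$, I normalize $S_\psi(x_0,R)$ via $T$ and observe that the quasi-distance bound forces $\tilde\psi$ to have oscillation comparable to $R$ on $T(S_\psi(x_0,R))$, so the Aleksandrov maximum principle applied on a domain comparable to the unit ball yields $\mu_{\tilde\psi}(T(S_\psi(x_0,R)))\sim R^n$; unwinding $T$ produces $c R^n\leq |S_\psi(x_0,R)|\mu_\psi(S_\psi(x_0,R))\leq C R^n$. Finally, $(3)\Rightarrow(1)$ is immediate: the half-section $\tfrac{1}{2}S_\psi(x_0,R)$ is a convex set with Lebesgue measure comparable to $|S_\psi(x_0,R)|$ (centered dilations lose only a dimensional factor) and, being sandwiched between sections of radii comparable to $R$, has Monge--Amp\`ere measure comparable to $R^n/|S_\psi(x_0,R)|\sim\mu_\psi(S_\psi(x_0,R))$.

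For the quantitative reverse doubling \eqref{eq:reversedoubling}, I would just feed the two-sided bound in (3) into the ratio:
\[
\frac{\mu_\psi(S_\psi(x,r_2))}{\mu_\psi(S_\psi(x,r_1))}\;\sim\;\frac{r_2^n}{r_1^n}\cdot\frac{|S_\psi(x,r_1)|}{|S_\psi(x,r_2)|}\;\leq\;\left(\frac{r_2}{r_1}\right)^n,
\]
since $r_1<r_2$ gives $|S_\psi(x,r_1)|\leq|S_\psi(x,r_2)|$; the constant $K_d$ collects the dimensional and doubling constants from (3). The main obstacle I anticipate is the step $(1)\Rightarrow(2)$: turning the abstract measure-doubling hypothesis on dilates about the center of mass into the purely geometric engulfing inclusion requires carefully iterating the normalization, because the center of mass of $T(S)$ is not generally $Tx_0$, and the chain of half-sections must be controlled uniformly as John's ellipsoid changes under dilation. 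The remaining implications are mostly bookkeeping once the affine-invariant framework is set up.
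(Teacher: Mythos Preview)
The paper does not prove this theorem at all: it is quoted directly from the literature (Theorem~5 in \cite{Forzani} and Lemma~2.1 in \cite{Maldonado2017}), so there is no in-paper argument to compare your proposal against.

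That said, your outline is broadly in the spirit of the cited works, but a couple of steps are too loose to count as a proof sketch. In $(3)\Rightarrow(1)$ you claim that the half-dilate $\tfrac{1}{2}S_\psi(x_0,R)$ about the center of mass is ``sandwiched between sections of radii comparable to $R$''; this containment $S_\psi(x_0,cR)\subset\tfrac{1}{2}S_\psi(x_0,R)$ is itself nontrivial and in the usual treatments is obtained from the size estimate (3) combined with John's lemma, not just by centered dilation. Likewise in $(4)\Rightarrow(3)$, the passage from the quasi-distance property to a two-sided oscillation bound on the normalized $\tilde\psi$ requires more than you indicate: one typically needs a lower bound on how $\tilde\psi$ grows away from its minimum inside the normalized section, which in the literature is extracted from the Aleksandrov estimate together with a compactness or sliding argument. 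Your $(2)\Rightarrow(4)$ and the derivation of \eqref{eq:reversedoubling} from (3) are fine. If you intend to supply a self-contained proof rather than cite, you should either fill in these two gaps or follow the chain and auxiliary lemmas as organized in \cite{Forzani} and \cite{Gutierrez}.
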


In order to show that our convex function $\Phi$ in \eqref{eq:capitalphi} satisfies Theorem \ref{thm:DC-engulf}, we need to introduce
the notion of Monge--Amp\`ere cubes associated with $\Phi$. 
Many of our proofs will rely on the fact that $\Phi(x,z) = \varphi(x) + h(z)$ has separated variables. 

\begin{defn}
A \emph{Monge--Amp\`ere cube} of radius $R>0$, centered at $(x,z) \in \R^{n+1}$, associated to $\Phi$ is given by
\[
Q_R(x,z) = 
 S_{\varphi_1}(x_1,R) \times \dots \times  S_{\varphi_n}(x_n,R) \times  S_{h}(z,R)
\] 
where $x = (x_1,\dots, x_n)$ and $\varphi_i:\R \to \R$ is defined by $\varphi_i(x_i) = \frac{1}{2} \abs{x_i}^2$  for $i= 1,\dots, n$.
\end{defn}

\begin{notation}
We will always use the following notation.
\begin{itemize}
	\item $x = (x_1,\dots,x_n) \in \R^n$, $z \in \R$.
	\item $S_R(x) \subset \R^n$ is a section of radius $R>0$ associated with $\varphi$ centered at $x$.
	\item $S_R(z) \subset \R$ is a section of radius $R>0$ associated with $h$, centered at $z$.
	\item $S_R(x,z)\subset \R^{n+1}$ is a section of radius $R>0$ associated with $\Phi$, centered at $(x,z)$.
	\item $Q_R(x) \subset \R^n$ is a cube of radius $R>0$ associated with $\varphi$ centered at $x$.
	\item $Q_R(z) \subset \R$ is a cube of radius $R>0$ associated with $h$ centered at $z$.
	\item $Q_R(x,z) \subset \R^{n+1}$ is a cube of radius $R>0$ associated with $\Phi$ centered at $(x,z)$.
\end{itemize}
\end{notation}

The relation between Monge--Amp\`ere cubes and sections is given by the following result.

\begin{lem}[Lemma 6 in \cite{Forzani}]\label{lem:tensor}
Fix $m \in \N$. For each $j = 1,\dots, m$, let $\psi_j: \R^{n_j} \to \R$ be strictly convex, differentiable functions.
Set $n = \sum_{j=1}^m n_j$ and define
\[
\psi(x) =\sum_{j=1}^m \psi_j(x_j), \quad x = (x_1,\dots, x_m) \in \R^n, x_j\in \R^{n_j}. 
\]
Then
\[
S_\psi(x,R) \subset \prod_{j=1}^m S_{\psi_j}(x_j,R) \subset S_\psi(x,mR)
\]
for all $x = (x_1,\dots, x_m) \in \R^n$ and $R>0$. 

In particular,
if $\psi_j$ satisfy the engulfing property with corresponding constants $\theta_j$ for all $j = 1,\dots m$,  
then $\phi$ satisfies the engulfing property with $\theta =m \max_j\{\theta_j\}$.
Conversely, if $\psi$ satisfies the engulfing property for some $\theta >1$, then $\psi_j$ satisfies the engulfing property with constant $\theta$ for all $j = 1,\dots, m$. 
\end{lem}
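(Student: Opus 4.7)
The plan is to exploit the fact that when $\psi(x) = \sum_{j=1}^m \psi_j(x_j)$ has separated variables, both the gradient and the Monge--Amp\`ere quasi-distance decompose additively. The key preliminary identity I would establish is
\[
\delta_\psi(x_0,x) = \sum_{j=1}^m \delta_{\psi_j}(x_{0,j},x_j),
\]
which is immediate from $\nabla\psi(x_0)=(\nabla\psi_1(x_{0,1}),\dots,\nabla\psi_m(x_{0,m}))$ upon substituting into the definition of $\delta_\psi$. Convexity of each $\psi_j$ gives nonnegativity of every summand, and this is all the structural input the rest of the argument needs.

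The two inclusions then follow at once from the trivial observation that a sum of $m$ nonnegative numbers is strictly less than $R$ iff each is, and is certainly less than $mR$ if each is. For the forward direction of the engulfing statement, if each $\psi_j$ engulfs with constant $\theta_j$ and $y\in S_\psi(x,R)$, then componentwise $y_j\in S_{\psi_j}(x_j,R)\subset S_{\psi_j}(y_j,\theta_j R)$ by the engulfing of each factor. For any $w\in S_\psi(x,R)$, a second application of the componentwise inclusion gives $w_j\in S_{\psi_j}(y_j,\theta_j R)$, and summing the resulting strict inequalities yields $\delta_\psi(y,w)<m\max_j\theta_j\,R$, which is the desired engulfing with constant $\theta=m\max_j\theta_j$.

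The one spot that deserves care is the converse: to recover engulfing of a single $\psi_j$ from engulfing of the assembled $\psi$, I would freeze the other coordinates. Given $y_j\in S_{\psi_j}(x_j,R)$, I lift to $y\in\R^n$ by setting $y_k=x_k$ for $k\neq j$, so the inert coordinates contribute zero to $\delta_\psi$ and $y\in S_\psi(x,R)$. Identical lifts for arbitrary $w_j\in S_{\psi_j}(x_j,R)$ produce $w\in S_\psi(x,R)\subset S_\psi(y,\theta R)$, whence $\delta_{\psi_j}(y_j,w_j)=\delta_\psi(y,w)<\theta R$. I expect this freezing step to be the only non-automatic move; the rest is bookkeeping around the additive decomposition, and in particular one should verify that strict inequalities are preserved at each step (they are, since adding or removing zero-contribution coordinates does not change $\delta_\psi$).
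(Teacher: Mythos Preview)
The paper does not give its own proof of this lemma; it is quoted as Lemma~6 from \cite{Forzani} and used as a black box. Your argument is correct and is exactly the natural proof one would expect: the additive decomposition $\delta_\psi(x_0,x)=\sum_j \delta_{\psi_j}(x_{0,j},x_j)$ immediately yields the two inclusions, the forward engulfing follows by applying the factor-wise engulfing and summing, and the converse follows by freezing all but one coordinate so that the inert factors contribute zero to $\delta_\psi$.

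One small wording issue: you write ``a sum of $m$ nonnegative numbers is strictly less than $R$ iff each is,'' which is false as stated (the ``only if'' direction holds, the ``if'' does not). What you actually use, and what is correct, is the pair of one-way implications: $\sum_j a_j < R \Rightarrow a_j < R$ for each $j$ (giving the first inclusion), and $a_j < R$ for each $j \Rightarrow \sum_j a_j < mR$ (giving the second). Rephrase accordingly and the argument is clean.
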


As a consequence of Lemma \ref{lem:tensor},
\[
S_{R}(x,z) \subset S_R(x) \times S_R(z) \subset S_{2R}(x,z)
\]
and
\[
S_R(x,z) \subset Q_R(x,z) \subset S_{(n+1)R}(x,z) 
\]
for all $(x,z) \in \R^{n+1}$ and $R>0$.

As discussed in \cite[Section 7.1]{Maldonado}, 
$h''(z) = \abs{z}^{1/s-2}$ is a Muckenhoupt $A_{\infty}(\R)$ weight.
In particular, the following lemma holds. 
See \cite[Section 9.3]{Grafakos} for definitions and properties of the class $A_{\infty}(\R)$. 

\begin{lem}\label{lem:A-infty}
Given $0 < \varepsilon<1$, there is $0 < \varepsilon_0<1$, depending only on $\varepsilon$ and $0 < s < 1$, such that for any section $S_R=S_R(z_0)$ and any measurable set $E \subset S_R$,
\[
\frac{\abs{E}}{\abs{S_R}} < \varepsilon_0 \quad \hbox{implies} \quad \frac{\mu_h(E)}{\mu_h(S_R)} < \varepsilon.
\]
\end{lem}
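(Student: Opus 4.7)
The plan is to recognize this as a direct consequence of the standard characterization of Muckenhoupt $A_\infty$ weights, applied to the power weight $h''(z) = |z|^{1/s-2}$. The first step is to observe that since $0 < s < 1$, the exponent $\alpha := 1/s - 2$ satisfies $\alpha > -1$. Power weights $|z|^{\alpha}$ on $\R$ with $\alpha > -1$ are locally integrable and belong to the Muckenhoupt class $A_\infty(\R)$; concretely, they satisfy the $A_p(\R)$ condition for every $p > \max\{1, 1+\alpha\}$, with $A_p$ constants depending only on $\alpha$ (hence only on $s$). This is a textbook fact (see, e.g., \cite[Example~9.1.7]{Grafakos}).

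The second step is to invoke the standard quantitative characterization of $A_\infty$: a weight $w$ belongs to $A_\infty(\R)$ if and only if for every $\varepsilon \in (0,1)$ there exists $\varepsilon_0 = \varepsilon_0(\varepsilon, [w]_{A_\infty}) \in (0,1)$ such that
\[
\frac{|E|}{|I|} < \varepsilon_0 \quad \text{implies} \quad \frac{w(E)}{w(I)} < \varepsilon
\]
for every interval $I \subset \R$ and every measurable $E \subset I$. This appears as one of the equivalent definitions of $A_\infty$ in \cite[Theorem~9.3.3]{Grafakos}. Applied to $w(z) = |z|^{1/s-2}$, we obtain such an $\varepsilon_0$ depending only on $\varepsilon$ and $s$.

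The third step is to connect the sections $S_R(z_0)$ with intervals. By definition, $S_R(z_0) = \{z \in \R : h(z) - h(z_0) - h'(z_0)(z - z_0) < R\}$ is the sublevel set of a strictly convex, continuous function of one real variable, hence an open interval of $\R$ containing $z_0$. Moreover, Lemma \ref{lem:h-integral} identifies $\mu_h(A) = \int_A h''(z)\, dz = \int_A |z|^{1/s - 2}\, dz$ for every Borel set $A \subset \R$, so $\mu_h$ is exactly the weighted measure $w(z)\, dz$ associated with $w = h''$. Plugging the interval $I = S_R(z_0)$ and the measurable subset $E \subset S_R(z_0)$ into the $A_\infty$ statement from the previous step yields the claim.

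There is no serious obstacle: the argument is essentially a citation of a classical $A_\infty$ characterization, with the only two routine verifications being (i) that sections for $h$ are genuine intervals and (ii) that the power-weight exponent $1/s - 2$ is strictly larger than $-1$ throughout the range $0 < s < 1$. Both are immediate from the definitions.
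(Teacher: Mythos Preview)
Your proposal is correct and takes essentially the same approach as the paper: the paper does not give a self-contained proof but simply records, just before the statement, that $h''(z)=|z|^{1/s-2}$ is a Muckenhoupt $A_\infty(\R)$ weight (citing \cite[Section~7.1]{Maldonado}) and refers to \cite[Section~9.3]{Grafakos} for the relevant $A_\infty$ characterization, which is exactly the route you spell out. Your added remarks that $1/s-2>-1$ for $0<s<1$ and that each $S_R(z_0)$ is an interval are the only routine verifications needed and are handled correctly.
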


We can now establish Theorem \ref{thm:DC-engulf} for $\psi=\Phi$.

\begin{cor}\label{lem:doubling}
We have $\mu_{\varphi} \in (DC)_{\varphi}$ and $\mu_{h} \in (DC)_h$, so that (1)--(4) in Theorem \ref{thm:DC-engulf} hold and are equivalent for $\varphi$ and $h$. Moreover, the following statements hold
and are equivalent.
\begin{enumerate}
\item  $\mu_{\Phi} \in (DC)_\Phi$ with corresponding doubling constant $C_d = C_d(n,s)>0$;
 
\item $\Phi$ satisfies the engulfing property with corresponding constant $\theta = \theta(n,s)>0$;

\item there are positive constants $C= C(n,s),~c = c(n,s)$ such that
	\[
	c R^{n+1} \leq \abs{S_R(x,z)} \mu_{\Phi}(S_R(x,z)) \leq C R^{n+1}
	\]
	for all sections $S_R(x,z)$;
\item $\delta_\Phi$ is a quasi-distance with constant $K = K(n,s)\geq1$.
\end{enumerate}
Consequently, $\mu_{\varphi}$, $\mu_h$ and $\mu_{\Phi}$ satisfying the doubling estimate \eqref{eq:reversedoubling}.
\end{cor}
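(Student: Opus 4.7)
The plan is to first establish the doubling property $(DC)$ for each of the separated-variable pieces $\varphi$ and $h$, apply Theorem \ref{thm:DC-engulf} to get properties (1)--(4) for each, transfer the engulfing property to $\Phi = \varphi + h$ via Lemma \ref{lem:tensor}, and finally re-apply Theorem \ref{thm:DC-engulf} to obtain (1)--(4) for $\Phi$.

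For $\varphi$ the verification is immediate: $\nabla\varphi(x) = x$ gives $\mu_\varphi$ equal to Lebesgue measure, and by \eqref{eq:sections-balls} sections are Euclidean balls centered at $x_0$, which is automatically their center of mass. Hence $\frac{1}{2} S_\varphi(x_0,R) = B_{\sqrt{R/2}}(x_0)$ and doubling holds with constant $C_d = 2^n$.

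The heart of the argument is verifying $\mu_h \in (DC)_h$. I would use the explicit form $h(z) = \frac{s^2}{1-s}|z|^{1/s}$ and exploit its scaling identity $h(\lambda z) = \lambda^{1/s} h(z)$, which yields $S_h(\lambda z_0, \lambda^{1/s} R) = \lambda S_h(z_0, R)$ and reduces the verification to the normalized cases $z_0 \in \{0, 1\}$ (together with $-z_0$ by symmetry). For $z_0 = 0$, sections are symmetric intervals $(-r,r)$ with $r = (R(1-s)/s^2)^s$ whose center of mass is $0$; a direct integration of $h''(z) = |z|^{1/s-2}$ yields the doubling ratio $2^{1/s-1}$. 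For $z_0 = 1$ I would split into two regimes: for small $R$, the section is trapped in a fixed interval bounded away from $0$, where $h$ is smooth with $h''$ bounded above and below by positive constants, and doubling reduces to that of Lebesgue measure; for large $R$, the section may cross $\{z=0\}$, and here the $A_\infty$ property of $h''$ provided by Lemma \ref{lem:A-infty}, combined with the trivial doubling of Lebesgue measure on intervals, yields the required estimate.

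Once $(DC)$ is established for $\varphi$ and $h$, Theorem \ref{thm:DC-engulf} gives properties (1)--(4) for each, in particular the engulfing property with constants $\theta_\varphi$ and $\theta_h$ depending only on $n$ and $s$. Then applying Lemma \ref{lem:tensor} with $m=2$, $\psi_1 = \varphi$, $\psi_2 = h$, we obtain the engulfing property for $\Phi$ with constant $\theta = 2\max\{\theta_\varphi,\theta_h\}$. Theorem \ref{thm:DC-engulf} applied to $\Phi$ then yields the equivalence of (1)--(4) with constants depending only on $n$ and $s$, and the reverse doubling estimate \eqref{eq:reversedoubling} for each of $\mu_\varphi$, $\mu_h$, and $\mu_\Phi$ is the last assertion of that theorem. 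The main obstacle will be the doubling for $h$ at $z_0 = 1$ with $R$ large, where the section straddles the singular/degenerate point $z=0$; this is precisely where the $A_\infty$ property of $h''$ plays an essential role and cannot be bypassed by smoothness or scaling arguments alone.
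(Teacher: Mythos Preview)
Your overall architecture matches the paper exactly: verify $(DC)$ for $\varphi$ and $h$ separately, invoke Theorem~\ref{thm:DC-engulf} to obtain engulfing for each, pass engulfing to $\Phi$ via Lemma~\ref{lem:tensor}, and apply Theorem~\ref{thm:DC-engulf} once more. Your treatment of $\varphi$ is identical to the paper's.

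The only difference is in how you handle $h$. The paper dispatches this in one line: since $h''(z)=|z|^{1/s-2}$ is an $A_\infty(\R)$ weight (stated just before Lemma~\ref{lem:A-infty}), the measure $\mu_h$ is automatically doubling on intervals, and since every section $S_h(z_0,R)$ is an interval with the midpoint as center of mass, this is precisely $(DC)_h$. Your scaling-plus-case-analysis route (reduce to $z_0\in\{0,1\}$, handle small and large $R$ separately) is correct but circuitous, and in the hardest case you still fall back on the $A_\infty$ property anyway---so the detour does not bypass the key ingredient. One small caution: Lemma~\ref{lem:A-infty} as stated records only one direction of the $A_\infty$ condition; for the doubling conclusion you need the full $A_\infty$ membership of the power weight $|z|^{1/s-2}$ (which is standard and is what the paper cites), not just that lemma.
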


\begin{proof}
By \eqref{eq:sections-balls}, we can write 
\[
\mu_{\varphi}(S_R(x_0))
	= |B_{\sqrt{2R}}(x_0)|
	\leq 2^n \big|\frac{1}{2}B_{\sqrt{2R}}(x_0) \big|
	=2^n\mu_{\varphi}\bigg(\frac{1}{2}S_R(x_0)\bigg)
\]
Hence $\varphi \in (DC)_{\varphi}$ with doubling constant $C_d^{\varphi} = C_d^{\varphi}(n)$.
Since $h''(z)$ is a Muckenhoupt $A_{\infty}(\R)$ weight for all $0 < s < 1$,
we have that 
$\mu_h \in (DC)_h$ with doubling constant $C_d^h = C_d^h(s)$.
It follows from Theorem \ref{thm:DC-engulf} that $\mu_\varphi$ and $\mu_h$ satisfy the engulfing
property and, by Lemma \ref{lem:tensor}, so does $\mu_\Phi$. Hence, 
the conclusion follows from Theorem \ref{thm:DC-engulf}.
\end{proof}

\begin{rem}\label{rem:r^s new}
There is a constant $q_s$, depending only on $s$, so that $S_{R}(0) = B_{q_sR^s}(0)$ for any $R>0$.
Indeed, $z \in S_{R}(0)$ if and only if 
\[ 
\delta_h(0,z) = h(z) = \frac{s^2}{1-s}\abs{z}^{1/s}<R
\]
which is equivalent to
\begin{equation}\label{eq:qs}
\abs{z} < \(\frac{1-s}{s^2}\)^s R^s =: q_sR^s.
\end{equation}
\end{rem}

\begin{notation}\label{note:2}
We will always use the following notation.
\begin{itemize}
	\item $\theta$ is the engulfing constant associated with $\Phi$.
	\item $K$ is the quasi-triangle constant associated with $\Phi$.
	\item $K_d$ is the the constant in \eqref{eq:reversedoubling}.
	\item $q_s$ is the constant in \eqref{eq:qs}.
\end{itemize}
\end{notation}

We end this section by presenting two lemmas that will be used later in the proofs.

\begin{lem}\label{lem:ordering}
Let $\psi \in C^1(\R)$ be a strictly convex function. 
If $x_0 < x_1 < x_2$, then
\[
\delta_\psi(x_0,x_1) < \delta_\psi(x_0,x_2) \quad \hbox{and} \quad
\delta_\psi(x_1,x_2) < \delta_\psi(x_0,x_2).
\]
Consequently, for any $x_0,x_1 \in \R$ and $R>0$, if $r>0$ is such that $S_\psi(x_1,r) \subset S_\psi(x_0,R)$, then $r \leq R$.
In particular, for any $(x_0,z_0), (x_1,z_1) \in \R^{n+1}$ and $R>0$, if $r>0$ is such that
$Q_r(x_1,z_1) \subset Q_R(x_0,z_0)$,  then $r \leq R$.
\end{lem}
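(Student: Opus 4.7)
The first task is to establish the two monotonicity inequalities for $\delta_\psi$ when $x_0<x_1<x_2$. My plan is to view $\delta_\psi(x_0,\cdot)$ as a function of its second argument; direct differentiation gives $\tfrac{d}{dt}\delta_\psi(x_0,t)=\psi'(t)-\psi'(x_0)$, which by strict convexity of $\psi$ (equivalently, strict monotonicity of $\psi'$) is strictly positive on $(x_0,\infty)$ and strictly negative on $(-\infty,x_0)$. Thus $\delta_\psi(x_0,\cdot)$ is strictly increasing for arguments past $x_0$, which gives $\delta_\psi(x_0,x_1)<\delta_\psi(x_0,x_2)$. For the second inequality, I will expand the difference $\delta_\psi(x_0,x_2)-\delta_\psi(x_1,x_2)$; after cancellation it equals $\delta_\psi(x_0,x_1)+[\psi'(x_1)-\psi'(x_0)](x_2-x_1)$, and both summands are strictly positive by strict convexity, delivering $\delta_\psi(x_1,x_2)<\delta_\psi(x_0,x_2)$.

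Next, I will deduce the one-dimensional section-containment statement. A preliminary observation is that strict monotonicity of $\psi'$ forces $\delta_\psi(x_0,y)\to\infty$ as $|y|\to\infty$ (indeed, $\delta_\psi(x_0,\cdot)$ grows at least linearly away from $x_0$), so every section $S_\psi(x_0,R)$ is an open bounded interval $(a_0,b_0)$ with $\delta_\psi(x_0,a_0)=\delta_\psi(x_0,b_0)=R$. Write similarly $S_\psi(x_1,r)=(a_1,b_1)$. The hypothesis $S_\psi(x_1,r)\subset S_\psi(x_0,R)$ translates to $a_0\leq a_1<x_1<b_1\leq b_0$, and in particular $x_1\in S_\psi(x_0,R)$. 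I argue by contradiction, assuming $r>R$. If $x_0=x_1$, strict monotonicity of $\delta_\psi(x_0,\cdot)$ past $x_0$ forces $b_1>b_0$, contradicting $b_1\leq b_0$. If $x_0<x_1$ (the case $x_1<x_0$ is symmetric, using the endpoint $a_1$ instead), I apply the second monotonicity inequality to the triple $x_0<x_1<b_1$ to get $r=\delta_\psi(x_1,b_1)<\delta_\psi(x_0,b_1)$; then the first monotonicity inequality applied to $x_0<b_1\leq b_0$ gives $\delta_\psi(x_0,b_1)\leq\delta_\psi(x_0,b_0)=R$. Chaining yields $r<R$, a contradiction.

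Finally, for the cube statement in $\R^{n+1}$, I exploit the product structure $Q_R(x_0,z_0)=\prod_{i=1}^n S_{\varphi_i}(x_{0,i},R)\times S_h(z_0,R)$. Inclusion of product sets reduces to coordinate-wise inclusion, so $Q_r(x_1,z_1)\subset Q_R(x_0,z_0)$ immediately yields, for instance, $S_{\varphi_1}(x_{1,1},r)\subset S_{\varphi_1}(x_{0,1},R)$. Applying the previous one-dimensional conclusion to the strictly convex $C^1$ function $\varphi_1$ delivers $r\leq R$.

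The main subtlety is the contradiction argument in the section statement: one has to choose the correct endpoint ($b_1$ when $x_0<x_1$, $a_1$ when $x_1<x_0$) and invoke the monotonicity inequalities twice with the right orderings of three points. Everything else — checking that sections are bounded, and reducing the cube case to the section case via the product structure — is routine.
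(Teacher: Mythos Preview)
Your proof is correct and follows essentially the same approach as the paper: both arguments reduce the two monotonicity inequalities to the strict monotonicity of $\psi'$, and both establish the section-containment statement by chaining those two inequalities along the right endpoint of the inner section (the paper does this directly rather than by contradiction, but the chain $r=\delta_\psi(x_1,b_1)\leq\delta_\psi(x_0,b_1)\leq\delta_\psi(x_0,b_0)=R$ is identical). Your algebraic identity $\delta_\psi(x_0,x_2)-\delta_\psi(x_1,x_2)=\delta_\psi(x_0,x_1)+[\psi'(x_1)-\psi'(x_0)](x_2-x_1)$ is a slightly cleaner route to the second inequality than the paper's auxiliary function $\Psi$, but the content is the same.
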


\begin{proof}
By the convexity of $\psi$, 
\begin{align*}
\psi'(x_0) < \frac{\psi(x_1) - \psi(x_2)}{x_1-x_2},
\end{align*}
so that
\begin{align*}
\delta_\psi(x_0,x_1)
	&= \psi(x_1) - \psi(x_0) - \psi'(x_0)(x_1-x_0) \\
	&< \psi(x_2) - \psi(x_0) - \psi'(x_0)(x_2-x_0) = \delta_\psi(x_0,x_2).
\end{align*}

Next, define a function $\Psi: \R \to \R$ by 
\[
\Psi(x) := [\psi(x_1) + \psi'(x_1)(x-x_1)] -[\psi(x_0) + \psi'(x_0)(x-x_0)]. 
\]
By the convexity of $\psi$, $\Psi'(x) = \psi'(x_1) - \psi'(x_0) >0$,
so $\Psi$ is increasing. Since
\[
\Psi(x_1) = \psi(x_1)  -\psi(x_0) + \psi'(x_0)(x_1-x_0) = \delta_\psi (x_0,x_1)>0,
\]
we know that
\begin{align*}
0 	
	< \Psi(x_2)
	&=  [\psi(x_1) + \psi'(x_1)(x_2-x_1)] -[\psi(x_0) + \psi'(x_0)(x_2-x_0)]\\
	&=[-\psi(x_2) + \psi(x_1) + \psi'(x_1)(x_2-x_1)] -[- \psi(x_2) + \psi(x_0) + \psi'(x_0)(x_2-x_0)].
\end{align*}
Hence,
\begin{align*}
\delta_{\psi}(x_1,x_2)
	&= \psi(x_2) - \psi(x_1) - \psi'(x_1)(x_2-x_1) \\
	&<  \psi(x_2) - \psi(x_0) - \psi'(x_0)(x_2-x_0) 
	= \delta_{\psi}(x_0,x_2).
\end{align*}

Lastly, fix $x_0,x_1 \in \R$ and $R>0$. Suppose that $r>0$ is such that $S_{\psi}(x_1,r) \subset S_{\psi}(x_0,R)$.
Write
\begin{align*}
S_\psi(x_0,R) = (x_0^L, x_0^R), &\quad x_0^L  < x_0 < x_0^R \\
S_\psi(x_1,r) = (x_1^L, x_1^R), &\quad x_1^L  < x_1 < x_1^R.
\end{align*}
Without loss of generality, assume that $x_0 \leq x_1$ so that $x_0 \leq x_1 < x_1^R \leq x_0^R$.
Then
\begin{align*}
r 	= \delta_\psi(x_1,x_1^R) 
	\leq \delta_\psi(x_1,x_0^R) 
	\leq \delta_\psi(x_0,x_0^R)  = R.
\end{align*}
\end{proof}

\begin{lem}[Theorem 3.3.10 in \cite{Gutierrez}]\label{lem:Guti}
There exist constants $C_0 >0$, $p \geq 1$, depending only on $n$ and $s$, such that for $0 < r_1< r_2 \leq 1$, $t>0$ and $(x_1,z_1) \in S_{r_1t}(x_0,z_0)$, we have that
\[
S_{C_0(r_2-r_1)^{p}t}(x_1,z_1) \subset S_{r_2t}(x_0,z_0).
\]
\end{lem}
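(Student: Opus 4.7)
The statement is Theorem 3.3.10 of \cite{Gutierrez} transported to our setting, and my plan is simply to verify its hypotheses for $\Phi$ and to indicate why the resulting constants depend only on $n$ and $s$. Gutierrez's theorem applies to any strictly convex $C^1$ function whose Monge--Amp\`ere measure is doubling with respect to the center of mass on its sections, and the constants $C_0,p$ in his conclusion depend only on the dimension and on the doubling constant. All of these hypotheses are established for $\Phi$ in Corollary \ref{lem:doubling}: $\Phi$ is strictly convex and $C^1$; $\mu_\Phi \in (DC)_\Phi$ with doubling constant depending only on $n$ and $s$; $\Phi$ satisfies the engulfing property with constant $\theta = \theta(n,s)$; $\delta_\Phi$ is a quasi-distance with constant $K = K(n,s)$; and the measure-volume balance $\abs{S_R(x_0,z_0)}\mu_\Phi(S_R(x_0,z_0)) \approx R^{n+1}$ holds with constants depending only on $n$ and $s$. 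These are precisely the features of the underlying convex function that Gutierrez's argument consumes.

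To indicate the mechanism behind that argument: one first normalizes the section $S_{r_2 t}(x_0,z_0)$ by an affine map (John's lemma for convex bodies), so that its image sits between two concentric Euclidean balls of comparable size, and the transported Monge--Amp\`ere measure retains the doubling constant. Under this normalization, $(x_1,z_1)$ lies at a distance from $\partial S_{r_2 t}(x_0,z_0)$ that can be quantified in terms of the gap $r_2-r_1$. Using the engulfing property from Corollary \ref{lem:doubling} together with the doubling of $\mu_\Phi$, one iteratively shrinks sections centered at $(x_1,z_1)$ until they fit inside $S_{r_2 t}(x_0,z_0)$; tracking the number of shrinking steps needed turns $r_2-r_1$ into the polynomial factor $(r_2-r_1)^p$, with $C_0$ and $p$ determined explicitly by the doubling and engulfing constants of $\Phi$.

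The one point worth flagging is that our $\Phi$ has $D^2\Phi$ degenerating or blowing up on $\{z=0\}$ and, for $s>1/2$, fails to be $C^2$ there, which would obstruct any direct computational proof based on Taylor expansions or pointwise Hessian bounds. This obstacle is bypassed because Gutierrez's argument appeals only to the abstract doubling and engulfing properties, and those properties are established for $\Phi$ by combining Lemma \ref{lem:A-infty} (the $A_\infty$ nature of the weight $\abs{z}^{1/s-2}$) with the product structure $\Phi = \varphi + h$ via Lemma \ref{lem:tensor}. The only real check is that all of the constants assembled in Corollary \ref{lem:doubling} depend only on $n$ and $s$, which was done there; hence the constants $C_0$ and $p$ extracted from Gutierrez's proof remain controlled by $n$ and $s$ across the full range $0<s<1$.
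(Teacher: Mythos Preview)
Your proposal is correct and matches the paper's approach: the paper simply cites Theorem 3.3.10 of \cite{Gutierrez} without proof, and your write-up supplies exactly the verification that this citation is legitimate, namely that $\Phi$ satisfies the hypotheses (strict convexity, $C^1$, $\mu_\Phi\in(DC)_\Phi$) via Corollary \ref{lem:doubling}, with all constants depending only on $n$ and $s$.
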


\section{Reductions of Theorem \ref{thm:harnack extension}}\label{sec:reductions}

In this section we show that, after a series of reductions,
Theorem \ref{thm:harnack extension} will follow from Theorem \ref{thm:reduction3}.
First, we show in Theorem \ref{thm:reduction1} that it is enough to consider Monge--Amp\`ere cubes,  
instead of Monge--Amp\`ere sections, and to take a nonnegative right hand side $f$.
The second reduction, Theorem \ref{thm:reduction2},  demonstrates how we only need to show that
 the supremum of $U$ in a small cube is controlled by the value of $U$ at the center of the cube.
Finally, Theorem \ref{thm:reduction3} is a normalized statement which says that if 
$U$ is controlled at the center of the cube and $\norm{f}_{L^{\infty}}$ is controlled with respect to the size of the section,  
then $U$ is uniformly bounded in a smaller cube. 
Hence, the rest of the paper will be devoted to proving Theorem \ref{thm:reduction3}
and Theorem \ref{thm:harnack Ls}. 

\subsection{First reduction}

We first show that it is enough to prove Theorem \ref{thm:harnack extension} in Monge--Amp\`ere cubes and with $f \geq 0$.

\begin{thm}\label{thm:reduction1}
Let $\Omega$ be a bounded domain,
$a^{ij}(x):\Omega \to \R$ be bounded, measurable functions that satisfy \eqref{eq:ellipticity} and let $f \in L^{\infty}(\Omega)$ be nonnegative.
There exist positive constants 
$C_H = C_H(n,\lambda,\Lambda, s)>1$, 
$\kappa_1 = \kappa_1(n,s)<1$, 
and $\hat{K}_1 = \hat{K}_1(n,s)>1$
such that for every cube $Q_{\hat{K}_1R} = Q_{\hat{K}_1R}(x_0,z_0) \subset \Omega \times\R$ 
and every nonnegative solution
$U \in C^2(Q_{\hat{K}_1 R} \setminus \{z=0\}) \cap C({Q}_{\hat{K}_1 R})$ such that $U$ is symmetric across $\{z=0\}$ and $U_{z+} \in C(Q_{\hat{K}_1 R} \cap \{z\geq0\})$ to
\[
\begin{cases}
a^{ij}(x) \partial_{ij}U + \abs{z}^{2-\frac{1}{s}}\partial_{zz}U = 0 & \hbox{in}~Q_{\hat{K}_1 R} \cap \{z\not=0\} \\
-\partial_{z+}U(x,0) = f & \hbox{on}~Q_{\hat{K}_1 R} \cap \{z=0\},
\end{cases}
\]
we have that
\begin{equation}\label{eq:harnack-reduction1}
\sup_{Q_{\kappa_1 R}} U 
\leq C_H \( \inf_{Q_{\kappa_1 R}} U + \norm{f}_{L^{\infty}(Q_{\hat{K}_1 R}\cap \{z=0\})}R^s\).
\end{equation}
\end{thm}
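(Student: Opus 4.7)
The plan is to derive Theorem \ref{thm:reduction1} from Theorem \ref{thm:reduction2}, which, as advertised in the roadmap of this section, furnishes a one-sided bound of the form
\begin{equation*}
\sup_{Q_{\kappa_2 r}(y_0)}U\leq C\bigl(U(y_0)+\norm{f}_{L^\infty(Q_{\hat{K}_2 r}(y_0)\cap\{z=0\})}\,r^s\bigr)
\end{equation*}
for any cube $Q_{\hat{K}_2 r}(y_0)\subset\Omega\times\R$ with center $y_0$. The key idea is to apply this estimate at a \emph{minimum point} of $U$ in $Q_{\kappa_1 R}(x_0,z_0)$ and then transfer the resulting supremum bound back onto the original cube via the engulfing property of $\Phi$. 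Crucially, the hypothesis $f\geq 0$ is carried from Theorem \ref{thm:reduction1} into Theorem \ref{thm:reduction2}, and nonnegativity of $U$ gives us a well-defined infimum to play against.

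First, by continuity of $U$ on the compact set $\overline{Q_{\kappa_1 R}(x_0,z_0)}$, I would pick $(x_m,z_m)\in\overline{Q_{\kappa_1 R}(x_0,z_0)}$ realizing $m:=\inf_{Q_{\kappa_1 R}(x_0,z_0)}U$. Next I would select $\kappa_1=\kappa_1(n,s)$, $\hat{K}_1=\hat{K}_1(n,s)$, and an auxiliary radius $\rho=cR$ with $c=c(n,s)>0$ so that both
\begin{align*}
Q_{\hat{K}_2\rho}(x_m,z_m)&\subset Q_{\hat{K}_1 R}(x_0,z_0),\\
Q_{\kappa_1 R}(x_0,z_0)&\subset Q_{\kappa_2\rho}(x_m,z_m)
\end{align*}
hold simultaneously. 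Both inclusions follow from the engulfing property of $\Phi$ (Corollary \ref{lem:doubling}) with constant $\theta=\theta(n,s)$. Indeed, since $(x_m,z_m)\in Q_{\kappa_1 R}(x_0,z_0)$, engulfing gives $Q_{\kappa_1 R}(x_0,z_0)\subset Q_{\theta\kappa_1 R}(x_m,z_m)$, which secures the second inclusion if $\rho\geq\theta\kappa_1 R/\kappa_2$; a symmetric engulfing argument yields the first provided $\hat{K}_2\rho\leq\hat{K}_1 R/\theta$. Compatibility is ensured by fixing, for example, $\rho=\theta\kappa_1 R/\kappa_2$ together with $\kappa_1=\kappa_2\hat{K}_1/(\theta^2\hat{K}_2)$ and $\hat{K}_1$ sufficiently large.

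With this setup, Theorem \ref{thm:reduction2} applied at $(x_m,z_m)$ with radius $\rho$ and the monotonicity of the supremum over nested cubes give
\begin{equation*}
\sup_{Q_{\kappa_1 R}(x_0,z_0)}U\leq \sup_{Q_{\kappa_2\rho}(x_m,z_m)}U \leq C\bigl(U(x_m,z_m)+\norm{f}_{L^\infty(Q_{\hat{K}_1 R}\cap\{z=0\})}\rho^s\bigr)\leq C_H\bigl(m+\norm{f}_{L^\infty(Q_{\hat{K}_1 R}\cap\{z=0\})}R^s\bigr),
\end{equation*}
which is precisely \eqref{eq:harnack-reduction1}.

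The main obstacle lies further upstream, in proving Theorem \ref{thm:reduction2} itself: the present argument is a clean engulfing-based manipulation, but the substance of the Harnack inequality is packaged into the local boundedness estimate at the center, which will require the barrier construction and sliding-paraboloid machinery of Sections \ref{sec:paraboloids}--\ref{sec:main proofs}. A minor subtlety within this present reduction is that the minimizer $(x_m,z_m)$ may lie on the singular set $\{z=0\}$, so Theorem \ref{thm:reduction2} must be formulated to apply even when the center of the cube sits on the Neumann boundary; since the hypotheses on $U$ in Theorem \ref{thm:reduction1} already include continuity up to $\{z=0\}$ and the even symmetry across it, no additional assumption is needed at this step.
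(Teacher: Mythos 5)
Your proposal is correct and takes essentially the same route as the paper's own proof of Theorem \ref{thm:reduction1} from Theorem \ref{thm:reduction2}: pick a point $(\tilde x,\tilde z)$ in the smaller cube, use the engulfing property of $\Phi$ to nest the cubes so that $Q_{\kappa_1 R}(x_0,z_0)\subset Q_{\kappa_2\cdot}(\tilde x,\tilde z)$ and $Q_{\hat K_3\cdot}(\tilde x,\tilde z)\subset Q_{\hat K_1 R}(x_0,z_0)$, then apply the centered estimate. The paper avoids the auxiliary radius $\rho$ by simply fixing $\theta\kappa_1\leq\kappa_2$ and $\theta\hat K_3\leq\hat K_1$ with the same $R$, and applies Theorem \ref{thm:reduction2} at an arbitrary $(\tilde x,\tilde z)\in Q_{\kappa_1 R}(x_0,z_0)$ before taking the infimum at the end rather than selecting a minimizer (and you wrote $\hat K_2$ where the statement of Theorem \ref{thm:reduction2} uses $\hat K_3$), but these are purely cosmetic variations.
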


\begin{proof}[Proof of Theorem \ref{thm:harnack extension} from Theorem \ref{thm:reduction1}]
Observe that
\begin{align*}
Q_{\hat{K}_1R}(x_0,z_0)
	&\subset Q_{\theta^2\hat{K}_1R}(x_0,z_0)
	\subset S_{(n+1)\theta^2\hat{K}_1R}(x_0,z_0)
\end{align*}
and
\[
S_{\kappa_1R}(x_0,z_0) \subset Q_{\kappa_1R}(x_0,z_0)
\]
Let $\hat{K}_0 = (n+1)\theta^2\hat{K}_1$ and $\kappa_0 = \kappa_1$.

\medskip

\noindent{\bf Case 1}: $Q_{\hat{K}_1R}(x_0,z_0) \cap \{z=0\} = \varnothing$. By Theorem \ref{thm:reduction1} and the inclusion above, we get
\begin{align*}
\sup_{S_{\kappa_0 R}} U
	\leq \sup_{Q_{\kappa_1 R}} U 
	\leq C_H  \inf_{Q_{\kappa_1 R}} U
	\leq C_H  \inf_{S_{\kappa_0 R}} U.
\end{align*}

\medskip

\noindent{\bf Case 2}: $Q_{\hat{K}_1R}(x_0,z_0) \cap \{z=0\} \not= \varnothing$ and $z_0 = 0$.
Define
\[
V = {U} - \norm{f}_{L^{\infty}(Q_{\hat{K}_1R} \cap \{z=0\})}\abs{z}  + \norm{f}_{L^{\infty}(Q_{\hat{K}_1R} \cap \{z=0\})}|S_{\hat{K}_1R}(0)|.
\]

We claim that $V$ is nonnegative in $Q_{\hat{K}_1R}$.
Indeed, let $(x,z) \in Q_{\hat{K}_1R}$, so that $z \in S_{\hat{K}_1R}(0) \subset \R$.
By Remark \ref{rem:r^s new}, $S_{\hat{K}_1R}(0) =B_{q_s\hat{K}_1^sR^s}(0)$.
Thus, $\abs{z} \leq | B_{q_s\hat{K}_1^sR^s}(0)|= |S_{\hat{K}_1R}(0)|$. 
Consequently, for any $(x,z) \in Q_{\hat{K}_1R}$, we have that 
\[
- \norm{f}_{L^{\infty}(Q_{\hat{K}_1R}\cap\{z=0\})}\abs{z} + \norm{f}_{L^{\infty}(Q_{\hat{K}_1R}\cap\{z=0\})}|S_{\hat{K}_1R}(0)|\geq0,
\]
so that $V \geq U \geq 0$  in $Q_{\hat{K}_1R}$.

Next, notice that $V$ is symmetric across $\{z=0\}$ and that
$V \in C^2(Q_{\hat{K}_1 R} \setminus \{z=0\}) \cap C({Q}_{\hat{K}_1 R})$, 
$V_{z+} \in C(Q_{\hat{K}_1 R} \cap \{z\geq0\})$. 
Moreover, for $(x,z) \in Q_{\hat{K}_1R} \cap \{z \ne 0\}$, it is clear that
 \[
a^{ij}(x) \partial_{ij}{V} + \abs{z}^{2-\frac{1}{s}}\partial_{zz} {V} = a^{ij}(x) \partial_{ij}{U} + \abs{z}^{2-\frac{1}{s}}\partial_{zz} {U} = 0
 \]
and for $(x,0) \in Q_{\hat{K}_1R} \cap \{z = 0\}$,
\begin{align*}
-\partial_{z+} V(x,0) = f(x) + \norm{f}_{L^{\infty}(Q_{\hat{K}_1R}\cap\{z=0\})} := g(x)  \geq 0.
 \end{align*}
Therefore, $V$ is a nonnegative solution to 
\[
\begin{cases}
a^{ij}(x) \partial_{ij}{V} + \abs{z}^{2-\frac{1}{s}}\partial_{zz} {V} = 0 & \hbox{in}~Q_{\hat{K}_1R} \cap \{z \ne 0\}\\
-\partial_{z+}{V}=g & \hbox{on}~Q_{\hat{K}_1R} \cap \{z = 0\}.
\end{cases}
\]

Therefore, by Theorem \ref{thm:reduction1} applied to $V$ and  using that $|S_{\hat{K}_1R}(0)| = | B_{q_s\hat{K}_1^sR^s}(0)| = C_{n,s}R^s$,
\begin{align*}
\sup_{S_{\kappa_0 R}}{U}
&\leq \sup_{Q_{\kappa_1 R}}{U}\\
&\leq \sup_{Q_{\kappa_1 R}}V\\
&\leq C_H \( \inf_{Q_{\kappa_1 R}} V + \norm{g}_{L^{\infty}(Q_{\hat{K}_1 R}\cap \{z=0\})}R^s\)\\
&= C_H \bigg(  \inf_{Q_{\kappa_1 R}} \({U} - \norm{f}_{L^{\infty}(Q_{\hat{K}_1R \cap \{z=0\}})}\abs{z}  + \norm{f}_{L^{\infty}(Q_{\hat{K}_1R}\cap \{z=0\})}|S_{\hat{K}_1 R}(0)|\) \\
	&\qquad \qquad+ \norm{f +  \norm{f}_{L^{\infty}(Q_{\hat{K}_1R} \cap \{z=0\})}}_{L^{\infty}(Q_{\hat{K}_1 R}\cap \{z=0\})}R^s\bigg)\\
	&\leq C_H \bigg(  \inf_{Q_{\kappa_1 R}} {U}  +(C_{n,s} +2)\norm{f}_{L^{\infty}(Q_{\hat{K}_1R}\cap\{z=0\})}R^s \bigg)	\\
	&\leq C_H' \bigg(  \inf_{S_{\kappa_0 R}} {U}  +\norm{f}_{L^{\infty}(S_{\hat{K}_0R}\cap\{z=0\})}R^s \bigg).
\end{align*}

\medskip

\noindent{\bf Case 3}: $Q_{\hat{K}_1R}(x_0,z_0) \cap \{z=0\} \not= \varnothing$ and $z_0 \not= 0$.
In this case, $0 \in S_{\hat{K}_1R}(z_0)$. 
Then, by the engulfing property,
\begin{align*}
Q_{\hat{K}_1R}(x_0,z_0)
	&= Q_{\hat{K}_1R}({x_0}) \times S_{\hat{K}_1R}({z}_0)\\
	&\subset Q_{\theta\hat{K}_1R}({x}_0) \times S_{\theta\hat{K}_1R}(0)
	= Q_{\theta\hat{K}_1R}({x}_0,0).
\end{align*}
Again, applying the engulfing property,
\begin{align*}
Q_{\theta\hat{K}_1R}({x_0},0)
	& =Q_{\theta\hat{K}_1R}({x_0}) \times S_{\theta\hat{K}_1R}(0)\\
	 &\subset Q_{\theta^2\hat{K}_1R}({x_0}) \times S_{\theta^2\hat{K}_1R}({z_0})
	= Q_{\theta^2\hat{K}_1R}(x_0,z_0).
\end{align*}

Define
\[
V = {U} - \norm{f}_{L^{\infty}(Q_{\theta\hat{K}_1R}(x_0,0)\cap\{z=0\})}\abs{z}  + \norm{f}_{L^{\infty}(Q_{\theta\hat{K}_1R}({x}_0,0)\cap\{z=0\})} |S_{\theta\hat{K}_1 R}(0)|.
\] 

We claim that $V$ is nonnegative in $Q_{\theta\hat{K}_1R}(x_0,0)$.
Let $(x,z) \in Q_{\theta\hat{K}_1R}(x_0,0)$, so that, by Remark \ref{rem:r^s new}, $z \in S_{\theta \hat{K}_1R}(0) = B_{q_s\theta^s \hat{K}_1^sR^s}(0) \subset \R$. In particular, 
 $\abs{z} \leq | B_{q_s\theta^s \hat{K}_1^sR^s}(0)|= |S_{\hat{K}_1\theta R}(0)|$.
Consequently, for any $(x,z) \in Q_{\hat{K}_1R}(x_0,z_0)$, we have that 
\[
- \norm{f}_{L^{\infty}(Q_{\theta\hat{K}_1R}(x_0,0)\cap\{z=0\})}\abs{z}  + \norm{f}_{L^{\infty}(Q_{\theta\hat{K}_1R}({x}_0,0)\cap\{z=0\})}|S_{\theta\hat{K}_1 R}(0)|
\geq0,
\]
so that $V \geq U \geq 0$ in $Q_{\theta\hat{K}_1R}(x_0,0)$.

Next, notice that $V$ is symmetric across $\{z=0\}$ and that
$V \in C^2(Q_{\theta\hat{K}_1R}(x_0,0) \setminus \{z=0\}) \cap C(Q_{\theta\hat{K}_1R}(x_0,0) )$, 
$V_{z+} \in C(Q_{\theta\hat{K}_1R}(x_0,0)  \cap \{z\geq0\})$. 
Moreover, for $(x,z) \in Q_{\theta\hat{K}_1R}(x_0,0)  \cap \{z \ne 0\}$, it is clear that
 \[
a^{ij}(x) \partial_{ij}{V} + \abs{z}^{2-\frac{1}{s}}\partial_{zz} {V} =  a^{ij}(x) \partial_{ij}{U} + \abs{z}^{2-\frac{1}{s}}\partial_{zz} {U}=0
 \]
and for $(x,0) \in Q_{\theta\hat{K}_1R}(x_0,0)  \cap \{z = 0\}$,
\begin{align*}
-\partial_{z+} V(x,0)
 = f(x) + \norm{f}_{L^{\infty}(Q_{\theta\hat{K}_1R}(x_0,0) \cap\{z=0\})} := g(x)  \geq 0.
 \end{align*}
Therefore, $V$ is a nonnegative solution to 
\[
\begin{cases}
a^{ij}(x) \partial_{ij}{V} + \abs{z}^{2-\frac{1}{s}}\partial_{zz} {V} = 0 & \hbox{in}~Q_{\hat{K}_1R}(x_0,z_0)  \cap \{z \ne 0\}\\
-\partial_{z+}{V}=g & \hbox{on}~Q_{\hat{K}_1R}(x_0,z_0)  \cap \{z = 0\}.
\end{cases}
\]

Applying Theorem \ref{thm:reduction1} to $V$
and using that $|S_{\theta\hat{K}_1R}(0)| = | B_{q_s\theta^s\hat{K}_1^sR^s}(0)| = C_{n,s}R^s$, 
 we get
\begin{align*}
\sup_{S_{\kappa_0 R}(x_0,z_0)}{U}
&\leq \sup_{Q_{\kappa_1 R}(x_0,z_0)}{U}\\
&\leq \sup_{Q_{\kappa_1 R}(x_0,z_0)}V \\
&\leq C_H \( \inf_{Q_{\kappa_1 R}(x_0,z_0)} V + \norm{g}_{L^{\infty}(Q_{\hat{K}_1 R}(x_0,z_0)\cap \{z=0\})}R^s\)\\
&= C_H \bigg( \inf_{Q_{\kappa_1 R}(x_0,z_0)} \({U} - \norm{f}_{L^{\infty}(Q_{\theta\hat{K}_1R}(x_0,0))}\abs{z}  +\norm{f}_{L^{\infty}(Q_{\theta\hat{K}_1R}(x_0,0))}|S_{\theta\hat{K}_1 R}(0)|\)\\
&\qquad\qquad + \norm{f + \norm{f}_{L^{\infty}(Q_{\theta\hat{K}_1R}(x_0,0) \cap \{z=0\})}}_{L^{\infty}(Q_{\hat{K}_1 R}(x_0,z_0)\cap \{z=0\})}R^s\bigg)\\
&\leq C_H \bigg( \inf_{Q_{\kappa_1 R}(x_0,z_0)} U+ (C_{n,s}+2)\norm{f}_{L^{\infty}(Q_{\theta\hat{K}_1 R}(x_0,0)\cap \{z=0\})}R^s\bigg)\\
&\leq C_H' \bigg( \inf_{S_{\kappa_0 R}(x_0,z_0)} U+ \norm{f}_{L^{\infty}(S_{\hat{K}_0 R}(x_0,z_0)\cap \{z=0\})}R^s\bigg).
\end{align*}

Therefore, \eqref{eq:extension Harnack3} holds in all cases.

It remains to prove the H\"older estimate \eqref{eq:extension Holder}. 
The proof follows by a standard argument (see, for example, \cite[Sections 8.9 and 9.8]{GilbargTrudinger}). We provide the details for completeness. 
Let $0 < r \leq R$ and define
\[
M(r) = \sup_{S_r(x_0,z_0)} U \quad \hbox{and} \quad m(r) = \inf_{S_r(x_0,z_0)}U.
\]
Apply \eqref{eq:extension Harnack3} to $M(\hat{K}_0r) - U \geq 0$ in $S_{\hat{K}_0r}(x_0,z_0)$ to obtain
\[
\sup_{S_{\kappa_0r}(x_0,z_0)}(M(\hat{K}_0r) -U) 
	\leq C_H \(\inf_{S_{\kappa_0 r}(x_0,z_0)}(M(\hat{K}_0r)-U) 
	+\norm{f}_{L^{\infty}(S_{\hat{K}_0 r}(x_0,z_0)\cap \{z=0\})}r^s\).
\]
Therefore,
\begin{equation}\label{Mr1}
M(\hat{K}_0r) - m(\kappa_0 r) \leq C_H \(M(\hat{K}_0r) - M(\kappa_0 r)+\norm{f}_{L^{\infty}(S_{\hat{K}_0 R}(x_0,z_0)\cap \{z=0\})}r^s\).
\end{equation}
Similarly, apply \eqref{eq:extension Harnack3} to $U- m(\hat{K}_0r) \geq 0$ in $S_{\hat{K}_0r}(x_0,z_0)$ to obtain 
\[
\sup_{S_{\kappa_0 r}(x_0,z_0)}(U -m(\hat{K}_0r)) \leq C_H\( \inf_{S_{\kappa_0 r}(x_0,z_0)}(U -m( \hat{K}_0r)+\norm{f}_{L^{\infty}(S_{\hat{K}_0 r}(x_0,z_0)\cap \{z=0\})}r^s\),
\]
so that
\begin{equation}\label{Mr2}
M(\kappa_0 r) -m(\hat{K}_0r) \leq C_H \(m(\kappa_0r) -m(\hat{K}_0r)+\norm{f}_{L^{\infty}(S_{\hat{K}_0 R}(x_0,z_0)\cap \{z=0\})}r^s\).
\end{equation}
Let $\w(r)= M(r) - m(r)$. Adding \eqref{Mr1} and \eqref{Mr2} together, we get
\begin{align*}
\w(\hat{K}_0r)+\w(\kappa_0  r)
	\leq C_H \(\w(\hat{K}_0r) - \w(\kappa_0r)+2\norm{f}_{L^{\infty}(S_{\hat{K}_0 R}(x_0,z_0)\cap \{z=0\})}r^s\).
\end{align*}
After rearranging,
\begin{align*}
\w(\kappa_0 r)
	&\leq \gamma \w(\hat{K}_0r)+\sigma(r), \quad \gamma = \frac{C_H-1}{C_H+1}<1
\end{align*}
where 
\[
\sigma(r) =\begin{cases}
\frac{2C_H}{C_H+1}\norm{f}_{L^{\infty}(S_{\hat{K}_0 R}(x_0,z_0)\cap \{z=0\})}r^s & \hbox{if}~S_{\hat{K}_0 R}(x_0,z_0)\cap \{z=0\} \not= \varnothing \\
0 & \hbox{if}~S_{\hat{K}_0 R}(x_0,z_0)\cap \{z=0\} = \varnothing
\end{cases} 
\]
is a non-decreasing function of $r$.
Note that $\gamma = \gamma(n,\lambda,\Lambda,s)$.
By \cite[Lemma 8.23]{GilbargTrudinger}, for any $\mu \in (0,1)$, 
there are constants $C = C(n,\lambda,\Lambda,s)>0$ and  $\alpha_1=(1-\mu)\log \gamma/ \log (\kappa_0/\hat{K}_0)$ such that
\begin{align*}
\w(\hat{K}_0r)  \leq C\(\(\frac{r}{R}\)^{\alpha_1}\w(\hat{K}_0R) + \sigma(r^{\mu}R^{1-\mu}) \).
\end{align*}
Choose $\mu = \mu(n,\lambda, \Lambda, s)$ so that $2\alpha_1 < \mu s$. Then,
\begin{align*}
\w( \hat{K}_0r)  
	&\leq C\(\frac{r}{ R}\)^{\alpha_1}\(\w(\hat{K}_0R) + \sigma(r^{\mu}R^{1-\mu})\( \frac{R}{r}\)^{\alpha_1} \)\\
	&\leq C\(\frac{r}{ R}\)^{\alpha_1}\( 2\sup_{S_{\hat{K}_0R}(x_0,z_0)} \abs{U}
		+ \frac{2C_H}{C_H+1}\norm{f}_{L^{\infty}(S_{\hat{K}_0 R}(x_0,z_0)\cap \{z=0\})}{r^{\mu s}R^{(1-\mu)s}R^{\alpha_1} r^{-\alpha_1}}\)\\
	&\leq \hat{C}_1\(\frac{\hat{K}_0r}{\hat{K}_0R}\)^{\alpha_1}\( \sup_{S_{\hat{K}_0R}(x_0,z_0)} \abs{U}
		+ \norm{f}_{L^{\infty}(S_{\hat{K}_0 R}(x_0,z_0)\cap \{z=0\})}R^s\).
\end{align*}
By taking $\hat{K}_0r = \delta_{\Phi}((x_0,z_0),(x,z))$, the estimate in \eqref{eq:extension Holder} follows.
\end{proof}

\subsection{Second reduction}

Next, we show that Theorem \ref{thm:reduction1} follows from the following result 
which says that the supremum of $U$ in a small cube can be controlled by its value at the center. 

\begin{thm}\label{thm:reduction2}
Let $\Omega$ be a bounded domain,
$a^{ij}(x):\Omega \to \R$ be bounded, measurable functions that satisfy \eqref{eq:ellipticity} and let $f \in L^{\infty}(\Omega)$ be nonnegative.
There exist positive constants 
$C_H = C_H(n,\lambda,\Lambda, s)>1$,
$\kappa_2 = \kappa_2(n,s)<1$, and $\hat{K}_3=\hat{K}_3(n,s)>1$ such
that for every cube $Q_{\hat{K}_3R} = Q_{\hat{K}_3R}(\tilde{x},\tilde{z}) \subset \subset \Omega \times \R$
and every nonnegative solution 
$U \in C^2(Q_{\hat{K}_3 R} \setminus \{z=0\}) \cap C({Q}_{\hat{K}_3 R})$ such that $U$ is symmetric across $\{z=0\}$ and  $U_{z+} \in C(Q_{\hat{K}_3 R} \cap \{z\geq0\})$ to
\[
\begin{cases}
a^{ij}(x) \partial_{ij}U + \abs{z}^{2-\frac{1}{s}}\partial_{zz}U = 0 & \hbox{in}~Q_{\hat{K}_3R} \cap \{z\not=0\} \\
-\partial_{z+}U(x,0) = f & \hbox{on}~Q_{\hat{K}_3R} \cap \{z=0\},
\end{cases}
\]
we have that
\[
\sup_{Q_{\kappa_2 R}} U \leq C_H \(U(\tilde{x},\tilde{z}) + \norm{f}_{L^{\infty}(Q_{\hat{K}_3R}\cap \{z=0\})}R^s\).
\]
\end{thm}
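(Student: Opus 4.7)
The plan is to deduce Theorem \ref{thm:reduction2} from Theorem \ref{thm:reduction3} by a standard linear rescaling argument that exploits the linearity and homogeneity of both the extension PDE and the Neumann condition $-\partial_{z+}U(x,0) = f(x)$. Based on the description in the paper, Theorem \ref{thm:reduction3} should state the following normalized form: there exist universal constants $\varepsilon_0, \kappa_2 \in (0,1)$ and $M, \hat{K}_3 > 1$, depending only on $n, \lambda, \Lambda, s$, such that if $U$ is a nonnegative solution to the extension problem on $Q_{\hat{K}_3R}(\tilde{x}, \tilde{z})$ with $U(\tilde{x}, \tilde{z}) \leq 1$ and $\norm{f}_{L^{\infty}(Q_{\hat{K}_3R} \cap \{z=0\})}R^s \leq \varepsilon_0$, then $\sup_{Q_{\kappa_2 R}} U \leq M$.

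Given an arbitrary nonnegative $U$ as in Theorem \ref{thm:reduction2} and a small parameter $\delta > 0$, I would set
$$D := U(\tilde{x}, \tilde{z}) + \varepsilon_0^{-1}\norm{f}_{L^{\infty}(Q_{\hat{K}_3R} \cap \{z=0\})}R^s + \delta > 0$$
and define the rescaled function $V := U/D$. By the linearity and homogeneity of the equation, $V$ is a nonnegative classical solution of
$$a^{ij}(x)\partial_{ij}V + \abs{z}^{2 - 1/s}\partial_{zz}V = 0 \quad \text{in } Q_{\hat{K}_3R} \cap \{z \neq 0\}$$
with Neumann data $g := f/D$ on $Q_{\hat{K}_3R}\cap\{z=0\}$. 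Symmetry of $V$ across $\{z=0\}$, $C^2$ regularity of $V$ away from $\{z=0\}$, and continuity of $V_{z+}$ up to $\{z \geq 0\}$ are all preserved under division by the positive constant $D$. By construction $V(\tilde{x}, \tilde{z}) \leq 1$ and
$$\norm{g}_{L^{\infty}(Q_{\hat{K}_3R}\cap\{z=0\})}R^s = \frac{\norm{f}_{L^{\infty}(Q_{\hat{K}_3R}\cap\{z=0\})}R^s}{D} \leq \varepsilon_0,$$
so Theorem \ref{thm:reduction3} applies to $V$ and yields $\sup_{Q_{\kappa_2 R}} V \leq M$. Un-rescaling and then sending $\delta \to 0^+$ gives
$$\sup_{Q_{\kappa_2 R}} U \leq M\bigl(U(\tilde{x}, \tilde{z}) + \varepsilon_0^{-1}\norm{f}_{L^{\infty}(Q_{\hat{K}_3R}\cap\{z=0\})}R^s\bigr),$$
and setting $C_H := M\max(1, \varepsilon_0^{-1})$ produces precisely the estimate claimed in Theorem \ref{thm:reduction2}.

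The technical content of the Harnack-type estimate is entirely absorbed into Theorem \ref{thm:reduction3}; the reduction described here is purely algebraic. The only subtlety worth flagging is the edge case in which $U(\tilde{x}, \tilde{z}) = 0$ and $f \equiv 0$ on $Q_{\hat{K}_3R} \cap \{z=0\}$, which would make $D = 0$ without the regularization term. Including $\delta > 0$ and passing to the limit circumvents this degeneracy without needing a strong maximum principle for the degenerate/singular equation subject to the Neumann condition. Thus the main obstacle of the program lies not in this reduction but rather in proving Theorem \ref{thm:reduction3} itself, via the sliding-paraboloids construction, the careful degenerate/singular barriers, and the Calder\'on--Zygmund-type covering lemma developed in the subsequent sections.
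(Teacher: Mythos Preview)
Your proposal is correct and follows essentially the same approach as the paper: a linear rescaling of $U$ by a positive constant chosen to normalize both the center value and the Neumann data, together with a regularization parameter ($\delta$ in your argument, $\varepsilon$ in the paper) to avoid division by zero, followed by un-rescaling and passing to the limit. The only cosmetic difference is that the paper's actual statement of Theorem~\ref{thm:reduction3} carries a free parameter $a>0$ and bounds $\|f\|_{L^\infty}$ by $a\,\mu_h(S_{\hat{K}_3R}(\tilde z))$ rather than by $\varepsilon_0 R^{-s}$; this necessitates one extra line at the end converting $R/\mu_h(S_{\hat{K}_3R}(\tilde z))$ to $R^s$ via Corollary~\ref{lem:doubling}(3) and the inclusion $S_{\hat{K}_3R}(\tilde z)\subset S_{\theta\hat{K}_3R}(0)$ when the section meets $\{z=0\}$, but the underlying reduction is identical.
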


\begin{proof}[Proof of Theorem \ref{thm:reduction1} from Theorem \ref{thm:reduction2}]
Let $\hat{K}_1 = \hat{K}_1(n,s)$ and $\kappa_1 = \kappa_1(n,s)$ be such that
\[
1 < \theta \hat{K}_3  \leq \hat{K}_1
\quad\hbox{and} \quad
\theta \kappa_1 \leq \kappa_2 < 1. 
\]
Let $(\tilde{x},\tilde{z}) \in Q_{\kappa_1 R}(x_0,z_0)$.
By the engulfing property,
\[
Q_{\kappa_1 R}(x_0,z_0)
	\subset Q_{\theta \kappa_1 R}(\tilde{x},\tilde{z})
	\subset Q_{\kappa_2 R}(\tilde{x},\tilde{z}).
\]
Again applying the engulfing property, we have
\begin{align*}
Q_{\hat{K}_3R}(\tilde{x},\tilde{z})
	&\subset Q_{\theta \hat{K}_3 R}(x_0,z_0) 
	\subset Q_{\hat{K}_1R}(x_0,z_0) 
	\subset \subset \Omega \times \R.
\end{align*}
By Theorem \ref{thm:reduction2}, we get
\begin{align*}
\sup_{Q_{\kappa_1 R}(x_0,z_0)}U
	&\leq \sup_{Q_{\kappa_2 R}(\tilde{x},\tilde{z})} U\\
	&\leq C_H\(U(\tilde{x},\tilde{z}) + \norm{f}_{L^{\infty}(Q_{\hat{K}_3R}(\tilde{x},\tilde{z})\cap \{z=0\})}R^s\)\\
	&\leq C_H\(U(\tilde{x},\tilde{z}) + \norm{f}_{L^{\infty}(Q_{\hat{K}_1R}(x_0,z_0)\cap \{z=0\})}R^s\).
\end{align*}
Taking the infimum over all $(\tilde{x},\tilde{z}) \in Q_{\kappa_1 R}(x_0,z_0)$, the Harnack inequality \eqref{eq:harnack-reduction1}  holds.
\end{proof}

\subsection{Third reduction}

Here we will see that Theorem \ref{thm:reduction2} follows from the next, and final, reduction
which is a normalized statement.

\begin{thm}\label{thm:reduction3}
Fix $a>0$. 
Let $\Omega$ be a bounded domain,
$a^{ij}(x):\Omega \to \R$ be bounded, measurable functions that satisfy \eqref{eq:ellipticity} and let $f \in L^{\infty}(\Omega)$ be nonnegative.
Let $\hat{K}_3$ be as in Theorem \ref{thm:reduction2}.
There exist positive constants 
$C_H= C_H(n,\lambda,\Lambda, s)>1$, 
$\kappa_2 = \kappa_2(n,s)<1$, and $K_0=K_0(n,s)$
such
that for any cube $Q_{\hat{K}_3R} = Q_{\hat{K}_3R}(\tilde{x},\tilde{z}) \subset \subset \Omega \times \R$
and every nonnegative solution
$U \in C^2(Q_{\hat{K}_3 R} \setminus \{z=0\}) \cap C({Q}_{\hat{K}_3 R})$ such that  
$U$ is symmetric across $\{z=0\}$ and $U_{z+} \in C(Q_{\hat{K}_3 R} \cap \{z\geq0\})$ to
\[
\begin{cases}
a^{ij}(x) \partial_{ij}U + \abs{z}^{2-\frac{1}{s}}\partial_{zz}U = 0 & \hbox{in}~Q_{\hat{K}_3R} \cap \{z\not=0\} \\
-\partial_{z+}U(x,0) = f & \hbox{on}~Q_{\hat{K}_3R} \cap \{z=0\},
\end{cases}
\]
if
\[
U(\tilde{x},\tilde{z}) \leq \frac{aR}{2K_0}
\]
and 
\[
\norm{f}_{L^{\infty}(Q_{\hat{K}_3R} \cap\{z=0\})} \leq a\mu_h(S_{\hat{K}_3R}(\tilde{z})),
\]
then
\begin{equation}\label{eq:harnack-finalreduction}
U \leq C_H aR \quad \hbox{in}~Q_{\kappa_2 R}.
\end{equation}
\end{thm}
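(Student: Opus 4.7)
The plan is to run a Savin--style sliding-paraboloids argument adapted to the Monge--Amp\`ere geometry induced by $\Phi$ and to the Neumann problem at $\{z=0\}$. The normalized hypotheses $U(\tilde x,\tilde z) \le aR/(2K_0)$ and $\|f\|_{L^\infty(Q_{\hat K_3R}\cap\{z=0\})} \le a\,\mu_h(S_{\hat K_3 R}(\tilde z))$ fix the natural scale: paraboloids
\[
P(x,z) = -a\,\delta_{\Phi}((x_v,z_v),(x,z)) + c
\]
of opening $a$ are the correct test functions, and one should view the goal as showing that the density of the sublevel set $\{U \le MaR\}$ inside $Q_{\hat K_3R}$ improves at every dyadic scale until it fills the smaller cube $Q_{\kappa_2 R}$.

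First I would combine the three ingredients established in Sections \ref{sec:lem1}--\ref{sec:lem3}. From Section \ref{sec:lem1}: sliding paraboloids of opening $a$ from below with vertex set $V$, the Monge--Amp\`ere measure of the set of contact points inside a section is bounded below by $c\,\mu_\Phi(V)$, up to an error controlled by $\|f\|_{L^\infty}\cdot \mu_h(S_{\hat K_3R}(\tilde z))/a$; the hypothesis on $\|f\|_{L^\infty}$ is exactly what is needed to absorb contacts occurring on $\{z=0\}$. From Section \ref{sec:lem2}: if a paraboloid of opening $\tilde a$ touches $U$ from below at a point $(\bar x,\bar z)$, then on a definite $\delta_{\Phi}$-proportion of a concentric section the value of $U$ is at least of order $\tilde a R$. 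This localization step is where the barriers of Section \ref{sec:barrier} enter to handle the degeneracy/singularity of $|z|^{2-1/s}$ together with the Neumann condition. From Section \ref{sec:lem3}: the Calder\'on--Zygmund decomposition in the doubling quasi-metric space $(\R^{n+1},\delta_\Phi,\mu_\Phi)$ (whose doubling is Corollary \ref{lem:doubling}) upgrades pointwise density gains into geometric decay of superlevel sets.

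With these tools, the argument runs as follows. Put $A_k = \{(x,z) \in Q_{\kappa_2 R} : U(x,z) > M^k aR\}$ for a constant $M=M(n,\lambda,\Lambda,s)>1$ to be chosen. The smallness of $U(\tilde x,\tilde z)$, combined with Lemma 1 applied to paraboloids of opening $a$ with vertices in a slightly enlarged cube and using that the $f$-error is absorbed by the hypothesis on $\|f\|_{L^\infty}$, produces an initial density estimate
\[
\mu_\Phi(A_1) \le (1-\delta)\,\mu_\Phi(Q_{\kappa_2 R})
\]
for some $\delta=\delta(n,\lambda,\Lambda,s)\in (0,1)$. For the inductive step, rescale the argument: at each $(x,z) \in A_k$, a paraboloid of opening $\sim M^k a$ touches $U$ from below, and the localization lemma forces the surrounding section to contain a definite proportion of points outside $A_{k-1}$, hence outside $A_k$. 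The covering lemma then gives $\mu_\Phi(A_{k+1}) \le (1-\delta)\,\mu_\Phi(A_k)$, so that
\[
\mu_\Phi(A_k) \le (1-\delta)^k \, \mu_\Phi(Q_{\kappa_2 R}).
\]
Choosing $M$ large enough in terms of the engulfing constant $\theta$, the doubling constant $K_d$, and the quasi-triangle constant $K$ from Notation \ref{note:2}, this measure decay together with the continuity of $U$ implies, via a standard De~Giorgi end-game, the pointwise bound $U \le C_H\,aR$ on $Q_{\kappa_2 R}$.

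The main obstacle will be the interplay between the Neumann condition and the contact estimate at $\{z=0\}$, together with the correct choice of scales. The reason the boundary datum enters with an exponent $R^s$ is that $h'$ has scaling $R^{1-s}$ on $S_R(\tilde z)$, so $\mu_h(S_{\hat K_3 R}(\tilde z))\sim R^s$; this is precisely the threshold at which $f$ contributes at the same order as paraboloids of opening $a$ in Lemma 1. Ensuring that the barriers of Section \ref{sec:barrier} remain compatible with the engulfing constant $\theta$ across the iteration, and that repeated application of the localization estimate does not lose a factor that breaks the geometric series, is the delicate point and is what dictates the technical dependence of $K_0$, $\hat K_3$ and $\kappa_2$ on $n$ and $s$.
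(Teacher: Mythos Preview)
Your outline captures the broad Savin scheme, but two concrete points are inverted and constitute real gaps.

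First, the density--improvement iteration in the paper is run on the \emph{contact sets}
\[
D_k \;=\; A_{aC^k,R}\cap \overline{Q}_{R/K_0}
\]
defined in \eqref{eq:AaR}, not on the superlevel sets $A_k=\{U>M^k aR\}$. The localization Lemma~\ref{Lem:Cc} takes as input a cube that meets $A_{a,R}$ (a point where $U$ is \emph{touched from below} and hence \emph{small}), and outputs a definite $\mu_\Phi$-proportion of contact points for the larger opening $Ca$. Your inductive sentence ``at each $(x,z)\in A_k$, a paraboloid of opening $\sim M^k a$ touches $U$ from below'' has the roles reversed: points of $A_k$ are points where $U$ is large, and nothing in Sections~\ref{sec:lem1}--\ref{sec:lem2} produces a lower contact there. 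The correct chain is: the hypothesis $U(\tilde x,\tilde z)\le aR/(2K_0)$ gives $D_0\neq\varnothing$, Lemma~\ref{Lem:Cc} verifies hypothesis 2) of Lemma~\ref{lem:Dk}, and the covering lemma then yields $\mu_\Phi(Q_{R/K_0}\setminus D_k)\le (1-c)^k\mu_\Phi(Q_{R/K_0})$, which in turn bounds $\mu_\Phi(\{U>aC^kR\}\cap Q_{R/K_0})$.

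Second, and more importantly, you misplace the role of the hypothesis $\|f\|_{L^\infty}\le a\,\mu_h(S_{\hat K_3R}(\tilde z))$. Since $f\ge 0$ we have $f^-\equiv 0$, so the condition in Theorem~\ref{lem:ABP-super} is vacuous and no $f$-error appears anywhere in the density iteration. The $f$-bound is needed \emph{only} in the endgame, and that endgame is not a ``standard De~Giorgi'' passage from measure decay to pointwise bounds. What the paper does (Section~\ref{sec:main proofs}) is argue by contradiction: if $U(x_{k_0},z_{k_0})\ge aRC^{k_0+1}$ at some point of $Q_{\kappa_2 R}$, one slides paraboloids of opening $\sim aC^{k}/\rho_k$ \emph{from above} and applies the \emph{subsolution} estimate Theorem~\ref{lem:ABP-sub}. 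The hypothesis on $\|f\|_{L^\infty}$ is exactly what makes the $\varepsilon_0$-condition of Theorem~\ref{lem:ABP-sub} hold for $k\ge k_0$ (this is the computation leading to \eqref{eq:12Phi}), yielding a set $A$ with $\mu_\Phi(A)\ge 2(1-c)^k\mu_\Phi(Q_{R/K_0})$ on which $U\ge 2aRC^k$; comparing with the density estimate on $D_k$ gives a contradiction and produces an increasing sequence $U(x_k,z_k)\to\infty$ in a fixed compact set. Without this subsolution step your measure decay of $A_k$ does not by itself give the pointwise bound \eqref{eq:harnack-finalreduction}.
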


\begin{proof}[Proof of Theorem \ref{thm:reduction2} from Theorem \ref{thm:reduction3} ]
Let $\varepsilon >0$. Define the nonnegative function $W_{\varepsilon}$ in $Q_{\hat{K}_3R}$ by
\[
W_{\varepsilon}(x,z) = \frac{aR}{2K_0U(\tilde{x},\tilde{z}) + \norm{f}_{L^{\infty}(Q_{\hat{K}_3R}\cap\{z=0\})} R/\mu_h(S_{\hat{K}_3R}(\tilde{z})) +\varepsilon } U(x,z).
\]
If $0 \notin S_{\hat{K}_3R}(\tilde{z})$, then $ \norm{f}_{L^{\infty}(Q_{\hat{K}_3R}\cap\{z=0\})} R/\mu_h(S_{\hat{K}_3R}(\tilde{z})) =0$. 
Notice that $W_{\varepsilon}$ is symmetric across $\{z=0\}$ and 
$W_{\varepsilon} \in C^2(Q_{\hat{K}_3 R} \setminus \{z=0\}) \cap C({Q}_{\hat{K}_3 R})$, $(W_{\varepsilon})_{z+} \in C(Q_{\hat{K}_3 R} \cap \{z\geq0\})$.
Moreover, 
in $Q_{\hat{K}_3R} \cap \{z\not=0\}$, we have
$$a^{ij}(x) \partial_{ij}{W}_{\varepsilon} + \abs{z}^{2-\frac{1}{s}}\partial_{zz} {W}_{\varepsilon}= 0$$
and, in $Q_{\hat{K}_3R} \cap \{z=0\}$, 
\[
-\partial_{z+} W_{\varepsilon}(x,0)
	= \frac{aR}{2K_0U(\tilde{x},\tilde{z}) + \norm{f}_{L^{\infty}(Q_{\hat{K}_3R}\cap\{z=0\})}R/\mu_h(S_{\hat{K}_3R}(\tilde{z})) +\varepsilon }  f(x)=: g(x)\geq 0.
\]
Therefore, $W_{\varepsilon}$ is a nonnegative solution to 
\[
\begin{cases}
a^{ij}(x) \partial_{ij}W_{\varepsilon} + \abs{z}^{2-\frac{1}{s}}\partial_{zz}W_{\varepsilon} = 0 & \hbox{in}~Q_{\hat{K}_3R} \cap \{z\not=0\} \\
-\partial_{z+}W_{\varepsilon}(x,0) = g & \hbox{on}~Q_{\hat{K}_3R} \cap \{z=0\}.
\end{cases}
\]
Clearly,
\begin{align*}
\norm{g}_{L^{\infty}(Q_{\hat{K}_3R}\cap\{z=0\})}\leq a \mu_h(S_{\hat{K}_3R}(\tilde{z}))
\end{align*}
and
\begin{align*}
W_{\varepsilon}(\tilde{x},\tilde{z})\leq \frac{aR}{2K_0}.
\end{align*}
By Theorem \ref{thm:reduction3} applied to $W_{\varepsilon}$, we get 
\[
W_{\varepsilon} \leq C_HaR \quad \hbox{in}~Q_{\kappa_2 R}
\]
which implies
\begin{align*}
\sup_{Q_{\kappa_2 R}}U\leq C_H'\(U(\tilde{x},\tilde{z}) + \norm{f}_{L^{\infty}(Q_{\hat{K}_3R}\cap\{z=0\})} \frac{R}{\mu_h(S_{\hat{K}_3R}(\tilde{z}))} +\varepsilon \).
\end{align*}
If $0 \notin S_{\hat{K}_3R}(\tilde{z})$, then 
\begin{align*}
\sup_{Q_{\kappa_2 R}}U\leq C_H'\(U(\tilde{x},\tilde{z}) +\varepsilon \).
\end{align*}
If $0 \in S_{\hat{K}_3R}(\tilde{z})$, then by the engulfing property and Remark \ref{rem:r^s new}, 
\[
S_{\hat{K}_3R}(\tilde{z}) \subset S_{\theta \hat{K}_3R}(0) = B_{q_s \theta^s \hat{K}_3^sR^s}(0).
\]
Hence, $| S_{\hat{K}_3R}(\tilde{z})| \leq C R^s$ for some $C = C(n,s)>0$. 
With this and Corollary \ref{lem:doubling} part (3), 
\begin{align*}
\sup_{Q_{\kappa_2 R}}U
	&\leq C_H'\(U(\tilde{x},\tilde{z}) + \norm{f}_{L^{\infty}(Q_{\hat{K}_3R}\cap\{z=0\})} \frac{1}{\hat{K}_3}\frac{\hat{K}_3R}{\mu_h(S_{\hat{K}_3R}(\tilde{z}))} +\varepsilon \)\\
	&\leq C_H'\(U(\tilde{x},\tilde{z}) + \norm{f}_{L^{\infty}(Q_{\hat{K}_3R}\cap\{z=0\})} \frac{1}{c\hat{K}_3} |S_{\hat{K}_3R}(\tilde{z})| +\varepsilon \)\\
	&\leq C_H''\(U(\tilde{x},\tilde{z}) + \norm{f}_{L^{\infty}(Q_{\hat{K}_3R}\cap\{z=0\})}R^s+\varepsilon\).
\end{align*}
As $\varepsilon>0$ was arbitrary, the conclusion follows.
\end{proof}

\section{Paraboloids associated to $\Phi$}\label{sec:paraboloids}

In this section, we define the Monge--Amp\`ere paraboloids associated with $\Phi$ in \eqref{eq:capitalphi} and study their
basic properties and relations with respect to solutions to the extension problem.

\begin{defn}
Let $a>0$. A \emph{paraboloid} $P$ of opening $a$ in $\R^{n+1}$ is defined as
\begin{equation}\label{eq:P defn}
P(x,z) = -a \Phi(x,z) + \la (y,w), (x,z) \ra + b\qquad(x,z)\in\R^{n+1}
\end{equation}
for some $(y,w) \in \R^{n+1}$ and $b \in \R$. 
\end{defn}

Since $\Phi \in C^1(\R^{n+1})$ is strictly convex, the point where the maximum of $P$ occurs, which we call the \emph{vertex}
$(x_v,z_v)$ of $P$, is the unique solution to $\nabla P(x_v,z_v) = 0$. 

We say that $P$ \emph{touches} a continuous function $U: \R^{n+1} \to \R$ \emph{from below} at $(x_0,z_0)$ in a convex set $S \subset \R^{n+1}$ if 
\[
P(x_0,z_0) = V(x_0,z_0) \quad \hbox{and} \quad
P(x,z) \leq V(x,z) \quad \hbox{for all}~(x,z) \in S.
\]

\begin{lem}\label{lem:PintersectsU}
A paraboloid $P$ of opening $a>0$ with vertex $(x_v,z_v)$ given by \eqref{eq:P defn} can be written as
\begin{equation}\label{eq:P defn2}
P(x,z) = -a\delta_{\Phi}((x_v,z_v),(x,z)) + c
\end{equation}
for some constant $c \in \R$. 
Moreover, 
\begin{align*}
\nabla P(x,z) 
	&= -a(\nabla\Phi(x,z) - \nabla\Phi(x_v,z_v))
	= -a(x-x_v,h'(z) - h'(z_v))
\end{align*}
and
\[
\partial_zP(x,0) = ah'(z_v) 
\]
for all $(x,z) \in \R^{n+1}$.
If $P$ coincides with a continuous function  $U: \R^{n+1} \to \R$ at a point $(x_0,z_0)$, i.e. $P(x_0,z_0) = U(x_0,z_0)$, then 
\[
P(x,z) = -a\delta_{\Phi}((x_v,z_v),(x,z)) + a\delta_{\Phi}((x_v,z_v),(x_0,z_0)) + U(x_0,z_0).
\]
\end{lem}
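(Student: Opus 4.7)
The proof is a direct computation, so my plan is simply to unpack definitions carefully. The key observation is that the vertex condition $\nabla P(x_v,z_v)=0$ pins down the linear term $(y,w)$ in \eqref{eq:P defn} in terms of the vertex. First I would compute $\nabla P(x,z) = -a\nabla\Phi(x,z) + (y,w)$ and set this equal to zero at $(x_v,z_v)$, giving $(y,w) = a\nabla\Phi(x_v,z_v) = a(x_v, h'(z_v))$. Substituting this back into \eqref{eq:P defn} yields
\[
P(x,z) = -a\Phi(x,z) + a\langle \nabla\Phi(x_v,z_v), (x,z)\rangle + b.
\]
Then I would add and subtract $a\Phi(x_v,z_v) + a\langle \nabla\Phi(x_v,z_v),(x_v,z_v)\rangle$ to recognize $-a\delta_\Phi((x_v,z_v),(x,z))$, leaving
\[
c := b - a\Phi(x_v,z_v) + a\langle \nabla\Phi(x_v,z_v),(x_v,z_v)\rangle
\]
as the constant in \eqref{eq:P defn2}, which proves the first claim.

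Next, the gradient formula follows immediately from the expression for $(y,w)$ obtained above: $\nabla P(x,z) = -a(\nabla\Phi(x,z)-\nabla\Phi(x_v,z_v)) = -a(x-x_v, h'(z)-h'(z_v))$, where the second equality uses $\nabla\Phi(x,z) = (x, h'(z))$ from Section \ref{sec:MA}. To get the expression for $\partial_z P(x,0)$, I would just specialize to $z=0$ and use $h'(0)=0$, which was recorded right after \eqref{eq:capitalphi}, giving $\partial_z P(x,0) = -a(h'(0)-h'(z_v)) = a h'(z_v)$.

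For the final claim, I would evaluate the representation \eqref{eq:P defn2} at $(x_0,z_0)$ and use $P(x_0,z_0)=U(x_0,z_0)$ to solve $c = U(x_0,z_0) + a\delta_\Phi((x_v,z_v),(x_0,z_0))$, and then substitute this value of $c$ back into \eqref{eq:P defn2} to obtain the stated formula. There is no genuine obstacle here; the only minor point to be careful about is that $h\in C^1(\R)$ (even when $s>1/2$), so $\nabla\Phi$ is defined everywhere on $\R^{n+1}$ and the vertex equation $\nabla P(x_v,z_v)=0$ has a unique solution by strict convexity of $\Phi$, as noted immediately before the statement. This justifies using $\nabla\Phi(x_v,z_v)$ freely across the proof.
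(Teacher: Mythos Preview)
Your proposal is correct and follows essentially the same approach as the paper: use the vertex condition $\nabla P(x_v,z_v)=0$ to identify $(y,w)=a\nabla\Phi(x_v,z_v)$, then add and subtract the appropriate terms to recognize $-a\delta_\Phi((x_v,z_v),(x,z))$, and read off the gradient, the value $\partial_z P(x,0)=ah'(z_v)$ from $h'(0)=0$, and the constant $c$ from the coincidence condition. The only cosmetic difference is the order of the steps (the paper records the gradient and $\partial_zP(x,0)$ before rewriting $P$ in terms of $\delta_\Phi$), but the computations are identical.
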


\begin{proof}
Since $0 = \nabla P(x_v,z_v) = -a\nabla\Phi(x_v,z_v) + (y,w)$, we can write
\begin{align*}
P(x,z) 
	&= -a \Phi(x,z) +a\la \nabla\Phi(x_v,z_v) ,(x,z) \ra + b.
\end{align*}
Consequently, 
\begin{align*}
\nabla P(x,z)= -a(x-x_v,h'(z) - h'(z_v))
\end{align*}
and
\begin{align*}
\partial_z P(x,0)
	&=-a\(h'(z) - h'(z_v)\) \big|_{(x,0)}
	= ah'(z_v).
\end{align*}
Moreover, we have
\begin{align*}
P(x,z) 
&= -a \Phi(x,z)  +a\la \nabla\Phi(x_v,z_v) ,(x,z) \ra  +b \\
	&\qquad+ a\Phi(x_v,z_v) - a\Phi(x_v,z_v) - a\la \nabla\Phi(x_v,z_v),(x_v,z_v) \ra + a\la  \nabla\Phi(x_v,z_v),(x_v,z_v) \ra\\
&= -a\(\Phi(x,z) - \Phi(x_v,z_v) - \la \nabla\Phi(x_v,z_v), (x,z)-(x_v,z_v) \ra \)\\
	&\qquad -a\Phi(x_v,z_v) +a\la  \nabla\Phi(x_v,z_v),(x_v,z_v) \ra  + b\\
&=  -a\delta_{\Phi}((x_v,z_v),(x,z)) + c.
\end{align*}

If $P(x_0,z_0) = U(x_0,z_0)$, then
$U(x_0,z_0)= -a\delta_{\Phi}((x_v,z_v),(x_0,z_0)) + c$ and,
after solving for $c$, we conclude that
$P(x,z) = -a\delta_{\Phi}((x_v,z_v),(x,z)) + a\delta_{\Phi}((x_v,z_v),(x_0,z_0)) + U(x_0,z_0)$.
\end{proof}

For the remainder of the paper, we use the terminology \emph{paraboloids} to reference those given by \eqref{eq:P defn}, or equivalently, \eqref{eq:P defn2}.

\begin{lem}\label{lem:narrower opening}
Suppose that $P$ is a paraboloid of opening $a>0$ that touches a continuous function $U:\R^{n+1} \to \R$ from below at $(x_0,z_0)$ in a convex set $S \subset \R^{n+1}$.
For any $\tilde{a} \geq a$, there exists a paraboloid $\tilde{P}$ of opening $\tilde{a}>0$ that touches $U$ from below at $(x_0,z_0)$ in $S$.
\end{lem}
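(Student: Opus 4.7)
The plan is to construct $\tilde{P}$ by matching $P$ in both value and first-order information at the touching point $(x_0,z_0)$, then exploit the convexity of $\Phi$ together with the opening inequality $\tilde{a}\geq a$ to sandwich $\tilde{P}$ below $P$ (hence below $U$) throughout $S$.

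\emph{Step 1 (construction of $\tilde{P}$).} I will define
\[
\tilde{P}(x,z) = -\tilde{a}\Phi(x,z) + \langle y',(x,z)\rangle + b',
\]
choosing the parameters $y' \in \R^{n+1}$ and $b' \in \R$ so that
\[
\nabla\tilde{P}(x_0,z_0) = \nabla P(x_0,z_0) \quad \text{and} \quad \tilde{P}(x_0,z_0) = P(x_0,z_0) = U(x_0,z_0).
\]
Concretely, $y' = \nabla P(x_0,z_0) + \tilde{a}\nabla\Phi(x_0,z_0)$ and $b'$ is then determined by the value at $(x_0,z_0)$. To see that $\tilde{P}$ really is a paraboloid in the sense of \eqref{eq:P defn}, I need a vertex, i.e.\ a point $(x_v',z_v')$ with $\nabla\tilde{P}(x_v',z_v') = 0$, equivalently $\nabla\Phi(x_v',z_v') = y'/\tilde{a}$. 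Since $\nabla\Phi(x,z) = (x, h'(z))$ and $h'(z) = \frac{s}{1-s}|z|^{1/s-2}z$ is a continuous, odd, strictly increasing bijection of $\R$ (as $1/s-1>0$ for $0<s<1$), the map $\nabla\Phi$ is a bijection of $\R^{n+1}$ and a unique vertex exists.

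\emph{Step 2 (convexity comparison).} Let $D = P - \tilde{P}$. Using the explicit forms,
\[
D(x,z) = (\tilde{a}-a)\Phi(x,z) + \text{(linear terms)}.
\]
Since $\tilde{a}-a \geq 0$ and $\Phi$ is convex, $D$ is a $C^1$ convex function on $\R^{n+1}$. By the matching conditions in Step~1,
\[
D(x_0,z_0) = 0, \qquad \nabla D(x_0,z_0) = 0,
\]
so $(x_0,z_0)$ is a critical point of the convex function $D$ and therefore a global minimum. Hence $D\geq 0$, i.e.\ $\tilde{P} \leq P$ on all of $\R^{n+1}$.

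\emph{Step 3 (conclusion).} Combining $\tilde{P}\leq P$ globally with the hypothesis $P\leq U$ on $S$, I obtain $\tilde{P}(x,z)\leq U(x,z)$ for every $(x,z)\in S$, with equality at $(x_0,z_0)$ by construction. Thus $\tilde{P}$ touches $U$ from below at $(x_0,z_0)$ in $S$. The only mildly delicate point is the existence of the vertex required by Definition~5.1, which is not automatic for a generic convex $\Phi$ but is guaranteed here by the surjectivity of $\nabla\Phi$ on $\R^{n+1}$; everything else is a one-line convexity argument applied to $D$.
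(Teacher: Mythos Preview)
Your proof is correct and is essentially the same as the paper's: both constructions match $P$ in value and gradient at $(x_0,z_0)$ and then use that the difference $P-\tilde P=(\tilde a-a)\Phi+\text{(affine)}$ is nonnegative, which the paper expresses as $(\tilde a-a)\,\delta_\Phi((x_0,z_0),(x,z))\geq 0$ and you express via convexity of $D$ with a critical point at $(x_0,z_0)$. The only cosmetic difference is the verification that $\tilde P$ is a paraboloid: the paper writes $\tilde P$ explicitly in the form \eqref{eq:P defn}, whereas you invoke surjectivity of $\nabla\Phi$ to produce the vertex---both are valid.
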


\begin{proof}
Begin by writing
\begin{align*}
P(x,z) &= -a\delta_{\Phi}((x_v,z_v),(x,z)) + a\delta_{\Phi}((x_v,z_v),(x_0,z_0)) + U(x_0,z_0)\\
	&=-a\( \Phi(x,z) -\Phi(x_v,z_v) - \la \nabla\Phi(x_v,z_v), (x,z) - (x_v,z_v) \ra \)\\
		&\quad+ a\( \Phi(x_0,z_0) -\Phi(x_v,z_v) - \la \nabla\Phi(x_v,z_v), (x_0,z_0) - (x_v,z_v) \ra \) + U(x_0,z_0)\\
	&=-a \Phi(x,z)+ a\la \nabla\Phi(x_v,z_v), (x,z) \ra 
		+ a\Phi(x_0,z_0) - a\la \nabla\Phi(x_v,z_v), (x_0,z_0) \ra + U(x_0,z_0)\\
	&= -a \( \Phi(x,z) - \Phi(x_0,z_0) - \la \nabla\Phi(x_0,z_0), (x,z)- (x_0,z_0) \ra \)\\
		&\quad+ a\la \nabla\Phi(x_v,z_v), (x,z) - (x_0,z_0) \ra - a\la \nabla\Phi(x_0,z_0), (x,z)- (x_0,z_0) \ra  + U(x_0,z_0)\\
	&= -a \delta_{\Phi}((x_0,z_0),(x,z))+ a\la \nabla\Phi(x_v,z_v) - \nabla\Phi(x_0,z_0), (x,z) - (x_0,z_0) \ra+ U(x_0,z_0).
\end{align*}
Define $\tilde{P}$ by
\[
\tilde{P}(x,z) = -\tilde{a} \delta_{\Phi}((x_0,z_0),(x,z))+ a\la \nabla\Phi(x_v,z_v) - \nabla\Phi(x_0,z_0), (x,z) - (x_0,z_0) \ra + U(x_0,z_0).
\]
Note that $\tilde{P}$ is a paraboloid of opening $\tilde{a}>0$ since it can be expressed as \eqref{eq:P defn} in the following way
\begin{align*}
\tilde{P}(x,z) 
	&= -\tilde{a}\Phi(x,z) +\tilde{a} \Phi(x_0,z_0) + \tilde{a} \la \nabla\phi(x_0,z_0),(x,z) - (x_0,z_0) \ra\\
	&\quad+ a\la \nabla\Phi(x_v,z_v) - \nabla\Phi(x_0,z_0), (x,z) - (x_0,z_0) \ra + U(x_0,z_0)\\
	&= -\tilde{a}\Phi(x,z) + \la  \tilde{a} \nabla\Phi(x_0,z_0) +a\nabla\Phi(x_v,z_v) - a\nabla\Phi(x_0,z_0),(x,z) \ra\\
	&\quad+\tilde{a} \Phi(x_0,z_0) - \la \tilde{a}\nabla\phi(x_0,z_0)+ a\nabla\Phi(x_v,z_v) - a\nabla\Phi(x_0,z_0), (x_0,z_0) \ra + U(x_0,z_0).
\end{align*}
Since $\tilde{P}(x_0,z_0) = U(x_0,z_0)$ and 
\begin{align*}
\tilde{P}(x,z)
&\leq -{a} \delta_{\Phi}((x_0,z_0),(x,z))+ a\la \nabla\Phi(x_v,z_v) - \nabla\Phi(x_0,z_0), (x,z) - (x_0,z_0) \ra + U(x_0,z_0)\\
&= P(x,z) \leq U(x,z),
\end{align*}
for every $(x,z) \in S$, we conclude that $\tilde{P}$ touches $U$ from below at $(x_0,z_0)$ in $S$.
\end{proof}

The next two lemmas provide some observations regarding how the symmetry of $U$
across $\{z=0\}$ effects the geometry of the paraboloids that touch $U$ from below. 

\begin{lem}\label{lem:pos vertex}
Let $S \subset \R^{n+1}$ be an open, convex set that is symmetric across $\{z=0\}$. 
Consider a continuous function $U : S \to \R$ which is symmetric across $\{z=0\}$. 
Let $P$ be a paraboloid of opening $a>0$ with vertex $(x_v,z_v)$ that touches $U$ from below at $(x_0,z_0)$ in $S$.
If $z_0 > 0$, then $z_v \geq 0$, and 
if $z_0 < 0$, then $z_v \leq 0$.
Moreover, the paraboloid $\tilde{P}(x,z) = P(x,-z)$ of opening $a>0$ with vertex $(x_v,-z_v)$ that touches $U$ from below at $(x_0,-z_0)$ in $S$.
\end{lem}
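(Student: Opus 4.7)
The plan is to exploit the symmetry of $U$ and $S$ to compare the values of $P$ at $(x_0,z_0)$ and $(x_0,-z_0)$, then translate that comparison into a sign condition on the vertex coordinate $z_v$ via the explicit formula $w=a\,h'(z_v)$ coming from $\nabla P(x_v,z_v)=0$. Write $P$ in its expanded form $P(x,z)=-a\Phi(x,z)+\langle y,x\rangle+wz+b$. Since $h(-z)=h(z)$, i.e.\ $\Phi(x,-z)=\Phi(x,z)$, a direct computation gives
\[
P(x_0,z_0)-P(x_0,-z_0)=2wz_0.
\]
The point $(x_0,-z_0)$ lies in $S$ by symmetry and $U(x_0,-z_0)=U(x_0,z_0)$; since $P$ touches $U$ from below at $(x_0,z_0)$, we have $P(x_0,-z_0)\le U(x_0,-z_0)=U(x_0,z_0)=P(x_0,z_0)$, so $wz_0\ge 0$.

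Next, the identity $\nabla P(x_v,z_v)=0$ yields $w=a\,h'(z_v)$. Because $h'(z)=\frac{s}{1-s}|z|^{1/s-2}z$ has the same sign as $z$ and vanishes only at $z=0$, the product $h'(z_v)z_v\ge0$ always, and more importantly $\mathrm{sign}(h'(z_v))=\mathrm{sign}(z_v)$. Combining $wz_0=a\,h'(z_v)z_0\ge 0$ with $a>0$ forces $z_vz_0\ge 0$, which is exactly the claim: if $z_0>0$ then $z_v\ge0$, and if $z_0<0$ then $z_v\le 0$.

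For the second part, define $\tilde P(x,z)=P(x,-z)$. Using $\Phi(x,-z)=\Phi(x,z)$ once more,
\[
\tilde P(x,z)=-a\Phi(x,z)+\langle y,x\rangle-wz+b,
\]
which is manifestly a paraboloid of opening $a$ in the sense of \eqref{eq:P defn}. Its vertex is the unique zero of $\nabla\tilde P(x,z)=(\partial_x P(x,-z),-\partial_z P(x,-z))$, which occurs exactly when $(x,-z)=(x_v,z_v)$, i.e.\ at $(x_v,-z_v)$. For any $(x,z)\in S$ the symmetry of $S$ gives $(x,-z)\in S$, so $\tilde P(x,z)=P(x,-z)\le U(x,-z)=U(x,z)$, while $\tilde P(x_0,-z_0)=P(x_0,z_0)=U(x_0,z_0)=U(x_0,-z_0)$. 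Hence $\tilde P$ touches $U$ from below at $(x_0,-z_0)$ in $S$.

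There is no real obstacle here; the proof is a two-line symmetry argument. The only subtlety worth flagging is that $h$ is not $C^2$ at $0$ when $s>1/2$, but we never differentiate $h$ twice: we only use that $h$ is even, $C^1$, with $h'$ odd and strictly monotone, which are all recorded in Section~\ref{sec:MA}.
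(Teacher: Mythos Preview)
Your proof is correct and follows essentially the same approach as the paper's: both arguments compare $P(x_0,z_0)$ with $P(x_0,-z_0)$ using the evenness of $h$ and the symmetry of $U$ to obtain $2a\,h'(z_v)z_0\ge 0$, and then read off the sign of $z_v$ from the sign of $h'(z_v)$. The only cosmetic difference is that you work with the expanded form \eqref{eq:P defn} (so the key quantity appears as $w=a\,h'(z_v)$), whereas the paper works with the $\delta_\Phi$ form \eqref{eq:P defn2} and computes $\delta_h(z_v,-z_0)-\delta_h(z_v,z_0)=2h'(z_v)z_0$ directly; likewise for the reflected paraboloid $\tilde P$, you identify the vertex via $\nabla\tilde P=0$, while the paper uses the identity $\delta_h(z_1,-z_2)=\delta_h(-z_1,z_2)$.
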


\begin{proof}
Assume that $z_0>0$. 
Write
\begin{align*}
P(x,z)
	&= -a \delta_\Phi((x_v,z_v),(x,z)) +a \delta_\Phi((x_v,z_v),(x_0,z_0)) +U(x_0,z_0)
\end{align*}
and note that
\begin{align*}
P(x_0, -z_0)
	&= -a \delta_\Phi((x_v,z_v),(x_0,-z_0)) +a \delta_\Phi((x_v,z_v),(x_0,z_0)) +U(x_0,z_0)\\
	&= -a \delta_h(z_v,-z_0) +a \delta_h(z_v,z_0) +U(x_0,-z_0).
\end{align*}
Then
\begin{align*}
0 
	&\leq U(x_0,-z_0) - P(x_0, -z_0)\\
	&= a \delta_h(z_v,-z_0) -a \delta_h(z_v,z_0) \\
	&= a \(h(-z_0)  - h(z_0)\) +2ah'(z_v)z_0= 2ah'(z_v)z_0.
\end{align*}
Since $z_0 >0$, it follows that $h'(z_v)\geq0$. Hence, $z_v\geq 0$, as desired. 
The case for $z_0<0$ follows similarly. 

Next, define $\tilde{P}$ by $\tilde{P}(x,z) = P(x,-z)$. 
Since $h'(-z_1)= -h'(z_1)$ and
\begin{equation}\label{eq:delta_h anti-symmetry}
\begin{aligned}
\delta_h(z_1,-z_2)
	&= h(-z_2) - h(z_1) - h'(z_1)(-z_2-z_1) \\
	&= h(z_2) - h(-z_1) - h'(-z_1)(z_2-(-z_1)) \\
	&=\delta_h(-z_1,z_2)
\end{aligned}
\end{equation}
for all $z_1,z_2 \in \R$, we may write
\begin{align*}
\tilde{P}(x,z)
	&= P(x,-z)\\
	&= -a \delta_\Phi((x_v,z_v),(x,-z)) + a \delta_\Phi((x_v,z_v),(x_0,z_0))  + U(x_0,z_0) \\
	&= -a \delta_\varphi(x_v,x) + a \delta_\varphi(x_v,x_0) 
		-a\delta_h(z_v,-z) + a \delta_h(z_v,z_0)  + U(x_0,-z_0) \\
	&= -a \delta_\varphi(x_v,x) + a \delta_\varphi(x_v,x_0) 
		-a\delta_h(-z_v,z) + a \delta_h(-z_v,-z_0)  + U(x_0,-z_0) \\
	&= -a \delta_\Phi((x_v,-z_v),(x,z)) + a \delta_\Phi((x_v,-z_v),(x_0,-z_0)) + U(x_0,-z_0).
\end{align*}
Hence, $\tilde{P}$ is a paraboloid of opening $a>0$ with vertex $(x_v,-z_v)$.
Since
\[
\tilde{P}(x,z) = P(x,-z) \leq U(x,-z) = U(x,z) \quad \hbox{for all}~(x,z) \in S
\]
and
\[
\tilde{P}(x_0,-z_0) = P(x_0,z_0) = U(x_0,z_0) = U(x_0,-z_0),
\]
 we have that $\tilde{P}$ touches $U$ from below at $(x_0,-z_0)$ in $S$.
\end{proof}

\begin{notation}
Given $f:\Omega\to\R$, we define the functions $f^\pm$ by 
\[
f^-(x) = \min\{0,f(x)\} \leq 0\quad \hbox{and} \quad f^+(x) = \max\{f(x),0\} \geq 0.
\]
\end{notation}

\begin{lem}\label{lem:vertices on z=0}
Let $f \in L^{\infty}(\Omega)$ and let $S \subset \subset \Omega \times \R \subset \R^{n+1}$ be an open, convex set such that $S \cap \{z=0\} \not= \varnothing$.
Suppose that a continuous function $U: \Omega \times \R \to \R$ such that
$U_{z+} \in C([0,\infty);C(\Omega))$ is symmetric across $\{z=0\}$ and satisfies
\[
-\partial_{z+}U(x,0) \geq f(x) \quad\hbox{on}~S \cap \{z=0\}.
\]
If $f(x_0) > 0$, then $U$ cannot be touched from below at $(x_0,0)$ in $S$  by any paraboloid. 
If $f(x_0) \leq 0$ and $P$ is a paraboloid of opening $a>0$ with vertex $(x_v,z_v)$ that touches $U$ from below in $S$ at $(x_0,0)$, then $\abs{h'(z_v)} \leq \abs{f^-(x_0)}/a$. 
Consequently, if $f(x_0) = 0$, then $z_v = 0$. 
\end{lem}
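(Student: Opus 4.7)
The plan is to use the hypothesis that $U$ is symmetric across $\{z=0\}$ in order to convert the one-sided condition $-\partial_{z+}U(x,0) \geq f(x)$ into a two-sided obstruction at $z = 0$, and then to compare with the $C^1$ paraboloid $P$. Because $U(x,-z) = U(x,z)$, the function $U$ has a corner across $\{z=0\}$ whose size is controlled by $|\partial_{z+}U(x,0)|$, and this is what prevents paraboloids with steeply tilted vertices from touching from below.

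The first step is a slope computation. By Lemma \ref{lem:PintersectsU}, $P$ is $C^1$ on $\R^{n+1}$ and $\partial_z P(x_0,0) = a h'(z_v)$. Since $P$ touches $U$ from below at $(x_0,0)$, the inequality $U(x_0,z) - U(x_0,0) \geq P(x_0,z) - P(x_0,0)$ holds for all $z$ near $0$. Dividing by $z>0$ and by $z<0$ and passing to one-sided limits yields
\[
\partial_{z+}U(x_0,0) \geq a h'(z_v), \qquad \partial_{z-}U(x_0,0) \leq a h'(z_v).
\]
The even symmetry of $U$ in $z$ gives $\partial_{z-}U(x_0,0) = -\partial_{z+}U(x_0,0)$, so these two inequalities combine to
\[
\partial_{z+}U(x_0,0) \geq a\abs{h'(z_v)}.
\]

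Next, I would insert the Neumann hypothesis in the equivalent form $\partial_{z+}U(x_0,0) \leq -f(x_0)$ to obtain $a\abs{h'(z_v)} \leq -f(x_0)$. If $f(x_0) > 0$, the right-hand side is negative while the left-hand side is nonnegative, a contradiction, which proves the first assertion. If $f(x_0) \leq 0$, then $-f(x_0) = \abs{f^-(x_0)}$ and the bound $\abs{h'(z_v)} \leq \abs{f^-(x_0)}/a$ follows at once. Finally, the case $f(x_0) = 0$ forces $h'(z_v) = 0$; since $h'(z) = \frac{s}{1-s}\abs{z}^{1/s-2}z$ vanishes only at $z = 0$, we conclude $z_v = 0$.

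I do not anticipate any serious obstacle. The only delicate point is verifying that $\partial_{z-}U(x_0,0)$ is well-defined and equal to $-\partial_{z+}U(x_0,0)$, but this is immediate from the identity $(U(x_0,-z')-U(x_0,0))/(-z') = -(U(x_0,z')-U(x_0,0))/z'$ together with the hypothesis $U_{z+} \in C([0,\infty);C(\Omega))$, which ensures the right-hand side converges as $z' \to 0^+$.
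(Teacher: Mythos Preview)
Your proposal is correct and follows essentially the same approach as the paper: both arguments compare the one-sided $z$-derivatives of $U$ and $P$ at $(x_0,0)$ via difference quotients, invoke the symmetry $\partial_{z-}U(x_0,0)=-\partial_{z+}U(x_0,0)$, and combine with the Neumann inequality. The only cosmetic difference is that the paper records the two inequalities as the chain $f(x_0)\le a h'(z_v)\le -f(x_0)$, whereas you first collapse them to $\partial_{z+}U(x_0,0)\ge a\abs{h'(z_v)}$ before applying $\partial_{z+}U(x_0,0)\le -f(x_0)$.
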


\begin{proof}
Suppose that $P$ is a paraboloid of opening $a>0$ that touches $U$ from below at $(x_0,0)$ in $S$. Write
\begin{align*}
P(x,z) = -a\delta_{\Phi}((x_v,z_v),(x,z))+a\delta_{\Phi}((x_v,z_v),(x_0,0)) + U(x_0,0).
\end{align*}
Let $\varepsilon>0$ be small.
Since $U-P$ attains a local minimum of 0 at $(x_0,0)$, we know that
\[
\frac{\(U(x_0,\varepsilon) - P(x_0,\varepsilon)\)-\(U(x_0,0) - P(x_0,0)\)}{\varepsilon}
=\frac{U(x_0,\varepsilon) - P(x_0,\varepsilon)}{\varepsilon} \geq 0.
\]
Therefore, taking the limit as $\varepsilon \to 0^+$, we obtain
\begin{equation}\label{eq:neum1}
0 \leq \partial_{z+}U(x_0,0) - \partial_zP(x_0,0)
	\leq -f(x_0) - ah'(z_v). 
\end{equation}
We note that, by the symmetry of $U$ across $\{z=0\}$,
we have that
\begin{align*}
\partial_{z-}U(x_0,0) 
	&= \lim_{h \to0^-} \frac{U(x_0,h) - U(x_0,0)}{h}\\
	&= - \lim_{h \to 0^+} \frac{U(x_0,-h) - U(x_0,0)}{-h}\\
	&=- \lim_{h \to 0^+} \frac{U(x_0,h) - U(x_0,0)}{h}\\
	&=-\partial_{z+}U(x_0,0) \geq f(x_0).
\end{align*}
For $\varepsilon >0$ small, we have that
\[
\frac{\(U(x_0,-\varepsilon) - P(x_0,-\varepsilon)\)-\(U(x_0,0) - P(x_0,0)\)}{-\varepsilon} 
	=\frac{-U(x_0,-\varepsilon) + P(x_0,-\varepsilon)}{\varepsilon}  \leq 0.
\]
Taking the limit as $\varepsilon \to 0^+$, we obtain
\begin{equation}\label{eq:neum2}
0 \geq \partial_{z-}U(x_0,0) - \partial_zP(x_0,0)
	\geq f(x_0) - ah'(z_v).
\end{equation}
By combining \eqref{eq:neum1} and \eqref{eq:neum2},
\[
f(x_0) \leq ah'(z_v) \leq -f(x_0).
\]
If $f(x_0) >0$, then the previous set of inequalities provides a contradiction, so $P$ cannot touch $U$ from below in $S$ at $(x_0,0)$. 
If $f(x_0) \leq 0$, then
\[
-\abs{f^-(x_0)} \leq ah'(z_v) \leq  \abs{f^-(x_0)}
\]
as desired. If $f(x_0) = 0$, then $h'(z_v) = 0$ which implies that $z_v=0$.
\end{proof}

\section{Estimate on the Monge--Amp\`ere measure of the set of contact points}\label{sec:lem1}

Our first key result is a measure estimate similar to the  Alexandroff--Bakelman--Pucci estimate for fully nonlinear equations.
We prove that if we lift paraboloids of fixed opening $a>0$ with vertices in a closed, bounded set from below until they touch the graph of $U$
for the first time, 
then, by using the equation and the Neumann boundary condition,
the Monge--Amp\`ere measure of the contact points is a universal proportion of the Monge--Amp\`ere measure of the
set of vertices. 

\begin{thm}\label{lem:ABP-super}
Assume that $\Omega$ is a bounded domain and that
$a^{ij}(x):\Omega \to \R$ are bounded, measurable functions that satisfy \eqref{eq:ellipticity}.
Let $Q_R = Q_R(\tilde{x},\tilde{z}) \subset \subset \Omega \times \R$ 
and $f \in L^{\infty}(Q_R \cap \{z=0\})$.
Suppose $U \in C^2(Q_R \setminus \{z=0\}) \cap C(Q_R)$ such that $U$ is symmetric across $\{z=0\}$ and  $U_{z+} \in C(Q_R \cap \{z\geq0\})$ 
is a supersolution to 
\[
\begin{cases}
a^{ij}(x) \partial_{ij}U + \abs{z}^{2-\frac{1}{s}}\partial_{zz}U \leq 0 & \hbox{in}~Q_R \cap \{z\not=0\}\\
-\partial_{z+}U \geq f & \hbox{on}~Q_R \cap \{z=0\}.
\end{cases}
\]
Let $B \subset \overline{Q}_R$ be a closed set and fix $a>0$. 
For each $(x_v,z_v) \in B$, we slide paraboloids of opening $a>0$ and vertex $(x_v,z_v)$ from below until they touch the graph of $U$
for the first time. 
Let $A$ denote the set of contact points and assume that $A \subset Q_R$. Then $A$ is compact and 
if
\[
\mu_{\Phi}\(B \cap \left\{(x,z): \abs{h'(z)} \leq \frac{\norm{f^-}_{ L^{\infty}(Q_R \cap \{z=0\})}}{a} \right\}\) \leq (1-\varepsilon_0) \mu_{\Phi}(B),
\]
for some $\varepsilon_0>0$, 
then there is a positive constant $c=c(n, \lambda,\Lambda)<1$ such that
\[
\mu_{\Phi}(A) \geq \varepsilon_0 c \mu_{\Phi}(B).
\]
\end{thm}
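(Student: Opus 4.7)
The plan is to run an Alexandroff--Bakelman--Pucci-type change of variables using the gradient map from contact points to vertices, while invoking Lemma \ref{lem:vertices on z=0} together with the threshold on $|h'(z_v)|$ to control the contribution of $\{z=0\}$. First, $A$ is compact because it is bounded (contained in $Q_R$) and closed: if $(x_k,z_k) \in A$ converges to $(x_0,z_0)$ with touching paraboloids whose vertices lie in the compact set $B$, one extracts a convergent subsequence of vertices $(x_{v,k},z_{v,k}) \to (x_v,z_v) \in B$, and the corresponding paraboloids converge locally uniformly to a paraboloid of opening $a$ that still touches $U$ from below at $(x_0,z_0)$. Define $B_1 = B \cap \{(x,z): |h'(z)| \leq \norm{f^-}_{L^{\infty}(Q_R \cap \{z=0\})}/a\}$; the hypothesis says $\mu_{\Phi}(B \setminus B_1) \geq \varepsilon_0 \mu_{\Phi}(B)$. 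By Lemma \ref{lem:vertices on z=0}, any paraboloid of opening $a$ with vertex in $B \setminus B_1$ must have all of its contact points in $A \cap \{z \ne 0\}$, where $U \in C^2$.

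Introduce the map $\Psi(x_0,z_0) = \nabla\Phi(x_0,z_0) + \frac{1}{a}\nabla U(x_0,z_0)$ on $A \cap \{z \ne 0\}$. At any contact point of a paraboloid $P$ with vertex $(x_v,z_v)$, Lemma \ref{lem:PintersectsU} gives $\nabla U(x_0,z_0) = \nabla P(x_0,z_0) = -a\nabla\Phi(x_0,z_0) + a\nabla\Phi(x_v,z_v)$, so $\Psi(x_0,z_0) = \nabla\Phi(x_v,z_v)$. Hence $\nabla\Phi(B \setminus B_1) \subset \Psi(A \cap \{z \ne 0\})$, and since $\Psi \in C^1$ on $Q_R \setminus \{z=0\}$, the area formula yields
\[
\mu_{\Phi}(B \setminus B_1) = |\nabla\Phi(B \setminus B_1)| \leq |\Psi(A \cap \{z \ne 0\})| \leq \int_{A \cap \{z \ne 0\}} |\det D\Psi|\, dx\, dz.
\]

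The main difficulty is the pointwise bound $\det D\Psi(x_0,z_0) \leq C(n,\lambda,\Lambda)\, h''(z_0)$ at contact points in $\{z\ne 0\}$; the weight $h''(z_0)=|z_0|^{1/s-2}$ is precisely what converts the integral above into $\mu_{\Phi}(A)$ via Lemma \ref{lem:h-integral} and encodes the correct degenerate/singular scaling of the coefficient $|z|^{2-1/s}$. Since $U - P$ attains a local minimum at the contact point and $U \in C^2$ there, we have $D^2 U \geq -a\, D^2\Phi$, so $D\Psi = D^2\Phi + \frac{1}{a}D^2 U \geq 0$. Setting
\[
M(x,z) = \begin{pmatrix} (a^{ij}(x)) & 0 \\ 0 & |z|^{2-1/s} \end{pmatrix},
\]
which is positive definite for $z\ne0$, the supersolution inequality reads $\operatorname{tr}(M D^2 U) \leq 0$, so
\[
\operatorname{tr}(M D\Psi) \leq \operatorname{tr}(M D^2 \Phi) = \operatorname{tr}(a^{ij}(x)) + 1 \leq n\Lambda + 1.
\]
Matrix AM--GM applied to the positive definite conjugate $M^{1/2}D\Psi M^{1/2}$, which shares eigenvalues with $MD\Psi$, yields
\[
\det(M)\det(D\Psi) \leq \left(\frac{\operatorname{tr}(M D\Psi)}{n+1}\right)^{n+1} \leq \left(\frac{n\Lambda+1}{n+1}\right)^{n+1},
\]
and since $\det M = \det(a^{ij}(x))\,|z|^{2-1/s} \geq \lambda^n |z|^{2-1/s}$, dividing gives the desired bound $\det D\Psi \leq C(n,\lambda,\Lambda)\, h''(z)$. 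Inserting this into the area estimate and applying Lemma \ref{lem:h-integral} produces $\varepsilon_0\, \mu_{\Phi}(B) \leq \mu_{\Phi}(B \setminus B_1) \leq C\, \mu_{\Phi}(A)$, which is the claim with $c = 1/C$.
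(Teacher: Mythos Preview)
Your proof is correct and follows the same overall architecture as the paper: show $A$ is closed via a limit-of-vertices argument, use Lemma~\ref{lem:vertices on z=0} to ensure that vertices outside the threshold set $\{|h'(z)|\le \|f^-\|_{L^\infty}/a\}$ produce contact points away from $\{z=0\}$, and then run the area formula for the gradient map $\Psi=\nabla(\Phi+\tfrac{1}{a}U)$ to compare $\mu_\Phi$ of vertices and contact points. The one genuine difference is in how you bound $\det D\Psi$. The paper proves a two-sided Hessian comparison $-a\,D^2\Phi\le D^2U\le Ca\,D^2\Phi$ at contact points (the upper bound obtained by contradiction from the supersolution inequality and ellipticity), and from this deduces $0\le D\Psi\le (C+1)D^2\Phi$, hence $\det D\Psi\le (C+1)^{n+1}h''(z)$. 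You instead take the classical ABP route: from $D\Psi\ge 0$ and $\trace(M\,D\Psi)\le n\Lambda+1$ you apply AM--GM to the symmetric conjugate $M^{1/2}D\Psi M^{1/2}$ and divide by $\det M\ge \lambda^n|z|^{2-1/s}$. Your argument is shorter and yields an explicit constant; the paper's Hessian bound, while a slight detour here, is in keeping with Savin's sliding-paraboloids philosophy and gives the additional qualitative information that $U$ separates from the touching paraboloid at a controlled rate. One minor point: when you invoke the area formula over $A\cap\{z\ne 0\}$ you implicitly need the exhaustion $A_\varepsilon=A\cap\{|z|\ge\varepsilon\}$ (on which $\Psi$ is Lipschitz) and a passage to the limit, exactly as the paper does; this is routine but worth stating.
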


\begin{proof}
We first show that $A$ is closed. Let $(x_k,z_k)\in A$ be such that $(x_k,z_k) \to (x_0,z_0)$. 
There exist corresponding polynomials $P_k$ with vertices $(x_v^k, z_v^k) \in B$ 
such that $P_k$ touches $U$ from below at $(x_k,z_k)$ in $Q_R$.
Since $B \subset \overline{Q}_R$ is closed, $B$ is compact. 
Thus, up to a subsequence, $(x_v^{{k}}, z_v^{{k}}) \to (x_v^0, z_v^0) \in B$.
By the continuity of $\delta_{\Phi}$ and $U$, as $k\to\infty$,
\begin{align*}
P_k(x,z)
	&= -a\delta_{\Phi}((x_v^{k},z_v^{k}),(x,z))
		+ a\delta_{\Phi}((x_v^{k},z_v^{k}),(x_k,z_k)) + U(x_k,z_k)\\
	&\to -a\delta_{\Phi}((x_v^{0},z_v^{0}),(x,z))
		+a\delta_{\Phi}((x_v^{0},z_v^{0}),(x_0,z_0)) + U(x_0,z_0)=:P(x,z).
\end{align*}
Since $P_k(x,z) \leq U(x,z)$, it must be that $P \leq U$ in $Q_R$. Moreover, $P(x_0,z_0) = U(x_0,z_0)$.
Therefore, $P$ is a paraboloid of opening $a>0$ with vertex $(x_v^0,z_v^0) \in B$ that touches
 $U$ from below at $(x_0,z_0)$. 
 This shows that $(x_0,z_0) \in A$, so that $A$ is closed and, moreover, compact. 

Define the sets
\begin{align*}
B_0 &= B \cap \left\{(x,z): \abs{h'(z)} \leq \frac{\norm{f^-}_{ L^{\infty}(Q_R \cap \{z=0\})}}{a} \right\}\\
B_1 &=  B \setminus \left\{(x,z): \abs{h'(z)} \leq \frac{\norm{f^-}_{ L^{\infty}(Q_R \cap \{z=0\})}}{a} \right\},
\end{align*}
so that $B = B_0 \cup B_1$ and $B_0 \cap B_1 = \varnothing$.
We lift paraboloids of opening $a>0$ from below with vertices in $B_0$ and $B_1$ to form the contact sets
$A_0$ and $A_1$, respectively. Note that $A = A_0 \cup A_1$.

We will first show that $\mu_{\Phi}(B_1) \leq C \mu_{\Phi}(A_1)$ for some positive constant $C = C(n,\lambda,\Lambda)$.

Let $(x_0,z_0) \in A_1$. There exists a paraboloid $P$ of opening $a>0$ and vertex $(x_v,z_v) \in B_1$ that touches $U$ from below at $(x_0,z_0)$. 
If $z_0 = 0$, then, by Lemma \ref{lem:vertices on z=0}, it must be that $f(x_0) \leq 0$ and that
\[
\abs{h'(z_v)} \leq  \frac{\abs{f^-(x_0)}}{a} \leq \frac{\norm{f^-}_{ L^{\infty}(Q_R \cap \{z=0\})}}{a}
\]
which contradicts that $(x_v,z_v) \in B_1$. Hence, $z_0 \not= 0$. 

Since $U-P$ attains a local minimum at $(x_0,z_0)$,
\[
\nabla U(x_0,z_0) =  \nabla P(x_0,z_0) = -a\(x_0-x_v, h'(z_0)-h'(z_v) \)
\]
which implies
\[
(x_v, h'(z_v) ) = (x_0,h'(z_0)) + \frac{1}{a}\nabla U(x_0,z_0).
\]
This is how the vertices $(x_v,z_v) \in B_1$ are uniquely determined by $(x_0,z_0) \in A_1$. Notice that this identity is equivalent to
\[
\nabla\Phi(x_v,z_v) = \nabla\(\Phi +\frac{1}{a}U\)(x_0,z_0) \quad \hbox{for all}~(x_v,z_v) \in B_1.
\]

Consider the map $T:A_1 \to T(A_1) = \nabla\Phi(B_1)$ given by 
\[
T(x_0,z_0) = \nabla\( \Phi + \frac{1}{a}U\)(x_0,z_0). 
\]
For $\varepsilon>0$, let $A_{\varepsilon}\subset A_1$ be given by
\[
A_{\varepsilon} = A_1 \setminus \{(x,z) : \abs{z} <\varepsilon\}.
\]
Then, $T$ is Lipschitz and injective on $A_{\varepsilon}$, so that, by the area formula for Lipschitz maps,
\begin{align*}
\abs{T(A_{\varepsilon})}
	= \int_{T(A_{\varepsilon})} \, dy \, dw
	&=  \int_{A_{\varepsilon}} \abs{\det \(\nabla T(x,z)\)} dz \, dx\\
	&= \int_{A_{\varepsilon}} \abs{\det \(D^2\( \Phi + \frac{1}{a}U\)(x,z)\)} dz \, dx.
\end{align*}

We claim that there is a constant $C = C(n,\lambda,\Lambda)>0$ such that for all $(x_0,z_0) \in A_{1}$
\begin{equation}\label{eq:claimD2U}
-aD^2\Phi(x_0,z_0) \leq D^2U(x_0,z_0) \leq CaD^2\Phi(x_0,z_0).
\end{equation}
The first inequality is straightforward because $P$ touches $U$ from below at $(x_0,z_0)$.
To prove the second inequality in \eqref{eq:claimD2U}, 
suppose, by way of contradiction, that 
\begin{equation}\label{eq:second inequality}
D^2U(x_0,z_0) > CaD^2\Phi(x_0,z_0) \quad \hbox{for all}~C>0. 
\end{equation}
Then
\[
D^2U(x_0,z_0) > Ca\begin{pmatrix} e_k\otimes e_k & 0 \\ 0 & 0\end{pmatrix} > Ca\begin{pmatrix} e_k\otimes e_k & 0 \\ 0 & 0\end{pmatrix} - a\begin{pmatrix} I & 0 \\ 0 & \abs{z_0}^{\frac{1}{s}-2}\end{pmatrix} 
\]
where $e_k$, $k=1,\dots, n$ are the standard basis vectors in $\R^n$.
Since $\tilde{A} = \begin{pmatrix} A(x_0) & 0 \\ 0 & 0 \end{pmatrix} \geq 0$ and 
\[
D^2U(x_0,z_0) - Ca\begin{pmatrix} e_k\otimes e_k & 0 \\ 0 & 0\end{pmatrix} +a\begin{pmatrix} I & 0 \\ 0 & \abs{z_0}^{\frac{1}{s}-2}\end{pmatrix} \geq 0,
\]
we have that
\[
\trace\(\tilde{A}D^2U(x_0,z_0) - Ca\tilde{A}\begin{pmatrix} e_k\otimes e_k & 0 \\ 0 & 0\end{pmatrix} 
+a\tilde{A}\begin{pmatrix} I & 0 \\ 0 & \abs{z_0}^{\frac{1}{s}-2}\end{pmatrix} \)\geq 0.
\]
By ellipticity (see \eqref{eq:ellipticity}),
\begin{equation}\label{lem:x-estimate}
\begin{aligned}
a^{ij}(x_0)\partial_{ij}U(x_0,z_0) 
&\geq (Ca)a^{kk}(x_0) - a\trace(A(x_0)) 
\geq Ca\lambda - an\Lambda.
\end{aligned}
\end{equation}
Similarly, from \eqref{eq:second inequality}, 
\[
D^2U(x_0,z_0) 
>Ca\begin{pmatrix} 0 & 0 \\ 0 & \abs{z_0}^{\frac{1}{s}-2}\end{pmatrix}
> Ca\begin{pmatrix} 0 & 0 \\ 0 & \abs{z_0}^{\frac{1}{s}-2}\end{pmatrix} - a\begin{pmatrix} I & 0 \\ 0 & \abs{z_0}^{\frac{1}{s}-2}\end{pmatrix}.
\]
From the definition of positive definite matrices, 
\[
\partial_{zz}U(x_0,z_0) - Ca \abs{z_0}^{\frac{1}{s}-2} +a\abs{z_0}^{\frac{1}{s}-2} > 0.
\]
Therefore,
\begin{equation}\label{lem:z-estimate}
\abs{z_0}^{2-\frac{1}{s}}\partial_{zz}U(x_0,z_0) >  Ca  -a.
\end{equation}
By \eqref{lem:x-estimate} and \eqref{lem:z-estimate}, it follows that
\begin{align*}
0 &\geq a^{ij}(x_0)\partial_{ij}U(x_0,z_0) +\abs{z_0}^{2-\frac{1}{s}}\partial_{zz}U(x_0,z_0)\\
	&> Ca\lambda - an\Lambda+Ca -a\\
	&= [C(\lambda +1) - (n\Lambda+1)]a,
\end{align*}
which is a contradiction when 
 $C = C(n,\lambda,\Lambda)>0$ is sufficiently large. 
Thus, \eqref{eq:claimD2U} holds. 

From \eqref{eq:claimD2U}, we get
\begin{align*}
0 \leq D^2\(\Phi + \frac{1}{a}U\)(x_0,z_0) 
&\leq D^2\Phi(x_0,z_0) + CD^2\Phi(x_0,z_0)
= (C+1)D^2\Phi(x_0,z_0)
\end{align*}
for all $(x_0,z_0) \in A_1$.
Hence,
\begin{align*}
\abs{T(A_{\varepsilon})}
	&= \int_{A_{\varepsilon}} \det\(D^2\(\Phi + \frac{1}{a}U\)(x,z) \) \, dx \, dz\\
	&\leq  \int_{A_{\varepsilon}} \det\( (C+1)D^2\Phi(x,z) \) \, dx \, dz\\
	&=  (C+1)^{n+1}\mu_{\Phi}(A_{\varepsilon})\\
	&\leq (C+1)^{n+1} \mu_{\Phi}(A_1).
\end{align*}
As this holds for all $\varepsilon>0$, 
\[
\mu_{\Phi}(B_1) 
	= \abs{\nabla\Phi(B_1)}
	=  \abs{T(A_1)} \leq  (C+1)^{n+1} \mu_{\Phi}(A_1).
\]

Thus, 
\begin{align*}
\mu_{\Phi}(B)
	&= \mu_{\Phi}(B_0) + \mu_{\Phi}(B_1) 
	\leq (1-\varepsilon_0)\mu_{\Phi}(B) +(C+1)^{n+1} \mu_{\Phi}(A_1)
\end{align*}
from which it follows that
\[
\mu_{\Phi}(A) \geq \mu_{\Phi}(A_1) \geq \frac{\varepsilon_0}{2(C+1)^{n+1}}\mu_{\Phi}(B) = c\varepsilon_0\mu_{\Phi}(B).
\]
\end{proof}

The following is a parallel result to that of Theorem \ref{lem:ABP-super}
for subsolutions when paraboloids of opening $a<0$ are lowered from above until they touch the graph of $U$ for the first time.
The proof is straightforward. We will apply this lemma in the proof of Theorem \ref{thm:reduction3}.

\begin{thm}\label{lem:ABP-sub}
Assume that $\Omega$ is a bounded domain and that
$a^{ij}(x):\Omega \to \R$ are bounded, measurable functions that satisfy \eqref{eq:ellipticity}.
Let $Q_R = Q_R(\tilde{x},\tilde{z}) \subset \subset \Omega \times \R$ 
and $f \in L^{\infty}(Q_R \cap \{z=0\})$.
Suppose $U \in C^2(Q_R \setminus \{z=0\}) \cap C(Q_R)$ such that $U$ is symmetric across $\{z=0\}$ and  $U_{z+} \in C(Q_R \cap \{z\geq0\})$ 
is a subsolution to 
\[
\begin{cases}
a^{ij}(x) \partial_{ij}U + \abs{z}^{2-\frac{1}{s}}\partial_{zz}U \geq 0 & \hbox{in}~Q_R \cap \{z\not=0\}\\
-\partial_{z+}U \leq f & \hbox{on}~Q_R \cap \{z=0\}.
\end{cases}
\]
Let $B \subset \overline{Q}_R$ be a closed set and fix $a<0$. 
For each $(x_v,z_v) \in B$, we slide paraboloids of opening $a$ and vertex $(x_v,z_v)$ from above until they touch the graph of $U$
for the first time. 
Let $A$ denote the set of contact points and assume that $A \subset Q_R$. Then $A$ is compact and 
if
\[
\mu_{\Phi}\(B \cap \left\{(x,z): \abs{h'(z)} \leq \frac{\norm{f^+}_{ L^{\infty}(Q_R \cap \{z=0\})}}{|a|} \right\}\) \leq (1-\varepsilon_0) \mu_{\Phi}(B),
\]
for some $\varepsilon_0>0$, 
then there is a positive constant $c=c(n, \lambda,\Lambda)<1$ such that
\[
\mu_{\Phi}(A) \geq \varepsilon_0 c \mu_{\Phi}(B).
\]
\end{thm}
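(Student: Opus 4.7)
The plan is to derive Theorem \ref{lem:ABP-sub} from Theorem \ref{lem:ABP-super} by passing to the negative function $V = -U$. The point is that sliding a paraboloid $P(x,z) = -a\delta_{\Phi}((x_v,z_v),(x,z)) + c$ of opening $a<0$ from \emph{above} until it touches $U$ corresponds exactly to sliding the paraboloid $-P(x,z) = -|a|\delta_{\Phi}((x_v,z_v),(x,z)) - c$ of opening $|a|>0$ (with the same vertex $(x_v,z_v)$) from \emph{below} until it touches $V$. In particular the set $A$ of first-time contact points is identical in both formulations.

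First I would verify that $V = -U$ fits the hypotheses of Theorem \ref{lem:ABP-super} with right-hand side $\tilde f = -f$: $V$ is symmetric across $\{z=0\}$, $V \in C^2(Q_R\setminus\{z=0\}) \cap C(Q_R)$ with $V_{z+} \in C(Q_R \cap \{z \geq 0\})$, the subsolution inequality for $U$ becomes the supersolution inequality
\[
a^{ij}(x)\partial_{ij}V + |z|^{2-\tfrac{1}{s}}\partial_{zz}V \leq 0 \qquad \text{in } Q_R \cap \{z\neq 0\},
\]
and the Neumann condition $-\partial_{z+}U \leq f$ becomes $-\partial_{z+}V \geq -f = \tilde f$ on $Q_R \cap \{z=0\}$.

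Next I would translate the hypothesis on $B$. Since $\tilde f = -f$, we have $\tilde f^- = \min\{0,-f\} = -\max\{0,f\} = -f^+$, so that $\|\tilde f^-\|_{L^\infty(Q_R\cap\{z=0\})} = \|f^+\|_{L^\infty(Q_R\cap\{z=0\})}$. Consequently the hypothesis
\[
\mu_{\Phi}\!\left(B \cap \left\{(x,z) : |h'(z)| \leq \frac{\|f^+\|_{L^\infty(Q_R\cap\{z=0\})}}{|a|}\right\}\right) \leq (1-\varepsilon_0)\mu_{\Phi}(B)
\]
is exactly the hypothesis of Theorem \ref{lem:ABP-super} applied to $V$ with paraboloids of opening $|a|>0$. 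Since the contact set $A$ and the vertex set $B$ coincide with those of the supersolution problem (and $A$ is assumed to be contained in $Q_R$), applying Theorem \ref{lem:ABP-super} gives both the compactness of $A$ and the estimate $\mu_{\Phi}(A) \geq \varepsilon_0 c \mu_{\Phi}(B)$ with the same universal constant $c = c(n,\lambda,\Lambda) < 1$.

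The hard part, if one insisted on reproving the result directly rather than by negation, would be the analogue of the key two-sided Hessian bound \eqref{eq:claimD2U}: for paraboloids touching from above with $a<0$, one would need to show $CaD^2\Phi(x_0,z_0) \leq D^2 U(x_0,z_0) \leq -aD^2\Phi(x_0,z_0)$ at contact points away from $\{z=0\}$, using ellipticity and the subsolution inequality in place of the supersolution inequality. By going through $V = -U$ this verification is completely bypassed, since it has already been carried out once for Theorem \ref{lem:ABP-super}. The only other ingredient, the correspondence between contact points in $A \cap \{z = 0\}$ and the sign of $f$, is handled identically by invoking the symmetry argument of Lemma \ref{lem:vertices on z=0} applied to $V$ with $\tilde f = -f$.
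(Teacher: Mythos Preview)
Your proposal is correct. The paper does not give a detailed proof of Theorem \ref{lem:ABP-sub}; it simply states that ``the proof is straightforward,'' leaving open whether one should rerun the argument of Theorem \ref{lem:ABP-super} with the evident sign changes or pass to $V=-U$. Your negation approach is a clean realization of this: it correctly identifies the bijection between paraboloids of opening $a<0$ touching $U$ from above and paraboloids of opening $|a|>0$ touching $V$ from below (same vertices, same contact set $A$), it verifies that $V$ is a supersolution with Neumann data $\tilde f=-f$, and it translates the measure hypothesis via $\|\tilde f^{-}\|_{L^\infty}=\|f^{+}\|_{L^\infty}$ so that Theorem \ref{lem:ABP-super} applies verbatim.
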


\begin{rem}\label{rem:lem1-sections}
By checking the proofs, it is easy to see that Theorems \ref{lem:ABP-super} and \ref{lem:ABP-sub} are still valid when the cube $Q_R$ is replaced by a section $S_R$.
\end{rem}

\section{Explicit barriers}\label{sec:barrier}

This section contains the construction of the barriers that will be used in Section \ref{sec:lem2} 
to prove a localization estimate. This is a quite delicate task due to the degeneracy/singularity of the extension equation
and the presence of the Neumann boundary condition. 

The idea for the barrier is to use $\delta_{\Phi}((x_0,z_0),(x,z))^{-\alpha}$, for $\alpha>0$ large, to construct subsolutions in 
a ring $S_{2r}(x_0,z_0) \setminus S_{\gamma r}(x_0,z_0)$.
This depends heavily on whether $s$ is smaller or larger than $1/2$. When $0< s \leq 1/2$, the coefficient $\abs{z}^{2 - 1/s}$ blows up at the origin.
When $1/2 < s <1$, the coefficient $\abs{z}^{2- 1/s}$ degenerates near $z=0$. 
In the latter case, we need to use an auxiliary function that bypasses the points where $\abs{z}^{2- 1/s}$ is small. 
A similar auxiliary function will be used when $z_0 = 0$ to force the Neumann condition to be strictly positive. 
By the symmetry of the equation, it will be enough to consider the nonnegative side of the ring if $z_0 \geq 0$ and the nonpositive side if $z_0 \leq 0$. 

The following is a preliminary result that
will be used in the case when $0 < s \leq 1/2$.

\begin{lem}\label{lem:quotient}
Let $0 < s \leq 1/2$ and $z_0>0$ be fixed. 
Define the function $\mQ:\R\to \R$ by
\[
\mQ(z) = \frac{(h'(z) - h'(z_0))^2}{\delta_h(z_0,z) h''(z)}.
\]
Then $\mQ$ is a continuous function of $z>0$, and $\mQ(z) \geq 1$ for all $z>0$.
\end{lem}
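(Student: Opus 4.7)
My plan is to split the proof into two pieces: (i) continuity of $\mQ$ on $(0,\infty)$, and (ii) the lower bound $\mQ(z)\geq 1$. For continuity on $(0,\infty)\setminus\{z_0\}$, nothing needs to be done since $\delta_h(z_0,z) h''(z)$ is strictly positive there and all of $h'$, $\delta_h$, $h''$ are smooth away from the origin. At the removable singularity $z=z_0$, I would Taylor-expand both factors around $z_0$ to get $(h'(z)-h'(z_0))^2 \sim h''(z_0)^2 (z-z_0)^2$ and $\delta_h(z_0,z)\, h''(z) \sim \tfrac{1}{2} h''(z_0)^2 (z-z_0)^2$, so $\mQ$ extends continuously at $z_0$ with value $2$.

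For the lower bound, the first step is to reduce to a single-variable inequality. The ratio $\mQ$ is invariant under the joint rescaling $(z,z_0)\mapsto (\lambda z, \lambda z_0)$, so without loss of generality $z_0=1$. Setting $p=1/s\geq 2$, the identities $h(z) = z^p/(p(p-1))$, $h'(z)=z^{p-1}/(p-1)$, $h''(z)=z^{p-2}$ (valid for $z>0$) turn $\mQ(z)\geq 1$ into
\[
G(z) := z^{2p-2} + p(p-3)\,z^{p-1} - (p-1)^2 z^{p-2} + p \,\geq\, 0 \qquad \hbox{for all}~z>0.
\]
Note that $G(0^{+}) = p>0$ and $G(z)\to +\infty$ as $z\to\infty$, and a direct expansion gives $G(1)=0$.

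The plan is then to show that $z=1$ is the \emph{unique} positive critical point of $G$, and hence its global minimizer. Differentiating,
\[
G'(z) = (p-1)\,z^{p-3}\bigl[\,2 z^p + p(p-3)\,z - (p-1)(p-2)\,\bigr],
\]
so the positive critical points are exactly the roots of $\Phi_1(z) := 2z^p + p(p-3)z - (p-1)(p-2)$. I would first check $\Phi_1(1)=0$ and $\Phi_1'(1) = 2p + p(p-3) = p(p-1) > 0$, and then show $\Phi_1<0$ on $(0,1)$ and $\Phi_1>0$ on $(1,\infty)$. For $p\geq 3$ this is immediate because $\Phi_1'(z)=2p z^{p-1}+p(p-3)\geq 0$ makes $\Phi_1$ monotone increasing. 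For $2\leq p<3$, $\Phi_1'$ is strictly convex and vanishes at a unique $z_*>0$ (with $z_*<1$ from $\Phi_1'(1)>0$), so $\Phi_1$ decreases on $(0,z_*)$ then increases; combined with $\Phi_1(0)=-(p-1)(p-2)\leq 0$ and $\Phi_1(1)=0$, the same sign pattern follows. Thus $G'<0$ on $(0,1)$ and $G'>0$ on $(1,\infty)$, which forces $G(z)\geq G(1)=0$ on $(0,\infty)$, finishing the proof.

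The main bookkeeping obstacle is that the analysis of $\Phi_1$ must be split into the cases $p\geq 3$ and $2\leq p<3$, since $\Phi_1'$ does not have a single sign for all $p\in[2,\infty)$. A tempting shortcut through global convexity of $G$ is unavailable: the leading-order term of $G''$ near $z=0$ is $-(p-1)^2(p-2)(p-3)z^{p-4}$, which is negative for $p>4$, so $G$ is not convex on $(0,\infty)$. The critical-point argument via $\Phi_1$ therefore seems to be the cleanest route.
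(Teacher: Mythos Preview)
Your argument is correct and takes a genuinely different route from the paper's. The paper does not rescale; instead it shows directly that $\mQ$ is strictly decreasing on $(0,\infty)$ by computing $\mQ'(z)$, factoring it as a positive prefactor times $(h'(z)-h'(z_0))\cdot I(z)$, and proving that $I(z)$ changes sign exactly at $z_0$ (this last step is carried out by introducing an auxiliary polynomial $\psi$ and invoking Young's inequality). Monotonicity of $\mQ$ together with $\lim_{z\to\infty}\mQ(z)=1/(1-s)\geq 1$ then gives the bound. Your approach trades this for a scaling reduction to $z_0=1$, after which the inequality $\mQ\geq 1$ becomes the polynomial inequality $G(z)\geq 0$ with $p=1/s\geq 2$; you then show $z=1$ is the unique positive critical point and hence the global minimizer. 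Both arguments end up analyzing the sign of a one-variable auxiliary function; the paper's buys slightly more (the full monotonicity of $\mQ$, not just the lower bound), while yours is arguably more transparent because the reduction to $G$ makes the structure of the inequality explicit and the case split $p\geq 3$ versus $2\leq p<3$ is short.

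Two minor corrections that do not affect the logic. First, in the $2\leq p<3$ case you write ``$\Phi_1'$ is strictly convex''; what you actually use (and what holds) is that $\Phi_1'$ is strictly increasing, since $\Phi_1''(z)=2p(p-1)z^{p-2}>0$. Second, the claim $G(0^+)=p$ is only correct for $p>2$; when $p=2$ the term $-(p-1)^2 z^{p-2}$ equals $-1$ and $G(0)=1$. Neither point matters for the conclusion, since the endpoint values are not used once you have established $G'<0$ on $(0,1)$ and $G'>0$ on $(1,\infty)$, which forces $G\geq G(1)=0$.
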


\begin{proof}
By L'H\"opital's rule, $\lim_{z \to z_0} \mQ(z)=2$,
so that $\mQ(z)$ is continuous for $z>0$. 
Also, for $s = 1/2$ and all $z \not= z_0$, we have $\mQ(z)= 2$. Hence, let us assume for the remainder of the proof that $0 < s < 1/2$.

It is easy to see that $\lim_{z \to 0^+} \mQ(z)=\infty$ and that 
$\lim_{z \to \infty} \mQ(z)=\frac{1}{1-s}\geq1$.
Therefore, it is enough to prove that $\mQ(z)$ is decreasing for $z>0$, $z\not= z_0$.
To this end, we will show that $\mQ'(z)<0$ for $z \not= z_0$. First, observe that
\begin{equation}\label{eq:Q}
\mQ'(z) 	= \frac{(h'(z) - h'(z_0))h''(z)}{(\delta_h(z_0,z) h''(z))^2} \cdot I(z)
\end{equation}
where
$$I(z)=2\delta_h(z_0,z) h''(z) - (h'(z) - h'(z_0))^2 - \delta_h(z_0,z) (h'(z) - h'(z_0)) \frac{h'''(z)}{h''(z)}.$$
We can write
$$I(z)=-\frac{s}{1-s}z_0^{\frac{2}{s}-2} 
	 +\frac{s}{1-s} z_0^{\frac{1}{s}} z^{\frac{1}{s}-2} 
	 -\frac{s(1-2s)}{1-s} z_0^{\frac{1}{s}-1}z^{\frac{1}{s}-1} 
	 + \frac{s(1-2s)}{1-s}z_0^{\frac{2}{s}-1}z^{-1}.$$
It follows that $I(z)>0$ for all $z>0$ if and only if
\[
\psi(z) := - z_0^{\frac{2}{s}-1} z
	 + z_0^{\frac{1}{s}+1} z^{\frac{1}{s}-1} 
	 -(1-2s) z_0^{\frac{1}{s}}z^{\frac{1}{s}} 
	 +(1-2s)z_0^{\frac{2}{s}} >0,
\]
for all $z>0$.
Note that $\psi(z_0) = 0$ and  $\psi(0) = (1-2s)z_0^{\frac{2}{s}}>0$.
We claim that $\psi$ is decreasing as function of $z>0$. Indeed, $\psi'(z) <0$ if and only if 
\[
- z_0^{\frac{2}{s}-1} 
	 + \(\frac{1}{s}-1\)z_0^{\frac{1}{s}+1} z^{\frac{1}{s}-2} 
	 -\frac{(1-2s)}{s} z_0^{\frac{1}{s}}z^{\frac{1}{s}-1} <0.
\]
Multiplying both sides by $z_0^{-1/s}s/(1-s)>0$ and rearranging, this is equivalent to 
\[
 z_0 z^{\frac{1}{s}-2} < \(\frac{s}{1-s}\) z_0^{\frac{1}{s}-1}+ \(\frac{1-2s}{1-s}\)z^{\frac{1}{s}-1},
\]
which is true by Young's inequality, and the claim follows. Thus, we conclude that
\[
\psi (z)>0 \quad \hbox{for}~0 < z < z_0 \quad \hbox{and} \quad \psi (z)< 0 \quad \hbox{for}~z>z_0.
\]
This gives that
\[
I(z)>0 \quad \hbox{for}~0 < z < z_0 \quad \hbox{and} \quad I(z)< 0 \quad \hbox{for}~z>z_0.
\]
Since, in addition,
\[
h'(z) - h'(z_0) <0 \quad \hbox{for}~0 < z < z_0 \quad \hbox{and} \quad h'(z) - h'(z_0) > 0 \quad \hbox{for}~z>z_0,
\]
we deduce from \eqref{eq:Q} that $\mQ'(z) <0$ for all $z\not=z_0$. This completes the proof.
\end{proof}

We now construct the barriers $\phi$.
For a set $S\subset \R^{n+1}$, we introduce the notation 
\[
S^+ = S \cap \{z \geq 0\} \quad \hbox{and} \quad 
S^- = S \cap \{z \leq 0\}.
\]
To deal with the singularity at $z=0$, we define $\phi$ in either the positive or negative half spaces. 
In particular, if $z_0 \geq 0$, then we consider the partial ring $[S_r(x_0,z_0) \setminus S_{\gamma r}(x_0,z_0)]^{+}$.
If $z_0 < 0$, then we consider the partial ring $[S_r(x_0,z_0) \setminus S_{\gamma r}(x_0,z_0)]^{-}$.
We will use the condensed notation
\[
[S_r(x_0,z_0) \setminus S_{\gamma r}(x_0,z_0)]^{\pm} 
= \begin{cases}
[S_r(x_0,z_0) \setminus S_{\gamma r}(x_0,z_0)]^{+} & \hbox{if}~z_0 \geq 0 \\
\left[S_r(x_0,z_0) \setminus S_{\gamma r}(x_0,z_0)\right]^{-} & \hbox{if}~z_0 < 0.
\end{cases}
\]

\begin{lem}\label{lem:subsoln}
Fix $0 < \gamma <1$ and consider a section $S_r(x_0,z_0) \subset \R^{n+1}$. 

If $z_0 \geq 0$, then there
exists a classical subsolution $\phi = \phi(x,z)$ to
\begin{equation} \label{eq:subsoln1}
\begin{cases}
a^{ij}(x) \partial_{ij}\phi + \abs{z}^{2-\frac{1}{s}}\partial_{zz} \phi > a(n\Lambda+1) & \hbox{in}~[S_{2r}(x_0,z_0) \setminus {S}_{\gamma r}(x_0,z_0)]^{+}\cap \{z\not=0\}\\
-\partial_{z+}\phi(x,0) < 0 & \hbox{on}~[S_{2r}(x_0,z_0) \setminus {S}_{\gamma r}(x_0,z_0)]^{+}\cap \{z=0\}.
\end{cases}
\end{equation}
If $z_0 \leq 0$, then there exist a classical subsolution $\phi = \phi(x,z)$ to
\begin{equation} \label{eq:subsoln1-neg}
\begin{cases}
a^{ij}(x) \partial_{ij}\phi + \abs{z}^{2-\frac{1}{s}}\partial_{zz} \phi > a(n\Lambda+1) & \hbox{in}~[S_{2r}(x_0,z_0) \setminus {S}_{\gamma r}(x_0,z_0)]^-\cap \{z\not=0\}\\
-\partial_{z-}\phi(x,0) > 0 & \hbox{on}~[S_{2r}(x_0,z_0) \setminus {S}_{\gamma r}(x_0,z_0)]^-\cap \{z=0\}.
\end{cases}
\end{equation}
In each case, 
$\phi >0$ in $[S_{r}(x_0,z_0) \setminus S_{\gamma r}(x_0,z_0)]^{\pm}$,  $\phi \leq 0$ on $[\partial S_{2r}(x_0,z_0)]^{\pm}$, and there is a constant $C = C(n,\lambda,\Lambda, \gamma) >0$ such that $\phi \leq Car$ on $[\partial S_{\gamma r}(x_0,z_0)]^{\pm}$.
\end{lem}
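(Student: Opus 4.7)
The plan is to build $\phi$ starting from the Monge--Amp\`ere distance raised to a negative power and adding two explicit one-variable corrections. Throughout, I will write $\eta(x,z) = \delta_{\Phi}((x_0,z_0),(x,z)) = \tfrac{1}{2}|x-x_0|^2 + \delta_h(z_0,z)$; I will carry out the case $z_0\geq 0$ in the upper half $\{z\geq 0\}$, as the case $z_0\leq 0$ is then obtained by reflecting $z\mapsto -z$ and using the anti-symmetry of $h'$ (compare \eqref{eq:delta_h anti-symmetry} in Lemma \ref{lem:pos vertex}).

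The first step is to compute the operator acting on $\psi(\eta)$ for a smooth function $\psi$. Since $\partial_{ij}\eta = \delta_{ij}$, $\partial_z\eta = h'(z)-h'(z_0)$ and $\partial_{zz}\eta = h''(z) = |z|^{1/s-2}$, a direct computation gives
\begin{equation*}
a^{ij}(x)\partial_{ij}[\psi(\eta)] + |z|^{2-1/s}\partial_{zz}[\psi(\eta)]
= \psi''(\eta)\,\Psi(x,z) + \psi'(\eta)\bigl(\mathrm{tr}(A(x)) + 1\bigr),
\end{equation*}
where $\Psi(x,z) := a^{ij}(x)(x-x_0)_i(x-x_0)_j + |z|^{2-1/s}(h'(z)-h'(z_0))^2$ and we used the identity $|z|^{2-1/s}h''(z)=1$. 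Taking $\psi(t) = t^{-\alpha}$ (so $\psi'<0$, $\psi''>0$) reduces the subsolution requirement to a lower bound of the form $\Psi(x,z) \geq c_0\,\eta(x,z)$ on the ring, which then lets the first term dominate once $\alpha=\alpha(n,\lambda,\Lambda)$ is chosen large. The $x$-part of $\Psi$ satisfies $a^{ij}(x-x_0)_i(x-x_0)_j \geq 2\lambda\cdot\tfrac{1}{2}|x-x_0|^2$ by \eqref{eq:ellipticity}; the $z$-part equals $\mathcal{Q}(z)\,\delta_h(z_0,z)$ with $\mathcal{Q}$ as in Lemma \ref{lem:quotient}.

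For $0<s\leq 1/2$ and $z_0>0$, Lemma \ref{lem:quotient} gives $\mathcal{Q}(z)\geq 1$, hence $\Psi\geq \min(2\lambda,1)\,\eta$. I will then set
\begin{equation*}
\phi_0(x,z) := A_1\bigl(\eta(x,z)^{-\alpha} - (2r)^{-\alpha}\bigr),\qquad A_1 \asymp a\,r^{\alpha+1},
\end{equation*}
and fix $\alpha$ so that $(\alpha+1)\min(2\lambda,1) > 2(n\Lambda+1)$. This yields $L\phi_0 > a(n\Lambda+1)$ on the ring; the choice of $A_1$ forces $\phi_0 \leq 0$ on $\partial S_{2r}$, $\phi_0 > 0$ in $S_r\setminus S_{\gamma r}$, and $\phi_0 \leq C(n,\lambda,\Lambda,\gamma)\,a\,r$ on $\partial S_{\gamma r}$ because $\eta\equiv \gamma r$ there. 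For the Neumann condition one computes $-\partial_{z+}\phi_0(x,0) = -A_1\alpha\,\eta^{-\alpha-1}\,h'(z_0)<0$ since $h'(z_0)>0$.

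Two refinements remain. When $z_0 = 0$, the previous formula collapses to $-\partial_{z+}\phi_0(x,0)=0$, so I will add a one-variable correction $\phi_1(z)$ with $\phi_1(0)=0$, $\partial_{z+}\phi_1(0)>0$, and whose contribution to $L$ is nonnegative and to the boundary values on $\partial S_{2r}$ can be absorbed by taking the multiplicative constant in $\phi_0$ a bit larger; a multiple of $\delta_h(0,z)$ together with a linear $z$-term cut off against the outer boundary does the job. When $1/2<s<1$ the factor $\mathcal{Q}(z)$ fails to be bounded below near $z=0$ (in fact $\mathcal{Q}(z)\to 0$ as $z\to 0^+$), so the lower bound $\Psi\geq c_0\eta$ breaks down on a slab $\{|z|<\varepsilon\}$ on which $|z|^{2-1/s}$ is small. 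I will add a second correction $\phi_2(z)$ which is negligible outside this slab but whose $L\phi_2$ carries a strictly positive bulk contribution inside it, effectively \emph{bypassing} the zone where $|z|^{2-1/s}$ degenerates, as indicated in the introduction to this section. The correction is again one-variable, so $a^{ij}\partial_{ij}\phi_2\equiv 0$ and only the $|z|^{2-1/s}\partial_{zz}\phi_2$ piece matters; the ansatz will be a convex combination of $|z|^\beta$ for an exponent $1<\beta<1/s$ chosen so that $|z|^{2-1/s}\partial_{zz}|z|^\beta = \beta(\beta-1)|z|^{\beta-1/s}$ blows up at the origin (and can be scaled to exceed $a(n\Lambda+1)$), truncated linearly so as not to spoil the outer boundary values.

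The single main obstacle is this last construction: the auxiliary function $\phi_2$ (and the Neumann corrector $\phi_1$) must be engineered to satisfy four simultaneous constraints, namely $L\phi>a(n\Lambda+1)$ on the full ring including the degenerate/singular slab, $\phi\leq 0$ on $[\partial S_{2r}]^\pm$, $\phi\leq C\,a\,r$ on $[\partial S_{\gamma r}]^\pm$, and strict positivity of $\partial_{z+}\phi(x,0)$. The fine-tuning of the exponent $\beta$, the slab width and the multiplicative constants, in terms only of $n$, $\lambda$, $\Lambda$, $\gamma$ and $s$, is what makes this lemma genuinely delicate; the splitting $\phi = A_1(\eta^{-\alpha}-(2r)^{-\alpha}) + \phi_1(z) + \phi_2(z)$ is what allows each difficulty to be handled by a separate elementary one-variable term.
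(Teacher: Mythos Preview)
Your starting point and your treatment of the easy regime ($0<s\le 1/2$, $z_0>0$) match the paper exactly: the barrier is a constant multiple of $\delta_\Phi^{-\alpha}$ minus a constant, the key computation is the one you wrote, and Lemma~\ref{lem:quotient} gives the needed lower bound $\mathcal Q\ge 1$.  Your observation that for $z_0=0$ one has $\mathcal Q\equiv 1/(1-s)$, so that only the Neumann sign needs repair, is also correct.

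Where you diverge from the paper is in the mechanism for the corrections.  You propose to \emph{add} one-variable functions $\phi_1$, $\phi_2$ to $\phi_0$.  The paper instead \emph{perturbs the argument of the negative power}: it replaces $\delta_\Phi$ by $\delta_\Phi-h_\varepsilon$ (Case~2, $s>1/2$) or $\delta_\Phi-g_\varepsilon$ (Case~3, $z_0=0$, $s\le 1/2$), with $h_\varepsilon,g_\varepsilon\le 0$ and $|h_\varepsilon|,|g_\varepsilon|\le C\varepsilon r$.  This is not a cosmetic difference.  In the degenerate case $s>1/2$, the paper chooses $h_\varepsilon$ to solve $h_\varepsilon''=2(n\Lambda+1)\psi_\varepsilon h''$, where $\psi_\varepsilon$ is a cutoff equal to $1$ on the bad set $H_\varepsilon=\{|z|^{2-1/s}\ \text{small}\}$.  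Expanding $L[(\delta_\Phi-h_\varepsilon)^{-\alpha}]$ one sees that on $H_\varepsilon$ the lower-order coefficient becomes $\operatorname{tr}A(x)+1-2(n\Lambda+1)\psi_\varepsilon\le -(n\Lambda+1)$: the sign of the first-order term \emph{flips}, and positivity of $L\phi$ follows immediately with no competition from the second-order term.  The smallness $|h_\varepsilon|\le C\varepsilon r$ (needed to preserve the boundary behavior) is obtained from the $A_\infty$ property of $h''$ via Lemma~\ref{lem:A-infty}.  The same ``inside the power'' device, with a linear $g_\varepsilon$, handles the Neumann issue when $z_0=0$.

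Your additive $|z|^\beta$ proposal for $s>1/2$ faces a concrete obstacle that you have not resolved.  On the bad slab (small $z$, $\delta_h\ge \gamma r/2$), $\Psi$ can be essentially zero, so $L\phi_0$ can be as negative as $-A_1\alpha(\gamma r)^{-\alpha-1}(n\Lambda+1)\sim -a(2/\gamma)^{\alpha+1}(n\Lambda+1)$, exponentially large in the parameter $\alpha$ that you have already fixed.  To overpower this with $L\phi_2=B\beta(\beta-1)|z|^{\beta-1/s}$ you would need $B$ correspondingly large, but then $|\phi_2|\le BZ^\beta$ (with $Z\sim r^s$ the width of the $z$-section) is of the same order as $\phi_0$ on the inner ring, and the required positivity $\phi>0$ in $S_r\setminus S_{\gamma r}$ and $\phi\le 0$ on $\partial S_{2r}$ are in tension.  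It is not clear that the four constraints you list can be met simultaneously with your additive scheme; the paper's sign-flip mechanism sidesteps this balancing entirely.  Your additive Neumann corrector $\phi_1$ for $z_0=0$, by contrast, can be made to work (take $\phi_1=c(z-Z)$ with $c\ll ar^{1-s}$), since $L\phi_1=0$ and $\mathcal Q$ is constant there; it is only the $s>1/2$ degeneracy that is genuinely problematic for your approach.
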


\begin{proof}
The proof of \eqref{eq:subsoln1-neg} will follow from \eqref{eq:subsoln1} at the end by symmetry. 
The construction of the subsolution in \eqref{eq:subsoln1} will depend on whether $z_0>0$ or $z_0=0$ and on whether $0 < s \leq 1/2$ or $1/2 < s < 1$. 

\medskip
\noindent
\underline{\bf Case 1}: $z_0> 0$ and  $0 < s \leq 1/2$.

\smallskip

We begin by considering the function $(\delta_\Phi((x_0,z_0),(x,z)))^{-\alpha}$ for a large constant $\alpha = \alpha(\gamma,n,\lambda,\Lambda,s) >0$ which will be fixed later on.
Let $\mQ(z)$ be the function defined in Lemma \ref{lem:quotient}. 
For a point $(x,z) \in [S_{2r}(x_0,z_0) \setminus S_{\gamma r}(x_0,z_0)]^{+} \setminus \{z=0\}$, we use ellipticity and Lemma \ref{lem:quotient} to estimate
\begin{align*}
a^{ij}&(x) \partial_{ij} (\delta_\Phi((x_0,z_0),(x,z)))^{-\alpha} + \abs{z}^{2-\frac{1}{s}} \partial_{zz} (\delta_\Phi((x_0,z_0),(x,z)))^{-\alpha}\\
&= \alpha (\delta_\Phi((x_0,z_0),(x,z)))^{-\alpha-2} \\
&\quad	\bigg[ (\alpha+1) \big( a^{ij}(x) (x-x_0)_i(x-x_0)_j+ \abs{z}^{2-\frac{1}{s}}(h'(z) - h'(z_0))^2\big) \\
&\qquad	-  \(\trace(A(x))+1\) \delta_\Phi((x_0,z_0),(x,z))\bigg]\\
&\geq \alpha (\delta_\Phi((x_0,z_0),(x,z)))^{-\alpha-2} \\
&\quad	\bigg[ (\alpha+1) \big( \lambda \abs{x-x_0}^2+ \abs{z}^{2-\frac{1}{s}}(h'(z) - h'(z_0))^2\big) 	-  \(n\Lambda+1\) \delta_\Phi((x_0,z_0),(x,z))\bigg]\\
&= \alpha (\delta_\Phi((x_0,z_0),(x,z)))^{-\alpha-2} \\
&\quad	\bigg[ (\alpha+1) \bigg( 2\lambda \delta_{\varphi}(x_0,x)+\frac{(h'(z) - h'(z_0))^2}{h''(z) \delta_h(z_0,z)} \delta_h(z_0,z)\bigg) 	-  \(n\Lambda+1\)\(\delta_\varphi(x_0,x) + \delta_h(z_0,z)\)\bigg]\\
&= \alpha (\delta_\Phi((x_0,z_0),(x,z)))^{-\alpha-2} \\
&\quad	\bigg[ \(2\lambda (\alpha+1) - (n\Lambda+1)\) \delta_{\varphi}(x_0,x)
	+\(\mQ(z)(\alpha+1) -  \(n\Lambda+1\)\) \delta_h(z_0,z)\bigg]\\
&\geq \alpha (\delta_\Phi((x_0,z_0),(x,z)))^{-\alpha-2} \\
&\quad	\bigg[ \(2\lambda (\alpha+1) - (n\Lambda+1)\) \delta_{\varphi}(x_0,x)
	+\((\alpha+1) -  \(n\Lambda+1\)\) \delta_h(z_0,z)\bigg].	
\end{align*}
Choose $\alpha = \alpha(\gamma, n, \lambda, \Lambda)$ large so that
\[
 2\lambda (\alpha+1) - (n\Lambda+1) > 4\gamma^{-1}(n\Lambda+1) 
 \quad \hbox{and} \quad
 (\alpha+1) -  \(n\Lambda+1\) > 4\gamma^{-1}(n\Lambda+1).
\]
Since $\gamma r\leq \delta_{\Phi}((x_0,z_0),(x,z)) = \delta_{\varphi}(x_0,x) + \delta_h(z_0,z)$,
it must be that $\delta_{\varphi} (x_0,x)\geq \gamma r/2$ or $\delta_{h}(z_0,z) \geq \gamma r/2$.
If $\delta_{\varphi}(x_0,x) \geq \gamma r/2$, then 
\begin{align*}
a^{ij}&(x) \partial_{ij} (\delta_\Phi((x_0,z_0),(x,z)))^{-\alpha} + \abs{z}^{2-\frac{1}{s}} \partial_{zz} (\delta_\Phi((x_0,z_0),(x,z)))^{-\alpha}\\
&> \alpha(\delta_\Phi((x_0,z_0),(x,z)))^{-\alpha-2} \bigg[ 4\gamma^{-1}(n\Lambda+1) \delta_{\varphi}(x_0,x)
	+0\bigg]\\
&\geq \alpha (n\Lambda+1)(2r)^{-\alpha-1}.
\end{align*}
If $\delta_{h}(z_0,z)\geq \gamma r/2$, then
\begin{align*}
a^{ij}&(x) \partial_{ij} (\delta_\Phi((x_0,z_0),(x,z)))^{-\alpha} + \abs{z}^{2-\frac{1}{s}} \partial_{zz} (\delta_\Phi((x_0,z_0),(x,z)))^{-\alpha}\\
&> \alpha(\delta_\Phi((x_0,z_0),(x,z)))^{-\alpha-2}\bigg[0	+4\gamma^{-1}\(n\Lambda+1\) \delta_h(z_0,z)\bigg]\\
&\geq \alpha (n\Lambda+1)(2r)^{-\alpha-1}.
\end{align*}
Combing the previous two estimates, we have that, for all $(x,z)\in [S_{2r}(x_0,z_0) \setminus S_{\gamma r}(x_0,z_0)]^{+} \setminus \{z=0\}$,
\[
a^{ij}(x) \partial_{ij} (\delta_\Phi((x_0,z_0),(x,z)))^{-\alpha} + \abs{z}^{2-\frac{1}{s}} \partial_{zz}(\delta_\Phi((x_0,z_0),(x,z)))^{-\alpha}\\ > \alpha (n\Lambda+1)(2r)^{-\alpha-1}.
\]

Define $\phi$ in $[S_{2r}(x_0,z_0) \setminus S_{\gamma r}(x_0,z_0)]^{+} $ by
\[
\phi(x,z) 
=\alpha^{-1}a(2r)^{\alpha+1}[(\delta_\Phi((x_0,z_0),(x,z)))^{-\alpha} - r^{-\alpha}].
\]
Then $a^{ij}(x) \partial_{ij} \phi(x,z)+ \abs{z}^{2-\frac{1}{s}} \phi(x,z)> a(n\Lambda+1)$.
If $ [S_{2r}(x_0,z_0) \setminus S_{\gamma r}(x_0,z_0)]^+ \cap \{z=0\} \not= \varnothing$, we need to check the Neumann condition. 
In this case, let $(x,0) \in [S_{2r}(x_0,z_0) \setminus S_{\gamma r}(x_0,z_0)]^+ \cap \{z=0\}$ and observe that
\begin{align*}
\partial_{z+}\phi(x,0)
	&= -a(2r)^{\alpha+1}(\delta_{\Phi}((x_0,z_0),(x,z))^{-\alpha-1}(h'(z) -h'(z_0)) \big|_{z=0}\\
	&= a(2r)^{\alpha+1}(\delta_{\Phi}((x_0,z_0),(x,0))^{-\alpha-1} h'(z_0)\\
	&\geq ah'(z_0) >0
\end{align*}
since $z_0>0$. 
Therefore, $\phi$ defined in $[S_{2r}(x_0,z_0) \setminus S_{\gamma r}(x_0,z_0)]^+$ is a subsolution to \eqref{eq:subsoln1}. 
It is easy to check that $\phi \leq 0$ in $[S_{2r}(x_0,z_0) \setminus S_r(x_0,z_0)]^+$ and $\phi>0$ in $[S_r(x_0,z_0)]^+$.
Lastly, for $(x,z) \in [\partial S_{\gamma r}(x_0,z_0)]^{\pm}$, we have that
$\phi(x,z)
	= \alpha^{-1} a2^{\alpha+1}(\gamma^{-1}-1) r  = Car$, where $C = C(\gamma, n, \lambda, \Lambda)>0$.

\medskip
\noindent
\underline{\bf Case 2}: $z_0\geq 0$ and  $1/2 < s <1$.

\smallskip

Here we need to bypass the points where $\abs{z}^{2-\frac{1}{s}}$ is small with respect to the size of the section $S_{2r}(z_0) \subset \R$.
Let $0<\varepsilon<1$ be a small constant, to be chosen.  
Let $0<\varepsilon_0<1$ be as in Lemma \ref{lem:A-infty}
and define the set $H_{\varepsilon}$ by
\begin{align*}
H_{\varepsilon}
&= \left\{z \in S_{2r}(z_0) : \abs{z}^{2-\frac{1}{s}} \leq \varepsilon_0 \frac{\abs{S_{2r}(z_0)}}{\mu_h(S_{2r}(z_0))}\right\}\\
&= \left\{z \in S_{2r}(z_0) : 1 \leq \varepsilon_0 \frac{\abs{S_{2r}(z_0)}}{\mu_h(S_{2r}(z_0))}h''(z)\right\}.
\end{align*}
We first show that the measure of  $H_{\varepsilon}$ is  small with respect to the measure of the section $S_{2r}(z_0)$.
Indeed, using Lemma \ref{lem:h-integral}, we estimate
\begin{align*}
\abs{H_\varepsilon} = \int_{H_{\varepsilon}} \, dz
	&\leq \int_{H_{\varepsilon}} \varepsilon_0 \frac{\abs{S_{2r}(z_0)}}{\mu_h(S_{2r}(z_0))} h''(z) \, dz\\
	&\leq \varepsilon_0 \frac{\abs{S_{2r}(z_0)}}{\mu_h(S_{2r}(z_0))} \int_{S_{2r}(z_0)} h''(z) \, dz
	= \varepsilon_0 \abs{S_{2r}(z_0)}.
\end{align*}
By Lemma \ref{lem:A-infty},  $\mu_h(H_{\varepsilon}) \leq \varepsilon \mu_h(S_{2r}(z_0))$. 

We will construct a function $h_{\varepsilon}$ in $[S_{2r}(z_0)]^+$ that bypasses the points in $H_{\varepsilon}$.
Let $\tilde{H}_{\varepsilon}$ be an open interval such that
\[
H_{\varepsilon} \subset \tilde{H}_{\varepsilon} \subset S_{2r}(z_0), \quad \mu_{h}(\tilde{H}_{\varepsilon} \setminus H_{\varepsilon}) \leq \varepsilon  \mu_{h}(S_{2r}(z_0)),
\] 
and let $\psi_{\varepsilon} = \psi_{\varepsilon}(z)$ be a smooth function satisfying
\[
\psi_\varepsilon = 1~\hbox{in}~H_{\varepsilon}, 
\quad \psi_\varepsilon = \varepsilon~\hbox{in}~S_{2r}(z_0) \setminus \tilde{H}_{\varepsilon},
\quad \varepsilon~\leq \psi_\varepsilon \leq 1~\hbox{in}~S_{2r}(z_0).
\]

We use the notation
\begin{align*}
[S_{2r}(z_0)]^+ = (z_L,z_R), \quad \hbox{where}~0 \leq z_L \leq z_0<z_R.
\end{align*}
Note that $z_L = 0$ if $0 \in S_{2r}(z_0)$.

In $[S_{2r}(z_0)]^+$, let $h_{\varepsilon} = h_{\varepsilon}(z)$ be the strictly convex solution to 
\[
\begin{cases}
h_{\varepsilon}'' = 2(n\Lambda+1) \psi_\varepsilon h''   & \hbox{in}~[S_{2r}(z_0)]^+ \\
h_{\varepsilon} (z_R) = 0 & \\
h'_{\varepsilon}(z_L) = \varepsilon \mu_h(S_{2r}(z_0)).
\end{cases}
\]
We remark that $h_{\varepsilon} \in C^{\infty}((z_L,z_R))$ and, since $h \in C^1(\R)$, we have $h_{\varepsilon} \in C^1([\overline{S}_{2r}(z_0)]^+)$.
Since $h_{\varepsilon}$ is strictly convex in $[S_{2r}(z_0)]^+$ and $h_{\varepsilon} \in C^1([\overline{S}_{2r}(z_0)]^+)$, it follows that $h_{\varepsilon}'>0$ in $[S_{2r}(z_0)]^+$. 
Moreover, since $h_{\varepsilon}$ is strictly increasing, $h_{\varepsilon}$ achieves its maximum at $z=z_R$, so that $h_{\varepsilon}\leq 0$ in $[\overline{S}_{2r}(z_0)]^+$.

To bound $h_{\varepsilon}$ and $h_{\varepsilon}'$, we first estimate
\begin{align*}
 \int_{S_{2r}(z_0)} \psi_\varepsilon \, d \mu_{h}
 	&=  \int_{H_{\varepsilon}} \psi_\varepsilon \, d \mu_{h}
		+ \int_{\tilde{H}_{\varepsilon} \setminus H_{\varepsilon}} \psi_\varepsilon  \, d \mu_{h}
		 +  \int_{S_{2r}(z_0)\setminus \tilde{H}_{\varepsilon}} \psi_\varepsilon \, d \mu_{h}\\
 	&\leq  \int_{H_{\varepsilon}}  d \mu_{h}
		+ \int_{\tilde{H}_{\varepsilon} \setminus H_{\varepsilon}}  \,d \mu_{h}
		 +  \int_{S_{2r}(z_0)\setminus \tilde{H}_{\varepsilon}} {\varepsilon} \,d \mu_{h}\\
	&=  \mu_{h}(H_{\varepsilon})
		+ \mu_{h}(\tilde{H}_{\varepsilon} \setminus H_{\varepsilon})
		 + \varepsilon \mu_{h}(S_{2r}(z_0)\setminus \tilde{H}_{\varepsilon})\\
	&\leq \varepsilon \mu_{h}(S_{2r}(z_0))
		+ \varepsilon \mu_{h}(S_{2r}(z_0))
		 + \varepsilon \mu_{h}(S_{2r}(z_0))= 3\varepsilon \mu_{h}(S_{2r}(z_0)).
\end{align*}
Let $\delta>0$. For $z \in [S_{2r}(z_0)]^+$, by the previous estimate,
\begin{align*}
\abs{h_{\varepsilon}'(z)} = h_{\varepsilon}'(z)
	&= \int_{z_L + \delta}^{z} h_{\varepsilon}''(w) \, dw + h_{\varepsilon}'(z_L + \delta)\\
	&= \int_{z_L + \delta}^{z} 2(n\Lambda+1) \psi_\varepsilon h''(w) \, dw + h_{\varepsilon}'(z_L + \delta)\\
	&\leq 2(n\Lambda+1)\int_{S_{2r}(z_0)} \psi_\varepsilon \, d \mu_h + h_{\varepsilon}'(z_L + \delta)\\
	&\leq 6(n\Lambda+1)\varepsilon \mu_{h}(S_{2r}(z_0)) + h_{\varepsilon}'(z_L + \delta)\\
	&= C\varepsilon \mu_{h}(S_{2r}(z_0))  + h_{\varepsilon}'(z_L + \delta)
\end{align*}
for a constant $C = C(n,\Lambda)>0$.
Taking the limit as $\delta \to 0$, we have
\begin{align*}
h_{\varepsilon}'(z)
	\leq C\varepsilon \mu_{h}(S_{2r}(z_0))  + h_{\varepsilon}'(z_L)
	= C\varepsilon \mu_{h}(S_{2r}(z_0))  +\varepsilon \mu_{h}(S_{2r}(z_0)) 
	= C_1 \varepsilon \mu_{h}(S_{2r}(z_0)) 
\end{align*}
for a constant $C_1 = C_1(n,\Lambda)$.

Again, let $\delta>0$.  For $z \in [S_{2r}(z_0)]^+$, 
by Corollary \ref{lem:doubling} part (3),
\begin{align*}
\abs{h_{\varepsilon}(z)}
	= - h_{\varepsilon}(z)
	&= \int_z^{z_R-\delta}h_\varepsilon'(w) \, dw - h_{\varepsilon}(z_R-\delta)\\
	&\leq  C_1 \varepsilon \mu_{h}(S_{2r}(z_0))  \int_z^{z_R-\delta} \, dw - h_{\varepsilon}(z_R-\delta)\\
	&\leq C_1 \varepsilon \mu_{h}(S_{2r}(z_0)) \abs{S_{2r}(z_0)} - h_{\varepsilon}(z_R-\delta)\\
	&\leq C_2 \varepsilon r - h_{\varepsilon}(z_R-\delta)
\end{align*}
for a constant $C_2 = C_2(n,\Lambda,s)>0$. 
Taking the limit as $\delta \to 0$, we have
\[
\abs{h_{\varepsilon}(z)}
	\leq C_2 \varepsilon r - h_{\varepsilon}(z_R)
	= C_2\varepsilon r.
\]

Suppose that $\gamma r/2\leq \delta_h(z_0,z) < 2r$. 
By the convexity of $\delta_h(z_0,z)$ in the variable $z$, we obtain
\[
0 = \delta_h(z_0,z_0) \geq \delta_h(z_0,z) + \partial_z\delta_h(z_0,z) \cdot (z_0-z).
\]
By Corollary \ref{lem:doubling} part (3), this implies
\[
\abs{\partial_z\delta_h(z_0,z) } 
	\geq \frac{\delta_h(z_0,z)}{\abs{z-z_0}}
	\geq \frac{\gamma r/2}{\abs{S_{2r}(z_0)}}\\
	\geq C_{3} \mu_h(S_{2r}(z_0))
\]
for a constant $C_3= C_3(\gamma,s)$.
Choose $\varepsilon = \varepsilon(\gamma,n,\Lambda,s)>0$ small so that $C_1\varepsilon < C_3$. Then, 
\[
\abs{\partial_z \delta_h(z_0,z) - h'_{\varepsilon}(z)}
	\geq \abs{\partial_z \delta_h(z_0,z)} - \abs{h'_{\varepsilon}(z)}
	\geq (C_3 - C_1\varepsilon)\mu_h(S_{2r}(z_0))>0
\]
and 
\begin{equation}\label{eq:deriv squared-new}
(\partial_z \delta_h(z_0,z) - h'_{\varepsilon}(z))^2
	\geq (C_3 - C_1\varepsilon)^2[\mu_h(S_{2r}(z_0))]^2
	= C_4 [\mu_h(S_{2r}(z_0))]^2
\end{equation}
for a constant $C_4 = C_4(\gamma,n,\Lambda,s)>0$.

For a large constant $\alpha = \alpha(\gamma,n,\lambda,\Lambda,s)>0$, we define the function $\tilde{\phi}$ on $[S_{2r}(x_0,z_0) \setminus S_{\gamma r}(x_0,z_0)]^+$ by 
\[
\tilde{\phi}(x,z) = (\delta_{\Phi}((x_0,z_0),(x,z))- h_{\varepsilon}(z))^{-\alpha}.
\]
Let $(x,z) \in [S_{2r}(x_0,z_0) \setminus S_{\gamma r}(x_0,z_0)]^+ \setminus \{z=0\}$. Since $h_{\varepsilon} \leq 0$, we first note that
\begin{equation}\label{eq:barrier bound}
\gamma r 
	\leq \delta_{\Phi}((x_0,z_0),(x,z))
	\leq \delta_{\Phi}((x_0,z_0),(x,z)) -h_{\varepsilon}(z) 
	< 2r + C_{2}\varepsilon r
	= (2 + C_{2} \varepsilon)r. 
\end{equation}
The equation for $\tilde{\phi}$ in $[S_{2r}(x_0,z_0) \setminus S_{\gamma r}(x_0,z_0)]^+ \setminus \{z=0\}$ is
\begin{align*}
a^{ij}(x)& \partial_{ij}\tilde{\phi} + \abs{z}^{2-\frac{1}{s}} \partial_{zz} \tilde{\phi}\\
&= \alpha(\delta_{\Phi}((x_0,z_0),(x,z))- h_{\varepsilon}(z))^{-\alpha-2}\\
&\quad\bigg((\alpha+1)\bigg[a^{ij}(x)(x-x_0)_i(x-x_0)_j + \abs{z}^{2-\frac{1}{s}}(\partial_z(\delta_{\Phi}((x_0,z_0),(x,z)))- h_{\varepsilon}'(z))^2\bigg]\\
&\quad - (\delta_{\Phi}((x_0,z_0),(x,z))- h_{\varepsilon}(z))\bigg[\trace(A(x))+ 1- 2(n\Lambda+1)\psi_\varepsilon\bigg]\bigg).
\end{align*}
Using ellipticity and 
\begin{equation}\label{eq:partialz}
\partial_z \delta_{\Phi}((x_0,z_0),(x,z)) = \partial_z(\delta_{\varphi}(x_0,x) + \delta_{h}(z_0,z)) = \partial_z\delta_{h}(z_0,z),
\end{equation}
we estimate
\begin{align*}
a^{ij}(x)& \partial_{ij}\tilde{\phi} + \abs{z}^{2-\frac{1}{s}} \partial_{zz} \tilde{\phi}\\
&\geq  \alpha(\delta_{\Phi}((x_0,z_0),(x,z))- h_{\varepsilon}(z))^{-\alpha-2}\\
&\quad\bigg((\alpha+1)\bigg[2\lambda \delta_{\varphi}(x_0,x) + \abs{z}^{2-\frac{1}{s}}(\partial_z\delta_{h}(z_0,z)- h_{\varepsilon}'(z))^2\bigg]\\
&\quad - (\delta_{\Phi}((x_0,z_0),(x,z))- h_{\varepsilon}(z))(1-2\psi_\varepsilon)(n\Lambda+1)\bigg).
\end{align*}

Suppose that $z \in H_{\varepsilon}$. Since $\psi_\varepsilon(z) = 1$, we can use \eqref{eq:barrier bound} to estimate
\begin{equation}\label{eq:Case2 -He}
\begin{aligned}
a^{ij}&(x) \partial_{ij}\tilde{\phi} + \abs{z}^{2-\frac{1}{s}} \partial_{zz} \tilde{\phi}\\
&\geq \alpha(\delta_{\Phi}((x_0,z_0),(x,z))- h_{\varepsilon}(z))^{-\alpha-2}
\bigg(0+ (\delta_{\Phi}((x_0,z_0),(x,z))- h_{\varepsilon}(z))(n\Lambda+1)\bigg)\\
&\geq \alpha(n\Lambda+1)(2 + C_{2} \varepsilon)^{-\alpha-1} r^{-\alpha-1}.
\end{aligned}
\end{equation}

Next, suppose that $z \notin H_{\varepsilon}$. Since $\psi_\varepsilon(z) >0$ and $\abs{z}^{2-\frac{1}{s}} >\varepsilon_0 \abs{S_{2r}(z_0)}/\mu_h(S_{2r}(z_0))$,
we estimate 
\begin{equation}\label{eq:notin H_e-new}
\begin{aligned}
a^{ij}(x)& \partial_{ij}\tilde{\phi} + \abs{z}^{2-\frac{1}{s}} \partial_{zz} \tilde{\phi}\\
&\geq  \alpha(\delta_{\Phi}((x_0,z_0),(x,z))- h_{\varepsilon}(z))^{-\alpha-2}\\
&\quad\bigg((\alpha+1)\bigg[2\lambda \delta_{\varphi}(x_0,x) +\varepsilon_0 \frac{\abs{S_{2r}(z_0)}}{\mu_h(S_{2r}(z_0))}(\partial_z\delta_{h}(z_0,z)- h'_{\varepsilon}(z))^2\bigg]\\
&\quad - (\delta_{\Phi}((x_0,z_0),(x,z))- h_{\varepsilon}(z))(n\Lambda+1)\bigg).
\end{aligned}
\end{equation}
Since $\delta_{\Phi}((x_0,z_0),(x,z))\geq\gamma r$, we have that $\delta_{\varphi} (x_0,x)\geq \gamma r/2$ or $\delta_{h}(z_0,z) \geq \gamma r/2$.
Suppose first that $\delta_{\varphi}(x_0,x) \geq \gamma r/2$. Then 
\[
2\lambda\delta_{\varphi}(x_0,x)  +\varepsilon_0 \frac{\abs{S_{2r}(z_0)}}{\mu_h(S_{2r}(z_0))}(\partial_z\delta_{h}(z_0,z) - h'_{\varepsilon}(z))^2
\geq 2\lambda\delta_{\varphi}(x_0,x) 
\geq\lambda\gamma r.
\]
Choose $\alpha = \alpha(\gamma, n,\lambda,\Lambda,s)$ large enough to guarantee that
\[
(\alpha+1)\lambda \gamma  -(n\Lambda+1)(2 + C_{2} \varepsilon) > (n\Lambda+1)(2+ C_{2} \varepsilon).
\]
Then, from \eqref{eq:notin H_e-new} and \eqref{eq:barrier bound},
\begin{equation}\label{eq:case2-x}
\begin{aligned}
a^{ij}(x)& \partial_{ij}\tilde{\phi} + \abs{z}^{2-\frac{1}{s}} \partial_{zz} \tilde{\phi}\\
&\geq  \alpha(\delta_{\Phi}((x_0,z_0),(x,z))- h_{\varepsilon}(z))^{-\alpha-2}
\bigg((\alpha+1)\lambda\gamma r - (n\Lambda+1)(2+C_2\varepsilon)r\bigg)\\
&>  \alpha(\delta_{\Phi}((x_0,z_0),(x,z))- h_{\varepsilon}(z))^{-\alpha-2}
(n\Lambda+1)(2+ C_{2} \varepsilon)r\\
&\geq \alpha(n\Lambda+1) (2+ C_{2} \varepsilon)^{-\alpha-1}r^{-\alpha-1}.
\end{aligned}
\end{equation}
Next, suppose that $\delta_h(z_0,z) \geq \gamma r/2$. Since $S_{2r}(x_0,z_0) \subset S_{2r}(x_0) \times S_{2r}(z_0)$, we know that $\gamma r/2 \leq \delta_h(z_0,z)<2r$. 
By \eqref{eq:deriv squared-new} and Corollary \ref{lem:doubling} part (3), we obtain
\begin{align*}
2\lambda\delta_{\varphi}(x_0,x) & +\varepsilon_0 \frac{\abs{S_{2r}(z_0)}}{\mu_h(S_{2r}(z_0))}(\partial_z\delta_{h}(z_0,z) - h'_{\varepsilon}(z))^2\\
&\geq\varepsilon_0 \frac{\abs{S_{2r}(z_0)}}{\mu_h(S_{2r}(z_0))}(\partial_z\delta_{h}(z_0,z)- h'_{\varepsilon}(z))^2\\
&\geq \varepsilon_0 \frac{\abs{S_{2r}(z_0)}}{\mu_h(S_{2r}(z_0))} C_4 [\mu_h(S_{2r}(z_0))]^2\\
&\geq C_5 \varepsilon_0r
\end{align*}
for some constant $C_5 = C_5(\gamma, n ,\Lambda,s)>0$.
Let $\alpha = \alpha(\gamma,n,\lambda,\Lambda,s)>0$ be large so that
\[
(\alpha+1) C_{5} \varepsilon_0  -(n\Lambda+1)(2 + C_{2} \varepsilon) > (n\Lambda+1)(2 + C_{2} \varepsilon).
\]
Then, from \eqref{eq:notin H_e-new}, we use \eqref{eq:barrier bound} to obtain
\begin{equation}\label{eq:case2-z-new}
\begin{aligned}
a^{ij}&(x) \partial_{ij}\tilde{\phi} + \abs{z}^{2-\frac{1}{s}} \partial_{zz} \tilde{\phi}\\
&\geq  \alpha(\delta_{\Phi}((x_0,z_0),(x,z))- h_{\varepsilon}(z))^{-\alpha-2}
\bigg((\alpha+1) C_{5} \varepsilon_0r- (n\Lambda+1)(2+C_2\varepsilon)r\bigg)\\
&>  \alpha(\delta_{\Phi}((x_0,z_0),(x,z))- h_{\varepsilon}(z))^{-\alpha-2}
(n\Lambda+1)(2 + C_{2} \varepsilon)r\\
&\geq \alpha(n\Lambda+1) (2+ C_{2} \varepsilon)^{-\alpha-1}r^{-\alpha-1}.
\end{aligned}
\end{equation}

From \eqref{eq:Case2 -He}, \eqref{eq:case2-x}, and \eqref{eq:case2-z-new}, there is an $\alpha = \alpha(\gamma,n,\lambda,\Lambda,s)>0$ such that
for all $(x,z) \in [S_{2r}(x_0,z_0) \setminus S_{\gamma r}(x_0,z_0)]^{+} \setminus \{z=0\}$, we have
\[
a^{ij}(x) \partial_{ij}\tilde{\phi} + \abs{z}^{2-\frac{1}{s}} \partial_{zz} \tilde{\phi}
> \alpha(n\Lambda+1)(2+C_{2} \varepsilon)^{-\alpha-1}r^{-\alpha-1}.
\]

We define the barrier $\phi$ on $[S_{2r}(x_0,z_0) \setminus S_{\gamma r}(x_0,z_0)]^+$ by
\begin{align*}
\phi(x,z)
	&= a\alpha^{-1}(2+C_{2}\varepsilon)^{\alpha+1} r^{\alpha+1} \( \tilde{\phi}(x,z) -(1 + C_2\varepsilon)^{-\alpha}r^{-\alpha}\).
\end{align*}
For  $(x,z) \in [S_{2r}(x_0,z_0) \setminus S_{\gamma r}(x_0,z_0)]^+ \setminus \{z=0\}$, it then follows that
$a^{ij}(x) \partial_{ij}{\phi} + \abs{z}^{2-\frac{1}{s}} \partial_{zz} {\phi}> a(n\Lambda+1)$.
If $z_L = 0$, we need to check the Neumann condition. In this case, 
let $(x,0) \in [S_{2r}(x_0,z_0) \setminus S_{\gamma r}(x_0,z_0)]^+ \cap \{z=0\}$. Using \eqref{eq:barrier bound}, we see that
\begin{align*}
\partial_{z+} \phi(x,0)
	&= a(2+C_{2}\varepsilon)^{\alpha+1} r^{\alpha+1}  (\delta_{\Phi}((x_0,z_0),(x,0))- h_{\varepsilon}(0))^{-\alpha-1}(h'(z_0) + \varepsilon \mu_h(S_{2r}(z_0)))\\
	&> a(2+C_{2}\varepsilon)^{\alpha+1} r^{\alpha+1}  (2+C_{2}\varepsilon)^{-\alpha-1} r^{-\alpha-1} (h'(z_0) + \varepsilon \mu_h(S_{2r}(z_0)))\\
	&= a(h'(z_0) + \varepsilon\mu_h(S_{2r}(z_0))) >0,
\end{align*}
since $z_0\geq 0$.
Therefore, $\phi$ is a subsolution to \eqref{eq:subsoln1}.
In $[S_{r}(x_0,z_0) \setminus S_{\gamma r}(x_0,z_0)]^+$, we have
\[
\gamma r \leq \delta_\Phi((x_0,z_0),(x,z)) - h_\varepsilon(z) < (1 + C_2\varepsilon)r,
\]
so that $\phi>0$ in $[S_{r}(x_0,z_0) \setminus S_{\gamma r}(x_0,z_0)]^+$.
Choose $\varepsilon>0$ small so that $2 > 1 + C_2\varepsilon$.
Then, $\phi \leq 0$ on  $[\partial S_{2r}(x_0,z_0)]^+$. 
Indeed, for $(x,z) \in [\partial S_{2r}(x_0,z_0)]^+$, we have that
\[
- h_{\varepsilon}(z) \geq 0> (1 + C_2\varepsilon - 2)r
\]
which implies 
\[
 \delta_{\Phi}((x_0,z_0),(x,z)) - h_{\varepsilon}(z) = 2r - h_{\varepsilon}(z)  > (1 + C_2\varepsilon)r.
\]
Thus, $\phi(x,z) \leq 0$.
Lastly, let $(x,z) \in [\partial S_{\gamma r}(x_0,z_0)]^+$ and observe that
\begin{align*}
\phi(x,z)
	&= a\alpha^{-1}(2+C_{2}\varepsilon)^{\alpha+1} r^{\alpha+1} \( (\gamma r - h_{\varepsilon}(z))^{-\alpha} -(1 + C_2\varepsilon)^{-\alpha}r^{-\alpha}\)\\
	&\leq a \alpha^{-1}(2+C_{2}\varepsilon_2)^{\alpha+1} r^{\alpha+1} \( (\gamma r +0)^{-\alpha} -0\)= Car
\end{align*}
for $C = C(\gamma,n,\lambda,\Lambda,s)>0$.

\medskip
\noindent
\underline{\bf Case 3}: $z_0= 0$ and  $0 < s \leq 1/2$. 

\smallskip

For the barrier constructed in Case 1, the inequality for the Neumann condition was not strict for $z_0 = 0$. 
We will add a function $g_{\varepsilon}$ to the quasi-distance function $\delta_\Phi$ to adjust the barrier as we did in Case 2. 

Let $(x,z) \in [S_{2r}(x_0,0) \setminus S_{\gamma r}(x_0,0)]^+$.
Since $(x,z) \in S_{2r}(x_0,0) \subset S_{2r}(x_0) \times S_{2r}(0)$, 
we know that $z \in S_{2r}(0) = B_{q_s2^sr^2}(0)$ by Remark \ref{rem:r^s new}. 
That is, 
\begin{equation}\label{eq:c2-bar}
\abs{z} < q_s2^s r^s = \bar{C}_2r^s. 
\end{equation}
Also, since $2- \frac{1}{s} \leq 0$,
\begin{equation}\label{eq:1/h''}
\abs{z}^{2-\frac{1}{s}} \geq \bar{C}_2^{\frac{2s-1}{s}}r^{2s-1}.
\end{equation}

Given $\varepsilon>0$, define $g_{\varepsilon}$ in $[S_{2r}(0)]^{+}$ by 
\[
g_{\varepsilon}(z) = \varepsilon r^{1-s}z - \varepsilon \bar{C}_2r.
\]
For all $z \in [S_{2r}(0)]^{+}$, we have that $g_{\varepsilon} \leq 0$ by \eqref{eq:c2-bar}.
We also have that
\begin{align*}
\abs{g_\varepsilon (z)} 
	= \varepsilon \bar{C}_2r -\varepsilon r^{1-s}z
	\leq \bar{C}_2 \varepsilon r,\quad\hbox{and}\quad g_{\varepsilon}'(z) = \varepsilon r^{1-s} >0.
\end{align*} 

Let $z$ be such that
$\gamma r/2 \leq \delta_h(0,z) < 2r$.
As in Case 2 above, since $z \in S_{2r}(0) = B_{\bar{C}_2r^s}(0)$, we can use the convexity of $\delta_h(0,z)$ in the variable $z$ to obtain
\[
\abs{\partial_z\delta_h(0,z) } 
	\geq \frac{\delta_h(0,z)}{\abs{z}}
	\geq \frac{\gamma r/2}{\bar{C}_2r^s}
	= \bar{C}_{3} r^{1-s}
\]
for a constant $\bar{C}_3= \bar{C}_3(\gamma,s)$.
Choose $\varepsilon = \varepsilon(\gamma,s)>0$ small so that $\varepsilon < \bar{C}_3$. Then, 
\[
\abs{\partial_z \delta_h(0,z) - g'_{\varepsilon}(z)}
	\geq \abs{\partial_z \delta_h(0,z)} - \abs{g'_{\varepsilon}(z)}
	\geq (\bar{C}_3 - \varepsilon)r^{1-s}>0
\]
and
\begin{equation} \label{eq:deriv squared - 0}
(\partial_z \delta_h(0,z) - g'_{\varepsilon}(z))^2
	\geq (\bar{C}_3 - \varepsilon)^2r^{2-2s}
	= \bar{C}_4 r^{2-2s}
\end{equation}
for a constant $\bar{C}_4 = \bar{C}_4(\gamma,s)>0$.

Let $(x,z) \in [S_{2r}(x_0,0) \setminus S_{\gamma r}(x_0,0)]^+$.
Since $-g_{\varepsilon} \geq 0$, we have that
\begin{equation} \label{eq:z0=0 estimate}
\gamma r 
	\leq \delta_{\Phi}((x_0,0),(x,z))
	\leq \delta_{\Phi}((x_0,0),(x,z))- g_{\varepsilon}(z) 
	< 2r + \varepsilon \bar{C}_2 r
	= (2 +\varepsilon \bar{C}_2)r. 
\end{equation}

We define a function $\bar{\phi}$ on $[S_{2r}(x_0,0) \setminus S_{\gamma r}(x_0,0)]^+$ by
\begin{align*}
\bar{\phi}(x,z)
	&=(\delta_{\Phi}((x_0,0),(x,z)) - g_{\varepsilon}(z))^{-\alpha}.
\end{align*}
Let $(x,z) \in [S_{2r}(x_0,0) \setminus S_{\gamma r}(x_0,0)]^+ \setminus \{z=0\}$. Using ellipticity, \eqref{eq:partialz}, \eqref{eq:1/h''}, and \eqref{eq:z0=0 estimate}, we estimate the equation for $\bar{\phi}$ as follows:
\begin{equation}\label{eq:case3-equation}
\begin{aligned}
a^{ij}&(x) \partial_{ij}\bar{\phi} + \abs{z}^{2-\frac{1}{s}} \partial_{zz} \bar{\phi}\\
&\geq  \alpha(\delta_{\Phi}((x_0,0),(x,z))- g_{\varepsilon}(z))^{-\alpha-2}\\
&\quad\bigg((\alpha+1)\bigg[2\lambda \delta_{\varphi}(x_0,x) + \bar{C}_2^{\frac{2s-1}{s}}r^{2s-1}(\partial_z\delta_{h}(0,z)- g_{\varepsilon}'(z))^2\bigg]\\
&\quad - (\delta_{\Phi}((x_0,0),(x,z))- g_{\varepsilon}(z))(n\Lambda+1)\bigg)\\
&\geq  \alpha(\delta_{\Phi}((x_0,0),(x,z))- g_{\varepsilon}(z))^{-\alpha-2}\\
&\quad\bigg((\alpha+1)\bigg[2\lambda \delta_{\varphi}(x_0,x) + \bar{C}_2^{\frac{2s-1}{s}}r^{2s-1}(\partial_z\delta_{h}(0,z) - g_{\varepsilon}'(z))^2\bigg]\\
&\quad - (n\Lambda+1)(2+\varepsilon \bar{C}_2)r\bigg).
\end{aligned}
\end{equation}
Since $\delta_{\Phi}((x_0,0),(x,z)) \geq \gamma r$, we know that $\delta_{\varphi}(x_0,x) \geq \gamma r/2$ or $\delta_{h}(0,z) \geq \gamma r/2$.
Suppose first that $\delta_{\varphi}(x_0,x) \geq \gamma r/2$. Then 
\[
2\lambda\delta_{\varphi}(x_0,x)  +\bar{C}_2^{\frac{2s-1}{s}}r^{2s-1}(\partial_z\delta_{h}(0,z)- g_{\varepsilon}'(z))^2
\geq 2\lambda\delta_{\varphi}(x_0,x) 
\geq\lambda\gamma r.
\]
Choose $\alpha = \alpha(\gamma, n,\lambda,\Lambda,s)$ large enough to guarantee that
\[
(\alpha+1)\lambda \gamma  -(n\Lambda+1)(2 + \bar{C}_2 \varepsilon) > (n\Lambda+1)(2+ \bar{C}_2 \varepsilon).
\]
Then, from \eqref{eq:case3-equation} and using \eqref{eq:z0=0 estimate}, we have that
\begin{equation}\label{eq:case3-x}
\begin{aligned}
a^{ij}(x)& \partial_{ij}\bar{\phi} + \abs{z}^{2-\frac{1}{s}} \partial_{zz} \bar{\phi}\\
&\geq  \alpha(\delta_{\Phi}((x_0,0),(x,z))- g_{\varepsilon}(z))^{-\alpha-2}
\big((\alpha+1)\lambda\gamma r - (n\Lambda+1)(2+ \bar{C}_2\varepsilon)r\big)\\
&>  \alpha(\delta_{\Phi}((x_0,0),(x,z))- g_{\varepsilon}(z))^{-\alpha-2}
(n\Lambda+1)(2+ \bar{C}_2\varepsilon)r\\
&\geq \alpha(n\Lambda+1) (2+ \bar{C}_2\varepsilon)^{-\alpha-1}r^{-\alpha-1}.
\end{aligned}
\end{equation}
Next, suppose that $\delta_h(0,z) \geq \gamma r/2$. We further know that
$\gamma r/2\leq \delta_{h}(0,z) < 2r$, so, by \eqref{eq:deriv squared - 0},
\begin{align*}
2\lambda\delta_{\varphi}(x_0,x)  &+\bar{C}_2^{\frac{2s-1}{s}}r^{2s-1}(\partial_z\delta_{h}(0,z) - g_{\varepsilon}(z)))^2\\
&\geq 0+\bar{C}_2^{\frac{2s-1}{s}}r^{2s-1}\bar{C}_{4} r^{2-2s} 
= \bar{C}_{4} \bar{C}_2^{\frac{2s-1}{s}} r.
\end{align*}
Let $\alpha = \alpha(\gamma,n,\lambda,\Lambda,s)>0$ be large so that
\[
(\alpha+1) \bar{C}_{4} \bar{C}_2^{\frac{2s-1}{s}}  -(n\Lambda+1)(2 + \bar{C}_{2} \varepsilon) > (n\Lambda+1)(2 + \bar{C}_{2} \varepsilon).
\]
Then, from \eqref{eq:case3-equation} and using \eqref{eq:z0=0 estimate},
\begin{equation}\label{eq:case3-z}
\begin{aligned}
a^{ij}&(x) \partial_{ij}\bar{\phi} + \abs{z}^{2-\frac{1}{s}} \partial_{zz} \bar{\phi}\\
&\geq  \alpha(\delta_{\Phi}((x_0,0),(x,z))- g_{\varepsilon}(z))^{-\alpha-2}
\bigg((\alpha+1) \bar{C}_{4}\bar{C}_2^{\frac{2s-1}{s}}r- (n\Lambda+1)(2r+\bar{C}_2\varepsilon)r\bigg)\\
&>  \alpha(\delta_{\Phi}((x_0,0),(x,z))- g_{\varepsilon}(z))^{-\alpha-2}
(n\Lambda+1)(2 + \bar{C}_{1} \varepsilon)r\\
&\geq \alpha(n\Lambda+1) (2+ \bar{C}_{2} \varepsilon)^{-\alpha-1}r^{-\alpha-1}.
\end{aligned}
\end{equation}

From \eqref{eq:case3-x} and \eqref{eq:case3-z},  there is an $\alpha = \alpha(\gamma,n,\lambda,\Lambda,s)>0$ such that
for all $(x,z) \in [S_{2r}(x_0,0) \setminus S_{\gamma r}(x_0,0)]^{+} \setminus \{z=0\}$, we have
\[
a^{ij}(x) \partial_{ij}\bar{\phi} + \abs{z}^{2-\frac{1}{s}} \partial_{zz} \bar{\phi}
> \alpha(n\Lambda+1)(2+\bar{C}_{2} \varepsilon)^{-\alpha-1}r^{-\alpha-1}.
\]

We define the barrier $\phi$ on $[S_{2r}(x_0,0) \setminus S_{\gamma r}(x_0,0)]^+$ by
\begin{align*}
\phi(x,z)
	&= a \alpha^{-1}(2+\bar{C}_{2}\varepsilon)^{\alpha+1} r^{\alpha+1} \( \bar{\phi}(x,z) -(1 + \bar{C}_1\varepsilon)^{-\alpha}r^{-\alpha}\).
\end{align*}
For  $(x,z) \in [S_{2r}(x_0,0) \setminus S_{\gamma r}(x_0,0)]^+ \setminus \{z=0\}$, it follows that
$a^{ij}(x) \partial_{ij}{\phi} + \abs{z}^{2-\frac{1}{s}} \partial_{zz} {\phi}> a(n\Lambda+1)$.
If~$(x,0) \in [S_{2r}(x_0,0) \setminus S_{\gamma r}(x_0,0)]^+ \cap \{z=0\}$, by \eqref{eq:z0=0 estimate},
\begin{align*}
\partial_{z+} \phi(x,0)
	&= a(2+\bar{C}_{2}\varepsilon)^{\alpha+1} r^{\alpha+1}  (\delta_{\Phi}((x_0,0),(x,0))- g_{\varepsilon}(0))^{-\alpha-1}\varepsilon r^{1-s}\\
	&\geq a(2+\bar{C}_{2}\varepsilon)^{\alpha+1} r^{\alpha+1} (2+\bar{C}_{2}\varepsilon)^{-\alpha-1} r^{-\alpha-1} \varepsilon r^{1-s}\\
	&= a\varepsilon r^{1-s} >0.
\end{align*}
Therefore, $\phi$ defined in $[S_{2r}(x_0,0) \setminus S_{\gamma r}(x_0,0)]^+$ is a subsolution to \eqref{eq:subsoln1}.
One can also check that $\phi >0$ in $[S_r(x_0,0) \setminus S_{\gamma r}(x_0,0)]^+$ and 
that $\phi \leq 0$ on $[\partial S_{2r}(x_0,0)]^+$ when $\varepsilon = \varepsilon(\gamma,s)$ is small enough to guarantee that $2>1 + \bar{C}_2 \varepsilon$. Moreover, there is a constant $C = C(\gamma,n,\lambda,\Lambda,s)>0$ such that
$\phi(x,z) \leq Car$ on $[\partial S_{\gamma r}(x_0,0)]^+$.

\medskip
\noindent
\underline{\bf Case 4}: $z_0\leq 0$ and  $0 < s <1$.

\smallskip

By \eqref{eq:delta_h anti-symmetry}, if $(x,z) \in [S_{2r}(x_0,z_0) \setminus S_{\gamma r}(x_0,z_0)]^-$, then $(x,-z) \in [S_{2r}(x_0,-z_0) \setminus S_{\gamma r}(x_0,-z_0)]^+$.
Define $\psi$ in $[S_{2r}(x_0,z_0) \setminus S_{\gamma r}(x_0,z_0)]^-$ to be the even reflection across $\{z=0\}$ of the solution $\phi$ to \eqref{eq:subsoln1} in $[S_{2r}(x_0,-z_0) \setminus S_{\gamma r}(x_0,-z_0)]^+$:
\[
\psi(x,z) = \phi(x,-z), \quad \hbox{for}~(x,z) \in [S_{2r}(x_0,z_0) \setminus S_{\gamma r}(x_0,z_0)]^-.
\]

Since $D^2\psi(x,z) = D^2\phi(x,-z)$, we know, for  $(x,z) \in [S_{2r}(x_0,z_0) \setminus S_{\gamma r}(x_0,z_0)]^- \setminus \{z=0\}$, that
\begin{align*}
a^{ij}(x) \partial_{ij}\psi(x,z) + \abs{z}^{2-\frac{1}{s}} \partial_{zz} \psi(x,z)
	&= a^{ij}(x) \partial_{ij}\phi(x,-z) + \abs{z}^{2-\frac{1}{s}} \partial_{zz} \phi(x,-z) \\
	&> a(n\Lambda+1).
\end{align*}
For $(x,0) \in  [S_{2r}(x_0,z_0) \setminus S_{\gamma r}(x_0,z_0)]^- \cap \{z=0\}$, we have
$-\partial_{z-}\psi(x,0)= \partial_{z+}\phi(x,0) >0$.
Therefore, $\psi$ is a subsolution to \eqref{eq:subsoln1-neg}. It is straightforward to check that $\psi >0$ in $[ S_{r}(x_0,z_0) \setminus S_{\gamma r}(x_0,z_0)]^-$ and that $\psi \leq 0$  on $[\partial S_{2 r}(x_0,z_0)]^-$.
Lastly, if $(x,z) \in [\partial S_{\gamma r}(x_0,z_0)]^-$, then $(x,-z) \in [\partial S_{\gamma r}(x_0,-z_0)]^+$. This gives the desired estimate
$\psi(x,z) = \phi(x,-z) \leq Car$ for $(x,z) \in [\partial S_{\gamma r}(x_0,z_0)]^-$.
\end{proof}

\section{Localization lemma}\label{sec:lem2}

In this section, we prove the main localization estimate, Lemma \ref{Lem:Cc}.
We show that if a supersolution $U$ can be touched from below with a paraboloid $P$ of opening $a>0$ in a cube $Q_r$, then 
the set in which $U$ can be touched from below by paraboloids of increased opening $Ca>0$, where $C=C(n,\lambda,\Lambda,s)>0$,
in a smaller cube $Q_{\eta r}$ makes up a universal proportion of $Q_r$.
To prove this result, we first use the barrier $\phi$ constructed
in Lemma \ref{lem:subsoln} to control how $U$ detaches from a touching paraboloid $P$, see Lemma \ref{claim:Car}.

Before stating the main lemma of this section, we need to introduce some notation.
First, we define a constant $\hat{K}_2$ to be large enough so that for any $(x_0,z_0), (\tilde{x},\tilde{z}) \in \R^{n+1}$ and $R>0$,
if $Q_r(x_0,z_0) \subset Q_R(\tilde{x},\tilde{z})$, then  $Q_{2(n+1)r}(x_0,z_0) \subset Q_{\hat{K}_2R}(\tilde{x},\tilde{z})$. 
By Lemma \ref{lem:ordering}, we know that if $Q_r(x_0,z_0) \subset Q_R(\tilde{x},\tilde{z})$ then $r \leq R$. 
If $(x,z) \in Q_{2(n+1)r}(x_0,z_0)$ then, by the quasi-triangle inequality (see Notation \ref{note:2}),
\begin{align*}
\delta_\varphi(\tilde{x},x) 
	&\leq K\(\delta_{\varphi}(\tilde{x},x_0) +\delta_{\varphi}(x_0,x) \) < K\(R + 2(n+1)r\) < K(1+2(n+1))R\\
\delta_h(\tilde{z},z) 
	&\leq K\(\delta_{h}(\tilde{z},z_0) +\delta_{h}(z_0,z) \)< K(1+2(n+1))R.
\end{align*}
We then take $\hat{K}_2 = \hat{K}_2(n,s)$ as
\begin{equation}\label{eq:K_2}
\hat{K}_2 = (2n+3)K.
\end{equation}

Let $\hat{K}_3 = \hat{K}_3(n,s)$ be given by
\begin{equation} \label{eq:K_3}
\hat{K}_3 = \theta^2\hat{K}_2.
\end{equation}

If $Q_{\hat{K}_2R}(\tilde{x},\tilde{z}) \cap \{z=0\} \not= \varnothing$, then $0 \in S_{\hat{K}_2R}(\tilde{z})$ and, by the engulfing property,
\[
Q_{\hat{K}_2R}(\tilde{x},\tilde{z})
	= Q_{\hat{K}_2R}(\tilde{x}) \times S_{\hat{K}_2R}(\tilde{z})
	\subset Q_{\theta\hat{K}_2R}(\tilde{x}) \times S_{\theta\hat{K}_2R}(0)
	= Q_{\theta\hat{K}_2R}(\tilde{x},0)
\]	
and 
\[
Q_{\theta\hat{K}_2R}(\tilde{x},0)
	 =Q_{\theta\hat{K}_2R}(\tilde{x}) \times S_{\theta\hat{K}_2R}(0)
	 \subset Q_{\theta^2\hat{K}_2R}(\tilde{x}) \times S_{\theta^2\hat{K}_2R}(\tilde{z})
	= Q_{\hat{K}_3R}(\tilde{x},\tilde{z}). 
\]
We define a vertex set $B_v \subset \overline{Q}_{\hat{K}_3R}(\tilde{x},\tilde{z})$ by
\[
B_{v} = 
\begin{cases}
\overline{Q}_{\hat{K}_2R}(\tilde{x},\tilde{z}) & \hbox{if}~\tilde{z} = 0~\hbox{or if}~ \overline{Q}_{\hat{K}_2R}(\tilde{x},\tilde{z}) \cap \{z=0\} = \varnothing\\
\overline{Q}_{\theta\hat{K}_2R}(\tilde{x},0) & \hbox{if}~\tilde{z} \not= 0~\hbox{and }~ \overline{Q}_{\hat{K}_2R}(\tilde{x},\tilde{z}) \cap \{z=0\} \not= \varnothing,
\end{cases}
\]
so that $B_v$ is symmetric with respect to $\{z=0\}$ if $\overline{Q}_{\hat{K}_2R}(\tilde{x},\tilde{z}) \cap \{z=0\} \not= \varnothing$.

Define the contact set $A_{a,R}$ for a continuous function $U$ on $Q_{\hat{K}_2R}(\tilde{x},\tilde{z})$ by
\begin{equation}\label{eq:AaR}
\begin{aligned}
A_{a,R}:=
\bigg\{& (x,z) \in Q_{\hat{K}_2R}(\tilde{x},\tilde{z})   : U(x,z) \leq aR~ \hbox{and there is}~(x_v,z_v) \in B_v~\hbox{such that }\\
&\quad\quad\hbox{$U$ can be touched from below at $(x,z)$ in $Q_{\hat{K}_3R}(\tilde{x},\tilde{z})$}\\
&\quad \quad \hbox{by a paraboloid of opening $a>0$ with vertex $(x_v,z_v)$}\bigg\}.
\end{aligned}
\end{equation}

\begin{lem}
The contact set $A_{a,R}$ is closed in $Q_{\hat{K}_2R}(\tilde{x},\tilde{z})$.
\end{lem}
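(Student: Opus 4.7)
The plan is to mimic the closedness argument already used at the start of the proof of Theorem \ref{lem:ABP-super}, adapted to the extra condition $U(x,z)\leq aR$ and to the fixed vertex set $B_v$.

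First, I would take a sequence $(x_k,z_k)\in A_{a,R}$ with $(x_k,z_k)\to(x_0,z_0)\in Q_{\hat{K}_2R}(\tilde{x},\tilde{z})$, and for each $k$ select a vertex $(x_v^k,z_v^k)\in B_v$ together with a paraboloid $P_k$ of opening $a$ and vertex $(x_v^k,z_v^k)$ that touches $U$ from below at $(x_k,z_k)$ in $Q_{\hat{K}_3R}(\tilde{x},\tilde{z})$. By Lemma \ref{lem:PintersectsU} we can write
\[
P_k(x,z)=-a\,\delta_{\Phi}((x_v^k,z_v^k),(x,z))+a\,\delta_{\Phi}((x_v^k,z_v^k),(x_k,z_k))+U(x_k,z_k).
\]
Since $B_v$ is a closed, bounded subset of $\R^{n+1}$ (a closed cube), it is compact, so along a subsequence $(x_v^k,z_v^k)\to(x_v^0,z_v^0)\in B_v$.

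Next, using continuity of $\delta_{\Phi}$ and of $U$, the paraboloids $P_k$ converge pointwise on $\R^{n+1}$ to
\[
P(x,z)=-a\,\delta_{\Phi}((x_v^0,z_v^0),(x,z))+a\,\delta_{\Phi}((x_v^0,z_v^0),(x_0,z_0))+U(x_0,z_0),
\]
which, by Lemma \ref{lem:PintersectsU}, is a paraboloid of opening $a$ with vertex $(x_v^0,z_v^0)\in B_v$. Passing to the limit in the inequality $P_k(x,z)\leq U(x,z)$ valid on the open set $Q_{\hat{K}_3R}(\tilde{x},\tilde{z})$ gives $P\leq U$ there, and clearly $P(x_0,z_0)=U(x_0,z_0)$, so $P$ touches $U$ from below at $(x_0,z_0)$ in $Q_{\hat{K}_3R}(\tilde{x},\tilde{z})$.

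Finally, from $U(x_k,z_k)\leq aR$ and the continuity of $U$ at $(x_0,z_0)$ we obtain $U(x_0,z_0)\leq aR$. Combining this with the touching property just established, $(x_0,z_0)\in A_{a,R}$, which proves closedness. There is no real obstacle here: the argument is a routine compactness-plus-continuity passage to the limit, and the only point requiring care is verifying that the limit vertex still lies in $B_v$, which is guaranteed by the fact that $B_v$ was defined as a \emph{closed} cube.
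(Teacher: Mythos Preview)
Your argument is correct and follows essentially the same approach as the paper, which simply invokes the closedness argument from the proof of Theorem~\ref{lem:ABP-super} with $B=B_v$ and adds the observation that $U(x_0,z_0)\leq aR$ by continuity. The only difference is that you spell out the compactness-and-convergence details explicitly, whereas the paper refers back to that earlier proof.
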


\begin{proof}
Let $(x_k,z_k) \in A_{a,R}$ and $(x_0,z_0) \in Q_{\hat{K}_2R}(\tilde{x},\tilde{z})$ be such that $(x_k,z_k) \to (x_0,z_0)$. 
Since $U(x_k,z_k) \leq aR$ and $U$ is continuous, $U(x_0,z_0) \leq aR$.
By the same argument as in the proof of Theorem \ref{lem:ABP-super} with $B = B_v$, 
we can touch $U$ from below in $Q_{\hat{K}_3R}(\tilde{x},\tilde{z})$ at $(x_0,z_0)$ by a paraboloid $P$ of opening $a>0$ with vertex $(x_v^0,z_v^0) \in B_v$. Therefore, $(x_0,z_0) \in A_{a,R}$ which shows that $A_{a,R}$ is closed in $Q_{\hat{K}_2R}(\tilde{x},\tilde{z})$.
\end{proof}

\begin{lem}\label{claim:Car}
Fix $0 < \gamma <1$.
Assume that $\Omega$ is a bounded domain and that
$a^{ij}(x):\Omega \to \R$ are bounded, measurable functions that satisfy \eqref{eq:ellipticity}.
For a cube $Q_R = Q_{R}(\tilde{x},\tilde{z}) \subset \R^{n+1}$,
consider a cube $Q_{\hat{K}_3R} = Q_{\hat{K}_3R}(\tilde{x},\tilde{z})$ where $\hat{K}_3$ is as in \eqref{eq:K_3}.
Let  $f \in L^{\infty}(Q_{\hat{K}_3R} \cap \{z=0\})$ be nonnegative.
Suppose 
$U \in C^2(Q_{\hat{K}_3R} \setminus \{z=0\}) \cap C(Q_{\hat{K}_3R})$ such that $U$ is symmetric across $\{z=0\}$ and  $U_{z+} \in C(Q_{\hat{K}_3R} \cap \{z\geq0\})$
is a supersolution to
\[
\begin{cases}
a^{ij}(x) \partial_{ij}U + \abs{z}^{2-\frac{1}{s}}\partial_{zz}U \leq  0 & \hbox{in}~Q_{\hat{K}_3R}  \cap \{z\not=0\}\\
-\partial_{z+}U \geq f & \hbox{on}~Q_{\hat{K}_3R}  \cap \{z=0\}.
\end{cases}
\]
Assume that $Q_r(x_0,z_0) \subset Q_R$ for some point $(x_0,z_0)$ such that $z_0 \geq 0$. 
Suppose that $U$ is touched from below at $(x_1,z_1) \in [S_r(x_0,z_0)]^+ \cap A_{a,R}$ in $Q_{\hat{K}_3R}$ by a paraboloid $P$ of opening $a>0$ with vertex $(x_v,z_v)$ such that $z_v \geq 0$. 
Then, there exists a constant $C = C(\gamma,n,\lambda,\Lambda)>0$ and a point $(x_2,z_2) \in [\overline{S}_{\gamma r}(x_0,z_0)]^+$ such that
\[
U(x_2,z_2) - P(x_2,z_2) \leq Car.
\]
\end{lem}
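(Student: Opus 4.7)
The plan is a strict-comparison argument. If $(x_1,z_1) \in [\overline{S}_{\gamma r}(x_0,z_0)]^+$, we may take $(x_2,z_2)=(x_1,z_1)$ since the paraboloid touches there, so assume throughout that $(x_1,z_1) \in [S_r(x_0,z_0)\setminus\overline{S}_{\gamma r}(x_0,z_0)]^+$. I will work on the closed half-ring $\overline E$, where $E := (S_{2r}(x_0,z_0)\setminus\overline{S}_{\gamma r}(x_0,z_0)) \cap \{z>0\}$; the choice of $\hat{K}_2$ in \eqref{eq:K_2} ensures $\overline E \subset Q_{\hat{K}_3R}$, so the paraboloid $P$, the function $U$, and the barrier $\phi$ supplied by Lemma~\ref{lem:subsoln} are all available on $\overline E$.

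Set $V:=U-P$ and $W:=V-\phi$, and write $L_0 := a^{ij}(x)\partial_{ij} + |z|^{2-1/s}\partial_{zz}$. Since $P=-a\Phi+\text{(linear)}$, one computes $L_0 P = -a(\trace(A(x))+1)$, so by ellipticity $L_0 V \leq a(n\Lambda+1)$ in $E$. By Lemma~\ref{lem:PintersectsU} and $z_v \geq 0$ one has $\partial_z P(x,0) = ah'(z_v) \geq 0$, hence
\[
-\partial_{z+}V(x,0) = -\partial_{z+}U(x,0) + ah'(z_v) \geq f(x) \geq 0.
\]
The barrier satisfies the strict reverse inequalities $L_0 \phi > a(n\Lambda+1)$ and $-\partial_{z+}\phi(x,0) < 0$, together with $\phi > 0$ on $[S_r\setminus S_{\gamma r}]^+$, $\phi \leq 0$ on $[\partial S_{2r}]^+$, and $\phi \leq Car$ on $[\partial S_{\gamma r}]^+$. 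Subtracting, $W$ is a strict supersolution,
\[
L_0 W < 0 \text{ in } E, \qquad -\partial_{z+}W(x,0) > 0 \text{ on } \overline E \cap \{z=0\},
\]
with $W \geq 0$ on $[\partial S_{2r}]^+$ (there $V\geq 0 \geq \phi$) and $W(x_1,z_1) = -\phi(x_1,z_1) < 0$.

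Now apply the weak maximum principle on the compact set $\overline E$. The minimum of $W$ is attained and is negative. It cannot occur at an interior point off $\{z=0\}$, because there $D^2 W \geq 0$ would give $L_0 W \geq 0$, contradicting $L_0 W < 0$. It cannot occur on $\overline E \cap \{z=0\}$, because at a one-sided minimum on that face the inequality $\partial_{z+}W \geq 0$ must hold, contradicting $-\partial_{z+}W > 0$. And it cannot occur on $[\partial S_{2r}]^+$ where $W \geq 0$. Therefore the minimum is taken at some $(x_2,z_2) \in [\partial S_{\gamma r}(x_0,z_0)]^+ \subset [\overline{S}_{\gamma r}(x_0,z_0)]^+$, and at this point
\[
U(x_2,z_2) - P(x_2,z_2) = V(x_2,z_2) \leq \phi(x_2,z_2) \leq Car.
\]

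The main delicate point is handling the degenerate/singular Neumann face $\{z=0\}$, where only the one-sided derivative $U_{z+}$ is available and the operator itself either degenerates or blows up. This is precisely what the careful construction of $\phi$ in Lemma~\ref{lem:subsoln} is designed for: its strict sign $-\partial_{z+}\phi < 0$ on the bottom produces a clean Hopf-type exclusion of minima on that face, after which the rest of the argument collapses to a routine strict comparison.
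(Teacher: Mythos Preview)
Your proof is correct and follows essentially the same strategy as the paper: set $V=U-P$, compare with the barrier $\phi$ of Lemma~\ref{lem:subsoln} on the half-ring $[S_{2r}(x_0,z_0)\setminus S_{\gamma r}(x_0,z_0)]^+$, and use the strict sign of $-\partial_{z+}\phi$ together with the maximum principle to force the minimum of $V-\phi$ onto $[\partial S_{\gamma r}(x_0,z_0)]^+$. The only cosmetic difference is that the paper cites the weak maximum principle directly, whereas you spell out the interior-Hessian and one-sided Hopf arguments explicitly.
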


\begin{proof}
If $(x_1,z_1) \in [\overline{S}_{\gamma r}(x_0,z_0)]^+$, 
\[
U(x_1,z_1) - P(x_1,z_1) = 0 \leq Car
\]
for all $C>0$, so we can take $(x_2,z_2) = (x_1,z_1)$. 
Therefore, we assume for the remainder of the proof that $(x_1,z_1) \in [S_r(x_0,z_0) \setminus \overline{S}_{\gamma r}(x_0,z_0)]^+$.

Let $W= U - P$. 
For $(x,z) \in Q_{\hat{K}_3 R} \setminus \{z=0\}$, we have that
\begin{align*}
a^{ij}(x) \partial_{ij}P(x,z) + \abs{z}^{2-\frac{1}{s}} \partial_{zz}P(x,z)
	&=  -a(\trace(A(x)) + 1) 
	\geq -a(n\Lambda +1)
\end{align*}
which implies
\begin{align*}
a^{ij}(x) \partial_{ij}W(x,z) + \abs{z}^{2-\frac{1}{s}} \partial_{zz}W(x,z)
	&\leq a(n\Lambda +1).
\end{align*}
Since $z_v \geq 0$, we also have that
\[
-\partial_{z+}W(x,0) \geq  f(x) + a h'(z_v) \geq 0.
\]

Let $\phi$ be the subsolution to \eqref{eq:subsoln1} in $[S_{2r}(x_0,z_0) \setminus S_{\gamma r}(x_0,z_0)]^+$. 
By the choice of $\hat{K}_2$ in \eqref{eq:K_2}, we have that $Q_{r}(x_0,z_0) \subset Q_R$ implies
\[
S_{2r}(x_0,z_0) \subset Q_{2r}(x_0,z_0) \subset Q_{\hat{K}_2R} \subset Q_{\hat{K}_3R}
\]
Therefore, $W-\phi$ satisfies 
\begin{equation}\label{eq:equationW}
\begin{cases}
a^{ij}(x) \partial_{ij}(W-\phi) + \abs{z}^{2-\frac{1}{s}}\partial_{zz} (W-\phi)< 0 & \hbox{in}~[S_{2r}(x_0,z_0) \setminus {S}_{\gamma r}(x_0,z_0)]^+\cap \{z\ne 0\}\\
-\partial_{z+}(W-\phi)(x,0) > 0 & \hbox{on}~[S_{2r}(x_0,z_0) \setminus {S}_{\gamma r}(x_0,z_0)]^+\cap \{z=0\}.
\end{cases}
\end{equation}

Let $(x_2,z_2) \in [\overline{S}_{2r}(x_0,z_0) \setminus S_{\gamma r}(x_0,z_0)]^+$ be such that
\[
W(x_2,z_2) - \phi(x_2,z_2) = \min_{[\overline{S}_{2r}(x_0,z_0) \setminus S_{\gamma r}(x_0,z_0)]^+} (W-\phi).
\]
By the maximum principle (see \cite[Theorem 3.1]{GilbargTrudinger}), 
the minimum of $W - \phi$ occurs on the boundary $\partial[S_{2r}(x_0,z_0) \setminus S_{\gamma r}(x_0,z_0)]^+$. 
That is,
\[
	(x_2,z_2) \in [\partial S_{2r}(x_0,z_0)]^+ \cup [\partial S_{\gamma r}(x_0,z_0)]^+ \cup [(S_{2r}(x_0,z_0) \setminus S_{\gamma r}(x_0,z_0)) \cap \{z=0\}].
\]
We claim that $(x_2,z_2) \in [\partial S_{\gamma r}(x_0,z_0)]^+$.

First, we will show that $(x_2,z_2) \notin [\partial S_{2r}(x_0,z_0)]^+$.
Since $(x_1,z_1) \in [S_r(x_0,z_0)]^+$, we know that $\phi(x_1,z_1)>0$ which implies
$W(x_1,z_1) -\phi(x_1,z_1) = 0-\phi(x_1,z_1)<0$. 
Moreover, since $\phi \leq 0$ on $[\partial S_{2r}(x_0,z_0)]^+$, we have that
$W(x,z) - \phi(x,z) \geq 0$ on $[\partial S_{2r}(x_0,z_0)]^+$.
Therefore, the minimum is strictly negative and cannot occur on $[\partial S_{2r}(x_0,z_0)]^+$.

If  $[S_{2r}(x_0,z_0)]^+ \cap \{z=0\} = \varnothing$, then our claim holds.
Suppose that $[S_{2r}(x_0,z_0)]^+ \cap \{z=0\} \not= \varnothing$. Assume,
by way of contradiction, that the minimum occurs on $[S_{2r}(x_0,z_0) \setminus S_{\gamma r}(x_0,z_0)]^+ \cap \{z=0\}$, i.e. $z_2 = 0$. Then
$ -\partial_{z+}(W-\phi)(x_2,0) \leq 0$, which contradicts \eqref{eq:equationW}.
Therefore, it must be that the minimum occurs at $(x_2,z_2) \in [\partial S_{\gamma r}(x_0,z_0)]^+ \subset [\overline{S}_{\gamma r}(x_0,z_0)]^+$.

It follows from Lemma \ref{lem:subsoln} that $\phi(x_2,z_2) \leq Car$ for $C = C(n,\lambda,\Lambda, \gamma) >0$. 
Since $W (x_2,z_2) - \phi(x_2,z_2) <0$, this implies that
\[
U(x_2,z_2) - P(x_2,z_2)  = W(x_2,z_2)<  \phi(x_2,z_2) \leq Car.
\]
\end{proof}

\begin{rem}
An analogue of Lemma \ref{claim:Car} with $z_0,z_1,z_v \leq 0$ can be similarly proved using the subsolution $\phi$ to \eqref{eq:subsoln1-neg} in $[S_{2r}(x_0,z_0) \setminus S_{\gamma r}(x_0,z_0)]^-$. 
\end{rem}

To state the main result of this section, we define positive constants $K_0>1$ and $\eta <1$ by
\begin{equation}\label{eq:K0 and eta}
K_0 = 2K^2+2K \quad \hbox{and} \quad \eta = \frac{1}{K^2( 2KK_0 +1)}.
\end{equation}

\begin{lem}\label{Lem:Cc}
Fix $a>0$. 
Assume that $\Omega$ is a bounded domain and that
$a^{ij}(x):\Omega \to \R$ are bounded, measurable functions that satisfy \eqref{eq:ellipticity}.
For a cube $Q_R = Q_{R}(\tilde{x},\tilde{z}) \subset \R^{n+1}$,
 consider $Q_{\hat{K}_3R} = Q_{\hat{K}_3R}(\tilde{x},\tilde{z})$
 where $\hat{K}_3$ is as in \eqref{eq:K_3}.
 Let $f \in L^{\infty}(Q_{\hat{K}_3R} \cap \{z=0\})$ be nonnegative.
Suppose 
$U \in C^2(Q_{\hat{K}_3R} \setminus \{z=0\}) \cap C(Q_{\hat{K}_3R})$ such that $U$ is symmetric across $\{z=0\}$ and  $U_{z+} \in C(Q_{\hat{K}_3R} \cap \{z\geq0\})$ 
is a supersolution to
\[
\begin{cases}
a^{ij}(x) \partial_{ij}U + \abs{z}^{2-\frac{1}{s}}\partial_{zz}U \leq  0 & \hbox{in}~Q_{\hat{K}_3R}  \cap \{z\not=0\}\\
-\partial_{z+}U \geq f & \hbox{on}~Q_{\hat{K}_3R}  \cap \{z=0\}.
\end{cases}
\]
Let $Q_r(x_0,z_0)$ be such that
\[
\overline{Q}_r(x_0,z_0) \subset Q_R \quad \hbox{and} \quad \overline{Q}_r(x_0,z_0) \cap A_{a,R} \not= \varnothing.
\]
There exists positive constants 
$C = C(n,\lambda,\Lambda,s)>1$ and $c = c(n,\lambda,\Lambda,s)<1$ such that
\[
\mu_{\Phi}(A_{Ca,R} \cap Q_{\eta r}(x_0,z_0)) \geq c \mu_{\Phi}(Q_r(x_0,z_0)).
\]
where $\eta = \eta (n,s)<1$ is as in \eqref{eq:K0 and eta}.
\end{lem}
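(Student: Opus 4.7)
The plan is to run a sliding-paraboloid argument, using Theorem \ref{lem:ABP-super} together with Lemma \ref{claim:Car}, in the spirit of Savin's method for uniformly elliptic equations. By the even symmetry of $U$ across $\{z=0\}$ and Lemma \ref{lem:pos vertex}, I reduce to the case $z_0 \geq 0$, and assume that the given contact point $(x_1,z_1)\in \overline{Q}_r(x_0,z_0)\cap A_{a,R}$ has $z_1\geq 0$, touched from below by a paraboloid $P_0$ of opening $a$ with vertex $(x_v,z_v)\in B_v$, $z_v\geq 0$. Using the inclusion $\overline{Q}_r\subset \overline{S}_{(n+1)r}$ from Lemma \ref{lem:tensor}, I apply Lemma \ref{claim:Car} with a small parameter $\gamma$ (to be specified) to produce a point $(x_2,z_2)\in[\overline{S}_{\gamma r}(x_0,z_0)]^{+}$ at which the nonnegative function $W:=U-P_0$ satisfies $W(x_2,z_2)\leq C_1 a r$ for $C_1=C_1(\gamma,n,\lambda,\Lambda)$.

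Next, for small $\sigma>0$ and large $M>0$ to be chosen (depending only on $n,\lambda,\Lambda,s$), I slide paraboloids $P_{(y,w)}(x,z)=-Ma\,\delta_\Phi((y,w),(x,z))+c$ from below into $W$ on $Q_{\hat{K}_3 R}$, with vertices $(y,w)$ ranging over the closed section $B:=\overline{S}_{\sigma r}(x_2,z_2)$. The function $W\geq 0$ verifies $a^{ij}\partial_{ij}W+|z|^{2-1/s}\partial_{zz}W\leq a(n\Lambda+1)$ in $\{z\neq 0\}$ and $-\partial_{z+}W\geq f+a h'(z_v)\geq 0$ on $\{z=0\}$. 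An inspection of the proof of Theorem \ref{lem:ABP-super} shows that the conclusion still holds when the differential inequality has right-hand side bounded by a constant multiple of $a$ (only the universal constant shifts); moreover the exclusion set is empty since $f+ah'(z_v)\geq 0$, giving $\mu_\Phi(A)\geq c\,\mu_\Phi(B)$ for the contact set $A$. At any contact point $(x^*,z^*)$, combining $P_{(y,w)}(x_2,z_2)\leq W(x_2,z_2)\leq C_1 ar$ with $P_{(y,w)}(x^*,z^*)=W(x^*,z^*)\geq 0$ produces
\[
Ma\,\delta_\Phi((y,w),(x^*,z^*))\leq (C_1+M\sigma)\,ar,
\]
and then the quasi-triangle inequality for $\delta_\Phi$ yields
\[
\delta_\Phi((x_0,z_0),(x^*,z^*))\leq K^2(\gamma+\sigma)r+K(C_1/M+\sigma)r.
\]
Choosing $\gamma,\sigma$ small and $M$ large (all universal), this can be made below the threshold corresponding to $\eta$ in \eqref{eq:K0 and eta}, placing $(x^*,z^*)\in Q_{\eta r}(x_0,z_0)$.

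Finally, the sum $P_{(y,w)}+P_0$ is a paraboloid of opening $(M+1)a$ in the sense of \eqref{eq:P defn} which touches $U$ from below at $(x^*,z^*)$; its vertex $v'$ is determined by $\nabla\Phi(v')=(M\nabla\Phi(y,w)+\nabla\Phi(x_v,z_v))/(M+1)$. Since $B_v$ is a product of convex sections, $\nabla\varphi(x)=x$, and $h'$ is monotone (so $h'(S_h)$ is an interval), the convex combination in gradient-space preserves membership in $B_v$, so $v'\in B_v$. A crude estimate $U(x^*,z^*)=W(x^*,z^*)+P_0(x^*,z^*)=O(ar)+O(aR)\leq(M+1)aR$ (for $M$ sufficiently large) places $(x^*,z^*)\in A_{Ca,R}$ with $C=M+1$. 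Doubling of $\mu_\Phi$ from Corollary \ref{lem:doubling} together with \eqref{eq:reversedoubling} gives $\mu_\Phi(B)\geq c'\mu_\Phi(Q_r(x_0,z_0))$, completing the estimate. The main obstacle will be verifying that the effective vertex $v'$ of the summed paraboloid remains in $B_v$ under the nonlinear map $\nabla\Phi$; this crucially exploits the product structure of Monge--Amp\`ere cubes associated to $\Phi=\varphi+h$ together with the monotonicity of $h'$. A secondary technical point is the careful tuning of $\gamma,\sigma,M$ against the engulfing and quasi-triangle constants so that contact points land in the cube $Q_{\eta r}$ with the precise $\eta$ of \eqref{eq:K0 and eta} (via the inclusion $S_{\eta r}\subset Q_{\eta r}$ and the absorbing of $(n+1)$-factors from Lemma \ref{lem:tensor}).
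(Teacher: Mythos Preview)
Your strategy coincides with the paper's: use Lemma~\ref{claim:Car} to find $(x_2,z_2)$ near $(x_0,z_0)$ where $W=U-P_0$ is small, slide paraboloids of large opening with vertices in a small section around $(x_2,z_2)$, localize the resulting contact points in $Q_{\eta r}(x_0,z_0)$, and apply Theorem~\ref{lem:ABP-super} together with doubling. Two steps, however, need more care than you give them.

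First, Theorem~\ref{lem:ABP-super} is stated for functions \emph{symmetric} across $\{z=0\}$, but $W=U-P_0$ is not symmetric when $z_v\neq 0$ (since $P_0$ is not). The paper avoids this by noting that sliding $P_{(y,w)}$ into $W$ is equivalent to sliding the combined paraboloid $\bar P=P_0+P_{(y,w)}$ of opening $(M+1)a$ into $U$, and applies Theorem~\ref{lem:ABP-super} directly to the symmetric $U$. The relevant vertex set is then not $B=\overline S_{\sigma r}(x_2,z_2)$ but the set $B'$ of combined vertices $v'$ with $\nabla\Phi(v')=\frac{M\nabla\Phi(y,w)+\nabla\Phi(x_v,z_v)}{M+1}$, and the paper carries out an explicit change-of-variables to show $\mu_\Phi(B')\ge (M/(M+1))^{n+1}\mu_\Phi(S_{\sigma r/2}(x_2,z_2))$. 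Your shortcut ``an inspection of the proof'' can be made rigorous for $W$---the two one-sided derivative conditions at $z=0$ force $f(x_0)=0$ and $h'(w)=-h'(z_v)/M$, a single value, so the excluded vertices still have $\mu_\Phi$-measure zero---but this requires redoing Lemma~\ref{lem:vertices on z=0} for a non-symmetric function, not a verbatim citation. Second, your assertion that ``the convex combination in gradient-space preserves membership in $B_v$'' presupposes $(y,w)\in B_v$, which you never check. This is exactly where the choice of $\hat K_2$ in \eqref{eq:K_2} enters: from $\overline Q_r(x_0,z_0)\subset Q_R$ it gives $Q_{2(n+1)r}(x_0,z_0)\subset Q_{\hat K_2R}\subset B_v$, and for $\gamma,\sigma$ small the quasi-triangle inequality places $(y,w)\in S_{K(\gamma+\sigma)r}(x_0,z_0)\subset Q_{2(n+1)r}(x_0,z_0)$. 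With both endpoints in $B_v$ your convexity/monotonicity argument then goes through.
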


\begin{rem}\label{rem:lem2}
Once the existence of $C = C(n,\lambda,\Lambda,s)>1$ has been established in Lemma \ref{Lem:Cc}, one can always take $C$ larger.
Indeed, if $C'>C$ then, by Lemma  \ref{lem:narrower opening}, we have that $A_{Ca,R} \subset A_{C'a,R}$.
\end{rem}

\begin{proof}[Proof of Lemma \ref{Lem:Cc}]
Without loss of generality, we can assume that $ Q_r(x_0,z_0) \cap A_{a,R} \not=\varnothing$. Otherwise, we replace $r$ by $r + \varepsilon$ and
then take the limit as $\varepsilon \to 0^+$ at the end.
Let $(x_1,z_1) \in  Q_r(x_0,z_0) \cap A_{a,R}$.

Since $(x_1,z_1) \in A_{a,R}$, there is a paraboloid $P$ of opening $a>0$ with vertex $(x_v,z_v) \in B_v$ 
that touches $U$ from below in $Q_{\hat{K}_3R}$ at $(x_1,z_1)$. 
We write $P$ as
\[
P(x,z) = -a \delta_{\Phi}((x_v,z_v),(x,z)) + a \delta_{\Phi}((x_v,z_v),(x_1,z_1)) + U(x_1,z_1).
\]

As for $z_0$, it must be that either $z_0 \geq 0$ or $z_0 <0$. 
We may assume that $z_1$ has the same sign as $z_0$, meaning that $z_0, z_1\geq 0$ or that  $z_0, z_1 \leq 0$.
Indeed, suppose that $z_0 \geq 0$ and $z_1<0$. 
If $\overline{Q}_{\hat{K}_2R}\cap \{z=0\} = \varnothing$, this is a contradiction. 
If $\overline{Q}_{\hat{K}_2R}\cap \{z=0\} \not= \varnothing$ then,
by Lemma \ref{lem:pos vertex}, $\tilde{P}(x,z) = P(x,-z)$ touches $U$ from below in $Q_{\hat{K}_3R}$ at $(x_1,-z_1)$ with vertex $(x_v,-z_v) \in B_v$.
Since 
 \begin{align*}
  \delta_h(z_0,-z_1)
 	&= h(z_1) - h(z_0) + h'(z_0)z_1 + h'(z_0)z_0 \\
 	&< h(z_1) - h(z_0) - h'(z_0)z_1 + h'(z_0)z_0 \quad \hbox{since}~z_0\geq0~\hbox{and}~z_1 < 0 < -z_1\\
	&= \delta_h(z_0,z_1)
	<r,
\end{align*}
it follows that $(x_1,-z_1) \in Q_r(x_0,z_0) \cap A_{a,R}$.
We proceed with the proof of the lemma using $\tilde{P}$ and $-z_1>0$ in place of $P$ and $z_1<0$. 
The argument for $z_0 \leq 0$ and $z_1>0$ follows similarly.

Hence, without loss of generality, let us assume that $z_0,z_1 \geq 0$. Then, $z_v \geq 0$.
Indeed, if $z_1>0$, then by Lemma \ref{lem:pos vertex}, we know that $z_v \geq 0$. 
If $z_1 = 0$, then, since $f \geq 0$, by Lemma \ref{lem:vertices on z=0}, it must be that $f(x_1) = 0$ and, consequently, $z_v = 0$.

Let $\gamma = \eta/(2 \theta^2)$. 
Note that $(x_1,z_1) \in  Q_r(x_0,z_0)\subset S_{(n+1)r}(x_0,z_0)$. 
We apply Lemma \ref{claim:Car} with ${r}_0 = (n+1)r$ and ${\gamma}_0 = \gamma/(n+1)$ to find a point 
\[
(x_2,z_2) 
\in [\overline{S}_{\gamma_0 r_0}(x_0,z_0)]^+
 = [\overline{S}_{\gamma r}(x_0,z_0)]^+
\subset \overline{S}_{\gamma r}(x_0,z_0)
\]
and a constant $C = C(n,\lambda,\Lambda,s)>0$ such that
\[
U(x_2,z_2) - P(x_2,z_2) \leq Car.
\]

Let $\alpha = \eta/(2\theta^3)<1$ and let $C' = C'(n,\lambda,\Lambda,s)>1$ be a large constant, to be determined.
Slide from below the family of paraboloids
\begin{equation}\label{eq:family}
\bar{P}(x,z) = P(x,z) - C'a \delta_{\Phi}((\bar{x}_v,\bar{z}_v),(x,z)) + d, \quad\hbox{for}~(\bar{x}_v,\bar{z}_v)\subset S_{\alpha r}(x_2,z_2)
\end{equation}
until they touch the graph of $U$ in $Q_{\hat{K}_3R}$ for the first time. It is clear that
$$\bar{P}(x,z)=  -a \delta_{\varphi}(x_v,x)  - C'a \delta_{\varphi}(\bar{x}_v,x) 
-a \delta_{h}(z_v,z) - C'a \delta_{h}(\bar{z}_v,z)+ d'$$
for some constant $d'$.
Let $\xi \in \R$ be such that
\[
h'(\xi) = \frac{ h'(z_v)+C'h'(\bar{z}_v)}{C'+1}.
\] 
It follows that, for some constant $b'$,
$$-a \delta_{h}(z_v,z) - C'a \delta_{h}(\bar{z}_v,z)=  -(C'+1)a\delta_h(\xi,z)+ b'.$$
Since
\[
 \frac{ \nabla\varphi(x_v)+C'\nabla\varphi(\bar{x}_v)}{C'+1} = \frac{ x_v+C'\bar{x}_v}{C'+1}  = \nabla\varphi\( \frac{ x_v+C'\bar{x}_v}{C'+1}\) ,
\]
we similarly write, for some constant $b''$,
\begin{align*}
-&a \delta_{\varphi}(x_v,x) - C'a \delta_{\varphi}(\bar{x}_v,x) 
	= -(C'+1)a\delta_\varphi\(\frac{ x_v+C'\bar{x}_v}{C'+1} ,x\)+ b''.
\end{align*}
Therefore
\[
\bar{P}(x,z)
	= -(C'+1)a \delta_\Phi\(\(\frac{ x_v+C'\bar{x}_v}{C'+1} ,\xi\),(x,z) \) + d'',
\]
for some constant $d''$.
Hence, the opening of $\bar{P}$ is $(C'+1)a>0$ and its vertex is of the form 
\[
\(\frac{x_v+C'\bar{x}_v}{C'+1}, \xi\) \quad \hbox{where}~h'(\xi) = \frac{ h'(z_v)+C'h'(\bar{z}_v)}{C'+1}.
\] 
Let $B$ be the set of these vertices and let $A$ denote the set of corresponding touching points.

Since $\bar{P}(x_2,z_2) \leq U(x_2,z_2)$, we have that
$P(x_2,z_2) - C'a \delta_{\Phi}((\bar{x}_v,\bar{z}_v),(x_2,z_2)) + d \leq U(x_2,z_2)$.
By the engulfing property, $S_{\alpha r}(x_2,z_2) \subset S_{\alpha \theta r} (\bar{x}_v,\bar{z}_v)$, 
so that $\delta_{\Phi}((\bar{x}_v,\bar{z}_v),(x_2,z_2)) < \alpha \theta r$.
Therefore, 
\begin{align*}
d 
&\leq U(x_2,z_2) - P(x_2,z_2) + C'a\delta((\bar{x}_v,\bar{z}_v),(x_2,z_2)) 
\leq Car +  C' \alpha \theta a r.
\end{align*}

Since 
$(x_2,z_2) \in S_{\alpha \theta r}(\bar{x}_v,\bar{z}_v) \subset S_{2 \alpha \theta r}(\bar{x}_v,\bar{z}_v)$, we again use the engulfing property to see that 
$S_{2\alpha \theta r}(\bar{x}_v,\bar{z}_v) \subset S_{2\alpha \theta^2 r}(x_2,z_2)$.
Suppose that $(x,z) \in Q_{\hat{K}_3R}$ is such that $\delta_{\Phi}((x_2,z_2),(x,z))\geq 2\alpha \theta^2 r$.
Then $\delta_{\Phi}((\bar{x}_v,\bar{z}_v),(x,z))\geq 2\alpha \theta r$ and
\begin{align*}
\bar{P}(x,z) 
&\leq P(x,z) - C'a ( 2\alpha \theta r)+ \(Car +  C' \alpha \theta a r\) \\
&= P(x,z) + \( C-  C'\theta \alpha \)ar
< P(x,z)
\leq U(x,z)
\end{align*}
when $C' = C'(n,\lambda,\Lambda,s)>1$ is such that $C'> C/(\theta \alpha)$. 
Hence, the contact points for $\bar{P}$ are inside $S_{2\alpha \theta^2 r}(x_2,z_2)$. That is, $A \subset S_{2\alpha \theta^2 r}(x_2,z_2)$.

Recall that $(x_2,z_2) \in \overline{S}_{\gamma r}(x_0,z_0)$. Since $\gamma  =  \alpha \theta$, we use the engulfing property to obtain
\begin{align*}
\overline{S}_{\gamma r}(x_0,z_0) 
	= \overline{S}_{ \alpha \theta r}(x_0,z_0) 
	&\subset  \overline{S}_{\alpha \theta^2  r}(x_2,z_2) \\
	&\subset {S}_{ 2\alpha \theta^2  r}(x_2,z_2)\\
	&\subset  {S}_{ 2\alpha \theta^3  r}(x_0,z_0)
	={S}_{\eta r}(x_0,z_0) 
	\subset  Q_{\eta r}(x_0,z_0).
\end{align*}
Consequently, $A \subset S_{2\alpha \theta^2 r}(x_2,z_2) \subset Q_{\eta r}(x_0,z_0)$.

We now estimate
\begin{align*}
\bar{P}(x,z)
	&\leq P(x,z)  + d\\
	&\leq a \delta_{\Phi}((x_v,z_v),(x_1,z_1)) + U(x_1,z_1) + d\\
	&\leq a \delta_{\Phi}((x_v,z_v),(x_1,z_1)) + aR + (Car + C' \alpha \theta a r)\\
	&\leq aK\(\delta_{\Phi}((\tilde{x},\tilde{z}),(x_v,z_v)) + \delta_{\Phi}((\tilde{x},\tilde{z}),(x_1,z_1))\)+ aR + (CaR + C'\alpha \theta a R)\\
	&\leq aK(\hat{K}_3R + R)+ aR + (CaR + C'\alpha \theta a R)\\
	&= \((\hat{K}_3+1)K+ 1 + C + C'\alpha \theta \)a R.
\end{align*}
If $C' = C'(n,\lambda,\Lambda,s)>1$ is sufficiently large, then
\[
\bar{P}(x,z) \leq (C'+1)aR
\]
which shows that $A \subset A_{(C'+1)a,R}$.

Since $f \geq 0$, we trivially have that
\begin{align*}
\mu_{\Phi}\bigg(B \cap \bigg\{ (x,z) &: \abs{h'(z)} \leq \frac{\norm{f^-}_{L^{\infty}(Q_{\hat{K}_3R})}}{(C'+1)a} \bigg\}\bigg) 
= \mu_{\Phi}\(B \cap \left\{ (x,z) : z=0 \right\}\) 
= 0.
\end{align*}
Therefore, by Theorem \ref{lem:ABP-super},
\begin{equation}\label{eq:lem2 equation}
\mu_{\Phi}(A_{(C'+1)a,R} \cap Q_{\eta r}(x_0,z_0))
	\geq \mu_{\Phi}(A \cap Q_{\eta r}(x_0,z_0))
	= \mu_{\Phi}(A)
	\geq c \mu_{\Phi}(B).
\end{equation}

We claim that 
\begin{equation}\label{eq:PhiB}
c\mu_{\Phi}(B)\geq c'\mu_{\Phi}(Q_r(x_0,z_0))
\end{equation}
for a positive constant $c' = c'(n,\lambda,\Lambda,s)<1$. 

For the proof of \eqref{eq:PhiB}, we first show that
\begin{equation}\label{eq:Cprime}
\mu_{\Phi}(B)  \geq  \(\frac{C'}{C'+1}\)^{n+1} \mu_{\Phi}(S_{\frac{\alpha r}{2}}(x_2,z_2)).
\end{equation}
Observe that the $B$ can be expressed as
\[
B = \left\{ (x,z) : x = \frac{x_v+C'\bar{x}_v}{C'+1},
~h'(z) = \frac{ h'(z_v)+C'h'(\bar{z}_v)}{C'+1},
~(\bar{x}_v,\bar{z}_v) \in S_{\alpha r}(x_2,z_2) \right\}.
\]
Define the sets $B_1$ and $B_2$ by
\begin{align*}
B_1&= \left\{ x = \frac{x_v+C'\bar{x}_v}{C'+1} : \bar{x}_v \in S_{\alpha r/2}(x_2)\right\} \\
B_2 &= \left\{ z = (h')^{-1}\(\frac{ h'(z_v)+C'h'(\bar{z}_v)}{C'+1}\): \bar{z}_v \in S_{\alpha r/2}(z_2)\right\}.
\end{align*}
Since
$S_{\alpha r/2}(x_2,z_2) \subset S_{\alpha r/2}(x_2)\times S_{\alpha r/2}(z_2)
\subset S_{\alpha r}(x_2,z_2)$,
we know that $B_1\times B_2 \subset B$ and 
\begin{equation}\label{eq:measureofB}
\mu_\Phi(B)\geq \mu_{\Phi}(B_1 \times B_2) = \mu_{\varphi}(B_1)\mu_h(B_2).
\end{equation}
By a change of variables, 
\begin{align*}
 \mu_{\varphi}(B_1) 
 	= \int_{B_1}  \, dx 
 	&= \(\frac{C'}{C'+1}\)^n \int_{S_{\alpha r/2}(x_2) } \, d\bar{x}_v
	=  \(\frac{C'}{C'+1}\)^n\mu_{\varphi}\(S_{\alpha r/2}(x_2)\).
\end{align*}
Notice that the set $Z_0$ given by 
\[
Z_0= \bigg\{\bar{z}_v\in\R: h'(\bar{z}_v) = -\frac{1}{C'} h'(z_v) \bigg\}
\]
is a singleton. 
Then, by using a change of variables,
\begin{align*}
& \mu_{h}(B_2)= \int_{B_2 \setminus \{z=0\} } h''(z) \, dz\\
&= \int_{S_{\alpha r/2}(z_2)\setminus  Z_0}  h''\((h')^{-1}\(\frac{ h'(z_v)+C'h'(\bar{z}_v)}{C'+1}\)\) \partial_z(h')^{-1}\bigg|_{\frac{ h'(z_v)+C'h'(\bar{z}_v)}{C'+1}} \(\frac{C'}{C'+1}\) h''(\bar{z}_v)\,d\bar{z}_v\\
&= \frac{C'}{C'+1}\int_{S_{\alpha r/2}(z_2)\setminus  Z_0}
	  h''(\bar{z}_v) \,d\bar{z}_v= \frac{C'}{C'+1}\mu_h\(S_{\alpha r/2}(z_2)\).
\end{align*}
Combining these estimates into \eqref{eq:measureofB}, we obtain
\begin{align*}
\mu_{\Phi}(B)
	&\geq \(\frac{C'}{C'+1}\)^{n+1}
		\mu_{\varphi}\(S_{\alpha r/2}(x_2)\)
		\mu_h\(S_{\alpha r/2}(z_2)\)\\
	&\geq \(\frac{C'}{C'+1}\)^{n+1}\mu_{\Phi}\(S_{\alpha r/2}(x_2,z_2)\)
\end{align*}
and \eqref{eq:Cprime} holds.

For \eqref{eq:PhiB}, observe that, by the doubling estimate \eqref{eq:reversedoubling} for $\mu_{\Phi}$,
\[
\mu_{\Phi}(S_{\gamma \theta r}(x_2,z_2))
	\leq  K_d\(\frac{2\theta \gamma}{\alpha}\)^{n+1} \mu_{\Phi}(S_{\frac{\alpha r}{2}}(x_2,z_2))
\]
and
\[
\mu_{\Phi}(S_{(n+1)r}(x_0,z_0))
	\leq K_d \(\frac{n+1}{\gamma}\)^{n+1} \mu_{\Phi}(S_{\gamma r}(x_0,z_0)).
\]
Since $(x_2,z_2) \in \overline{S}_{\gamma r}(x_0,z_0)$, the engulfing property gives $\overline{S}_{\gamma r}(x_0,z_0) \subset \overline{S}_{\gamma \theta r}(x_2,z_2)$. 
Hence, by using \eqref{eq:Cprime} and the previous two estimates,
\begin{align*}
c\mu_{\Phi}(B)
	&\geq  c\(\frac{C'}{C'+1}\)^{n+1} \mu_{\Phi}(S_{\frac{\alpha r}{2}}(x_2,z_2))\\
	&\geq c\(\frac{C'}{C'+1}\)^{n+1}  \frac{1}{K_d}\(\frac{\alpha}{2\theta \gamma}\)^{n+1} \mu_{\Phi}(S_{\gamma \theta r}(x_2,z_2)) \\
	&\geq c\(\frac{C'}{C'+1}\)^{n+1}  \frac{1}{K_d}\(\frac{\alpha}{2\theta \gamma}\)^{n+1} \mu_{\Phi}(S_{\gamma  r}(x_0,z_0)) \\
	&\geq c\(\frac{C'}{C'+1}\)^{n+1}   \frac{1}{K_d^2}\(\frac{\alpha}{2\theta(n+1)}\)^{n+1}  \mu_{\Phi}(S_{(n+1)r}(x_0,z_0))\\
	&\geq c' \mu_{\Phi}(Q_{r}(x_0,z_0)).
\end{align*}
This completes the proof of \eqref{eq:PhiB}.

From \eqref{eq:lem2 equation} and \eqref{eq:PhiB}, the lemma follows.
\end{proof}

\section{Covering lemma}\label{sec:lem3}

Here, we establish the following covering lemma. 

\begin{lem}\label{lem:Dk}
Let $K_0 = K_0(n,s)>1$, $\eta = \eta (n,s)<1$ be as in \eqref{eq:K0 and eta}, 
and fix $0 < c <1$. 
Consider a cube $Q_{R/K_0} = Q_{R/K_0}(\tilde{x},\tilde{z})$.
Suppose there is a countable family of closed sets $D_k \subset \R^{n+1}$
that satisfy the following properties:
\begin{enumerate}
	\item[1)] $D_0\subset D_1\subset\dots \subset D_k \subset \dots \subset \overline{Q}_{R/K_0}$,~$D_0 \not= \varnothing$;
	\item[2)] for any $(x,z) \in \R^{n+1}$, $\rho>0$ such that
	\begin{align*}
	&Q_\rho(x,z) \subset Q_{R}(\tilde{x},\tilde{z}) , \quad Q_{\eta \rho}(x,z) \subset Q_{R/K_0}(\tilde{x},\tilde{z}),
	\quad \overline{Q}_{\rho}(x,z)  \cap D_k \not= \varnothing,
	\end{align*}
	we have
	\[
	\mu_{\Phi}(Q_{\eta \rho}(x,z) \cap D_{k+1}) \geq c \mu_{\Phi}(Q_\rho(x,z)).
	\]
\end{enumerate}
Then
\[
\mu_{\Phi}(Q_{R/K_0} \setminus D_k) \leq (1-c)^k\mu_{\Phi}(Q_{R/K_0}).
\]
\end{lem}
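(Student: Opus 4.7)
I would prove the estimate by induction on $k$. The case $k=0$ is trivial, so it suffices to establish the one-step contraction
\begin{equation*}
\mu_\Phi(Q_{R/K_0}\setminus D_{k+1})\leq (1-c)\,\mu_\Phi(Q_{R/K_0}\setminus D_k),
\end{equation*}
and then iterate. Set $E_k:=Q_{R/K_0}\setminus D_k$. For each $(x,z)\in E_k$ I would associate the stopping radius
\begin{equation*}
\rho(x,z):=\sup\bigl\{\rho>0:Q_\rho(x,z)\cap D_k=\varnothing\bigr\},
\end{equation*}
which is strictly positive (since $D_k$ is closed and $(x,z)\notin D_k$) and finite (since $\varnothing\neq D_0\subset D_k\subset\overline{Q}_{R/K_0}$ forces any sufficiently large cube to meet $D_k$). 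Maximality of the supremum together with closedness of $D_k$ give $\overline{Q}_{\rho(x,z)}(x,z)\cap D_k\neq\varnothing$.

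The essential geometric step is to verify that the stopping cube also satisfies $Q_{\rho(x,z)}\subset Q_R$ and $Q_{\eta\rho(x,z)}\subset Q_{R/K_0}$, so that hypothesis~(2) is fully applicable. This is where the explicit values $K_0=2K^2+2K$ and $\eta=1/(K^2(2KK_0+1))$ from \eqref{eq:K0 and eta} come in: any touching point of $D_k$ in $\overline{Q}_{\rho(x,z)}$ lies in $\overline{Q}_{R/K_0}$, and the quasi-triangle inequality for $\delta_\Phi$ applied coordinatewise through Lemma~\ref{lem:tensor} yields a bound of the form $\rho(x,z)\leq 2KR/K_0$; feeding this back through the quasi-triangle inequality produces both required inclusions, with the algebraic form of $K_0$ and $\eta$ being exactly what makes the two inequalities hold simultaneously and with enough room for the Vitali enlargement below.

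With hypothesis~(2) applicable at every $(x,z)\in E_k$, one has
\begin{equation*}
\mu_\Phi\bigl(Q_{\eta\rho(x,z)}(x,z)\cap D_{k+1}\bigr)\geq c\,\mu_\Phi\bigl(Q_{\rho(x,z)}(x,z)\bigr).
\end{equation*}
I would then extract, via a Vitali-type covering in the doubling space $(\R^{n+1},\delta_\Phi,\mu_\Phi)$ (doubling from Corollary~\ref{lem:doubling}), a countable pairwise disjoint subfamily $\{Q_{\eta\rho_j}(x_j,z_j)\}_j$ of the shrunken cubes whose full dilates $\{Q_{\rho_j}(x_j,z_j)\}_j$ still cover $E_k$ up to a $\mu_\Phi$-null set; the smallness of $\eta$ in \eqref{eq:K0 and eta} is exactly what allows the Vitali enlargement factor to be absorbed inside the full cubes. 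Since $Q_{\rho_j}\cap D_k=\varnothing$ by maximality, the pairwise disjoint sets $Q_{\eta\rho_j}\cap D_{k+1}$ are contained in $E_k\cap D_{k+1}=E_k\setminus E_{k+1}$, so summing the density bound yields
\begin{equation*}
\mu_\Phi(E_k)-\mu_\Phi(E_{k+1})\geq c\sum_j\mu_\Phi\bigl(Q_{\rho_j}(x_j,z_j)\bigr)\geq c\,\mu_\Phi(E_k),
\end{equation*}
which rearranges to the one-step contraction and closes the induction.

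The main obstacle is the calibration step: checking that the explicit constants in \eqref{eq:K0 and eta} simultaneously enforce the two ambient inclusions $Q_{\rho(x,z)}\subset Q_R$ and $Q_{\eta\rho(x,z)}\subset Q_{R/K_0}$ at the stopping radius, and that $\eta$ is small enough to absorb the Vitali enlargement into $Q_{\rho_j}$. The remaining ingredients --- the induction, the Vitali extraction for doubling measures, and the summation of density estimates --- are standard Calder\'on--Zygmund.
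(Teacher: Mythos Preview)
Your overall skeleton (stopping radii, Vitali extraction, summation of the density bound, induction) matches the paper's, but there is a genuine gap in the ``calibration step''. You assert that at every stopping point $(x,z)\in E_k$ the shrunken cube satisfies $Q_{\eta\rho(x,z)}(x,z)\subset Q_{R/K_0}$. This fails near the boundary of $Q_{R/K_0}$: for $(x',z')\in Q_{\eta\rho}(x,z)$ the quasi-triangle inequality only gives
\[
\delta_\varphi(\tilde{x},x')\leq K\bigl(\delta_\varphi(\tilde{x},x)+\delta_\varphi(x,x')\bigr)<K\Bigl(\tfrac{R}{K_0}+\eta\rho\Bigr),
\]
and since $\delta_\varphi(\tilde{x},x)$ can be arbitrarily close to $R/K_0$, the right side is essentially $KR/K_0>R/K_0$ for $K>1$, regardless of how small $\eta$ is. Thus hypothesis~(2) is \emph{not} applicable at $(x,z)$ with radius $\rho(x,z)$ in this boundary regime, and your density bound there is unjustified. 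This also breaks the containment $Q_{\eta\rho_j}\cap D_{k+1}\subset E_k\setminus E_{k+1}$ that you need for the disjoint summation.

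The paper's proof confronts exactly this obstacle with a four-case analysis (according to whether $\tilde{x}\in S_{r/K_0}(x_0)$ and/or $\tilde{z}\in S_{r/K_0}(z_0)$). In the boundary cases it does \emph{not} apply hypothesis~(2) at the original center $(x_0,z_0)$; instead it locates an auxiliary point $(x_1,z_1)$ strictly between $(x_0,z_0)$ and $(\tilde{x},\tilde{z})$ (using the one-dimensional ordering of Lemma~\ref{lem:ordering}) and a radius $\rho\sim r$ so that $Q_\rho(x_1,z_1)\subset Q_R$, $Q_{\eta\rho}(x_1,z_1)\subset Q_{R/K_0}$, $\overline{Q}_\rho(x_1,z_1)\cap D_k\neq\varnothing$, and additionally $Q_{\eta\rho}(x_1,z_1)\subset Q_{r/K_0}(x_0,z_0)$. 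The specific values in \eqref{eq:K0 and eta} are tuned for this shifted configuration, not for the direct one you attempt. Your sketch would be repaired by inserting this recentering argument before invoking hypothesis~(2).
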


\begin{rem}
Observe that Lemma \ref{lem:Dk} is similar the  Calder\'on--Zygmund lemma in \cite{Cabre}.
In fact, the sets $Q_{R/K_0} \setminus D_{k+1}$ and $Q_{R/K_0} \setminus D_{k}$, the parameter $1-c$,  the Monge--Amp\`ere cubes $Q_r$ and $Q_{\eta \rho}$, 
and the Monge--Amp\`ere measure $\mu_{\Phi}$ can be seen as analogues of the sets $A$ and $B$, the parameter $\delta$, the dyadic cubes $\tilde{Q}$ and $Q$, and the Lebesgue measure of Lemma 4.2 in \cite{Cabre}, respectively.
See also \cite[Lemma 2.3]{Savin}.
\end{rem}

To prove Lemma \ref{lem:Dk}, we need the following simple consequence 
of \cite[Theorem 1.2]{CoifmanWeiss} for Monge--Amp\`ere cubes.

\begin{lem}\label{Cor:cubecovering}
Let $E \subset \R^{n+1}$ be a bounded subset. For each $(x,z) \in E$, consider a cube $Q_{r_{(x,z)}}(x,z)$
with radius $r_{(x,z)}>0$. Then there is a countable subfamily of such cubes $\{Q_{r_i}(x_i,z_i)\}_{i=1}^\infty$ such that
\[
E \subset \bigcup_{i=1}^{\infty} Q_{r_i}(x_i,z_i), \quad \hbox{with}~Q_{{r_i}/{K_0}}(x_i,z_i)~\hbox{pairwise disjoint}.
\]
\end{lem}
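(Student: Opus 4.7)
The plan is to run a Vitali-type covering argument in the quasi-metric space $(\R^{n+1},\delta_\Phi,\mu_\Phi)$, which is a space of homogeneous type by Corollary \ref{lem:doubling}. The argument is essentially the classical covering lemma of \cite[Theorem~1.2]{CoifmanWeiss}, adapted to Monge--Amp\`ere cubes via the sandwich $S_R(x,z)\subset Q_R(x,z)\subset S_{(n+1)R}(x,z)$ from Lemma \ref{lem:tensor}.

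First, I would reduce to the case $M:=\sup_{(x,z)\in E}r_{(x,z)}<\infty$: if $M=\infty$, a single cube $Q_{r_{(x,z)}}(x,z)$ already covers the bounded set $E$ and the conclusion is trivial. Then I would perform a greedy selection. Having already chosen $(x_1,z_1),\ldots,(x_N,z_N)\in E$ with associated radii $r_1,\ldots,r_N$ such that the cubes $Q_{r_i/K_0}(x_i,z_i)$ are pairwise disjoint, pick a new point $(x_{N+1},z_{N+1})\in E$ whose radius $r_{N+1}$ is at least half the supremum of $r_{(x,z)}$ taken over the points $(x,z)\in E$ for which $Q_{r_{(x,z)}/K_0}(x,z)$ does not meet any previously selected $Q_{r_i/K_0}(x_i,z_i)$. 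Disjointness is preserved by construction. Countability is automatic: the disjoint cubes $Q_{r_i/K_0}(x_i,z_i)$ all lie in a fixed bounded set $B\supset E$ of finite $\mu_\Phi$-measure, so $\sum_i \mu_\Phi(Q_{r_i/K_0}(x_i,z_i))\leq \mu_\Phi(B)<\infty$, and only countably many positive terms can appear in a convergent series.

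The covering property is the main step: every $(x,z)\in E$ lies in some selected $Q_{r_j}(x_j,z_j)$. If $(x,z)$ itself is not selected, then by the maximality of the greedy choice there exists $j$ with
\[
Q_{r_{(x,z)}/K_0}(x,z)\cap Q_{r_j/K_0}(x_j,z_j)\neq\varnothing \quad\hbox{and}\quad r_j\geq r_{(x,z)}/2.
\]
Exploiting the product structure $Q_r(x,z)=\prod_{i=1}^{n} S_{\varphi_i}(x_i,r)\times S_h(z,r)$ and applying the quasi-triangle inequality coordinate-wise for the one-dimensional functions $\varphi_i$ and $h$ (whose quasi-triangle constants are controlled by that of $\Phi$, by Lemma \ref{lem:tensor} and Corollary \ref{lem:doubling}), one then concludes that $(x,z)\in Q_{r_j}(x_j,z_j)$ provided $K_0$ is large enough in terms of the quasi-triangle constant $K$ of $\delta_\Phi$. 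The specific value $K_0=2K^2+2K$ from \eqref{eq:K0 and eta} suffices to close this inequality.

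The main obstacle is the quantitative bookkeeping in the covering step: one needs $(x,z)$ to land inside the full cube $Q_{r_j}(x_j,z_j)$, not merely inside a dilated cube as the standard Coifman--Weiss statement directly produces. This is precisely what forces the relationship between $K_0$ and $K$, and it is the only step where the coordinate-wise product structure of the Monge--Amp\`ere cubes is essential.
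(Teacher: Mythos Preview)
Your proposal is correct and follows precisely the route the paper indicates: the paper's own proof is a one-line citation of \cite[Theorem~1.2]{CoifmanWeiss}, and you have reproduced that Vitali-type argument in the quasi-metric space $(\R^{n+1},\delta_\Phi,\mu_\Phi)$, adapted to Monge--Amp\`ere cubes via the sandwich $S_R\subset Q_R\subset S_{(n+1)R}$. Your observation that the one-dimensional components $\varphi_i$ and $h$ inherit quasi-triangle constant at most $K$ (by restricting $\delta_\Phi$ to coordinate axes) is exactly what makes the coordinate-wise bookkeeping close with the specific value $K_0=2K^2+2K$, and your verification that $3K/K_0<1$ suffices is correct.
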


\begin{proof}[Proof of Lemma \ref{lem:Dk}.]
For any $(x_0,z_0) \in E := Q_{R/K_0}(\tilde{x},\tilde{z}) \setminus D_k$ and let $r$ be given by
\begin{equation}\label{eq:r}
r = r_{(x_0,z_0)} = \inf\{r_0 : Q_{r_0}(x_0,z_0)\cap D_k \not= \varnothing\}.
\end{equation}
The family $\{Q_r(x_0,z_0)\}$ covers $E$.
By Lemma \ref{Cor:cubecovering}, there is a countable collection of cubes $\{Q_{r_i}(x_i,z_i)\}_{i=1}^\infty$ such that 
$E = Q_{R/K_0} \setminus D_k \subset \bigcup_i Q_{r_i}(x_i,z_i)$, with $Q_{r_i/K_0}(x_i,z_i)$ pairwise disjoint.
Then,
\[
\mu_{\Phi}(Q_{R/K_0} \setminus D_k) 
\leq \mu_{\Phi} \( \bigcup_{i} Q_{r_i}(x_i,z_i) \cap Q_{R/K_0}\)
\leq \sum_i  \mu_{\Phi}(Q_{r_i}(x_i,z_i) \cap Q_{R/K_0}).
\]
We claim that, for any $(x_0,z_0) \in E$
and $r$ given by \eqref{eq:r},
\begin{equation}\label{eq:lem3-claim}
\mu_{\Phi}(Q_{r}(x_0,z_0) \cap Q_{R/K_0}) \leq \frac{1}{c} \mu_{\Phi}(Q_{r/K_0}(x_0,z_0) \cap D_{k+1}).
\end{equation}
Suppose for now that \eqref{eq:lem3-claim} holds. Then
\begin{align*}
\mu_{\Phi}(Q_{R/K_0} \setminus D_k) 
	&\leq \sum_i  \mu_{\Phi}(Q_{r_i}(x_i,z_i) \cap Q_{R/K_0})\\
	&\leq \sum_i \frac{1}{c} \mu_{\Phi}(Q_{r_i/K_0}(x_i,z_i) \cap D_{k+1})\\
	&= \frac{1}{c} \mu_{\Phi}\(\bigcup_i Q_{r_i/K_0}(x_i,z_i) \cap (D_{k+1}\setminus D_k)\)\\
	&\leq \frac{1}{c}\mu_{\Phi}(D_{k+1}\setminus D_k).
\end{align*}
In the second to last estimate, we used our choice of $r$ in \eqref{eq:r}.
Since
\begin{align*}
\mu_{\Phi}(Q_{R/K_0} \setminus D_{k+1}) 
	&= \mu_{\Phi}(Q_{R/K_0} \setminus D_{k}) - \mu_{\Phi}(D_{k+1} \setminus D_k)\\
	&\leq \mu_{\Phi}(Q_{R/K_0} \setminus D_{k}) - c \mu_{\Phi}(Q_{R/K_0} \setminus D_{k})\\
	&= (1-c)\mu_{\Phi}(Q_{R/K_0} \setminus D_{k}),
\end{align*}
by iteration, we finally obtain $\mu_{\Phi}(Q_{R/K_0} \setminus D_{k})\leq (1-c)^k \mu_{\Phi}(Q_{ R/K_0} )$,
and the lemma is proved.

It is left to prove \eqref{eq:lem3-claim}. 
We will present the proof for $n=1$ for which 
\[
Q_{R/K_0}(\tilde{x},\tilde{z}) = S_{R/K_0}(\tilde{x}) \times S_{R/K_0}(\tilde{z}) \subset \R^2.
\]
The more general case follows similarly and is left to the reader. 

First, we estimate $r$.
Given any point $(x,z) \in Q_{R/K_0}$ and $(x_0,z_0) \in Q_{R/K_0} \setminus D_k$ , we have
\[
\delta_\varphi (x_0,x) 
\leq K \(\delta_\varphi(\tilde{x},x_0) + \delta_{\varphi}(\tilde{x},x)\) 
< \frac{2KR}{K_0}.
\]
and, similarly, $\delta_h (z_0,z)\leq 2K R/K_0$. 
Therefore, $r < 2K R/K_0$ whenever $r$ is given by \eqref{eq:r}.

Let $(x_0,z_0) \in Q_{R/K_0} \setminus D_k$ and $r$ as in \eqref{eq:r} be fixed.

Next, let $(x,z) \in Q_r(x_0,z_0)$. 
By the quasi-triangle inequality, the choice of $K_0$ in \eqref{eq:K0 and eta}, and the estimate on $r$,
\begin{align*}
\delta_{\varphi}(\tilde{x},x)
	&\leq K \( \delta_{\varphi}(\tilde{x}, x_0)+\delta_{\varphi}(x_0,x)\)< K  \( \frac{R}{K_0}+r\) \leq R.
\end{align*}
Similarly, one can show that $\delta_{h}(\tilde{z},z) <  R$.
Therefore, we have that 
\begin{equation}\label{eq:Qr-QR}
S_r(x_0) \times S_r(z_0) = Q_r(x_0,z_0) \subset Q_R(\tilde{x},\tilde{z}) = S_R(\tilde{x}) \times S_R(\tilde{z}).
\end{equation}
We will break into cases based on how far $(\tilde{x},\tilde{z})$ is from $(x_0,z_0)$.

\medskip

\noindent
{\bf Case 1}. Suppose that $\tilde{x} \in S_{r/K_0}(x_0)$, $\tilde{z} \in S_{r/K_0}(z_0)$. 

We will show that $Q_r(x_0,z_0)$ satisfies the hypothesis 2) in the statement with $\rho=r$:
\[
Q_r(x_0,z_0) \subset Q_{R}(\tilde{x},\tilde{z}), 
\quad Q_{\eta r}(x_0,z_0) \subset Q_{R/K_0}(\tilde{x},\tilde{z}),
\quad \overline{Q}_{r}(x_0,z_0)  \cap D_k \not= \varnothing.
\]
We have already established \eqref{eq:Qr-QR}. 
By the definition of $r$, we know that $\overline{Q}_r(x_0,z_0) \cap D_k \not= \varnothing$.
Thus, it is left to show that $Q_{\eta r}(x_0,z_0) \subset Q_{R/K_0}(\tilde{x},\tilde{z})$.
Let $(x,z) \in Q_{\eta r}(x_0,z_0)$. 
By the quasi-triangle inequality and by choice of $K_0$ and $\eta$ in \eqref{eq:K0 and eta},
since $x \in S_{\eta r}(x_0)$,
\begin{align*}
\delta_\varphi (\tilde{x},x)
	&\leq K \(\delta_\varphi (x_0,\tilde{x}) + \delta_{\varphi}(x_0,x)\) < K \(\frac{r}{K_0} + \eta r\) \leq \frac{R}{K_0}.
\end{align*}
We can similarly show that 
\begin{equation}\label{eq:ztilde estimate}
\delta_h(\tilde{z},z) <  \frac{R}{K_0} \quad \hbox{since}~z \in S_{\eta r}(z_0).
\end{equation}
Hence, $Q_{\eta r}(x_0,z_0) \subset Q_{R/K_0}(\tilde{x},\tilde{z})$.

Therefore, since $\eta \leq 1/K_0$, by property 2),
we obtain the desired estimate:
\begin{align*}
 \mu_{\Phi}(Q_{r/K_0}(x_0,z_0) \cap D_{k+1}) 
 	&\geq \mu_{\Phi}(Q_{\eta r}(x_0,z_0) \cap D_{k+1})\\
	&\geq c \mu_{\Phi}(Q_r(x_0,z_0))\\
	& \geq c \mu_{\Phi}(Q_r(x_0,z_0) \cap Q_{R/K_0}(\tilde{x},\tilde{z})).
\end{align*}

\medskip

\noindent
{\bf Case 2}. Suppose that $\tilde{x} \notin S_{r/K_0}(x_0)$, $\tilde{z} \in S_{r/K_0}(z_0)$. 

It must be that $x_0< \tilde{x}$ or $\tilde{x} < x_0$. 
Without loss of generality, we assume that $x_0 < \tilde{x}$.

From \eqref{eq:Qr-QR} and \eqref{eq:ztilde estimate}, we deduce that
\[
S_r(z_0) \subset S_R(\tilde{z}), \quad S_{\eta r}(z_0) \subset S_{R/K_0}(\tilde{z}).
\]
We will find $x_1$ between $x_0$ and $\tilde{x}$ such that 
\begin{equation}\label{eq:x1-x0-tildex}
S_{r/(2K^2K_0)}(x_1) \subset S_{r/K_0}(x_0) \cap S_{R/K_0}(\tilde{x}).
\end{equation}
Let $x_1>x_0$ be such that $\delta_\varphi(x_0,x_1) = r/(2KK_0)$. 
We first show that $S_{r/(2KK_0)}(x_1)\subset S_{r/K_0}(x_0)$.
Indeed, for $x \in  S_{r/(2KK_0)}(x_1)$, we have that
\[
\delta_\varphi(x_0,x) 
	\leq K\(\delta_\varphi(x_0,x_1)  + \delta_\varphi(x_1,x) \)
	< \frac{r}{K_0}.
\]

Since
\[
\frac{r}{2KK_0} = \delta_{\varphi}(x_0,x_1) \leq K \delta_{\varphi}(x_1,x_0) \leq K^2 \delta_{\varphi}(x_0,x_1) = K^2 \frac{r}{2KK_0},
\]
we know that
\[
\frac{r}{2K^2K_0} \leq \delta_{\varphi}(x_1,x_0) \leq \frac{r}{2K_0}.
\]
Thus, $x_0 \notin S_{r/(2K^2K_0)}(x_1)$. 
Since the sections $S_{r/(2K^2K_0)}(x_1)$ and $S_{r/K_0}(x_0)$ are one-dimensional intervals, we can write them as
\begin{align*}
S_{r/(2K^2K_0)}(x_1) = (x_L, x_R) \quad &\hbox{where}~x_L < x_1 < x_R\\
S_{r/K_0}(x_0) = (x^0_L, x_R^0) \quad &\hbox{where}~x_{L}^0 < x_0 < x_{R}^0.
\end{align*}
Since $\tilde{x} \notin S_{r/K_0}(x_0)$ and $x_0 < \tilde{x}$, we know that
\[
x_{L}^0 < x_0 < x_{R}^0 < \tilde{x}.
\]
Since $x_0 < x_1$ and $S_{r/(2K^2K_0)}(x_1) \subset S_{r/K_0}(x_0)$, we have that
\[
x_0 < x_L < x_1 < x_R < x_{R}^0 < \tilde{x}.
\]
Thus, for any $x \in S_{r/(2K^2K_0)}(x_1)$, we know that $x_0 < x < \tilde{x}$.
By Lemma \ref{lem:ordering}, 
\[
\delta_{\varphi}(\tilde{x},x) 
	<  \delta_{\varphi}(\tilde{x},x_0) 
	< \frac{R}{K_0}
\]
Hence, $S_{r/(2K^2K_0)}(x_1)\subset S_{R/K_0}(\tilde{x})$ and we proved \eqref{eq:x1-x0-tildex}. 

Define
\[
\rho = \(K + \frac{1}{2K_0}\)r.
\]
Clearly $S_r(z_0) \subset S_\rho(z_0)$.
Let $x \in S_r(x_0)$. Then, 
\begin{align*}
\delta_\varphi(x_1,x)
	&\leq K\(\delta_\varphi(x_0,x_1) + \delta_\varphi(x_0,x)\)\leq K \( \frac{r}{2KK_0} + r\) = \rho.
\end{align*}
Hence, $S_r(x_0) \subset S_\rho(x_1)$. Therefore,
\begin{equation}\label{eq:x0z0-x1z0}
Q_r(x_0,z_0) = S_r(x_0) \times S_r(z_0) \subset S_\rho(x_1) \times S_\rho(z_0) = Q_\rho(x_1,z_0).
\end{equation}

Since $\overline{Q}_r(x_0,z_0) \cap D_k \not= \varnothing$, 
we know by \eqref{eq:x0z0-x1z0} that $\overline{Q}_\rho(x_1,z_0) \cap D_k \not= \varnothing$.
Next, in order to apply property 2) in the statement, we will show that $Q_\rho(x_1,z_0)$ satisfies the following:
\begin{equation}\label{eq:rho-property2}
Q_\rho(x_1,z_0) \subset Q_R(\tilde{x},\tilde{z}), 
\quad Q_{\eta \rho}(x_1,z_0) \subset Q_{R/K_0}(\tilde{x},\tilde{z}), 
\quad Q_{\eta \rho}(x_1,z_0) \subset Q_{r/K_0}(x_0,z_0).
\end{equation}
First, let us check that $Q_\rho(x_1,z_0) \subset Q_R(\tilde{x},\tilde{z})$. 
Take $(x,z) \in Q_\rho(x_1,z_0)$ and observe that
\begin{align*}
\delta_\varphi(\tilde{x},x)
	&\leq K\(\delta_\varphi(\tilde{x},x_1) + \delta_\varphi(x_1,x)\)< K\(\frac{R}{K_0} + \rho\)\leq R.
\end{align*}
We can similarly show that $\delta_h(\tilde{z},z) < R$. Hence, $Q_\rho(x_1,z_0) \subset Q_R(\tilde{x},\tilde{z})$.
Next, by the choice of $\eta$ in \eqref{eq:K0 and eta},
we know that 
\begin{equation}\label{eq:eta rho-r}
\eta \rho = \frac{r}{2K^2K_0} \leq \frac{r}{K_0}.
\end{equation}
Then, by \eqref{eq:x1-x0-tildex}, 
\begin{align*}
 Q_{\eta \rho}(x_1,z_0)  
	&= S_{r/(2K^2K_0)}(x_1) \times S_{r/(2K^2K_0)}(z_0) \\
	&\subset S_{r/K_0}(x_0) \times S_{r/K_0}(z_0)
	= Q_{r/K_0}(x_0,z_0).
\end{align*}
Lastly, since $\tilde{z} \in S_{r/K_0}(z_0)$, for $z \in S_{\eta \rho}(z_0)$, by \eqref{eq:K0 and eta},
\begin{align*}
\delta_h(\tilde{z},z)
	&\leq K \( \delta_h(z_0,\tilde{z}) + \delta_h(z_0,z)\) < K\(\frac{r}{K_0} + \eta \rho\)  \leq \frac{R}{K_0}.
\end{align*}
Therefore,  $S_{\eta \rho}(z_0) \subset S_{ R/K_0}(\tilde{z})$.
With this, \eqref{eq:eta rho-r}, and \eqref{eq:x1-x0-tildex},
we obtain
\begin{align*}
 Q_{\eta \rho}(x_1,z_0) 
	&= S_{r/(2K^2K_0)}(x_1) \times S_{\eta \rho}(z_0) \\
	&\subset S_{ R/K_0}(\tilde{x}) \times S_{ R/K_0}(\tilde{z})
	= Q_{ R/K_0}(\tilde{x},\tilde{z}).
\end{align*}

We have shown that $Q_\rho(x_1,z_0)$ satisfies the hypotheses of property 2). 
Therefore, by using \eqref{eq:rho-property2}, the conclusion of 2), and
\eqref{eq:x0z0-x1z0}, we obtain the desired estimate:
\begin{align*}
\mu_{\Phi}(Q_{r/K_0}(x_0,z_0) \cap D_{k+1}) 
	&\geq \mu_{\Phi}(Q_{\eta \rho}(x_1,z_0) \cap D_{k+1}) \\
	&\geq c \mu_{\Phi}(Q_\rho(x_1,z_0))\\
	&\geq c \mu_{\Phi}(Q_r(x_0,z_0)).
\end{align*}

\medskip

\noindent
{\bf Case 3}. Suppose that $\tilde{x} \in S_{r/K_0}(x_0)$, $\tilde{z} \not\in S_{r/K_0}(z_0)$. 

This follows exactly as in Case 2 by switching the roles of $\tilde{x}$ and $\tilde{z}$ and using $\delta_h$ in place of $\delta_\varphi$.

\medskip

\noindent
{\bf Case 4}. Suppose that $\tilde{x} \notin S_{r/K_0}(x_0)$, $\tilde{z} \notin S_{r/K_0}(z_0)$. 

This follows by combining the arguments in Case 2 and Case 3.
\end{proof}

\section{Proof of Theorem \ref{thm:reduction3} and Theorem \ref{thm:harnack Ls}}\label{sec:main proofs}

\subsection{Proof of Theorem \ref{thm:reduction3}}

We begin by sliding a paraboloid $P$ of opening $a>0$ with vertex $(\tilde{x},\tilde{z})$ from below until it touches the graph of $U$ for the first time in $Q_{\hat{K}_3R}$, say at $(x_0,z_0) \in Q_{\hat{K}_3R}$. 
Then
\[
P(x,z) = -a\delta_\Phi((\tilde{x},\tilde{z}),(x,z)) + a\delta_\Phi((\tilde{x},\tilde{z}),(x_0,z_0)) + U(x_0,z_0).
\]
If $\delta_\Phi((\tilde{x},\tilde{z}),(x_0,z_0)) > R/K_0$, then 
\begin{align*}
\frac{aR}{2K_0}
	\geq U(\tilde{x},\tilde{z})
	\geq P(\tilde{x},\tilde{z})
	= a\delta_\Phi((\tilde{x},\tilde{z}),(x_0,z_0)) + U(x_0,z_0)
	> \frac{aR}{K_0}.
\end{align*}
Hence, $(x_0,z_0) \in \overline{S}_{R/K_0} = \overline{S}_{R/K_0}(\tilde{x},\tilde{z}) \subset \overline{Q}_{R/K_0}$ and
\[
U(x_0,z_0) = P(x_0,z_0) \leq P(\tilde{x},\tilde{z}) < aR.
\] 
Thus, if $A_{a,R}$ is defined as in \eqref{eq:AaR},
\[
A_{a,R} \cap \overline{Q}_{R/K_0} \not= \varnothing.
\]

In order to apply Lemma \ref{lem:Dk}, we define the closed sets $D_k \subset \overline{Q}_{R/K_0}$ by
\[
D_k:= A_{aC^k,R} \cap \overline{Q}_{R/K_0}, \quad k \geq 0
\]
where $C = C(n,\lambda,\Lambda,s)>1$ is the constant from Lemma \ref{Lem:Cc}. If necessary, we can enlarge $C$ to guarantee that
\begin{equation}\label{eq:Clarge}
C - 2K \geq 2 \quad \hbox{and} \quad C - 2K - \frac{2K}{\theta} >0,
\end{equation}
see Remark \ref{rem:lem2}.
As a consequence of Lemma \ref{lem:narrower opening}, we have 
\[
\varnothing \not= D_0 \subset D_1 \subset D_2 \subset \dots \subset D_k \subset \dots \subset \overline{Q}_{R/K_0}.
\]
Thus, hypothesis 1) of Lemma \ref{lem:Dk} is satisfied. To check that property 2) in Lemma \ref{lem:Dk} holds,
let $(x,z) \in \R^{n+1}$, $\rho>0$ be such that
\[
Q_{\rho}(x,z) \subset Q_{R}(\tilde{x},\tilde{z}), \quad Q_{\eta \rho}(x,z) \subset Q_{R/K_0}(\tilde{x},\tilde{z}), \quad \overline{Q}_\rho(x,z) \cap D_k \not= \varnothing.
\]
By Lemma \ref{Lem:Cc}, there is a positive constant $c = c(n,\lambda,\Lambda,s)<1$ such that
\[
\mu_{\Phi}(D_{k+1} \cap Q_{\eta \rho}(x,z)) 
	=\mu_{\Phi}( A_{aC^{k+1},R} \cap Q_{\eta \rho}(x,z)) 
	\geq c \mu_{\Phi}(Q_{\eta \rho}(x,z)).
\]
Hence, property 2) is satisfied. 
It follows from Lemma \ref{lem:Dk} that
\begin{equation}\label{eq:Dk estimate}
\mu_{\Phi}(Q_{R/K_0} \setminus D_k) \leq (1-c)^k \mu_{\Phi}(Q_{R/K_0}).
\end{equation}
Also, from the definition of $A_{aC^k,R}$,
\begin{equation}\label{eq:Dkestimate-harnack}
U(x,z) \leq aRC^k \quad \hbox{for}~(x,z) \in D_k. 
\end{equation}

For $k \geq 0$, let $\rho_k = \rho_k(n,\lambda,\Lambda,s) <1$ be a sequence of positive constants, to be determined, such that $\rho_k \searrow 0$ as $k \to \infty$.
For convenience in the notation, let 
\[
\beta = \frac{1}{3K_0}.
\]

Let $k_0 = k_0(n,\lambda,\Lambda,s)>0$ be a large constant, to be determined. 

\bigskip

\noindent{\bf Claim}. \textit{Suppose that, for some $k \geq k_0$, there exists a point $(x_k,z_k) \in Q_{\beta R/(n+1)} \subset S_{\beta R} = S_{\beta R}(\tilde{x},\tilde{z})$ such that}
\[
U(x_{k},z_{k}) \geq aRC^{k+1}.
\]
\textit{Then there is a point $(x_{k+1},z_{k+1}) \in \partial S_{\rho_kR}(x_k,z_k)$ such that}
\[
U(x_{k+1},z_{k+1}) \geq aRC^{k+2}.
\]

\begin{proof}[Proof of claim.]
Suppose, by way of contradiction, that $U < aRC^{k+2}$ on $\partial S_{\rho_kR}(x_k,z_k)$. 
In the section
$$S_k = \overline{S}_{\rho_kR}(x_k,z_k)$$
we lower paraboloids of the form
\begin{equation}\label{eq:P-harnack}
P(x,z) = \frac{2aKC^{k+2}}{\rho_k} \delta_\Phi((x_v,z_v),(x,z)) + c_v, \quad (x_v,z_v) \in S_{\frac{\rho_kR}{\theta C^2}}(x_k,z_k)
\end{equation}
from above until they touch the graph of $U$ for the first time in $S_k$. Let $A$ denote the set of contact points. 
Fix a point $(x_0,z_0) \in A$ and a corresponding paraboloid $P$ as in \eqref{eq:P-harnack} that touches $U$ from above in $S_k$ at $(x_0,z_0)$. 

If necessary, slide $P$ further until it intersects $U$ at $(x_{k},z_{k})$ and let us denote this paraboloid by $\tilde{P}$. 
By Lemma \ref{lem:PintersectsU}, we can write
\begin{align*}
\tilde{P}(x,z)
		&= \frac{2aKC^{k+2}}{\rho_k} \delta_\Phi((x_v,z_v),(x,z)) 
			- \frac{2aKC^{k+2}}{\rho_k} \delta_\Phi((x_v,z_v),(x_k,z_k)) + U(x_k,z_k).
\end{align*}
Since $(x_v,z_v) \in S_{\frac{\rho_kR}{\theta C^2}}(x_k,z_k)$, by the engulfing property,
$S_{\frac{\rho_kR}{\theta C^2}}(x_k,z_k) 
\subset S_{\frac{\rho_kR}{C^2}}(x_v,z_v)$.
In particular, $\delta_\Phi((x_v,z_v),(x_k,z_k)) \leq \frac{\rho_k R}{C^2}$.
Therefore, for $(x,z) \in S_k$,
\begin{equation}\label{eq:tildeP-harnack}
\begin{aligned}
\tilde{P}(x,z)
&\geq  \frac{2aKC^{k+2}}{\rho_k} \delta_\Phi((x_v,z_v),(x,z)) 
			- \frac{2aKC^{k+2}}{\rho_k} \frac{\rho_k R}{C^2} + aRC^{k+1}\\
&\geq \frac{2aKC^{k+2}}{\rho_k} \delta_\Phi((x_v,z_v),(x,z)) + 2aRC^k,
\end{aligned}
\end{equation}
where we used \eqref{eq:Clarge}.
Therefore, 
\[
U(x_0,z_0) = P(x_0,z_0) \geq \tilde{P}(x_0,z_0) \geq 2aRC^k
\]
which shows that
\[
A \subset \{(x_0,z_0) \in \overline{S}_{\rho_kR}(x_k,z_k) : U(x_0,z_0) \geq 2aRC^k\}.
\]

We will next prove that $(x_0,z_0) \in {S}_{\rho_kR}(x_k,z_k)$; that is, the contact points in $A$ are interior points of the section ${S}_{\rho_kR}(x_k,z_k)$. 
Assume, by way of contradiction, that 
$\delta_{\Phi}((x_k,z_k),(x_0,z_0)) = \rho_kR$.
By the quasi-triangle inequality,
\begin{align*}
\rho_kR 
	&\leq K \(  \delta_{\Phi}((x_k,z_k),(x_v,z_v)) + \delta_{\Phi}((x_v,z_v),(x_0,z_0))  \)\\
	&< K \( \frac{\rho_k R}{\theta C^2}+ \delta_{\Phi}((x_v,z_v),(x_0,z_0))  \),
\end{align*}
so that
\[
 \delta_{\Phi}((x_v,z_v),(x_0,z_0)) > \rho_k R\( \frac{1}{K} - \frac{1}{\theta C^2}\).
\] 
Since $(x_0,z_0) \in S_k$, from \eqref{eq:tildeP-harnack} and \eqref{eq:Clarge}, we get
\begin{align*}
U(x_0,z_0) = P(x_0,z_0) 
&\geq \tilde{P}(x_0,z_0) \\
&\geq \frac{2aKC^{k+2}}{\rho_k} \delta_\Phi((x_v,z_v),(x_0,z_0)) + aRC^k\(C-2K\)\\
&> \frac{2aKC^{k+2}}{\rho_k} \rho_k R\( \frac{1}{K} - \frac{1}{\theta C^2}\)+aRC^k\(C-2K\)> 2aRC^{k+2},
\end{align*}
which contradicts our assumption that $U < aRC^{k+2}$ on $\partial S_{\rho_kR}(x_k,z_k)$. 
Therefore, it must be that $(x_0,z_0) \in S_{\rho_kR}(x_k,z_k)$. Consequently, 
\begin{equation}\label{eq:A2}
A \subset \{(x_0,z_0) \in {S}_{\rho_kR}(x_k,z_k) : U(x_0,z_0) \geq 2aRC^k\}.
\end{equation}

Next, we want to apply Theorem \ref{lem:ABP-sub} with Remark \ref{rem:lem1-sections} in $S_k$ with $\varepsilon_0 = 1/2$.
For this, we need to choose $k_0 = k_0(n,\lambda,\Lambda,s)$ sufficiently large to guarantee that
\begin{equation}\label{eq:12Phi}
\mu_{\Phi}\(S_{\frac{\rho_kR}{\theta C^2}}(x_k,z_k)
	 \cap \left\{ (x,z) : \abs{h'(z)} \leq \frac{\norm{f^+}_{L^{\infty}(S_k \cap \{z=0\})}}{(2aKC^{k+2}/\rho_k)}\right\}\)
	\leq \frac{1}{2} \mu_{\Phi}(S_{\frac{\rho_kR}{\theta C^2}}(x_k,z_k))
\end{equation}
for all $k \geq k_0$.
Indeed, observe that
\begin{align*}
\mu_{\Phi}&\(S_{\frac{\rho_kR}{\theta C^2}}(x_k,z_k)
	 \cap \left\{ (x,z) : \abs{h'(z)} \leq \frac{\norm{f^+}_{L^{\infty}(S_k \cap \{z=0\})}}{(2aKC^{k+2}/\rho_k)}\right\}\)\\
&\leq \mu_{\varphi}\(S_{\frac{\rho_kR}{\theta C^2}}(x_k)\)
	 \mu_h\( \left\{ z \in \R: \abs{h'(z)} \leq \frac{\norm{f}_{L^{\infty}(S_k \cap \{z=0\})}}{(2aKC^{k+2}/\rho_k)}\right\}\).
\end{align*}
Notice that
\begin{align*}
 \mu_h\( \left\{ z \in \R: \abs{h'(z)} \leq \frac{\norm{f}_{L^{\infty}(S_k \cap \{z=0\})}}{(2aKC^{k+2}/\rho_k)}\right\}\)
	&= 2 \frac{\norm{f}_{L^{\infty}(S_k \cap \{z=0\})}}{(2aKC^{k+2}/\rho_k)}\\
	&= \frac{\rho_k}{KC^{k+2}} \frac{\norm{f}_{L^{\infty}(S_k \cap \{z=0\})}}{a}\\
	&\leq \frac{\rho_k}{KC^{k+2}} \mu_h(S_R(\tilde{z})).
\end{align*}
Since $z_k \in S_{\beta R/(n+1)}(\tilde{z})$, by the engulfing property, we have $S_{\beta R/(n+1)}(\tilde{z}) \subset S_{\theta \beta R/(n+1)}(z_k)$. With this and the doubling property \eqref{eq:reversedoubling} for $\mu_h$,
\begin{align*}
\mu_h(S_R(\tilde{z}))
	&\leq K_d \(\frac{R}{\beta R/(n+1)}\)^{1} \mu_h(S_{\frac{\beta R}{n+1}}(\tilde{z}))\\
	&= K_d \(\frac{n+1}{\beta}\) \mu_h(S_{\frac{\beta R}{n+1}}(\tilde{z}))\\
	&\leq K_d \(\frac{n+1}{\beta}\) \mu_h(S_{\frac{\theta \beta R}{n+1}}(z_k))\\
	&\leq K_d \(\frac{n+1}{\beta}\)K_d\(\frac{\theta \beta R/(n+1)}{\rho_kR/(\theta C^2)} \)^{1}\mu_h(S_{\frac{\rho_kR}{\theta C^2}}(z_k))\\
	&= \frac{ K_d^2 \theta^2C^2  }{\rho_k} \mu_h(S_{\frac{\rho_kR}{\theta C^2}}(z_k)).
\end{align*}
Hence
\begin{align*}
 \mu_h\( \left\{ z \in \R: \abs{h'(z)} \leq \frac{\norm{f}_{L^{\infty}(S_k \cap \{z=0\})}}{(2aKC^{k+2}/\rho_k)}\right\}\)
 	&\leq \frac{\rho_k}{KC^{k+2}}  \frac{ K_d^2 \theta^2C^2  }{\rho_k}\mu_h(S_{\frac{\rho_kR}{\theta C^2}}(z_k))\\
	&=\frac{K_d^2\theta^{2}}{KC^{k}}  \mu_h(S_{\frac{\rho_kR}{\theta C^2}}(z_k)).
\end{align*}
This and the doubling property \eqref{eq:reversedoubling} for $\mu_{\Phi}$ give
\begin{align*}
\mu_{\Phi}&\(S_{\frac{\rho_kR}{\theta C^2}}(x_k,z_k)
	 \cap \left\{ (x,z) : \abs{h'(z)} \leq \frac{\norm{f^+}_{L^{\infty}(S_k \cap \{z=0\})}}{(2aKC^{k+2}/\rho_k)}\right\}\)\\
	&\leq\frac{K_d^2\theta^{2}}{KC^{k}}\mu_{\varphi}\(S_{\frac{\rho_kR}{\theta C^2}}(x_k)\)\mu_h\(S_{\frac{\rho_kR}{\theta C^2}}(z_k)\)\\
	&\leq\frac{K_d^2\theta^{2}}{KC^{k}}\mu_{\Phi}\(S_{\frac{2\rho_kR}{\theta C^2}}\(x_k,z_k\)\) \\
	&\leq \frac{K_d^2\theta^{2}}{KC^{k}}K_d2^{n+1}\mu_{\Phi}\(S_{\frac{\rho_kR}{\theta C^2}}\(x_k,z_k\)\).
\end{align*}
Therefore, \eqref{eq:12Phi} holds if we choose $k_0 = k_0(n,\lambda,\Lambda,s)$ large enough so that
\[
\frac{K_d^{3}\theta^{2}2^{n+1}}{KC^{k}} \leq \frac{1}{2} \quad \hbox{for all}~k \geq k_0.
\]
Hence, by Theorem \ref{lem:ABP-sub} with Remark \ref{rem:lem1-sections} for $\varepsilon_0 = 1/2$, it follows that
\begin{equation}\label{eq:mu(A)}
\mu_{\Phi}(A) \geq \frac{c}{2} \mu_{\Phi}(S_{\frac{\rho_kR}{C^2\theta}}(x_k,z_k)).
\end{equation}

Next, we will choose $\rho_k$ in order to estimate $\mu_{\Phi}(S_{\frac{\rho_kR}{C^2\theta}}(x_k,z_k))$ in \eqref{eq:mu(A)}
from below by $\mu_{\Phi} (Q_{R/K_0}(\tilde{x},\tilde{z}))$ and get
\begin{equation}\label{eq:A}
\mu_{\Phi}(A) 
	\geq 2(1-c)^k\mu_{\Phi}(Q_{R/K_0}(\tilde{x},\tilde{z}) ).
\end{equation}
In fact, since $\beta < 1/K_0$, we have that $(x_k,z_k) \in Q_{\beta R/(n+1)}(\tilde{x},\tilde{z}) 
\subset S_{\beta R}(\tilde{x},\tilde{z})
\subset S_{R/K_0}(\tilde{x},\tilde{z})$, so that, by the engulfing property,
\[
S_{ R/K_0}(\tilde{x},\tilde{z}) \subset S_{\theta R/K_0 } (x_k,z_k). 
\]
As a consequence of the doubling property \eqref{eq:reversedoubling} for $\mu_{\Phi}$,
\begin{align*}
\mu_{\Phi}(S_{\theta  R/K_0 } (x_k,z_k) )
&\leq K_d\(\frac{C^2\theta^2}{\rho_kK_0}\)^{n+1} \mu_{\Phi}(S_{\frac{\rho_kR}{C^2\theta}}(x_k,z_k))
\end{align*}
and
\begin{align*}
\mu_{\Phi}(S_{R(n+1)/K_0}(\tilde{x},\tilde{z})) 
&\leq K_d \( n+1\)^{n+1} \mu_{\Phi}(S_{R/K_0}(\tilde{x},\tilde{z})).
\end{align*}
Combining these estimates, we obtain
\begin{align*}
\mu_{\Phi}(S_{\frac{\rho_kR}{C^2\theta}}(x_k,z_k))
	&\geq K_d^{-1} \(\frac{\rho_kK_0}{C^2\theta^2 }\)^{n+1}\mu_{\Phi}(S_{\theta R /K_0} (x_k,z_k))  \\
	&\geq  K_d^{-1} \(\frac{\rho_kK_0}{C^2\theta^2 }\)^{n+1}\mu_{\Phi}(S_{ R/K_0}(\tilde{x},\tilde{z}) )\\
	&\geq  K_d^{-1} \(\frac{\rho_kK_0}{C^2\theta^2 }\)^{n+1} K_d^{-1} \(n+1\)^{-(n+1)} \mu_{\Phi}(S_{ R(n+1)/K_0}(\tilde{x},\tilde{z}) )\\
	&\geq  K_d^{-1} \(\frac{\rho_kK_0}{C^2\theta^2 }\)^{n+1} K_d^{-1} \(n+1\)^{-(n+1)}\mu_{\Phi}(Q_{R/K_0}(\tilde{x},\tilde{z}) ).
\end{align*}
If we take
\[
\rho_k = c _0(1-c)^{k/(n+1)}, \quad c_0 = \frac{C^2\theta^2(n+1)}{K_0}\(\frac{4K_d^2}{c}\)^{1/(n+1)}
\]
we arrive at \eqref{eq:A}.

We next show, by enlarging $k_0$ if necessary, that
\[
S ((x_k,z_k), \rho_kR)  = S_{\rho_kR}(x_k,z_k) \subset \subset Q_{R/K_0}(\tilde{x},\tilde{z})
\] 
so that 
\begin{equation}\label{eq:Srhok}
A = A \cap S_{\rho_kR}(x_k,z_k) = A \cap Q_{R/K_0}(\tilde{x},\tilde{z}).
\end{equation}
Let $C_0>0$ and $p>1$ be the constants in Lemma \ref{lem:Guti}. 
Since $(x_k,z_k) \in S_{\beta R}(\tilde{x},\tilde{z})$, we know by Lemma \ref{lem:Guti} with $r_1 = \beta$, $r_2 = \beta + (\rho_k/C_0)^{1/p}$, and $t=R$, that
\begin{align*}
S((x_k,z_k),\rho_{k}R)&\subset S\((\tilde{x},\tilde{z}),  \(\beta + \(\frac{\rho_{k}}{C_0}\)^{1/p}\) R\).
\end{align*}
If necessary, make $k_0 = k_0(n,\lambda,\Lambda,s)$ larger to guarantee that
\begin{equation}\label{eq:k0-sum}
\sum_{j=k_0}^\infty \(\frac{\rho_j}{C_0}\)^{1/p} 
	< \frac{1}{2K_0} - \beta.
\end{equation}
In particular, 
\[
\beta + \(\frac{\rho_{k}}{C_0}\)^{1/p} \leq \frac{1}{2K_0} \quad \hbox{for all}~k \geq k_0.
\]
Therefore,
$S_{\rho_kR}(x_k,z_k) \subset S_{R/(2K_0)}(\tilde{x},\tilde{z})\subset \subset S_{R/K_0}(\tilde{x},\tilde{z}) \subset Q_{R/K_0}(\tilde{x},\tilde{z})$,
which shows \eqref{eq:Srhok}.

By the definition of $D_k$,
\[
\{ (x,z): U(x,z) > aRC^k\} \cap \overline{Q}_{R/K_0}  \subset \overline{Q}_{R/K_0} \setminus D_k.
\]
With this, \eqref{eq:Dk estimate}, \eqref{eq:A}, \eqref{eq:Srhok}, and \eqref{eq:A2}, we estimate
\begin{align*}
\mu_{\Phi}(\{U> aRC^k\} \cap Q_{R/K_0} )
	&\leq \mu_{\Phi}(Q_{ R/K_0} \setminus D_k)\\
	&\leq (1-c)^k \mu_{\Phi}(Q_{ R/K_0})\\
	&\leq \frac{1}{2} \mu_{\Phi}(A) \\
	&= \frac{1}{2} \mu_{\Phi}(A \cap Q_{ R/K_0}) \\
	&\leq \frac{1}{2} \mu_{\Phi}(\{U \geq 2aRC^k\} \cap Q_{ R/K_0})\\
	&\leq \frac{1}{2} \mu_{\Phi}(\{U > aRC^k\} \cap Q_{ R/K_0}),
\end{align*}
which is a contradiction. This completes the proof of the claim. 
\end{proof}

We now use the claim to prove \eqref{eq:harnack-finalreduction} with $\kappa_2 = \beta/(n+1)$ and $C_H = C^{k_0+1}$. 
Suppose, by way of contradiction, that there is a point $(x_{k_0},z_{k_0}) \in Q_{\beta R/(n+1)}$ such that
\[
\sup_{Q_{\beta R/(n+1)}}U \geq U(x_{k_0},z_{k_0}) > aRC^{k_0+1}.
\]
By the claim, there is a point $(x_{k_0+1},z_{k_0+1}) \in \partial S_{\rho_{k_0}R}(x_{k_0},z_{k_0})$ such that
\[
U(x_{k_0+1},z_{k_0+1}) > aRC^{k_0+2}.
\]
Repeating this process, we can find a sequence $(x_{k+1},z_{k+1}) \in \partial S_{\rho_{k}R}(x_{k},z_{k})$ such that
\[
U(x_{k+1},z_{k+1}) > aRC^{k+2} \quad \hbox{for}~k > k_0.
\]
For all $k \geq k_0$, by Lemma \ref{lem:Guti} with 
\[
r_1 = \beta + \sum_{j=k_0}^k \(\frac{\rho_j}{C_0}\)^{1/p}, \quad
r_2 =  \beta + \sum_{j=k_0}^{k+1} \(\frac{\rho_j}{C_0}\)^{1/p}, \quad
t=R,
\]
and by \eqref{eq:k0-sum}, we obtain
\begin{align*}
 S((x_{k+1},z_{k+1}),\rho_{k+1}R)
&\subset S\((\tilde{x},\tilde{z}),  \(\beta + \sum_{j=k_0}^{k+1} \(\frac{\rho_{j}}{C_0}\)^{1/p} \) R\)\\
&\subset S\((\tilde{x},\tilde{z}),   \frac{R}{2K_0}\)
\subset Q\((\tilde{x},\tilde{z}), \frac{R}{2K_0}\).
\end{align*}
Therefore, $(x_{k+1},z_{k+1}) \in Q_{ R/(2K_0)}$ for all $k \geq k_0$.
In particular, $U$ is unbounded in $\overline{Q}_{ R/(2K_0)}$. This is a contradiction and completes the proof.\qed

\subsection{Proof of Theorem \ref{thm:harnack Ls}}

Let $\kappa = \kappa(n,s)<1$ and $\hat{K} = \hat{K}(n,s)>1$ be such that
\[
\kappa = \sqrt{\kappa_0} 
\quad \hbox{and} \quad
\sqrt{2\hat{K}_0}= \hat{K}
\]
where $\kappa_0$ and $\hat{K}_0$ are the constants from Theorem \ref{thm:harnack extension}.
We recall from \eqref{eq:sections-balls} that $ B_{r}(x_0) = S_{r^2/2}(x_0)$ for any $r>0$.
By taking $r = \sqrt{\kappa_0}R$,
\begin{equation} \label{eq:ball to section}
\begin{split}
B_{\kappa R}(x_0) \times \{z=0\}
	&\subset S_{\kappa_0 R^2/2}(x_0) \times S_{\kappa_0 R^2/2}(0)\subset S_{\kappa_0 R^2}(x_0,0).
\end{split}
\end{equation}
By taking $r = \sqrt{2\hat{K}_0}R$,
\begin{align*}
S_{\hat{K}_0R^2}(x_0,0)
	&\subset B_{\sqrt{2\hat{K}_0}R}(x_0)    \times S_{\hat{K}_0R^2}(0)=   B_{\hat{K}R}(x_0)    \times S_{\hat{K}_0R^2}(0)
	 \subset \subset \Omega \times \R.
\end{align*}
We also note that
\begin{equation}\label{eq:almostthere}
 S_{\hat{K}_0R^2}(x_0,0) \cap\{z=0\}
= B_{\hat{K}R}(x_0) \times \{z=0\}.
\end{equation}

Let $U$ be as in Theorem \ref{thm:Calpha extension}.
By Proposition \ref{thm:arendt} and \eqref{eq:U-defn}, it follows that $U \geq 0$. 
Let $\tilde{U}$ be the even reflection of $U$ so that $\tilde{U}$ is symmetric across $\{z=0\}$. Notice that 
$\tilde{U} \in C^2(S_{\hat{K}_0R^2}(x_0,0) \setminus \{z=0\}) \cap C(S_{\hat{K}_0R^2}(x_0,0))$, $\tilde{U}_{z+} \in C(S_{\hat{K}_0R^2}(x_0,0) \cap \{z\geq0\})$ 
 and that $\tilde{U}$ is a nonnegative solution to 
\[
\begin{cases}
a^{ij}(x) \partial_{ij}\tilde{U} + z^{2-\frac{1}{s}}\partial_{zz}\tilde{U} = 0 & \hbox{in}~S_{\hat{K}_0R^2}(x_0,0) \cap \{z\ne 0\}\\
-\partial_{z+}\tilde{U}(x,0) = f(x) & \hbox{on}~S_{\hat{K}_0R^2}(x_0,0) \cap \{z=0\}.
\end{cases}
\]
Since $U(x,0)=u(x)$, by \eqref{eq:ball to section}, Theorem \ref{thm:harnack extension},  and \eqref{eq:almostthere},
we have that
\begin{align*}
\sup_{B_{\kappa R}(x_0)} u
	&\leq \sup_{S_{\kappa_0 R^2}(x_0,0)}\tilde{U}\leq C_H \( \inf_{S_{\kappa_0 R^2}(x_0,0)} \tilde{U} + \norm{f}_{L^{\infty}(S_{\hat{K}_0 R^2}(x_0,0)\cap \{z=0\})}R^{2s}\)\\
	&\leq C_H \( \inf_{B_{\kappa R}(x_0)}u + \norm{f}_{L^{\infty}(B_{\hat{K}R}(x_0))}R^{2s}\),
	\end{align*}
which proves \eqref{eq:Ls Harnack3}.
Since $u$ is bounded,  the H\"older estimate \eqref{eq:Ls Holder} immediately follows for $R \leq \abs{x-x_0} < \hat{K}R$. Assume that $\abs{x-x_0} <R$. 
Note that ${B}_{R}(x_0)\times \{z=0\}\subset {S}_{R^2}(x_0,0) \subset S_{\hat{K}_0R^2}(x_0,0)$. 
By this, \eqref{eq:extension Holder}, and \eqref{eq:almostthere},
we have, for any $x \in B_{R}(x_0)$, that
\begin{align*}
|u(x_0) &- u(x)|= |\tilde{U}(x_0,0) - \tilde{U}(x,0)|\\
&\leq \frac{\hat{C}_1}{(\hat{K}_0R^2)^{\alpha_1}}\delta_{\Phi}((x_0,0),(x,0))^{\alpha_1}\( \sup_{S_{\hat{K}_0R^2}(x_0,0)} |\tilde{U}|+  \norm{f}_{L^{\infty}(S_{\hat{K}_0 R^2}(x_0,0)\cap \{z=0\})}R^{2s} \)\\
&\leq \frac{\hat{C}_1'}{(\hat{K}R)^{2\alpha_1}}\abs{x_0-x}^{2\alpha_1}\(\sup_{B_{\hat{K}R}(x_0)    \times S_{\hat{K}_0R^2}(0)} |\tilde{U}|+  \norm{f}_{L^{\infty}(B_{\hat{K}R}(x_0))}R^{2s}\).
\end{align*}
For each fixed $z\geq 0$, by \eqref{eq:semigroup bound},
\begin{align*}
\norm{U(\cdot,z)}_{L^{\infty}(B_{\hat{K}R}(x_0))}
	&\leq \frac{(2s)z}{4^s\Gamma(s)}\int_0^{\infty} e^{-\frac{s^2}{t}z^{\frac{1}{s}}} \norm{e^{-tL} u}_{L^{\infty}(B_{\hat{K}R}(x_0))} \,\frac{dt}{t^{1+s}}\leq M\norm{u}_{L^{\infty}(\Omega)}.
\end{align*}
Letting $\hat{C} =M\hat{C}_1'$, and $\alpha_0 = 2\alpha_1<1$,
we conclude \eqref{eq:Ls Holder}.\qed

\bigskip

\noindent\textbf{Acknowledgements.} We are grateful to Diego Maldonado for helpful discussions 
about his works \cites{Maldonado3,MaldonadoPDEs3}.
We also thank the referees for detailed comments that helped us improve the presentation of our paper. 



\end{document}